\newcommand{\N}{\mathbb{N}}
\newcommand{\diam}{\operatorname{diam}}
\newcommand{\forman}[1]{\kappa_F\left({#1}\right)}
\newcommand{\avgdegree}{\rho}
\newcommand{\resistance}[1]{\kappa_R\left({#1}\right)}
\newtheorem{theorem}{Theorem}[section]
\newtheorem{corollary}[theorem]{Corollary}
\newtheorem{remark}[theorem]{Remark}
\newtheorem{definition}[theorem]{Definition}
\newtheorem{lemma}[theorem]{Lemma}
\newtheorem{proposition}[theorem]{Proposition}
\newtheorem{conjecture}[theorem]{Conjecture}
\newtheorem{example}[theorem]{Example}
\date{\textbf{\texttt{Keywords:}} discrete curvature, convex polytopes, diameter, Forman--Ricci curvature, effective resistance, graph Laplacians
\\
\hspace*{.225in}\textbf{\texttt{MSC2020:}} 52B05, 52B11, 05C50, 05C10}
\title{Discrete Curvatures and Convex Polytopes}
\author{Jes\'us A. De Loera$^1$}
\author{Jillian Eddy$^1$}
\author{Sawyer J. Robertson$^2$}
\author{Jos\'e A. Samper$^3$}
\address{$^1$Department of Mathematics, UC Davis}
\address{$^2$Department of Mathematics, UC San Diego}
\address{$^3$Departamento de Matem\'{a}ticas, Pontificia Universidad Cat\'{o}lica de Chile}
\begin{document}
    \captionsetup[figure]{labelfont={bf},labelformat={default},labelsep=period,name={Figure}}

    \begin{abstract}
        We study Forman--Ricci and effective resistance curvatures on the skeleta of convex polytopes. Our guiding questions are: how frequently do polytopal graphs exhibit everywhere positive curvature, and what structural constraints does positivity impose? For Forman--Ricci curvature we derive an exact identity for the average edge curvature in terms of flag $f$-numbers and establish the existence of infinite families of Forman--Ricci-positive polytopes in every fixed dimension $d\ge 6$. We prove finiteness results in low dimension: there are only finitely many Forman--Ricci-positive $3$- and $4$-polytopes; for $d=5$ we show finiteness in the simplicial case, and conjecture its extension to $5$-polytopes more generally. For the resistance curvature $\kappa(v)$ we establish the existence of infinite families for all $d\ge 3$, and we provide a quantitative lower bound for $\kappa(v)$ in a simple $3$-polytope in terms of the lengths of the three $2$-faces incident to $v$. This bound leads to constructions of non-vertex-transitive, resistance-positive $3$-polytopes via $\Delta$-operations, and a degree-based obstruction showing that if each neighbor of $v$ has degree at most $d_v-2$, then $\kappa(v)\le 0$. Our results suggest that positive curvature on polytopal skeletons is rare and constrained. 
    \end{abstract}

	\maketitle

\section{Introduction}

Curvature of graphs and other discrete structures is a rapidly developing field rooted in deep questions concerning how well discrete models capture the geometric structure of continuous spaces. Although the notion of \emph{discrete curvature} may seem counterintuitive at first, there exist many notions of curvature on graphs and complexes which have been shown to satisfy discrete analogues of well-known results from differential geometry. Examples include Bonnet-Myers-type theorems relating curvature to diameter bounds (see, e.g.,~\cite[Thm. 1]{devriendt2024graph},~\cite[Thm. 6.3]{forman}) and Lichnerowicz-type bounds relating curvature to Laplacian eigenvalues (see, e.g.,~\cite[Thm. 4.2]{lin2011ricci},~\cite[Thm. 3]{steinerberger2023curvature}).
Additionally, discrete curvatures have been used in been applied data science and the analysis of networks, demonstrating their versatility and importance.
For example, Weber and others~\cite{Weber2017,Fesseretal2024} connected Forman--Ricci curvature to the analysis of complex networks, while Ollivier and others related the curvature of Markov chains to their mixing rates and spectral gaps, showing that positive curvature ensures fast convergence (see e.g.,~\cite{Ollivier2009,munch2022mixingtimeexpansionnonnegatively} and references therein). 

Meanwhile, in polyhedral geometry, many longstanding open questions remain which concern the very quantities investigated in the theory of discrete curvatures. The \emph{Hirsch conjecture} (see~\cite{Ziegler}), for example, predicted that the largest diameter $f(d,n)$ of polytope of dimension $d\geq 1$ defined by no more than $n\ge 1$ linear inequalities satisfies $f(n,d)\le n-d$. The Hirsch conjecture was disproved in general by Santos~\cite{santos2012counterexample}, but variations of the conjecture remain open and of great interest to the community. Another example is \emph{Barnette's conjecture} (see~\cite{Barnette1969Conjecture5}), which hypothesizes that every cubic bipartite 3-dimensional polyhedral graph is Hamiltonian. It was shown recently by Devriendt~\cite{devriendt2025graphs} that Hamiltonian graphs have positive curvature with respect to a weighted variant of resistance curvature.

It is therefore natural to consider the properties of discrete curvatures within the category of convex polytopes and, in particular, their graphs (i.e., their $1$-skeleta). Little effort has been made in this research direction and we are not aware of any prior work. In this article, we investigate the discrete curvature of polytopes with emphasis on \emph{Forman--Ricci curvature} (see~\cref{def:forman-curvature}) and \emph{effective resistance curvature} (see~\cref{defn:curvature-resistance}). In both cases, we are interested in the basic question of \emph{how many positively curved combinatorial types of $d$-polytopes exist for various values of the dimension $d$}. Somewhat surprisingly, in both cases, we show that positive curvature polyhedra appear to be rare. We state our contributions later. 
\subsection{Related work}

Forman--Ricci curvature, introduced for general cell complexes~\cite{forman}, specializes to an edge-based invariant on graphs that is local and computationally cheap. Subsequent work has seen the theory develop further in various directions: Watanabe~\cite{Watanabe2019} established a Gauss-Bonnet-type theorem for graphs and $2$-complexes; Bloch~\cite{Bloch2014} analyzed structural limitations of the edge-only definition in dimension $2$ and proposed a poset-theoretic extension that restores a Gauss-Bonnet analogue and clarifies the failure of ubiquitous negativity on surfaces; Jost and M{\"u}nch~\cite{JostMuench2021} characterized lower bounds of Forman--Ricci curvature via the contractivity of the Hodge-Laplacian semigroup and related (optimized) 
Forman--Ricci and Ollivier curvatures, yielding refined diameter bounds and a bridge to heat semigroup techniques. Extensions of this notion to weighed graphs~\cite{Sreejith2016}, directed graphs~\cite{Sreejith2016}, and hypergraphs~\cite{Leal2021Hypergraph} have also been considered.

Resistance curvature, on the other hand, is comparatively newer and continues 
to be an active topic of research. Effective resistance (see~\cite{KleinRandic1993ResistanceDistance}), more generally, is a metric on the vertices of a graph and is related to the simple random walk on the graph~\cite{Tetali1991RandomWalksResistance,DoyleSnell1984RWEN}, spanning trees, and graph sparsification~\cite{SpielmanSrivastava2011Sparsification}. Using resistance as the basis for a notion of curvature was originally proposed by Devriendt and Lambiotte~\cite{DL}, and was followed shortly thereafter by a closely related notion by Devriendt, Ottolini, and Steinerberger~\cite{devriendt2024graph}. Subsequent work by Devriendt considered a relaxed notion of positive resistance curvature for graphs~\cite{devriendt2025graphs} and its connections to combinatorial properties of graphs satisfying this condition.

\subsection{Notation and mathematical background}
We follow the notation and conventions of the classical books~\cite{Grunbaum,Ziegler}.
A \emph{polytope} $P\subseteq\mathbb{R}^d$ is the convex hull of a finite collection of points. We do not consider nonconvex polytopes in this article. 
The \emph{dimension} of $P$ is the dimension of the smallest affine subspace containing it; and the \emph{codimension} is given by $d$ minus the dimension of $P$. A \emph{face} $Q\subseteq P$ is any subset of $P$ for which there exists a linear functional $\ell: \mathbb{R}^d\to \mathbb{R}$ which is constant on $Q$ and which satisfies
    \begin{align*}
        \max_{\vec{x}\in P}\ell(x) = \max_{\vec{x}\in Q}\ell(x).
    \end{align*}
Any face of a polytope is a polytope, and has a well defined dimension. Faces of dimension $0$ are called \emph{vertices} and faces of dimension $1$ are called \emph{edges} of $P$. The \emph{face lattice} of $P$ consists of all the faces of $P$ ordered by inclusion. We say that two polytopes are \emph{combinatorially equivalent} if they have isomorphic face lattices. In this article we focus on polytopes up to combinatorial equivalence.

Let $P$ be a $d$-dimensional polytope. For each $0\leq k \leq d-1$, we denote by $\mathcal{F}_k = \mathcal{F}_k(P)$ the collection of $k$-dimensional faces of $P$. We write $f_k = f_k(P)$ to refer to the cardinality of $\mathcal{F}_k$. If $0\le i <j\le d-1$ we denote by $f_{ij}$ the number of pairs $(F,G)$ with $F\in \mathcal{F}_i$, $G\in \mathcal{F}_j$ and $F\subseteq G$. The \emph{$k$-skeleton} of $P$ consists of the collection of all faces of dimension at most $k$. The \emph{graph} of $P$ is the combinatorial graph $G=G(P) = (\mathcal{F}_0(P) =: V(P), \mathcal{F}_1(P) =:E(P) )$, i.e., the 1-skeleton of $P$. Note that in general polytopes are not characterized by their graphs: polytopes whose graph is isomorphic to the complete graph are known as \emph{neighborly} and are abundant (see, e.g.,~\cite[Ch. 8]{Ziegler}). In general, the graph of a $d$-dimensional polytope is known to be $d$-vertex-connected (Balinski's theorem), and in dimension $3$, the graphs of $3$-polytopes are characterized combinatorially as exactly those graphs which are planar and 3-vertex-connected (Steinitz's theorem).

If $e\in E(P)$ is any edge, we denote by $\mathcal{F}\uparrow(e)\subseteq\mathcal{F}_2(P)$ the collection of $2$-faces of $P$ that contain $e$ and by $\mathcal{F}\downarrow(e)\subseteq\mathcal{F}_0(P)$ the set of vertices of $P$ contained in $e$.

\begin{definition}\label{def:parallel-neighbors}
    Let $P$ be a polytope and $e, e'\in E(P)$ fixed edges. We say that $e$ and $e'$ are {\normalfont parallel neighbors} if one of the following statements holds:
    \begin{itemize}
        \item[\textit{i)}] $\mathcal{F}\downarrow(e) \cap \mathcal{F}\downarrow(e')\not= \emptyset $, but $\mathcal{F}\uparrow(e) \cap \mathcal{F}\uparrow(e')= \emptyset$, i.e., if $e$ and $e'$ share a vertex, but are not contained in a common two face. 

        \item[\textit{ii)}] $\mathcal{F}\uparrow(e) \cap \mathcal{F}\uparrow(e') \not= \emptyset$, but $\mathcal{F}\downarrow(e)\cap \mathcal{F}\downarrow(e')= \emptyset$, i.e., $e$ and $e'$ are vertex disjoint edges that are contained in a two dimensional face. 
    \end{itemize}
    The collection of parallel edges of $e$ is denoted by $\mathcal{E}(e)$.
\end{definition}

\begin{figure}[t!]
    \centering
    \begin{subfigure}[t]{0.48\textwidth}
        \centering
            \begin{tikzpicture}
        \coordinate (A) at (1,0) {};
        \coordinate (B) at (0.62348, 0.7818) {};
        \coordinate (C) at (-0.2225,0.9749) {};
        \coordinate (D) at (-0.9,0.433) {};
        
        \coordinate (E) at (-0.9,-0.433) {};
        \coordinate (F) at (-.2225,-0.9749) {} ;
        \coordinate (G) at (0.62348,-0.7818) {} ;
    
        \fill[pattern=crosshatch dots, pattern color=gray!50] (A) -- (B) -- (C) -- (D) -- (E) -- (F) -- (G) -- cycle;
        
        \path[draw=red, line width=0.5mm] (A) edge node [near end, right] {$e$} (B);
        \path (B) edge node {} (C);
        \path[draw=blue, line width=0.5mm] (C) edge node {} (D);
        \path[draw=blue, line width=0.5mm] (D) edge node {} (E);
        \path[draw=blue, line width=0.5mm] (E) edge node {} (F);   
        \path[draw=blue, line width=0.5mm] (F) edge node {} (G);   
        \path (G) edge node {} (A);
    \end{tikzpicture}
    \hspace{1cm}
        \begin{tikzpicture}
        \coordinate (A) at (1,0) {};
        \coordinate (B) at (0.62348, 0.7818) {};
        \coordinate (C) at (-0.2225,0.9749) {};
        \coordinate (D) at (-0.9,0.433) {};
        
        \coordinate (E) at (-0.9,-0.433) {};
        \coordinate (F) at (-.2225,-0.9749) {} ;
        \coordinate (G) at (0.62348,-0.7818) {} ;
        \coordinate (H) at (0,0) {} ;
        
        \fill[pattern=crosshatch dots, pattern color=gray!50] (A) -- (B) -- (C) -- (D) -- (E) -- (F) -- (G) -- cycle;
            
        \path[draw=red,line width=0.5mm] (A) edge node [midway, above] {$e$} (H);
        \path (B) edge node {} (H);
        \path[draw=blue, line width=0.5mm] (C) edge node {} (H);
        \path[draw=blue, line width=0.5mm] (D) edge node {} (H);
        \path[draw=blue, line width=0.5mm] (E) edge node {} (H);   
        \path[draw=blue, line width=0.5mm] (F) edge node {} (H);   
        \path (G) edge node {} (H);
    \end{tikzpicture}
        \caption{}\label{subfig:parallel-neighbors}
    \end{subfigure}
    \begin{subfigure}[t]{0.48\textwidth}
        \centering
\begin{tikzpicture}[x={(0.866cm,0.3cm)}, y={(-0.866cm,0.3cm)}, z={(0cm,1cm)}, scale=1.4]

\coordinate (P1) at (0.5,0.5,0.71); 
\coordinate (P2) at (-0.5,0.5,0.71); 
\coordinate (P3) at (-0.5,-0.5,0.71); 
\coordinate (P4) at (0.5,-0.5,0.71); 

\coordinate (D1) at (1.21, 0.5, 0); 
\coordinate (D2) at (0.5, 1.21, 0); 
\coordinate (D3) at (-0.5, 1.21, 0); 
\coordinate (D4) at (-1.21, 0.5, 0); 
\coordinate (D5) at (-1.21, -0.5, 0); 
\coordinate (D6) at (-0.5, -1.21, 0); 
\coordinate (D7) at (0.5, -1.21, 0); 
\coordinate (D8) at (1.21, -0.5, 0); 

\draw[pattern=crosshatch dots, pattern color=gray!50] (P1) -- (P2) -- (P3) -- (P4) -- cycle; 

\draw[pattern=crosshatch dots, pattern color=gray!50] (D3) -- (P2) -- (D4) -- cycle; 
\draw[pattern=crosshatch dots, pattern color=gray!50] (D5) -- (P3) -- (D6) -- cycle; 
\draw[pattern=crosshatch dots, pattern color=gray!50] (D7) -- (P4) -- (D8) -- cycle; 

\draw[pattern=crosshatch dots, pattern color=gray!50] (D4) -- (P2) -- (P3) -- (D5) -- cycle; 
\draw[pattern=crosshatch dots, pattern color=gray!50] (D6) -- (P3) -- (P4) -- (D7) -- cycle; 



\draw[line width=0.5mm, color = black] (P1) -- (P2) node [midway, above] {0};
\draw[line width=0.5mm, color = black] (P2) -- (P3) node [midway, below] {0};
\draw[line width=0.5mm, color = black] (P3) -- (P4) node [midway, below] {0};
\draw[line width=0.5mm, color = black] (P4) -- (P1) node [midway, above] {0};

\draw[line width=0.5mm, color = blue] (D3) -- (D4) node [midway, left] {-1};
\draw[line width=0.5mm, color = blue] (D4) -- (D5) node [midway, below] {-2};
\draw[line width=0.5mm, color = blue] (D5) -- (D6) node [midway, below] {-1};
\draw[line width=0.5mm, color = blue] (D6) -- (D7) node [midway, below] {-2};
\draw[line width=0.5mm, color = blue] (D7) -- (D8) node [midway, right] {-1};

\draw[line width=0.5mm, dotted] (D8) -- (D1) -- (D2) -- (D3);

\draw[line width=0.5mm, color = red] (D3) -- (P2) node [midway, above] {2};
\draw[line width=0.5mm, color = red] (P2) -- (D4) node [midway, right] {2};

\draw[line width=0.5mm, color = red] (D5) -- (P3) node [midway, left] {2};
\draw[line width=0.5mm, color = red] (P3) -- (D6) node [midway, right] {2};

\draw[line width=0.5mm, color = red] (D7) -- (P4) node [midway, left] {2};
\draw[line width=0.5mm, color = red] (P4) -- (D8) node [midway, above] {2};




\node[circle, fill=black, inner sep=0pt,minimum size=5pt] at (0.5,0.5,0.71) {}; 
\node[circle, fill=black, inner sep=0pt,minimum size=5pt] at (-0.5,0.5,0.71) {}; 
\node[circle, fill=black, inner sep=0pt,minimum size=5pt] at (-0.5,-0.5,0.71) {}; 
\node[circle, fill=black, inner sep=0pt,minimum size=5pt] at (0.5,-0.5,0.71) {}; 

\node[circle, fill=black, inner sep=0pt,minimum size=5pt] at (-0.5, 1.21, 0) {}; 
\node[circle, fill=black, inner sep=0pt,minimum size=5pt] at (-1.21, 0.5, 0) {}; 
\node[circle, fill=black, inner sep=0pt,minimum size=5pt] at (-1.21, -0.5, 0) {}; 
\node[circle, fill=black, inner sep=0pt,minimum size=5pt] at (-0.5, -1.21, 0) {}; 
\node[circle, fill=black, inner sep=0pt,minimum size=5pt] at (0.5, -1.21, 0) {}; 
\node[circle, fill=black, inner sep=0pt,minimum size=5pt] at (1.21, -0.5, 0) {}; 
\end{tikzpicture}
        \caption{}\label{subfig:forman-curvature}
    \end{subfigure}%
    ~
    \caption{\emph{(a)} Parallel neighbors (blue) of an edge $e$ (red) in the case where $e$ is an edge of a heptagon (left), and in the case where $e$ is adjacent to a vertex of degree 7 (right). \emph{(b)} A $3$-dimensional square cupola polytope with edges labeled according to their Forman--Ricci curvature.}\label{fig:parallel-forman}
\end{figure}
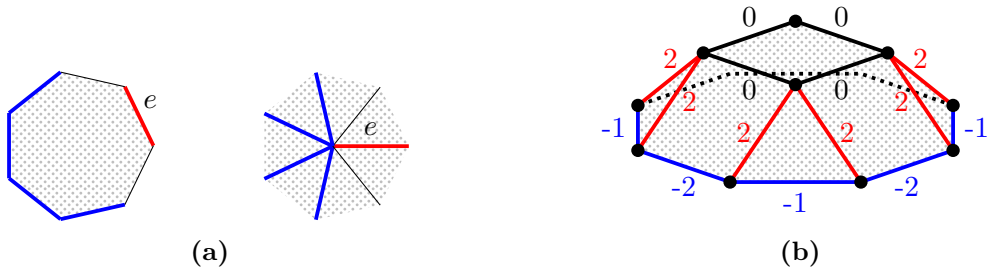

\begin{definition}\label{def:forman-curvature}
    Let $P$ be a polytope and let $e\in E(P)$ be any fixed edge. The {\normalfont Forman--Ricci curvature} of $e$, denoted $\forman{e}$, is given by
        \begin{align*}
            \forman{e} := |\mathcal{F}\uparrow(e)| + 2 - |\mathcal{E}(e)|.
        \end{align*}
\end{definition}

We illustrate~\cref{def:parallel-neighbors} and~\cref{def:forman-curvature} in~\cref{subfig:parallel-neighbors} and~\cref{subfig:forman-curvature}, respectively.~\cref{def:forman-curvature} originally appeared (for cell complexes) in 2003 in a work of Forman (see~\cite{forman}) and is known in the literature by this name. The same paper contains a Bonnet-Myers-type diameter bound, which is set up as follows. The distance between two vertices $v,v'\in\mathcal{F}_0$, denoted $d(v, v')$, is the length of any shortest path from $v$ to $v'$ in $G(P)$. The diameter of $G$ is the largest distance between a pair of vertices and is denoted by $\diam(G)$. The degree of a vertex $v$, denoted $d_v$, is the number edges incident to $v$. A $d$-polytope $P$ is said to be \emph{simple} if its graph is $d$-regular. The following theorem is a Bonnet-Meyers type result which motivates our study of positively curved polytopes.

\begin{theorem}[Bonnet-Myers Theorem for Forman--Ricci curvature (see~\cite{forman})] \label{thm: forman diam}
    Let $P$ be a polytope. Suppose there exists $c > 0$ such that for  $\forman{e} \geq c$ for each edge $e\in E(P)$. Then the following hold:
        \begin{enumerate}[label=(\roman*)]
            \item If $v_1, v_2\in V(P)$ and $e_1, e_2\in E(P)$ occur on any shortest $v_1$-$v_2$ path, then the distance $d(v_1, v_2)$ satisfies
                \[
                d(v_1, v_2)\leq \frac{1}c\left(2 + |\mathcal{F}\uparrow(e_1)| + |\mathcal{F}\uparrow(e_2)| \right).
                \]
            \item Consequently,
                \[
                \mathrm{diam}(P) \leq \frac{2}{c}\left(1+\max_{e\in E(P)}|\mathcal{F}\uparrow(e)| \right).
                \]
        \end{enumerate}
\end{theorem}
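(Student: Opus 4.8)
The strategy is the discrete Bochner / Bonnet--Myers argument of Forman: bound $d(v_1,v_2)$ by summing the curvature lower bound along a shortest path and showing that everything except the contributions of the two extreme edges cancels. Statement (ii) is an immediate corollary of (i): take $v_1,v_2$ with $d(v_1,v_2)=\diam(P)$, fix a shortest path between them, and apply (i) to its first and last edges $e_1,e_2$, using $|\mathcal{F}\uparrow(e_1)|,|\mathcal{F}\uparrow(e_2)|\le\max_{e\in E(P)}|\mathcal{F}\uparrow(e)|$. So the content is all in (i).

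For (i) I would fix a shortest path $\gamma=(w_0=v_1,w_1,\dots,w_L=v_2)$ realizing $d(v_1,v_2)=L$ (the cases $L\le 1$ being trivial), write $e_i=\{w_{i-1},w_i\}$, and take $e_1,e_2$ to be its first and last edges. The first block of work is a short list of combinatorial facts about geodesics of polytope graphs. (a) Two edges of $\gamma$ meet iff they are consecutive, so by \cref{def:parallel-neighbors} a consecutive pair $e_i,e_{i+1}$ either lies in a common $2$-face or else is a parallel-neighbor pair of the first type. (b) If $e_i,e_{i+1}$ lie in a common $2$-face $F$, then $F$ is not a triangle, since a triangle on $w_{i-1},w_i,w_{i+1}$ would contain the edge $\{w_{i-1},w_{i+1}\}$, contradicting that $\gamma$ is shortest; hence $F$ has an edge disjoint from $e_i$, a parallel neighbor of $e_i$ of the second type (and likewise one disjoint from $e_{i+1}$). (c) Two non-consecutive edges of $\gamma$ in a common $2$-face are vertex-disjoint, hence parallel neighbors of one another, and any vertex-disjoint pair of edges lies in a \emph{unique} common $2$-face. (d) Consequently, within any fixed $2$-face $F$ the edges of $\gamma$ that lie in $F$ split into maximal $\gamma$-consecutive runs, each of which is an arc of the polygon $\partial F$ of length at most $\tfrac12|\partial F|$, as the complementary arc of $\partial F$ would otherwise be a shorter route between the run's endpoints.

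The second block is the accounting. Summing $\forman{e_i}=|\mathcal{F}\uparrow(e_i)|+2-|\mathcal{E}(e_i)|\ge c$ over $i=1,\dots,L$ and writing $a_i=|\mathcal{F}\uparrow(e_i)|$, the bound $L\le\tfrac1c(2+a_1+a_L)$ follows once one shows $\sum_{i=1}^{L}\forman{e_i}\le 2+a_1+a_L$, i.e.
\[
\sum_{i=1}^{L}|\mathcal{E}(e_i)|\ \ge\ \sum_{i=2}^{L-1}a_i\ +\ 2(L-1).
\]
The plan is a charging argument: assign two tokens to each of the $L-1$ consecutive pairs and one token to each incidence $(i,F)$ with $2\le i\le L-1$ and $F\in\mathcal{F}\uparrow(e_i)$ — a set whose size is exactly the right-hand side — and inject them into the parallel-neighbor incidences counted on the left. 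The two tokens of a consecutive pair go to $(e_i,e_{i+1})$ and $(e_{i+1},e_i)$ when these are parallel, and otherwise to edges of the common $2$-face disjoint from $e_i$ and from $e_{i+1}$ respectively, by (b); a token $(i,F)$ is routed to a parallel-neighbor incidence witnessed inside $F$, using the arc structure of (d) to supply boundary edges of $F$ disjoint from the relevant $\gamma$-run and the uniqueness in (b)--(c) to keep incidences coming from distinct $2$-faces apart. The two ``free'' end terms $a_1,a_L$ are precisely the slack that lets the end edges absorb whatever tokens land on them.

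The step I expect to be genuinely hard is making this charging \emph{injective}, and this is where the rigidity of geodesics on a polytope — as opposed to an arbitrary $2$-complex — must be used: a triangular $2$-face above an interior edge $e_i$ contributes no parallel neighbor of $e_i$, so its token has to be absorbed elsewhere along $\gamma$; a ``far'' edge of a common $2$-face may itself lie on $\gamma$ and already carry a token; and a vertex of $\gamma$ of small degree can force many such coincidences at once. Controlling all of this — in effect, ruling out long geodesics that accumulate these degeneracies under the standing positivity hypothesis — is the combinatorial heart of the theorem. Once the displayed inequality is secured, (i), and then (ii), follow as above.
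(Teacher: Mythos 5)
Two remarks before the substance: the paper does not prove \cref{thm: forman diam} at all --- it is imported verbatim from Forman's paper~\cite{forman} as a cited result --- so there is no in-paper argument to compare against; your proposal must therefore stand on its own (or be measured against Forman's original proof of his combinatorial Myers theorem). Within your write-up, the outer layer is sound: deducing (ii) from (i) with $e_1,e_2$ the first and last edges, the reduction of (i) to the inequality $\sum_{i=1}^{L}|\mathcal{E}(e_i)|\ge\sum_{i=2}^{L-1}|\mathcal{F}\uparrow(e_i)|+2(L-1)$ along a minimal geodesic, and the geodesic facts (a)--(d) are all correct (note only that the theorem as stated allows $e_1,e_2$ to be \emph{any} edges of the path, while you treat only the extreme edges; that suffices for (ii) but is weaker than the literal statement of (i)).

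The genuine gap is that the proof stops exactly where the content is. The ``charging argument'' is a plan, not an argument: you assign tokens whose total equals the right-hand side, but you give no injection into the parallel-neighbor incidences on the left, and you yourself identify the configurations that defeat the naive routing --- a triangular $2$-face over an interior edge $e_i$ adds $1$ to $|\mathcal{F}\uparrow(e_i)|$ yet contributes no element of $\mathcal{E}(e_i)$, so its token must be absorbed by a \emph{different} geodesic edge; the ``far'' edge of a shared $2$-face may itself lie on $\gamma$ and already be spoken for; and low-degree vertices force these coincidences simultaneously. Nothing in (a)--(d) bounds the number of such triangles against the supply of parallel-neighbor incidences elsewhere along the path, so per-edge the needed count $|\mathcal{E}(e_i)|\ge|\mathcal{F}\uparrow(e_i)|+2$ can simply fail (e.g.\ an interior edge all of whose incident $2$-faces are triangles), and the summed version is exactly the nontrivial claim. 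Your closing suggestion that the standing hypothesis $\forman{e}\ge c$ might be needed to ``rule out long geodesics that accumulate these degeneracies'' is itself a red flag: as you have set things up, the displayed inequality is a curvature-free combinatorial statement about minimal geodesics, and if its proof really required the positivity hypothesis, the reduction would have to be reorganized rather than patched. As it stands this is an outline with the central lemma unproved; the honest fix is to carry out (or quote) Forman's own argument for the Myers-type theorem in~\cite{forman}, which is what the paper does by citation.
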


Some of our results call only for a combinatorial graph which is not necessarily derived as the 1-skeleton of a polytope; in such cases we consider graphs of the form $G=(V, E)$ where $V$ is any finite set of vertices and $E\subseteq \binom{V}{2}$. If $\{i, j\}\in E$ we write $i\sim j$. If $G$ is any graph, we denote by $n\geq 1$ the number of vertices in $G$ and $m\geq 1$ the number of edges in $G$. We denote by $\mathbf{A}=\mathbf{A}(G)$ (resp. $\mathbf{D}=\mathbf{D}(G)$) the adjacency matrix (resp. diagonal vertex degree matrix) of $G$. The matrix $\mathbf{L}=\mathbf{D}-\mathbf{A}$ is known as the \emph{combinatorial Laplacian matrix} of $G$. We denote by $E'\subseteq V\times V$ any fixed but otherwise arbitrary orientation of the edges $E$, i.e., any set containing exactly one ordered representative for each $e=\{v_1, v_2\}\in E$. The \emph{vertex-edge oriented incidence matrix} $\mathbf{B}\in\mathbb{R}^{n\times m}$ is defined entrywise by the values
    \begin{align}\label{eq:defn-incidence}
        \mathbf{B}_{v_i, e_j} &= \begin{cases}
            1 &\text{ if }e_j = (v_i, \cdot)\\
            -1&\text{ if }e_j = (\cdot, v_i)\\
            0&\text{ otherwise}
        \end{cases},\quad v_i\in V, \hspace{.1cm}e_j\in E'.
    \end{align}
Note that regardless of choice of orientation on the edges, $\mathbf{L} = \mathbf{B}\mathbf{B}^\top$ (see, e.g.,~\cite{chung1997spectral}). We choose to orient the edges with respect to the indexing on the nodes only for concreteness. We recall the well known facts that L is symmetric and positive semidefinite, and as long as $G$ is connected, $\mathbf{L}$ has rank $n-1$. We denote the Moore-Penrose inverse of $\mathbf{L}$ by $\mathbf{L}^\dagger$ (see~\cite{moore1920reciprocal} for an historic reference). \emph{Effective resistance} is a metric on $V$ which is defined by the formula
    \begin{align}
        r_{v_1 v_2} = (\mathbf{1}_{v_1} - \mathbf{1}_{v_2})^\top \mathbf{L}^\dagger (\mathbf{1}_{v_1} - \mathbf{1}_{v_2}), \quad v_1, v_2\in V.
    \end{align}
Here, $\mathbf{1}_{v}$ is the indicator vector of $v\in V$. We note that by writing $\widetilde{\mathbf{L}} = \mathbf{L} + \frac{1}{n}\mathbf{J}_n$ (where $\mathbf{J}_n\in\mathbb{R}^{n\times n}$ is the all ones matrix), which is nonsingular, one may also write	
    \begin{align}
        r_{v_1 v_2} = (\mathbf{1}_{v_1} - \mathbf{1}_{v_2})^\top \widetilde{\mathbf{L}}^{-1} (\mathbf{1}_{v_1} - \mathbf{1}_{v_2}), \quad v_1, v_2\in V.
    \end{align}
The following variational characterization of effective resistance is useful in practice.

\begin{lemma}\label{prop:flow-resistance}
    For each $u, v\in V$, the effective resistance $r_{uv}$ is given by
        \begin{align*}
            r_{uv} &= \inf\left\{\|\mathbf{J}\|_2^2 \;:\;\mathbf{J}\in\mathbb{R}^{E'},\;  \mathbf{B}\mathbf{J} = \mathbf{1}_{u} - \mathbf{1}_{v}\right\}.
        \end{align*}
\end{lemma}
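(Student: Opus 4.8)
The plan is to prove this ``Thomson principle'' by decomposing an arbitrary feasible flow into a fixed particular solution plus an element of the cycle space $\ker\mathbf{B}$, and showing the cross term vanishes. First I would exhibit a particular solution: set $\mathbf{J}_0 := \mathbf{B}^\top\mathbf{L}^\dagger(\mathbf{1}_u - \mathbf{1}_v)$. Since $\mathbf{B}\mathbf{B}^\top = \mathbf{L}$, we have $\mathbf{B}\mathbf{J}_0 = \mathbf{L}\mathbf{L}^\dagger(\mathbf{1}_u - \mathbf{1}_v)$, and $\mathbf{L}\mathbf{L}^\dagger$ is the orthogonal projection onto $\operatorname{range}(\mathbf{L})$; for $G$ connected (as is implicit whenever $r_{uv}$ is defined) this range is $\mathbf{1}^\perp$, and $\mathbf{1}_u - \mathbf{1}_v$ already lies in $\mathbf{1}^\perp$, so $\mathbf{B}\mathbf{J}_0 = \mathbf{1}_u - \mathbf{1}_v$. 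Thus $\mathbf{J}_0$ is feasible and in particular the constraint set is nonempty.

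Next I would take an arbitrary feasible $\mathbf{J}$ and write $\mathbf{K} := \mathbf{J} - \mathbf{J}_0$, which satisfies $\mathbf{B}\mathbf{K} = 0$. Expanding the norm,
\[
\|\mathbf{J}\|_2^2 = \|\mathbf{J}_0\|_2^2 + 2\langle\mathbf{J}_0,\mathbf{K}\rangle + \|\mathbf{K}\|_2^2,
\]
and the key observation is that the cross term vanishes: $\langle\mathbf{J}_0,\mathbf{K}\rangle = (\mathbf{1}_u - \mathbf{1}_v)^\top\mathbf{L}^\dagger\mathbf{B}\mathbf{K} = 0$ because $\mathbf{B}\mathbf{K} = 0$. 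Hence $\|\mathbf{J}\|_2^2 = \|\mathbf{J}_0\|_2^2 + \|\mathbf{K}\|_2^2 \ge \|\mathbf{J}_0\|_2^2$, with equality precisely when $\mathbf{K} = 0$; so the infimum is attained at $\mathbf{J}_0$ and equals $\|\mathbf{J}_0\|_2^2$.

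Finally I would evaluate that minimum value. Using $\mathbf{B}\mathbf{B}^\top = \mathbf{L}$ together with the Moore--Penrose identity $\mathbf{L}^\dagger\mathbf{L}\mathbf{L}^\dagger = \mathbf{L}^\dagger$,
\[
\|\mathbf{J}_0\|_2^2 = (\mathbf{1}_u - \mathbf{1}_v)^\top\mathbf{L}^\dagger\mathbf{B}\mathbf{B}^\top\mathbf{L}^\dagger(\mathbf{1}_u - \mathbf{1}_v) = (\mathbf{1}_u - \mathbf{1}_v)^\top\mathbf{L}^\dagger\mathbf{L}\mathbf{L}^\dagger(\mathbf{1}_u - \mathbf{1}_v) = (\mathbf{1}_u - \mathbf{1}_v)^\top\mathbf{L}^\dagger(\mathbf{1}_u - \mathbf{1}_v) = r_{uv},
\]
which is the asserted identity. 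The steps are all routine linear algebra; the only point requiring care is the standing connectedness hypothesis, needed both so that $\operatorname{range}(\mathbf{L}) = \mathbf{1}^\perp$ and so that $r_{uv}$ is finite — one should either state it or restrict attention to the component containing $u$ and $v$, so I do not expect a genuine obstacle. (An even shorter route recognizes $\mathbf{J}_0 = \mathbf{B}^\dagger(\mathbf{1}_u - \mathbf{1}_v)$ as the minimum-norm solution and uses $(\mathbf{B}^\dagger)^\top\mathbf{B}^\dagger = (\mathbf{B}\mathbf{B}^\top)^\dagger = \mathbf{L}^\dagger$, but the orthogonal-decomposition argument above is more self-contained.)
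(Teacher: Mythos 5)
Your proof is correct: the particular solution $\mathbf{J}_0=\mathbf{B}^\top\mathbf{L}^\dagger(\mathbf{1}_u-\mathbf{1}_v)$ is feasible (since $\mathbf{L}\mathbf{L}^\dagger$ projects onto $\operatorname{range}(\mathbf{L})=\mathbf{1}^\perp$ for a connected graph), the cross term with any element of $\ker\mathbf{B}$ vanishes, and the Moore--Penrose identity $\mathbf{L}^\dagger\mathbf{L}\mathbf{L}^\dagger=\mathbf{L}^\dagger$ gives the value $r_{uv}$. The paper omits this proof as ``straightforward linear algebra,'' and your orthogonal-decomposition argument is precisely the standard routine argument being alluded to, so there is nothing to reconcile.
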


Its proof is straightforward linear algebra and is omitted.

\begin{definition}\label{defn:curvature-resistance}
    Let $G=(V, E)$ be any fixed graph and let $v\in V$ be fixed. Then the {\normalfont effective resistance curvature} at $v$, denoted $\resistance{v}$, is given by
        \begin{align*}
            \resistance{v} &= 1- \frac{1}{2}\sum_{\substack{u\in V\\ u\sim v}} r_{uv}.
        \end{align*}   
\end{definition}

This notion of curvature originally appeared in a 2022 paper of Devriendt and Lambiotte (see~\cite{DL}). A subsequent notion, also known as effective resistance curvature, was introduced in a 2024 paper of Devriendt, Ottolini, and Steinerberger (see~\cite{DOS}). The latter notion can be considered a modification of the former, as although it in principle is motivated by an equilibrium measure of the effective resistance matrix, the two are the same up to a global scaling factor. The latter paper obtained a Bonnet-Myers-type result, which we state below, having been adjusted to be consistent with our chosen convention~\cref{defn:curvature-resistance}.

\begin{theorem}[Bonnet-Myers Theorem for Resistance Curvature (see~\cite{DOS})]
    Let $G=(V,E)$ be a connected graph with maximum degree $\Delta$ and effective resistance matrix $\mathbf{R} = (r_{uv})_{u, v\in V}$. Assume the node resistance curvature $\boldsymbol{\kappa} = (\resistance{v})_{v\in V}$ satisfies $\resistance{v} \geq K > 0$ for each $v\in V$. Then
        \begin{align*}
            \diam(G)\leq \left\lceil \sqrt{\frac{\Delta\; \boldsymbol{\kappa}^\top\mathbf{R}\boldsymbol{\kappa}}{K}}\;\log|V|  \right\rceil.
        \end{align*}
\end{theorem}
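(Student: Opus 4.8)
The plan is to turn the curvature hypothesis $\resistance{v}\ge K$ into a lower bound on the algebraic connectivity of $G$ and then feed that into a classical Chebyshev‑polynomial diameter bound. The only genuinely curvature‑specific ingredient is an exact identity for the curvature vector, which I would establish first. Write $n:=|V|$ and $\mathbf{g}:=\operatorname{diag}(\mathbf{L}^\dagger)\in\mathbb{R}^n$. Using $r_{uv}=\mathbf{L}^\dagger_{uu}+\mathbf{L}^\dagger_{vv}-2\mathbf{L}^\dagger_{uv}$ and $\mathbf{L}\mathbf{L}^\dagger=\mathbf{I}-\tfrac1n\mathbf{J}_n$, a short computation gives $\sum_{u\sim v}r_{uv}=2-\tfrac2n-(\mathbf{L}\mathbf{g})_v$ for every $v$, hence
\[
\boldsymbol{\kappa}=\tfrac12\mathbf{L}\mathbf{g}+\tfrac1n\mathbf{1},\qquad\text{and in particular } \mathbf{1}^\top\boldsymbol{\kappa}=1 .
\]
Substituting this into $\mathbf{R}=\mathbf{g}\mathbf{1}^\top+\mathbf{1}\mathbf{g}^\top-2\mathbf{L}^\dagger$ and simplifying (every non‑constant term cancels, again via $\mathbf{L}^\dagger\mathbf{L}=\mathbf{I}-\tfrac1n\mathbf{J}_n$ and $\mathbf{L}^\dagger\mathbf{1}=0$) I expect $\mathbf{R}\boldsymbol{\kappa}$ to come out proportional to $\mathbf{1}$; since $\mathbf{1}^\top\boldsymbol{\kappa}=1$, the constant of proportionality must be $Q:=\boldsymbol{\kappa}^\top\mathbf{R}\boldsymbol{\kappa}$, so $\mathbf{R}\boldsymbol{\kappa}=Q\,\mathbf{1}$. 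Equivalently, $\sum_{u}r_{vu}\,\resistance{u}$ does not depend on $v$. This is essentially the equilibrium characterization of resistance curvature from~\cite{DL}, so it could also simply be cited.

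Granting this identity, the spectral estimates are short. Fix $v$: since $r_{vu}\ge0$ and $\resistance{u}\ge K>0$, the identity gives $K\sum_{u}r_{vu}\le\sum_{u}\resistance{u}\,r_{vu}=Q$. Summing over $v$ and using the Kirchhoff‑index identity $\sum_{v,u}r_{vu}=2n\operatorname{tr}\mathbf{L}^\dagger=2n\sum_{i\ge2}\lambda_i^{-1}$, where $0=\lambda_1<\lambda_2\le\cdots\le\lambda_n$ are the eigenvalues of $\mathbf{L}$, yields $\sum_{i\ge2}\lambda_i^{-1}\le Q/(2K)$, and hence $\lambda_2\ge 2K/Q$. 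On the other end, the Rayleigh quotient $x^\top\mathbf{L}x=\sum_{u\sim v}(x_u-x_v)^2\le 2\sum_{v}d_v x_v^2$ shows $\lambda_n\le2\Delta$, so $s:=\lambda_2/\lambda_n\ge K/(\Delta Q)$.

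Finally I would invoke the classical spectral diameter bound of Chebyshev type, $\diam(G)\le\bigl\lceil\cosh^{-1}(n-1)\big/\cosh^{-1}\!\bigl(\tfrac{\lambda_n+\lambda_2}{\lambda_n-\lambda_2}\bigr)\bigr\rceil$ (see, e.g.,~\cite{chung1997spectral}; the proof is the usual one, namely that a degree‑$(\diam(G)-1)$ shifted Chebyshev polynomial $p$ with $p(0)=1$ that is uniformly small on $[\lambda_2,\lambda_n]$ must nonetheless satisfy $p(\mathbf{L})_{vu}=0$ whenever $d(v,u)=\diam(G)$). Since $\tfrac{\lambda_n+\lambda_2}{\lambda_n-\lambda_2}=\tfrac{1+s}{1-s}$, the elementary identity $\cosh^{-1}\!\bigl(\tfrac{1+s}{1-s}\bigr)=2\operatorname{artanh}\sqrt{s}\ge 2\sqrt{s}\ge 2\sqrt{K/(\Delta Q)}$ bounds the denominator below, while $\cosh^{-1}(n-1)\le\log\bigl(2(n-1)\bigr)\le 2\log n$ (i.e.\ $2(n-1)\le n^2$) bounds the numerator above by $2\log|V|$. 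The ratio is then at most $\sqrt{\Delta Q/K}\,\log|V|$, which, after taking ceilings, is exactly the claimed bound. The few degenerate cases ($|V|\le2$, or $G$ complete where $\lambda_2=\lambda_n$) are checked by hand.

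The main obstacle I anticipate is the identity $\mathbf{R}\boldsymbol{\kappa}=Q\,\mathbf{1}$ from the first paragraph: it is the only place the precise definition of resistance curvature is used, and everything downstream is standard spectral graph theory. The one point requiring care afterwards is the constant bookkeeping in the last step --- the estimate lands \emph{exactly} on $\lceil\sqrt{\Delta Q/K}\,\log|V|\rceil$, with the square root, only because of the two favorable $\cosh^{-1}$ inequalities; routing instead through the cruder mixing‑time bound $\diam(G)\lesssim(\Delta/\lambda_2)\log|V|$ loses the square root and is not enough.
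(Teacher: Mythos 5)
The paper does not prove this statement at all; it is quoted as background from the cited work of Devriendt--Ottolini--Steinerberger, so there is no in-paper argument to compare against. Your proof is correct, and it is in essence the original one: the pivotal identity $\mathbf{R}\boldsymbol{\kappa}=(\boldsymbol{\kappa}^\top\mathbf{R}\boldsymbol{\kappa})\,\mathbf{1}$ follows exactly as you compute, since $\boldsymbol{\kappa}=\tfrac12\mathbf{L}\mathbf{g}+\tfrac1n\mathbf{1}$ gives $\mathbf{1}^\top\boldsymbol{\kappa}=1$ and $\mathbf{L}^\dagger\boldsymbol{\kappa}=\tfrac12\mathbf{g}-\tfrac{\operatorname{tr}\mathbf{L}^\dagger}{2n}\mathbf{1}$, so $\mathbf{R}\boldsymbol{\kappa}=\bigl(\mathbf{g}^\top\boldsymbol{\kappa}+\tfrac1n\operatorname{tr}\mathbf{L}^\dagger\bigr)\mathbf{1}$; this is the equilibrium property from the Devriendt--Lambiotte/DOS papers and may equally be cited. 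The downstream estimates all check out: $K\sum_u r_{vu}\le(\mathbf{R}\boldsymbol{\kappa})_v=Q$ summed over $v$ gives $\operatorname{tr}\mathbf{L}^\dagger\le Q/(2K)$ and hence $\lambda_2\ge 2K/Q$, the bound $\lambda_n\le 2\Delta$ is standard, and the Chung--Faber--Manteuffel bound together with $\cosh^{-1}\!\bigl(\tfrac{1+s}{1-s}\bigr)=2\operatorname{artanh}\sqrt{s}\ge 2\sqrt{s}$ and $\cosh^{-1}(n-1)\le 2\log n$ lands on the stated ceiling. One small point worth making explicit: the Chebyshev argument really gives $\diam(G)\le\lfloor\text{ratio}\rfloor+1$, which can exceed $\lceil\text{ratio}\rceil$ when the ratio is an integer; your chain survives this because the inequalities $\operatorname{artanh}\sqrt{s}>\sqrt{s}$ (for $0<s<1$) and $\cosh^{-1}(n-1)<2\log n$ are strict, so the ratio is strictly below $\sqrt{\Delta Q/K}\,\log|V|$ and the final ceiling still dominates $\lfloor\text{ratio}\rfloor+1$; together with your separate treatment of complete graphs ($\lambda_2=\lambda_n$) and $|V|\le 2$, the argument is complete.
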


\subsection{Our Contributions}

We study various aspects of the Forman--Ricci and effective resistance curvatures for skeleta of polytopes. We start by analyzing the average curvature and derive an equation to compute average curvature in terms of face numbers of the polytope. By analyzing polytopes whose $2$-skeletons admit an edge-transitive group action we obtain the following result. We say a polytope $P$ is \emph{Forman--Ricci-positive} provided $\forman{e} > 0$ for each $e\in E(P)$. We remind the reader that we consider polytopes up to combinatorial equivalence. 

\begin{theorem}\label{thm:FormanInfinite6}
    For each $d\geq 6$, there are infinitely many Forman--Ricci-positive polytopes with dimension $d$.
\end{theorem}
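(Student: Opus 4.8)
The plan is to exhibit, for each fixed $d\ge 6$, the explicit infinite family of cyclic polytopes $\{C(n,d):n\ge d+1\}$ and to verify Forman--Ricci-positivity of each member by a direct evaluation of $\forman{e}$ on an arbitrary edge. The only structural input I will use is the classical fact that $C(n,d)$ is simplicial and $\lfloor d/2\rfloor$-neighborly (see, e.g.,~\cite{Ziegler}). For $d\ge 6$ we have $\lfloor d/2\rfloor\ge 3$, so $C(n,d)$ is $3$-neighborly, and together with simpliciality this means: every pair of vertices is an edge, every triple of vertices spans a triangular $2$-face, and every $2$-face is a triangle. In other words, the $2$-skeleton of $C(n,d)$ is exactly the $2$-skeleton of the simplex $\Delta^{n-1}$: it has $n$ vertices, all $\binom{n}{2}$ pairs as edges, and all $\binom{n}{3}$ triples as triangular $2$-faces. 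Since $\forman{e}$ is determined entirely by the $2$-skeleton of $P$---it refers only to $\mathcal{F}\uparrow$, $\mathcal{F}\downarrow$, and the edge set---this is all that is needed; equivalently, the $2$-skeleton of $C(n,d)$ carries the obvious edge-transitive $S_n$-action, so all edges share the same Forman--Ricci curvature and it suffices to compute one.

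Now fix an edge $e=\{u,v\}$ of $P=C(n,d)$. The set $\mathcal{F}\uparrow(e)$ consists precisely of the triangles $\{u,v,w\}$ as $w$ ranges over the remaining $n-2$ vertices---each of these is a $2$-face by $3$-neighborliness---so $|\mathcal{F}\uparrow(e)|=n-2$. I claim $\mathcal{E}(e)=\emptyset$. An edge $e'$ of type \textit{i)} in~\cref{def:parallel-neighbors} shares a vertex with $e$, say $e'=\{u,w\}$ with $w\ne v$; but then the triangle $\{u,v,w\}$ lies in $\mathcal{F}\uparrow(e)\cap\mathcal{F}\uparrow(e')$, contradicting the requirement that this intersection be empty. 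An edge $e'$ of type \textit{ii)} would be vertex-disjoint from $e$ yet lie in a common $2$-face with $e$; but that $2$-face is a triangle, hence has only three vertices, whereas $e$ and $e'$ together have four distinct endpoints---impossible. Therefore
\[
    \forman{e}=|\mathcal{F}\uparrow(e)|+2-|\mathcal{E}(e)|=(n-2)+2-0=n>0,
\]
and since $e$ was arbitrary, $C(n,d)$ is Forman--Ricci-positive.

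Finally, $C(n,d)$ has exactly $n$ vertices, so the members of $\{C(n,d):n\ge d+1\}$ are pairwise combinatorially inequivalent; this exhibits infinitely many Forman--Ricci-positive $d$-polytopes and completes the proof. No step here is genuinely hard once the family is identified; the one point worth flagging---and the reason the argument starts at $d=6$, consistently with the finiteness statements for $d\le 5$---is that a $d$-polytope other than the simplex can be at most $\lfloor d/2\rfloor$-neighborly, so the $3$-neighborliness that annihilates the type-\textit{i)} parallel neighbors above becomes available precisely at $d=6$.
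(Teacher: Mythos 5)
Your proposal is correct and follows essentially the same route as the paper: for $d\ge 6$ one produces infinitely many $d$-polytopes whose $2$-skeleton coincides with that of a simplex (via $3$-neighborliness plus simpliciality), and since $\forman{e}$ depends only on the $2$-skeleton, positivity follows. The only differences are cosmetic --- you instantiate the family with cyclic polytopes $C(n,d)$ and compute $\forman{e}=n$ directly, whereas the paper invokes the existence of neighborly (and also neighborly cubical) $d$-polytopes and evaluates the curvature through the average-curvature identity of~\cref{lemma:AverageCurvatureIdentity} together with edge-transitivity.
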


Further analysis of the average curvature yields the following result which is useful for studying positive polytopes in smaller dimensions. 

\begin{theorem}\label{thm: boundedDeg}
    Let $d\geq 3$ be fixed and let $\Delta\ge 3$ be a real number. The set of Forman--Ricci-positive $d$-polytopes with the property that the average degree of a vertex is at most $\Delta$ is finite. 
\end{theorem}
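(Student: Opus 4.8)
The plan is to extract from Forman--Ricci-positivity a rigidity statement for the degree sequence and then feed it into the Bonnet--Myers bound \cref{thm: forman diam}.

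\textbf{Step 1: a local formula for the curvature.} For an edge $e=\{u,v\}$ I would first make $|\mathcal{E}(e)|$ explicit. Each $F\in\mathcal{F}\uparrow(e)$ is a polygon, so at $u$ it has exactly two edges, one of which is $e$; call the other $g^F_u$, and define $g^F_v$ symmetrically. Since two distinct faces of a polytope meet in a common face of dimension at most $1$, the edges $g^F_u$, as $F$ ranges over $\mathcal{F}\uparrow(e)$, are pairwise distinct and different from $e$; in particular
\[
|\mathcal{F}\uparrow(e)|\ \le\ \min(d_u,d_v)-1 .
\]
The type-\textit{i)} parallel neighbours of $e$ incident to $u$ are precisely the edges at $u$ other than $e$ and the $g^F_u$, of which there are $d_u-1-|\mathcal{F}\uparrow(e)|$ (symmetrically at $v$); the type-\textit{ii)} parallel neighbours lying in a given $F\in\mathcal{F}\uparrow(e)$ are the $|F|-3$ edges of the polygon $F$ disjoint from $e$ (here $|F|$ is the number of edges of $F$), and the same face-intersection fact rules out overcounting across distinct $2$-faces. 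Combining these counts with \cref{def:forman-curvature} yields the identity
\[
\forman{e}\ =\ 4+3|\mathcal{F}\uparrow(e)|-d_u-d_v-\sum_{F\in\mathcal{F}\uparrow(e)}\bigl(|F|-3\bigr).
\]

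\textbf{Step 2: degree rigidity and a sharpened diameter estimate.} If $P$ is Forman--Ricci-positive then $\forman{e}\ge 1$ for every edge, since the curvature is integer-valued; discarding the nonnegative sum $\sum_F(|F|-3)$ in the identity of Step 1 gives $d_u+d_v\le 3+3|\mathcal{F}\uparrow(e)|\le 3\min(d_u,d_v)$, hence
\[
d_u\le 2d_v\quad\text{and}\quad d_v\le 2d_u\qquad\text{for every edge }\{u,v\}.
\]
Now apply \cref{thm: forman diam}(i) with $c=1$: taking $e_1,e_2$ to be the first and last edges of a shortest $v_1$--$v_2$ path (the case of a path of length $\le 1$ being trivial) and using $|\mathcal{F}\uparrow(e_i)|\le d_{v_i}-1$ gives
\[
d(v_1,v_2)\ \le\ 2+|\mathcal{F}\uparrow(e_1)|+|\mathcal{F}\uparrow(e_2)|\ \le\ d_{v_1}+d_{v_2}\qquad\text{for all }v_1,v_2\in V(P).
\]

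\textbf{Step 3: bounding the number of vertices.} Call a vertex \emph{small} if $d_v\le 2\Delta$. Since the average degree is at most $\Delta$, fewer than $n/2$ vertices can fail to be small, so more than $n/2$ are small; fix one small vertex $s_0$. By Step 2 every small vertex $s$ satisfies $d(s_0,s)\le d_{s_0}+d_s\le 4\Delta$, and iterating $d_u\le 2d_v$ along a shortest path shows every vertex at distance $k$ from $s_0$ has degree at most $2^{k+1}\Delta$; therefore the ball of radius $\lfloor 4\Delta\rfloor$ about $s_0$ has size at most $\prod_{j=0}^{\lfloor 4\Delta\rfloor-1}\bigl(1+2^{j+1}\Delta\bigr)=:\Phi(\Delta)$, a quantity depending on $\Delta$ alone. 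As all small vertices lie in that ball, $n/2<\Phi(\Delta)$, so $n$ is bounded purely in terms of $\Delta$. Finally, up to combinatorial equivalence a polytope is determined by the family of vertex-subsets spanning its faces, so there are only finitely many combinatorial types on a bounded vertex set (and the prescribed set is empty if $\Delta<d$), which gives the theorem.

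\textbf{Where the difficulty lies.} The computational core is the exact identity in Step 1, whose parallel-neighbour bookkeeping rests on the fact that two distinct $2$-faces of a polytope share at most an edge. The conceptual twist is in Step 3: one should not try to bound the maximum degree directly --- indeed for $d\ge 6$ it is unbounded over Forman--Ricci-positive polytopes (\cref{thm:FormanInfinite6}) --- but instead use the a priori weak diameter bound of Step 2 only to force the abundant small-degree vertices to be mutually close, which by itself caps $n$.
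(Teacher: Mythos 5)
Your proof is correct and follows essentially the same route as the paper's (\cref{thm:average-degree} together with \cref{cor: finiteAverage}): positivity forces the degree-doubling rigidity $d_u\le 2d_v$ across edges via the same parallel-neighbour count, the Bonnet--Myers bound with $c=1$ converts this into distance bounds in terms of degrees, and a degree--diameter ball estimate caps the number of vertices once one observes that more than half of them have degree at most $2\Delta$. The differences are cosmetic: you use an exact local curvature identity, a Markov-type count of small-degree vertices, and a BFS product bound, where the paper uses the corresponding inequality, a three-part partition around a minimum-degree vertex, and the Moore bound.
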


This theorem has several consequences and essentially says that the edge density of Forman--Ricci-positive graphs has to be rather large. As a consequence, the number of Forman--Ricci-positive \emph{simple} $d$-dimensional polytopes is finite for all $d$. 

Next we turn to the situation in low dimensions.

\begin{theorem}\label{thm:FormanFinite3}
    The set of Forman--Ricci positive $3$-polytopes is finite. Polytopes in this collection have no more than $15$-vertices. 
\end{theorem}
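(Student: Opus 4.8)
The plan is to exploit the rigidity of $3$-polytopes. Every edge of a $3$-polytope lies in exactly two $2$-faces, so $|\mathcal{F}\uparrow(e)| = 2$ for every $e \in E(P)$, and the entire problem reduces to computing $|\mathcal{E}(e)|$. Fix $e = \{u,v\}$ and let $F_1, F_2$ be the two $2$-faces through $e$, with $p_1$ and $p_2$ edges respectively. First I would verify that the parallel neighbors of $e$ split into four pairwise-disjoint families: the $d_u - 3$ edges at $u$ lying on neither $F_1$ nor $F_2$, the symmetric $d_v - 3$ edges at $v$, and the $p_1 - 3$ (resp. $p_2 - 3$) edges of $F_1$ (resp. $F_2$) that are vertex-disjoint from $e$. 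The disjointness of these families, and the fact that distinct edges of a single polygonal face meeting $\{u,v\}$ are distinct, both follow from the fact that two distinct facets of a $3$-polytope meet in a single face. This gives $|\mathcal{E}(e)| = (d_u - 3) + (d_v - 3) + (p_1 - 3) + (p_2 - 3)$ and hence the clean identity
\begin{align*}
    \forman{e} = 16 - d_u - d_v - p_1 - p_2,
\end{align*}
matching the edge labels in~\cref{subfig:forman-curvature}. Thus $P$ is Forman--Ricci-positive precisely when $d_u + d_v + p_1 + p_2 \le 15$ for every edge $e = \{u,v\}$ with incident $2$-face sizes $p_1, p_2$.

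Finiteness is then immediate: since each of $d_u, d_v, p_1, p_2$ is at least $3$ (using $d$-vertex-connectedness for the degrees), the constraint forces every vertex degree and every $2$-face size to be at most $6$; in particular the average vertex degree is at most $6$, so~\cref{thm: boundedDeg} applies with $\Delta = 6$. (Alternatively, $\forman{e}\ge 1$ together with~\cref{thm: forman diam} already bounds $\diam(G(P))$ by $6$.)

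For the explicit bound I would run a counting argument. Summing $|\mathcal{E}(e)| \le 3$ over all $m$ edges and regrouping the contributions by vertex and by $2$-face yields
\begin{align*}
    \sum_{v\in V(P)} d_v(d_v - 3) + \sum_{F\in\mathcal{F}_2(P)} p_F(p_F - 3) \le 3m.
\end{align*}
Let $n_k$ and $\varphi_k$ denote the numbers of vertices of degree $k$ and of $2$-faces with $k$ edges. Using $2m = \sum_v d_v = \sum_F p_F$, the inequality above rearranges to $4(n_4 + \varphi_4) + 25(n_5 + \varphi_5) + 54(n_6 + \varphi_6) \le 9(n_3 + \varphi_3)$. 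The Euler relation $\sum_v (4 - d_v) + \sum_F (4 - p_F) = 8$ (which itself follows from $n - m + f_2 = 2$ and $4m = \sum_v d_v + \sum_F p_F$) reads $n_3 + \varphi_3 = 8 + (n_5 + \varphi_5) + 2(n_6 + \varphi_6)$; substituting collapses everything to
\begin{align*}
    (n_4 + \varphi_4) + 4(n_5 + \varphi_5) + 9(n_6 + \varphi_6) \le 18,
\end{align*}
whence also $n_3 + \varphi_3 \le 16$. Therefore $n + f_2 = \sum_k (n_k + \varphi_k)$ is bounded by a small explicit constant, and combining this with the standard $3$-polytope inequality $f_0 \le 2f_2 - 4$ (i.e. $f_2 \ge (n+4)/2$) and the identity $\sum_v d_v = \sum_F p_F$, a short optimization over the admissible integer sequences $(n_k), (\varphi_k)$ yields $n \le 15$.

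I expect this final step to be the main obstacle: the global inequalities above are clean but slightly slack, and one must carefully track the interaction between the degree constraint, the face-size constraint, Euler's formula, and realizability of the degree/face-size sequence in order to extract the sharp constant rather than a weaker bound such as $n \le 25$. As a reality check, the rhombic dodecahedron (with $n = 14$, eight vertices of degree $3$, six of degree $4$, and twelve quadrilateral $2$-faces) has $\forman{e} \equiv 1$, so the bound cannot be pushed much below $15$.
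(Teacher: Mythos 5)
Your local identity $\forman{e}=16-d_u-d_v-p_1-p_2$ is correct (and consistent with the edge labels in the square-cupola figure), and the finiteness half of your argument is sound: positivity forces $d_u,d_v,p_1,p_2\le 6$, hence the average degree is at most $6$, and \cref{thm: boundedDeg} (proved as \cref{thm:average-degree}) applies. This is essentially the paper's own route to finiteness: in \cref{thm:3dFormanFinite} the bound $\rho\le 6$ is obtained from Steinitz's inequality $f_1\le 3f_0-6$ rather than from positivity, but both arguments terminate in the same bounded-average-degree theorem, so for this half you and the paper coincide.

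The gap is in the second assertion, the bound of $15$ vertices. Your two inequalities, $4(n_4+\varphi_4)+25(n_5+\varphi_5)+54(n_6+\varphi_6)\le 9(n_3+\varphi_3)$ and $(n_4+\varphi_4)+4(n_5+\varphi_5)+9(n_6+\varphi_6)\le 18$, are correctly derived from $\sum_e|\mathcal{E}(e)|\le 3f_1$ and Euler's relation, but the decisive step (``a short optimization over the admissible integer sequences yields $n\le 15$'') is only asserted, and you yourself flag it as the main obstacle; as written, the proposal does not prove the vertex bound. The step is in fact fillable from exactly the constraints you have: the second inequality together with $n_3+\varphi_3=8+(n_5+\varphi_5)+2(n_6+\varphi_6)$ gives $f_0+f_2\le 26$, hence $f_1\le 24$, while $f_0=15$ and all degrees at least $3$ force $f_1\in\{23,24\}$ with total degree excess $\sum_v(d_v-3)\in\{1,3\}$; each of the resulting handful of degree distributions makes $(n_5+\varphi_5)+2(n_6+\varphi_6)\ge 4+\varphi_3$, which contradicts $(n_4+\varphi_4)+4(n_5+\varphi_5)+9(n_6+\varphi_6)\le 18$. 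So your constraints actually yield $f_0\le 14$, attained by the rhombic dodecahedron you mention --- but this case check must be written out, not gestured at. Note also that this global counting argument is genuinely different from the paper's route to a quantitative bound, which is the structural case analysis of \cref{lem: 6 bounds} and \cref{thm:3polytope-degree} (pentagon and degree-five configurations analyzed via Schlegel diagrams, giving $f_0\le 16$ or $f_2\le 16$); once completed, your discharging-style argument is shorter and gives a sharper constant.
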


We illustrate the graphs of each of the Forman--Ricci positive 3-polytopes in~\cref{fig:forman_positives}. It is interesting to compare~\Cref{thm:FormanFinite3} with a similar result on a different combinatorial curvature in~\cite{DevosMohar2007}.

Next, in dimension $4$, using the proof of~\Cref{thm: boundedDeg} and known structural results about the graphs of $3$-polytopes, we obtain the following result.

\begin{theorem}\label{thm:FormanFinite4}
    The set of Forman--Ricci positive $4$-polytopes is finite.
\end{theorem}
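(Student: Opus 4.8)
\medskip
\noindent\textbf{Proof plan.} The strategy is to deduce \Cref{thm:FormanFinite4} from \Cref{thm: boundedDeg}: it suffices to produce an absolute constant $\Delta$ such that every Forman--Ricci positive $4$-polytope has average vertex degree at most $\Delta$, and I will in fact bound the maximum degree.

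First I would record an explicit local formula for the curvature. Fix an edge $e=\{u,v\}$ of a $4$-polytope $P$ and set $k_e:=|\mathcal{F}\uparrow(e)|$; since the faces of $P$ containing $e$ form the face lattice of a polygon, $k_e\ge 3$ and $k_e$ equals both the number of $2$-faces and the number of facets of $P$ through $e$. Counting parallel neighbors of $e$ via the two cases of \Cref{def:parallel-neighbors}: each $2$-face $F\ni e$ determines exactly one further edge at $u$ and one at $v$ that share the $2$-face $F$ with $e$, and for distinct $F$ these edges are distinct, since two $2$-faces containing two distinct edges that span a common $2$-plane would have intersection of dimension at least $2$, impossible for a proper common face; this accounts for $d_u-1-k_e$ case-(i) neighbors at $u$ and $d_v-1-k_e$ at $v$. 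Likewise each $F\ni e$ contributes its $|F|-3$ edges disjoint from $e$ as case-(ii) neighbors, again without repetition. Substituting into \Cref{def:forman-curvature} gives
\[
\forman{e}=6k_e+4-d_u-d_v-\sum_{F\in\mathcal{F}\uparrow(e)}|F|.
\]
Because $|F|\ge 3$ for every $2$-face $F$, the hypothesis $\forman{e}>0$ forces $d_u+d_v\le 3k_e+3$ at every edge $e=\{u,v\}$.

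Next I would localize at a vertex. Summing $d_v+d_w\le 3k_{\{v,w\}}+3$ over the $d_v$ edges $\{v,w\}$ at a fixed vertex $v$, and using $\sum_{w\sim v}k_{\{v,w\}}=2p_v$, where $p_v$ denotes the number of $2$-faces of $P$ through $v$ (each contributing exactly its two edges at $v$), yields
\[
d_v^{\,2}+\sum_{w\sim v}d_w\le 6p_v+3d_v.
\]
This is where the structure of $3$-polytopes enters: the vertex figure $P/v$ is a $3$-polytope with exactly $d_v$ vertices and $p_v$ edges, so planarity (Steinitz plus Euler) forces $p_v\le 3d_v-6$, while every neighbor of $v$ in a $4$-polytope has degree at least $4$. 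Substituting gives $d_v^{\,2}-17d_v+36\le 0$, hence $d_v\le 14$. Thus every Forman--Ricci positive $4$-polytope has all vertex degrees (a fortiori its average degree) at most $14$, and \Cref{thm: boundedDeg}, applied with $d=4$ and $\Delta=14$ (reusing the quantitative content of its proof), shows there are only finitely many.

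I expect no genuine obstacle here; the delicate point is the parallel-neighbor bookkeeping behind the displayed formula for $\forman{e}$, which must be verified in dimension $4$ — where an edge typically lies in more than two $2$-faces, unlike the $3$-polytope case — using only that the intersection of two faces of a polytope is a common face of strictly smaller dimension. Once that formula and the identification of $p_v$ with the edge count of the $3$-polytope $P/v$ are in hand, the degree bound and the appeal to \Cref{thm: boundedDeg} are routine.
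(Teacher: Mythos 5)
Your proof is correct, and it shares the paper's overall strategy---bound the maximum vertex degree of a Forman--Ricci positive $4$-polytope by exploiting that the vertex figure is a $3$-polytope, then invoke \Cref{thm: boundedDeg}---but the mechanism for the degree bound is different. The paper argues via a single edge: at a vertex $v$ of degree $\delta\ge 13$ the vertex figure has some vertex of degree at most $5$, and the corresponding edge $e=\{v,w\}$ satisfies $|\mathcal{F}\uparrow(e)|\le 5$ while having at least $\delta-6$ parallel neighbors at $v$, so $\forman{e}\le 13-\delta\le 0$; this gives the sharper bound $d_v\le 12$. You instead establish the exact local identity $\forman{e}=6k_e+4-d_u-d_v-\sum_{F\ni e}|F|$ with $k_e=|\mathcal{F}\uparrow(e)|$ (your bookkeeping is sound: two distinct $2$-faces cannot share two distinct edges, so no parallel neighbor is double counted), deduce $d_u+d_v\le 3k_e+3$ from positivity, sum over the edges at $v$ using $\sum_{w\sim v}k_{\{v,w\}}=2p_v$, and then combine the Euler bound $p_v\le 3d_v-6$ for the vertex figure with $d_w\ge 4$ for neighbors to get $d_v\le 14$. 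Both routes ultimately rest on Euler's relation for the $3$-dimensional vertex figure; yours pays a slightly weaker constant ($14$ versus $12$) but produces a reusable per-edge identity (a local analogue of \Cref{lemma:AverageCurvatureIdentity}), whereas the paper's argument is shorter and extends directly to the nonnegative-curvature case. Either degree bound suffices for the appeal to \Cref{thm: boundedDeg}, so your proof is complete as written.
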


The result says little about how to classify such $4$-polytopes, but the proof shows that, in particular, Forman--Ricci positive $4$-polytopes have no vertex of degree greater than 12. The case of $d=5$ is less well understood, and we conjecture that~\cref{thm:FormanFinite3} and~\cref{thm:FormanFinite4} extend to this setting.

\begin{conjecture}\label{conjecture:5-polytopes}
    The set of Forman--Ricci positive $5$-polytopes is finite.
\end{conjecture}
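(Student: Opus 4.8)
The plan is to leverage \cref{thm: boundedDeg}: it suffices to produce an absolute constant $\Delta$ such that every Forman--Ricci positive $5$-polytope has average degree $\rho := 2f_1/f_0 \le \Delta$, since finiteness then follows at once. The computational input is the per-edge identity
\[
\forman{e} \;=\; 4 - d_u - d_v - \sum_{F\in\mathcal{F}\uparrow(e)}(|F| - 6),\qquad e = \{u,v\},
\]
where $|F|$ is the number of edges of the $2$-face $F$; it follows from \cref{def:forman-curvature} by checking that the parallel neighbours of $e$ are the $d_u + d_v - 2 - 2|\mathcal{F}\uparrow(e)|$ edges meeting $e$ in a vertex but lying in no common $2$-face, together with, for each $F\in\mathcal{F}\uparrow(e)$, the $|F|-3$ edges of $F$ disjoint from $e$. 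Thus Forman positivity is equivalent to the system $d_u + d_v + \sum_{F\in\mathcal{F}\uparrow(e)}(|F|-6)\le 3$, one inequality per edge.

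Summing this system over all edges (using $\sum_e (d_u+d_v) = \sum_v d_v^2$ and $\sum_e\sum_{F\in\mathcal{F}\uparrow(e)}(|F|-6) = \sum_{F\in\mathcal{F}_2}|F|^2 - 6f_{12}$) gives
\[
\sum_{v\in V(P)} d_v^2 \;+\; \sum_{F\in\mathcal{F}_2}|F|^2 \;\le\; 3f_1 + 6f_{12},
\]
and combining it with Cauchy--Schwarz ($\sum_v d_v^2 \ge 4f_1^2/f_0$) together with the fact that an edge of a $d$-polytope lies in at least $d-1$ two-faces (so $f_{12}\ge 4f_1$ here) yields $\rho \le \tfrac{27}{8}\, f_{12}/f_1$. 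Since $f_{12} = \sum_{F\in\mathcal{F}_2}|F| = \sum_{v} f_1(P/v)$ --- a $2$-face has equally many vertices and edges, and the $2$-faces through $v$ are the edges of the vertex figure $P/v$ (the $4$-polytope whose vertices are the neighbours of $v$ and whose edges are the $2$-faces of $P$ through $v$) --- this reduces the problem to bounding the average edge-density of the $4$-polytopal vertex figures; conversely, if $\rho$ is bounded so is $f_{12}/f_1$, so the two formulations are equivalent for our class.

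In the simplicial case this program can be completed. If $P$ is simplicial, every $2$-face is a triangle, the identity becomes $\forman{e} = 4 - d_u - d_v + 3|\mathcal{F}\uparrow(e)|$, and the Dehn--Sommerville relations force $f_2 = 4f_1 - 10f_0 + 20$, whence $f_{12} = 3f_2 \le 12f_1$; feeding this into $\rho\le\tfrac{27}{8}\, f_{12}/f_1$ (or using the full global inequality, which does slightly better) bounds $\rho$ by an absolute constant, and \cref{thm: boundedDeg} concludes --- this is essentially the argument behind the simplicial $5$-polytope result. For \emph{general} $5$-polytopes the obstruction is immediate: Dehn--Sommerville is unavailable, and the flag $f$-vector of a $5$-polytope satisfies only the much weaker generalized Dehn--Sommerville relations, which leave several affine degrees of freedom and do not by themselves force $f_{12} = O(f_1)$.

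The route I would pursue is to split the edges of $P$ into the \emph{simplicial} ones --- those all of whose incident $2$-faces are triangles --- and the rest. The global inequality above forces the number of incident pairs $(e,F)$ with $|F|\ge 4$ to be at most a bounded multiple of $f_1$, so $P$ is ``mostly simplicial'': on the corresponding subcomplex the vertex figures are, up to a bounded defect, simplicial $4$-polytopes, and one would hope to run the simplicial estimate locally (Dehn--Sommerville for those vertex figures, or Lower-Bound-Theorem--type rigidity) and then absorb the bounded non-simplicial contribution. The main obstacle, and the reason this stays conjectural, is controlling that interface: there is no Lower Bound Theorem or Dehn--Sommerville-type identity for general $5$-polytopes strong enough to pin down $f_{12}$ --- equivalently, the average edge-density of the vertex figures --- in terms of $f_0$ and $f_1$ alone, and the non-triangular $2$-faces can inflate the degrees of vertices lying inside the simplicial part, exactly where the argument is delicate; ruling out a clever non-simplicial construction that exploits this seems to require genuinely new structural input about $5$-polytopes. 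A complementary attack would enumerate the vertex-figure configurations compatible with the per-edge inequalities, as in the $3$- and $4$-dimensional cases, but the diversity of $4$-polytopes makes a direct enumeration unappealing; one might instead try to combine the non-negativity of the $cd$-index with the per-edge constraints to bound $\rho$.
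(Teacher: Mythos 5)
The statement you were asked about is stated in the paper as a \emph{conjecture}, and the paper contains no proof of it: the authors prove finiteness only for the simplicial case in dimension $5$ and explicitly leave the general case open. Your proposal is honest about this, and the partial analysis you do give is correct and in fact coincides with the paper's own evidence for~\cref{conjecture:5-polytopes}. Your per-edge identity $\forman{e}=4-d_u-d_v-\sum_{F\in\mathcal{F}\uparrow(e)}(|F|-6)$ is right (the parallel-neighbor count you use is exactly the one underlying~\cref{lemma:AverageCurvatureIdentity}), and summing it over edges reproduces the paper's average-curvature identity $\sum_e\forman{e}=6f_{12}+4f_1-\sum_v d_v^2-\sum_F|F|^2$; combined with Cauchy--Schwarz and Dehn--Sommerville this is precisely how the paper handles the simplicial $5$-dimensional case before invoking~\cref{cor: finiteAverage} (equivalently~\cref{thm: boundedDeg}). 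Your diagnosis of the obstruction --- that without a Dehn--Sommerville-type relation one cannot force $f_{12}=O(f_1)$, i.e.\ cannot bound the edge density of the vertex figures --- matches the authors' own remark that a putative infinite family would have to have dense graphs and small $2$-faces.

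One concrete caveat on your proposed route: the claim that the global inequality ``forces the number of incident pairs $(e,F)$ with $|F|\ge 4$ to be at most a bounded multiple of $f_1$'' does not follow from what you have written. The per-edge inequality with $d_u,d_v\ge 5$ gives $\sum_{F\ni e}(|F|-3)\le 3|\mathcal{F}\uparrow(e)|-7$, and summing yields only $\#\{(e,F):|F|\ge4\}\le 3f_{12}-7f_1$, which is a bounded multiple of $f_1$ only if $f_{12}=O(f_1)$ --- exactly the statement you are trying to prove. So the ``mostly simplicial'' reduction is circular as stated, and the conjecture remains open for you just as it does for the paper.
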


In dimension five, we are able make progress in the special case of simplicial polytopes: Recall that a $d$-dimensional polytope is said to be \emph{simplicial} if all of its faces (excluding $P$ itself) are simplices.

\begin{theorem}
    The set of Forman--Ricci positive simplicial $5$-polytopes is finite.
\end{theorem}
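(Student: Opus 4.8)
The plan is to deduce the statement from \cref{thm: boundedDeg}. It suffices to exhibit an absolute constant $\Delta_0$ with the property that every Forman--Ricci positive simplicial $5$-polytope has average vertex degree at most $\Delta_0$; then this class is contained in the class of Forman--Ricci positive $5$-polytopes of average degree at most $\Delta_0$, which is finite by \cref{thm: boundedDeg}, and we are done. Everything below except a single step is insensitive to the dimension and uses only that $P$ is simplicial.

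The first step is to simplify $\forman{\cdot}$ in the simplicial setting. When $P$ is simplicial every $2$-face is a triangle, so two distinct edges lying in a common $2$-face automatically share a vertex; hence case \textit{ii)} of \cref{def:parallel-neighbors} never occurs. Fix $e=\{u,v\}$ and write $t(e):=|\mathcal{F}\uparrow(e)|$ for the number of triangles of $P$ through $e$. An edge $\{u,w\}$ with $w\neq v$ lies in a common $2$-face with $e$ precisely when $\{u,v,w\}$ spans a triangle of $P$, and there are exactly $t(e)$ such vertices $w$; the same holds at $v$, and the two resulting families of edges are disjoint (one consists of edges through $u$, the other of edges through $v$). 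Consequently $|\mathcal{E}(e)|=(d_u-1-t(e))+(d_v-1-t(e))$, and \cref{def:forman-curvature} collapses to
\[
\forman{e}=3\,t(e)+4-d_u-d_v .
\]

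Summing this identity over $E(P)$, using $\sum_{e} t(e)=3f_2$ (every triangle carries three edges of $P$) and $\sum_{e=\{u,v\}}(d_u+d_v)=\sum_{v} d_v^2$, gives
\[
\sum_{e\in E(P)}\forman{e}=9f_2+4f_1-\sum_{v\in V(P)}d_v^2 .
\]
Since $\forman{e}$ is an integer, positivity forces $\forman{e}\ge 1$ for each edge, so the left side is at least $f_1$, whence $\sum_v d_v^2\le 9f_2+3f_1$. Together with Cauchy--Schwarz, $\sum_v d_v^2\ge\bigl(\sum_v d_v\bigr)^2/f_0=4f_1^2/f_0$, this yields $4f_1^2/f_0\le 9f_2+3f_1$. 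The dimension now enters through the sole special ingredient: for a simplicial $5$-polytope the Dehn--Sommerville symmetry $h_2=h_3$ is the linear identity $f_2=4f_1-10f_0+20$. Substituting, dividing by $f_0$, and using $f_0\ge 6$, a routine computation produces a quadratic inequality $4x^2-39x+60\le 0$ for $x:=f_1/f_0$, which forces $x<8$ and hence average degree $2f_1/f_0<16$. Taking $\Delta_0=16$ and invoking \cref{thm: boundedDeg} (with $d=5$) finishes the proof.

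The overall argument is short, so I expect no deep obstacle; the step that most needs care is the count $|\mathcal{E}(e)|=(d_u-1-t(e))+(d_v-1-t(e))$ of parallel neighbors, where one must confirm that there is no double counting and that it is precisely simpliciality which makes ``$\{u,w\}$ and $e$ lie in a common $2$-face'' equivalent to ``$\{u,v,w\}$ is a triangle of $P$''. It is also worth emphasizing where dimension $5$ is used: this is exactly the dimension in which the Dehn--Sommerville relations express $f_2$ as a linear function of $f_0$ and $f_1$ alone, which is what lets the Cauchy--Schwarz bound close; no analogous linear constraint is available in dimension $6$ or higher, consistent with the existence of infinite families of Forman--Ricci positive polytopes there (\cref{thm:FormanInfinite6}).
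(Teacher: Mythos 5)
Your proof is correct and follows essentially the same route as the paper: the identity $\sum_{e}\forman{e}=9f_2+4f_1-\sum_v d_v^2$ (which the paper obtains via its average-curvature lemma specialized to simplicial polytopes, and you re-derive directly from the edge formula $\forman{e}=3t(e)+4-d_u-d_v$), Cauchy--Schwarz, the Dehn--Sommerville relation $f_2=4f_1-10f_0+20$, and the bounded-average-degree finiteness theorem. The only differences are cosmetic: you use integrality to get $\forman{e}\ge 1$ and the bound $f_0\ge 6$ to reach average degree $<16$, while the paper works with positivity of the average and obtains the slightly sharper constant $13.2$ for large $f_0$.
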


This appears to be strong evidence in favor of~\cref{conjecture:5-polytopes}, since $2$-faces that are not triangles contribute at most $0$ to the curvature computation.

Next, we describe our results on resistance curvature. We follow a similar program with a more quantitative angle and obtain several results which, although spiritually analogous, have different conclusions and implications. We call a polytope $P$ \emph{resistance positive} if $\resistance{v} > 0$ for each $v\in V(P)$.

\begin{figure}[t!]
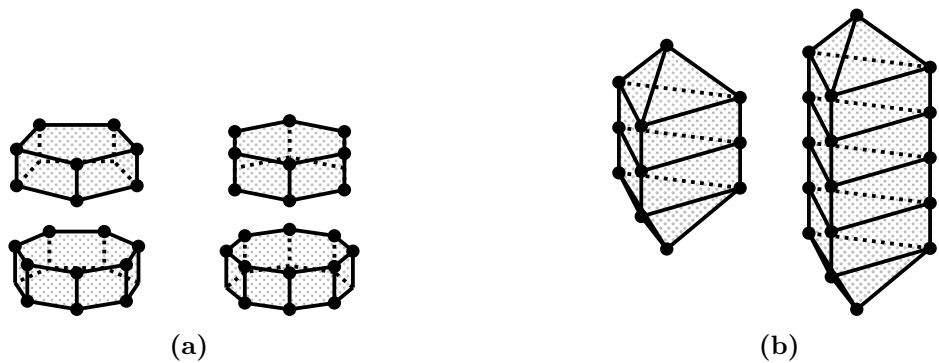

    \centering
    \begin{subfigure}[t]{0.48\textwidth}
        \centering
        \input{tikz/prism}
        \caption{}\label{subfig:resis-prism}
    \end{subfigure}
    \begin{subfigure}[t]{0.48\textwidth}
        \centering
        \input{tikz/pencil}
        \caption{}\label{subfig:resis-pencil}
    \end{subfigure}%
    ~
    \caption{\emph{(a)} The prism $3$-polytopes and their graphs with faces consisting of (in clockwise order) five, six, seven, and eight vertices. \emph{(b)} The $3$-polytope constructed via its graph $G_k$ in~\cref{subsec:resistance-positive-polyhedra}, here shown for $k=3, 5$.}\label{fig:resistance-curvature}
\end{figure}

\begin{theorem}\label{thm:resistance-positive-3polytopes}
    For each $d\geq 2$, there are infinitely many resistance positive polytopes with dimension $d$.
\end{theorem}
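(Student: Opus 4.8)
The plan is to prove the statement in two moves: first a \emph{reduction} to the vertex-transitive case via a global identity for the total resistance curvature, and then an \emph{explicit infinite family} of vertex-transitive $d$-polytopes, one family for each $d\ge 2$.

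\emph{Step 1 (total curvature).} I would first record the discrete Gauss--Bonnet identity $\sum_{v\in V}\resistance{v}=1$, valid for every connected graph $G=(V,E)$ on $n$ vertices (cf.~\cite{DL}). Indeed, by~\cref{defn:curvature-resistance}, $\sum_{v}\resistance{v}=n-\tfrac12\sum_{v}\sum_{u\sim v}r_{uv}=n-\sum_{\{u,v\}\in E}r_{uv}$, and Foster's network theorem gives $\sum_{\{u,v\}\in E}r_{uv}=n-1$; the latter is a one-line trace computation, since $\mathbf L=\sum_{\{u,v\}\in E}(\mathbf 1_u-\mathbf 1_v)(\mathbf 1_u-\mathbf 1_v)^\top$ and $r_{uv}=\operatorname{tr}\!\big(\mathbf L^\dagger(\mathbf 1_u-\mathbf 1_v)(\mathbf 1_u-\mathbf 1_v)^\top\big)$, so that $\sum_{\{u,v\}\in E}r_{uv}=\operatorname{tr}(\mathbf L^\dagger\mathbf L)=\operatorname{rank}(\mathbf L)=n-1$ for connected $G$. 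Consequently, if $G$ admits a vertex-transitive automorphism group then $\resistance{\cdot}$ is constant on $V$, hence $\resistance{v}=1/n>0$ for every $v$. Since the graph of a polytope is connected (indeed $d$-connected, by Balinski), every polytope with vertex-transitive graph is resistance positive.

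\emph{Step 2 (an explicit family).} For $d\ge 2$ and $n\ge 3$ let $P_n^{(d)}$ be the product polytope $C_n\times[0,1]^{d-2}$, i.e.\ an $n$-gon times a $(d-2)$-cube; this is a $d$-polytope, and the graph of a product of polytopes is the Cartesian product of the two graphs, so $G(P_n^{(d)})=C_n\,\square\,Q_{d-2}$, where $C_n$ is the $n$-cycle and $Q_{d-2}$ the $(d-2)$-dimensional cube graph. A Cartesian product of vertex-transitive graphs is vertex-transitive, so by Step~1 we get $\resistance{v}=\tfrac{1}{n\,2^{d-2}}>0$ at every vertex of $P_n^{(d)}$. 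These polytopes have $n\cdot 2^{d-2}$ vertices, so they are pairwise combinatorially inequivalent for distinct $n$, giving infinitely many resistance-positive $d$-polytopes in each fixed dimension $d\ge 2$. (For $d=2$ this is the family of all $n$-gons, with $\resistance{v}=1/n$; for $d=3$ it is the family of $n$-gonal prisms of~\cref{subfig:resis-prism}.)

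\emph{Main obstacle.} With the total-curvature identity in hand there is essentially nothing hard left, only the standard facts that a product of polytopes realizes the Cartesian product of graphs and that Cartesian products preserve vertex-transitivity. The only genuinely computational alternative --- which I would avoid --- is to bypass Foster's theorem and compute $r_{uv}$ directly on the edges of $C_n\,\square\,Q_{d-2}$ by diagonalizing its Laplacian through the cyclic and coordinate symmetries and summing the resulting spectral series. I would also flag that the \emph{non}-vertex-transitive resistance-positive $3$-polytopes advertised in the introduction (e.g.\ the graphs $G_k$ of~\cref{subfig:resis-pencil}, built via $\Delta$-operations) lie deeper: they rely on the quantitative lower bound for $\resistance{v}$ developed later, and are not needed for the present existence statement.
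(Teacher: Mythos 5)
Your proof is correct, and its engine is the same as the paper's: vertex transitivity forces $\resistance{v}$ to be constant, and the total-curvature identity (Foster's theorem, which you prove via the trace computation) then gives $\resistance{v}=1/n>0$; this is exactly the content of~\cref{thm:resis-curv-transitive}, which the paper states with proof omitted as ``straightforward linear algebra.'' Where you diverge is in the witness family: the paper takes polygons for $d=2$, prisms for $d=3$, and for $d\ge 4$ invokes neighborly $d$-polytopes whose graphs are complete graphs $K_n$, whereas you use the single uniform family $C_n\times[0,1]^{d-2}$, relying on the standard facts that a product of polytopes has graph equal to the Cartesian product of the factors' graphs and that Cartesian products preserve vertex transitivity. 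Both routes are valid; yours has the advantage of one explicit construction covering all $d\ge 2$ at once (and recovering the paper's polygons and prisms for $d=2,3$), while the paper's neighborly polytopes buy a denser family (complete-graph skeleta) at the cost of citing a nontrivial existence result. Your write-up also makes the reduction step self-contained by actually proving $\sum_{v}\resistance{v}=1$, which strengthens rather than weakens the argument; distinguishing combinatorial types by vertex count is handled correctly as well.
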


\Cref{thm:resistance-positive-3polytopes} follows from the existence of an infinite family of $d$-polytopes with vertex transitive graphs; namely, in the case of $d=2,3$, polygons and polygonal prisms (see~\cref{subfig:resis-prism}); and in the case of $d\geq 4$, the existence of $d$-polytopes whose graphs are isomorphic to the complete graph $K_n$. We note, however, that a complete characterization of resistance positive $3$-polytopes seems out of reach at present; in particular, we identify a family of resistance positive $3$-polytopes which have graphs that are not vertex transitive (see~\Cref{subsec:resistance-positive-polyhedra} and~\Cref{subfig:resis-pencil}). 

On the quantitative side we obtain the following curvature bound for a vertex in a simple polytope in terms of the lengths of the polygonal cycles incident to a given vertex.

\begin{theorem}\label{thm:curvature-bound-intro}
    Let $G=(V,E)$ be the $1$-skeleton of a simple $3$-polytope. Fix $v\in V$, and let $\mathcal{C
    }(v)$ denote the set of $2$-faces incident to $v$. For each $C\in\mathcal{C}(v)$, let $\ell_C := |E(C)|$ be the length (edge count) of $C$. Then the resistance curvature of $G$ at $v$ satisfies
        \begin{align*}
            \resistance{v} \geq
            1 \;-\; \frac{1}{2}\sum_{C\in\mathcal{C}(v)}
            \frac{\ell_C-1}{
                (\ell_C-1)\!\left(\sum_{C'\in\mathcal{C}(v)}\frac{1}{\ell_{C'}-1}+1\right)-1}.
        \end{align*}
\end{theorem}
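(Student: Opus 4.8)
The plan is to reduce everything to an explicit flow estimate on the three $2$-faces at $v$. Since $P$ is a simple $3$-polytope, $v$ has degree $d_v=3$; write its neighbors as $u_1,u_2,u_3$ and its incident $2$-faces as $\mathcal C(v)=\{C_1,C_2,C_3\}$. I would first record the local combinatorics: the vertex figure of a simple $3$-polytope at $v$ is a triangle, whose three vertices correspond to the edges $\{v,u_i\}$ and whose three edges correspond to the faces $C_k$. Hence I may label so that $C_k$ is the unique $2$-face at $v$ that does \emph{not} contain $\{v,u_k\}$; equivalently, for each $i$ the edge $\{v,u_i\}$ lies in exactly the two faces $C_j,C_k$ with $\{i,j,k\}=\{1,2,3\}$. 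Set $a_k:=\ell_{C_k}-1\ge 2$ (a $2$-face is a polygon, so $\ell_{C_k}\ge 3$) and $S:=\sum_k a_k^{-1}$.

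The core step is the estimate
\[
r_{u_i v}\;\le\;\Bigl(1+\tfrac{1}{a_j}+\tfrac{1}{a_k}\Bigr)^{-1},\qquad \{i,j,k\}=\{1,2,3\},
\]
which I would prove by exhibiting a unit $u_i$–$v$ flow and applying \cref{prop:flow-resistance}. Inside $\partial C_j$, deleting the two edges at $v$ leaves a path $Q_j$ from $u_i$ to $u_k$ of length $\ell_{C_j}-2=a_j-1$, which together with the edge $\{u_k,v\}$ gives a $u_i$–$v$ path $R_j\subseteq\partial C_j$ of length $a_j$; similarly one gets $R_k\subseteq\partial C_k$ of length $a_k$, and $R_0$ is the single edge $\{v,u_i\}$. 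These three paths are pairwise edge-disjoint: $R_0$ meets neither $Q_j$ nor $Q_k$ (they contain no edge at $v$), and any edge common to $R_j$ and $R_k$ would lie in $C_j\cap C_k$, which, $C_j$ and $C_k$ being distinct $2$-faces, is a proper common face, hence at most the single edge $\{v,u_i\}$—an edge that lies in neither $Q_j$ nor $Q_k$. Now route current $c$ along $R_0$, $c/a_j$ along $R_j$, $c/a_k$ along $R_k$ (oriented from $u_i$ toward $v$) with $c=\bigl(1+a_j^{-1}+a_k^{-1}\bigr)^{-1}$; edge-disjointness makes this a well-defined $\mathbf J^{(i)}\in\mathbb R^{E'}$ with $\mathbf B\mathbf J^{(i)}=\mathbf 1_{u_i}-\mathbf 1_v$ and $\|\mathbf J^{(i)}\|_2^2=c^2\bigl(1+a_j^{-1}+a_k^{-1}\bigr)=c$, giving the bound.

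Summing over the three neighbors and using \cref{defn:curvature-resistance},
\[
\resistance{v}=1-\tfrac12\sum_{i=1}^3 r_{u_i v}\;\ge\;1-\tfrac12\sum_{\{i,j\}}\Bigl(1+\tfrac1{a_i}+\tfrac1{a_j}\Bigr)^{-1},
\]
the inner sum over the three $2$-subsets of $\{1,2,3\}$. To match the statement I would verify the elementary identity $\bigl(1+a_i^{-1}+a_j^{-1}\bigr)^{-1}=\dfrac{a_k}{a_k(S+1)-1}$ (divide numerator and denominator of the right side by $a_k$ and use $S-a_k^{-1}=a_i^{-1}+a_j^{-1}$), which rewrites the sum over pairs $\{i,j\}$ as the sum over faces $C_k$, with $a_k=\ell_{C_k}-1$ and $S=\sum_{C'}(\ell_{C'}-1)^{-1}$; this is exactly the claimed lower bound.

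I expect the only genuinely polytope-theoretic input to be the edge-disjointness of the three routes $R_0,R_j,R_k$, which rests on the fact that two distinct $2$-faces of a polytope intersect in a common face of dimension at most $1$; the rest is the variational characterization of effective resistance plus bookkeeping. A secondary point to state carefully is the triangular vertex-figure labeling, since it is what makes the ``sum over neighbors'' on the left correspond to the ``sum over faces'' on the right. (As a sanity check, when all $\ell_{C_k}=3$ one gets $a_k=2$, $S=3/2$, and the bound equals $1-\tfrac12\cdot\tfrac32=\tfrac14>0$, consistent with the existence of resistance-positive $3$-polytopes.)
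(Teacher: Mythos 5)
Your proof is correct and takes essentially the same route as the paper: the same three pairwise edge-disjoint $u_i$--$v$ routes (the edge itself plus the two bounding face cycles traversed away from that edge), the same flow-based upper bound on $r_{u_i v}$ via the variational characterization with the optimal weights, then summation over the three neighbors and the same algebraic identity converting the sum over edge/face-pairs into the stated sum over faces. The only cosmetic difference is that you inline the optimal choice of flow weights (which the paper packages as \cref{lem:resistance-bound}) and spell out the vertex-figure labeling and the edge-disjointness via $C_j\cap C_k$ being exactly the edge $\{v,u_i\}$, details the paper merely asserts.
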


This bound is used to identify families of non-vertex-transitive $3$-polytopes obtained as $\Delta$-expansions of known simple $3$-polytopes (see~\Cref{thm:delta-expansions} and examples in~\Cref{fig:delta-expansions}). We also investigate quantitative lower bounds for the resistance curvature in generic graphs, and obtain the following degree-based criterion for the existence of a vertex with negative resistance curvature.

\begin{corollary}\label{cor:intro-resistance-curv-lowerbound}
    Let $G=(V,E)$ be any graph, and suppose $v\in V$ satisfies the following two conditions:
        \begin{enumerate}[label={\normalfont (\roman*)}]
            \item $d_v\geq 2$, and
            \item For each $u\sim v$, $d_u\leq d_v -2$.
        \end{enumerate}
    Then the resistance curvature $\resistance{v}$ satisfies $\resistance{v}\leq 0$.
\end{corollary}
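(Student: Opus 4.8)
The plan is to reduce \cref{cor:intro-resistance-curv-lowerbound} to the single inequality $\sum_{u\sim v} r_{uv}\ge 2$, which by \cref{defn:curvature-resistance} is exactly the assertion $\resistance{v}\le 0$. To bound each $r_{uv}$ from below I will pass to the dual (Dirichlet) side of \cref{prop:flow-resistance}: if $\mathbf J\in\mathbb{R}^{E'}$ is any vector with $\mathbf B\mathbf J=\mathbf 1_u-\mathbf 1_v$ and $\phi\in\mathbb{R}^V$ is arbitrary, then
\[
\phi_u-\phi_v=\phi^\top(\mathbf B\mathbf J)=(\mathbf B^\top\phi)^\top\mathbf J ,
\]
so Cauchy--Schwarz gives $\|\mathbf J\|_2^2\ge (\phi_u-\phi_v)^2/\|\mathbf B^\top\phi\|_2^2$; taking the infimum over such $\mathbf J$ and invoking \cref{prop:flow-resistance}, together with the identity $\|\mathbf B^\top\phi\|_2^2=\sum_{\{a,b\}\in E}(\phi_a-\phi_b)^2$ immediate from~\cref{eq:defn-incidence}, yields the lower bound $r_{uv}\ge (\phi_u-\phi_v)^2\big/\sum_{\{a,b\}\in E}(\phi_a-\phi_b)^2$ for every non-constant $\phi$.

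The crux is the choice of test potential. A first attempt, $\phi=\mathbf 1_u$, only gives $r_{uv}\ge 1/d_u$, hence $\sum_{u\sim v}r_{uv}\ge d_v/(d_v-2)$ under hypothesis (ii), which exceeds $2$ only when $d_v\le 4$. The symmetric two-point choice $\phi=\mathbf 1_u-\mathbf 1_v$ does better: here $(\phi_u-\phi_v)^2=4$, and the Dirichlet energy is the contribution $4$ from the edge $\{u,v\}$, plus $1$ from each of the $d_u-1$ edges at $u$ other than $\{u,v\}$ and each of the $d_v-1$ edges at $v$ other than $\{u,v\}$, with all other edges contributing $0$; thus the energy equals $d_u+d_v+2$ and
\[
r_{uv}\;\ge\;\frac{4}{d_u+d_v+2}.
\]
The only care needed in this count is to avoid double-counting the edge $\{u,v\}$ itself.

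Finally I apply hypothesis (ii): for each neighbor $u$ of $v$ we have $d_u\le d_v-2$, so $d_u+d_v+2\le 2d_v$ and therefore $r_{uv}\ge 4/(2d_v)=2/d_v$. Since hypothesis (i) guarantees $v$ has at least one neighbor, summing over the $d_v$ neighbors gives $\sum_{u\sim v}r_{uv}\ge d_v\cdot(2/d_v)=2$, whence $\resistance{v}=1-\tfrac12\sum_{u\sim v}r_{uv}\le 0$. I do not expect a genuine obstacle here: the entire argument turns on noticing that the elementary two-point test function already produces a bound strong enough to absorb the degree gap imposed by (ii), after which everything reduces to the routine energy computation indicated above.
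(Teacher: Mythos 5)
Your proof is correct, and its endgame coincides with the paper's: the bound $r_{uv}\ge 4/(d_u+d_v+2)$, combined with hypothesis \emph{(ii)} to give $r_{uv}\ge 2/d_v$, and then summed over the $d_v$ neighbors of $v$, is exactly how the paper deduces $\resistance{v}\le 0$. Where you genuinely differ is in how the resistance lower bound is obtained. The paper first proves the general principal-submatrix bound $r_{uv}\ge(\mathbf 1_u-\mathbf 1_v)^\top\mathbf L_A^\dagger(\mathbf 1_u-\mathbf 1_v)$ of~\cref{thm:resistance-subset-lower-bound} by relaxing the flow constraint in~\cref{prop:flow-resistance} to the rows indexed by $A$, specializes to $A=\{u,v\}$, and then applies Cauchy--Schwarz to $\mathbf b^\top\mathbf L_A^{-1}\mathbf b$ (this is~\cref{thm:resistance-degree-bound}). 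You instead use the classical flow--potential duality directly on the whole graph: pair any feasible flow against the test potential $\phi=\mathbf 1_u-\mathbf 1_v$ and apply Cauchy--Schwarz, getting $r_{uv}\ge(\phi_u-\phi_v)^2/\phi^\top\mathbf L\phi$. The two computations agree numerically because $\phi$ is supported on $\{u,v\}$, so $\phi^\top\mathbf L\phi=\mathbf b^\top\mathbf L_A\mathbf b=d_u+d_v+2$ (your edge count, which correctly avoids double-counting $\{u,v\}$, is fine for simple graphs as considered here). Your route is more elementary and self-contained---no pseudoinverses and no relaxation lemma---while the paper's submatrix framework is more general: it also yields the alternative, sometimes sharper, bound $(d_u+d_v-2)/(d_ud_v-1)$ and is proposed as a tool for higher-order refinements, none of which is needed for this corollary. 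One point worth making explicit in your write-up is that the Dirichlet lower bound requires $\|\mathbf B^\top\phi\|_2\neq 0$; this holds automatically here since the edge $\{u,v\}$ alone contributes $4$ to the energy.
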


\Cref{cor:intro-resistance-curv-lowerbound} can be used to rule out resistance positivity for many polytopes; pyramids are natural examples in the case of $d=3$ since their apexes generally meet the hypotheses of~\Cref{cor:intro-resistance-curv-lowerbound}. Moreover,~\Cref{cor:intro-resistance-curv-lowerbound} establishes that resistance positive graphs must, in a weak sense, be ``close'' to degree regular, and in doing so lends credence to the overall picture that resistance positive polytopes are often rare.

\section{Forman--Ricci curvature of polytopes}\label{sec: forman results}
In this section we consider the case of Forman--Ricci curvature of the 2-dimensional skeleta of polytopes. In~\Cref{subsec:avg-curvature} we record general facts valid for all polytopes and which are useful across dimensions. We compute Forman--Ricci curvature for simplices and hypercubes, from which~\Cref{thm:FormanInfinite6} follows. In~\Cref{subsec:avg-degree} we study polytopes whose graphs have bounded degree and show that, in any fixed dimension, there are only finitely many simple positive polytopes. Next, in~\Cref{subsec:forman-dim3,subsec:forman-dim4} we prove~\Cref{thm:3dFormanFinite,thm:4-dimFiniteness}, establishing finiteness in dimensions $3$ and $4$. Finally, in~\Cref{subsec:simplicial-five} we examine $d=5$; while the picture remains open, our partial results on simplicial $5$-polytopes point toward finiteness.

\subsection{Average curvature, symmetry, and high dimensions}\label{subsec:avg-curvature}
The goal of this subsection is to compute the Forman--Ricci curvature of the $2$-skeleton of the $d$-simplex and the hypercube. Since the automorphism groups of these polytopes act transitively on their $2$-skeleta, the curvature is constant on each edge.

Therefore by computing the average curvature of an edge in a polytope and specializing to the two above cases, we may recover their Forman--Ricci curvature. In order to do this, we will show that the average curvature across all edges depends on what are known as the \emph{flag}, or \emph{$f$-numbers}, of the polytope. 

The average curvature of a $d$-dimensional polytope $P$ is defined as
    \begin{align*}
        \mathcal{K}(P):= \frac{1}{f_1(P)} \sum_{e} \forman{e}.
    \end{align*}
For $k=0, 1$, we let $f_{k2}= f_{k2}(P)$ to be the numbers of pairs $(x,F)$ where $x$ is a $k$-face of $P$, and $F$ a two dimensional face that contains $x$. Furthermore, $d_k(P)$ denotes the number of vertices of degree $k$ of $P$ and $p_k(P)$ denotes the number of $2$-faces that are $k$-gons.

\begin{lemma}\label{lemma:AverageCurvatureIdentity}
    Let $P$ be a $d$-polytope. The following equation holds: 
        \begin{align*}
            \mathcal{K}(P)= \frac{1}{f_1(P)}\left(6f_{02}(P) + 4f_1(P) - \sum_{k\ge d} k^2d_k(P) - \sum_{k\ge 3}k^2p_k(P)\right).
        \end{align*}
\end{lemma}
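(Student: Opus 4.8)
The plan is to expand $\sum_e \forman{e}$ term by term using Definition \ref{def:forman-curvature}, which gives $\forman{e} = |\mathcal{F}\uparrow(e)| + 2 - |\mathcal{E}(e)|$. First I would handle the easy pieces: summing the constant $2$ over all edges yields $2f_1(P)$, and summing $|\mathcal{F}\uparrow(e)|$ over all edges counts incident (edge, $2$-face) pairs, i.e.\ $\sum_e |\mathcal{F}\uparrow(e)| = f_{12}(P)$. So the whole problem reduces to computing $\sum_e |\mathcal{E}(e)|$, the total number of ordered parallel-neighbor pairs, and then re-expressing $f_{12}$ via the other $f$-numbers.

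The key step is to split $|\mathcal{E}(e)|$ according to the two cases of Definition \ref{def:parallel-neighbors}. For a fixed edge $e$ with endpoints $u, w$: the case (i) parallel neighbors of $e$ are edges sharing a vertex with $e$ but not lying in a common $2$-face; the case (ii) parallel neighbors are vertex-disjoint edges sharing a $2$-face with $e$. I would count each globally. For the case (ii) contribution: in each $2$-face $F$ which is a $k$-gon ($k = \ell_F$), every edge of $F$ has exactly $k-2$ other edges of $F$ vertex-disjoint from it in a $k$-cycle (for $k\ge 3$; when $k = 3$ this is $0$), so summing over all $2$-faces gives $\sum_{F\in\mathcal{F}_2} \ell_F(\ell_F - 2) = \sum_{k\ge 3}(k^2 - 2k)p_k(P)$; note $\sum_{k} k\, p_k = f_{12}$ again since that counts (edge, $2$-face) incidences the other way. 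For the case (i) contribution: fix an endpoint $v$ of $e$; the edges sharing $v$ with $e$ are the other $d_v - 1$ edges at $v$, and among these I must subtract those that \emph{do} share a $2$-face with $e$. A clean way is to note that for a fixed vertex $v$ of degree $k = d_v$, considering all $\binom{k}{2}$ pairs of edges at $v$: each such pair either spans a common $2$-face or not, and I would argue (using that the link/vertex figure of $v$ in a polytope is itself a polytope, so consecutive edges around each $2$-face at $v$ pair up) that the number of pairs NOT spanning a $2$-face is $\binom{k}{2} - (\text{number of }2\text{-faces at }v)$, because each $2$-face at $v$ contains exactly two edges through $v$ and these incidences are disjoint across faces. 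Summing the ordered count $k(k-1) - 2\cdot(\#2\text{-faces at }v)$ over all vertices $v$ gives $\sum_{k\ge d} k(k-1)\,d_k(P) - 2 f_{02}(P)$, where $f_{02}$ counts (vertex, $2$-face) incidences.

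Assembling: $\sum_e|\mathcal{E}(e)| = \big[\sum_k k(k-1)d_k - 2f_{02}\big] + \big[\sum_k (k^2-2k)p_k\big]$, and $\sum_e|\mathcal{F}\uparrow(e)| + 2f_1 = f_{12} + 2f_1$. Then I would use the two ways of counting $f_{12}$ — namely $f_{12} = \sum_k k\, p_k$ (summing over $2$-faces) and also $f_{12} = \sum_k k\, d_k$? — here care is needed: $f_{12}$ counts (edge, $2$-face) incident pairs, which equals $\sum_{F\in\mathcal{F}_2}\ell_F = \sum_{k\ge 3} k\, p_k$, and it does \emph{not} directly equal $\sum_k k\, d_k$ (that would be $2f_1$ for the edge-vertex count). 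So I would substitute $f_{12} = \sum_{k\ge3} k\, p_k$ and also observe $f_{02} = \sum_{F}\ell_F$ when each $2$-face is considered — actually $f_{02}$ counts (vertex, $2$-face) pairs with vertex in the $2$-face, so for a $k$-gon that is $k$ vertices, giving $f_{02} = \sum_{k\ge 3} k\, p_k = f_{12}$ as well (every $2$-gon-free polygon has equally many vertices and edges). Using $f_{02} = f_{12}$, collecting all terms yields
\[
\sum_e \forman{e} = f_{12} + 2f_1 - \Big(\sum_{k\ge d}k(k-1)d_k - 2f_{02}\Big) - \Big(\sum_{k\ge 3}(k^2 - 2k)p_k\Big),
\]
and after substituting $f_{12} = f_{02} = \sum_k k\,p_k$ and simplifying $k(k-1) = k^2 - k$, $(k^2 - 2k)$, one collects the $k$-linear terms into multiples of $f_{02}$ and $f_1$ (using $\sum_k k\,d_k = 2f_1$), landing on $6f_{02}(P) + 4f_1(P) - \sum_{k\ge d}k^2 d_k(P) - \sum_{k\ge 3}k^2 p_k(P)$; dividing by $f_1(P)$ finishes it.

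\textbf{Main obstacle.} The delicate point is the case (i) count: correctly justifying that, for each vertex $v$, the pairs of edges at $v$ lying in a common $2$-face are in bijection with the $2$-faces at $v$ (each contributing exactly one such pair, with no overcounting) — this rests on the fact that in a polytope two distinct $2$-faces meet in at most an edge or vertex, and that the vertex figure at $v$ is a polytope whose own edges correspond to the $2$-faces at $v$. Getting the bookkeeping between the three incidence numbers $f_{02}$, $f_{12}$, and $2f_1$ exactly right (and tracking the degree restriction $k\ge d$ for vertices versus $k\ge 3$ for $2$-faces, which is why the two sums have different ranges) is where a careless sign or an off-by-one would creep in.
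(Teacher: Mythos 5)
Your overall strategy is the same as the paper's: the paper counts ordered parallel pairs globally via an inclusion--exclusion $|E_{||}(P)| = |E\uparrow(P)|+|E\downarrow(P)|-4f_{02}$, where $|E\downarrow(P)|=\sum_{k\ge d}k(k-1)d_k$ and $|E\uparrow(P)|=\sum_{k\ge 3}k(k-1)p_k$, and then uses $f_{02}=f_{12}=\sum_k kp_k$ and $\sum_k kd_k=2f_1$ exactly as you do. Your case (i) count (per vertex of degree $k$: $k(k-1)-2\cdot\#\{2\text{-faces at }v\}$ ordered pairs, summing to $\sum_{k\ge d}k(k-1)d_k-2f_{02}$) is correct, and your justification via "two distinct $2$-faces meet in at most an edge" is the right reason there is no overcounting.

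However, there is a genuine error in your case (ii) count. In a $k$-gon an edge is vertex-disjoint from exactly $k-3$ of the other edges (the two adjacent edges share a vertex with it), not $k-2$; your own sanity check already exposes this, since at $k=3$ you assert the count is $0$ while $k-2=1$. The correct case (ii) total is therefore $\sum_{k\ge 3}k(k-3)p_k$, not $\sum_{k\ge 3}(k^2-2k)p_k$. This is not a harmless slip: plugging your stated counts into your own assembly gives $\sum_e\forman{e}=5f_{02}+4f_1-\sum_{k\ge d}k^2d_k-\sum_{k\ge 3}k^2p_k$, which differs from the lemma by $f_{02}$, so the final claim that the terms "collect" into $6f_{02}+4f_1-\cdots$ does not follow from what you wrote. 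Replacing $k-2$ by $k-3$ (so the case (ii) contribution becomes $\sum_k(k^2-3k)p_k$) and redoing the last simplification with $f_{02}=f_{12}=\sum_k kp_k$ and $\sum_k kd_k=2f_1$ does yield the stated identity, so the proof is fixable and, once fixed, is essentially the paper's argument carried out locally (per vertex and per face) rather than via the global symmetric-difference formula.
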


\begin{proof}
    Let $E_{||}(P)$ be the set of pairs $(e,e')$ of edges that are parallel. Furthermore, along the lines of~\cref{def:parallel-neighbors}, let $E\uparrow(P)$ be the set of ordered pairs of disjoint edges that are in a common $2$-face, and let $E\downarrow(P)$ be the set of ordered pairs of edges that have a common vertex. Note that $E\uparrow(P)$ and $E\downarrow(P)$ contain edges which are not parallel. Notice that $|E\uparrow(P)| = \sum_{k\ge 3} k(k-1)p_k$ and $|E\downarrow(P)| = \sum_{k\ge d} k(k-1)d_k$. Parallel edges correspond to pairs that are either in $E\uparrow(P)$ or in $E\downarrow(P)$, but not both. The pairs of edges appearing in both places are exactly the pairs contained in a single $2$-face and which share an endpoint. It follows that $|E_{||}(P)| = |E\uparrow(P)| + |E\downarrow(P)| - 4f_{02}$. It follows that:
        \begin{eqnarray*}
            \mathcal{K}(P)f_1 &=& \sum_{e\in E(P) } \left(F(e) + 2 -\mathcal{F}(P,e)\right) \\
            &=&  f_{12}+ 2f_1 - |E_{||}(P)| \\
            &=& f_{12} + 2f_1 - (|E\uparrow(P)| + |E\downarrow(P)| - 4f_{02}) \\ 
            &=& 5f_{02} + 2f_1 - \sum_{k\ge d} k(k-1)d_k - \sum_{k\ge 3}k(k-1)p_k \\ 
            &=& 6f_{02} + 4f_1 - \sum_{k\ge d} k^2d_k - \sum_{k\ge 3}k^2p_k.
        \end{eqnarray*}
     Where we also us that $f_{02}=f_{12}=\sum_{k\ge 3} kp_k$.
\end{proof}

\begin{corollary}
    Let $d\ge 3$. The Forman--Ricci curvature of any edge of the $d$-dimensional simplex is $d+1$ and the Forman--Ricci curvature of any edge of the $d$-dimensional hypercube is equal to $2$.
\end{corollary}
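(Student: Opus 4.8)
The plan is to use the symmetry observation recorded just above the statement: the automorphism group of the $d$-simplex (namely $S_{d+1}$) and that of the $d$-cube (the hyperoctahedral group) both act transitively on edges, so the quantities $|\mathcal{F}\uparrow(e)|$ and $|\mathcal{E}(e)|$ --- and hence $\forman{e}$ --- do not depend on the choice of edge $e$. Therefore $\forman{e} = \mathcal{K}(P)$ for every edge, and it suffices to evaluate the right-hand side of \Cref{lemma:AverageCurvatureIdentity} once for the simplex and once for the cube.

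For the $d$-simplex $\Delta_d$ I would first assemble the needed flag data: the graph is $K_{d+1}$, so $\Delta_d$ is $d$-regular with $d_d(\Delta_d)=d+1$ and $d_k=0$ otherwise; there are $f_1=\binom{d+1}{2}$ edges; every $2$-face is a triangle, so $p_3(\Delta_d)=\binom{d+1}{3}$ and $p_k=0$ for $k\ge 4$; and counting incident (vertex, $2$-face) pairs gives $f_{02}(\Delta_d)=3\binom{d+1}{3}$. Substituting these into \Cref{lemma:AverageCurvatureIdentity} and simplifying the polynomial in $d$ (using $\binom{d+1}{2}=\tfrac{d(d+1)}{2}$ and $\binom{d+1}{3}=\tfrac{(d+1)d(d-1)}{6}$) collapses the numerator to $\tfrac{d(d+1)^2}{2}$; dividing by $f_1=\binom{d+1}{2}$ yields $d+1$.

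For the $d$-cube $Q_d$ the analogous data are: the graph is $d$-regular on $2^d$ vertices, so $d_d(Q_d)=2^d$ and $d_k=0$ otherwise; $f_1 = d\,2^{d-1}$; every $2$-face is a square, so $p_4(Q_d)=\binom{d}{2}2^{d-2}$ and $p_k=0$ otherwise; and $f_{02}(Q_d)=4\binom{d}{2}2^{d-2}=\binom{d}{2}2^d$. Feeding these into \Cref{lemma:AverageCurvatureIdentity}, the term $6f_{02}$ together with $-\sum_{k\ge 3}k^2p_k=-16\binom{d}{2}2^{d-2}$ combine to $2\binom{d}{2}2^d=d(d-1)2^d$; adding $4f_1=2d\,2^d$ and subtracting $\sum_{k\ge d}k^2 d_k=d^2 2^d$ leaves a numerator of $d\,2^d$, and division by $f_1=d\,2^{d-1}$ gives $2$.

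The only substantive work here is the arithmetic simplification of the two flag-number expressions, which is routine; the one conceptual point is the edge-transitivity of the two automorphism groups, which legitimizes passing from the average curvature to the curvature of a single edge. I therefore anticipate no genuine obstacle --- just careful bookkeeping of flag numbers. As a sanity check one can verify the two cases directly for small $d$: in the tetrahedron each edge lies in $2$ triangles and has no parallel neighbors, giving $\forman{e}=2+2-0=4=d+1$; in the $3$-cube each edge lies in $2$ squares and has exactly $2$ (vertex-disjoint, coface-sharing) parallel neighbors, giving $\forman{e}=2+2-2=2$.
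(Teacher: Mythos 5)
Your proposal is correct and follows exactly the paper's argument: invoke edge-transitivity of the automorphism group on the $2$-skeleton to identify each edge's curvature with the average curvature, then substitute the flag numbers of the simplex and the cube into \cref{lemma:AverageCurvatureIdentity}. Your flag-number bookkeeping and the resulting arithmetic (numerators $\tfrac{d(d+1)^2}{2}$ and $d\,2^d$) check out, and you simply carry out explicitly the simplification that the paper leaves to the reader.
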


\begin{proof}
    Notice that in both cases the automorphism group of the two skeleton of a complex acts transitively on the edges, which implies that each of the edge curvature values are equal and their common value is realized by the average curvature. To compute the curvature of simplex, we have that $f_0= n+1$, $f_1= \binom{n+1}{2}$, $f_{02}= 3f_2= 3\binom{n+1}3$ $d_n= f_0$, $d_k=0$ for $k>n$, $p_3=f_2 = \binom{n+1}{3}$ and $p_k=0$ for $k>3$. Plugging this into~\cref{lemma:AverageCurvatureIdentity} yields the result. To compute the curvature of hypercube, we have that $f_0= 2^n$, $f_1= n2^{n-1}$, $f_{02}= 4f_2= \binom{n}{2}2^{n}$ $d_n= f_0$, $d_k=0$ for $k>n$, $p_4=f_2 = \binom{n}{2}2^{n-2}$ and $p_k=0$ for $k\not=4$. Plugging this into ~\cref{lemma:AverageCurvatureIdentity} yields the result. 
\end{proof}

\begin{corollary}
    Let $d\ge 6$ be an integer. There are infinitely many positive Forman--Ricci polytopes of dimension $d$.
\end{corollary}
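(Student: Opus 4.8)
The plan is to produce the infinite families from \emph{neighborly} polytopes, exploiting that the Forman--Ricci curvature of an edge is determined by the $2$-skeleton alone. Fix $d\ge 6$. For each integer $n>d$ I would take $P_n$ to be the cyclic polytope $C(n,d)$ --- or any $\lfloor d/2\rfloor$-neighborly simplicial $d$-polytope on $n$ vertices, which exist for every $n>d$ (see~\cite[Ch.~8]{Ziegler}). Since the $P_n$ have distinct vertex numbers they are pairwise combinatorially inequivalent, so it suffices to show that each $P_n$ is Forman--Ricci-positive.

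The crux is the observation that, because $d\ge 6$, we have $\lfloor d/2\rfloor\ge 3$, so $P_n$ is $3$-neighborly: every set of at most three vertices is the vertex set of a face. In particular $G(P_n)=K_n$, and every triple of vertices spans a triangular $2$-face; conversely every $2$-face of $P_n$ must be a triangle, since a $2$-face on four or more vertices would strictly contain a triangular (hence $2$-dimensional) face, which is impossible. Consequently the $2$-skeleton of $P_n$ --- its vertices, edges, $2$-faces, and the incidences among them --- is isomorphic to the $2$-skeleton of the simplex $\Delta^{n-1}$.

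To finish, I would note that $\forman{e}=|\mathcal{F}\uparrow(e)|+2-|\mathcal{E}(e)|$ depends only on the $2$-skeleton, because both $\mathcal{F}\uparrow(e)$ and the parallel-neighbor set $\mathcal{E}(e)$ of \cref{def:parallel-neighbors} refer only to $0$-, $1$-, and $2$-faces and their incidences. By the preceding corollary every edge of $\Delta^{n-1}$ has curvature $(n-1)+1=n$, so transporting this along the $2$-skeleton isomorphism gives $\forman{e}=n\ge d+1>0$ for every edge $e$ of $P_n$; hence each $P_n$ is Forman--Ricci-positive, and there are infinitely many such $d$-polytopes. (A direct check gives the same conclusion: for $e=\{a,b\}$ the $n-2$ triangles $\{a,b,c\}$ show $|\mathcal{F}\uparrow(e)|=n-2$, no triangular $2$-face contains two vertex-disjoint edges, and any edge $\{a,x\}$ lies with $e$ in the $2$-face $\{a,b,x\}$, so $\mathcal{E}(e)=\emptyset$; alternatively, edge-transitivity of the automorphism group of that $2$-skeleton together with \Cref{lemma:AverageCurvatureIdentity} yields the average, and hence the common, curvature $n$.)

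I do not anticipate a real obstacle, but the dimension hypothesis is the one point that must be handled with care: a $d$-polytope other than a simplex is at most $\lfloor d/2\rfloor$-neighborly, so $3$-neighborly non-simplices exist exactly when $d\ge 6$ --- precisely the range in the statement --- and the short argument that $3$-neighborliness forbids $2$-faces with more than three vertices is what certifies that the relevant $2$-skeleton is genuinely that of a simplex.
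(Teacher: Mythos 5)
Your proposal is correct and follows essentially the same route as the paper: exhibit, for each $n$, a sufficiently neighborly $d$-polytope whose $2$-skeleton coincides with that of a simplex, and transport the everywhere-positive curvature computed in the preceding corollary (the paper additionally offers $2$-neighborly cubical polytopes as a second such family, but one family suffices). Your explicit verification that $3$-neighborliness (available exactly when $d\ge 6$) forces all $2$-faces to be triangles is a welcome detail that the paper leaves implicit.
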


\begin{proof}
    Fix an integer $d\ge 6$. For every $n\ge 6$ there exist both \emph{(i)} a $2$-neighborly $d$-polytope on $n$ vertices and \emph{(ii)} a $2$-neighborly cubical $d$-polytope on $n$ vertices. Let $P$ be either such polytope. Then the $2$-dimensional skeleton of $P$ coincides with that of, respectively, a simplex or a cube on the same vertex set; consequently, the Forman--Ricci curvature of each edge in $P$ agrees with that of the corresponding simplex or cube and is, in particular, everywhere-positive.
\end{proof}

\subsection{Forman--Ricci-positive polytopes and the average degree of a vertex}\label{subsec:avg-degree}

In this subsection, we show that the number of $d$-polytopes whose average vertex degree is bounded above by a constant is finite. The idea is to give a bound on the number of vertices. We recall the following lemma known as the Moore bound:

\begin{lemma}[Moore Bound~\cite{hoffman1960moore,miller2012moore}]\label{lem: degdiam}
   Let $G = (V,E)$ be a graph with maximum vertex degree $\Delta \neq 2$ and diameter $D$. Then, the number of vertices $|V|$ is bounded by
         \begin{align*}
             |V| \leq \frac{\Delta (\Delta - 1)^D - 2}{\Delta - 2}.
         \end{align*}
\end{lemma}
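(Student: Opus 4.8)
The plan is the classical breadth-first-search (BFS) layering argument. First I would fix an arbitrary vertex $v \in V$ and partition the vertex set by distance from $v$: for $k \ge 0$ set $L_k := \{u \in V : d(v,u) = k\}$. Since $G$ has diameter $D$, no vertex is farther than $D$ from $v$, so $V = L_0 \cup L_1 \cup \cdots \cup L_D$ is a disjoint union, and hence $|V| = \sum_{k=0}^{D} |L_k|$. It therefore suffices to control each layer size $|L_k|$.

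Next I would prove the layer bound $|L_k| \le \Delta(\Delta-1)^{k-1}$ for all $k \ge 1$ (together with the trivial $|L_0| = 1$) by induction on $k$. The base case is immediate: $|L_1| = d_v \le \Delta$. For the inductive step with $k \ge 2$, observe that every $u \in L_k$ has a neighbor lying in $L_{k-1}$ — namely the second-to-last vertex on a shortest $v$–$u$ path — so, writing $N(w)$ for the neighbor set of $w$, we have $L_k \subseteq \bigcup_{w \in L_{k-1}} N(w)$. Moreover, each $w \in L_{k-1}$ itself has (at least) one neighbor in $L_{k-2}$, which consumes one of its at most $\Delta$ incident edges, so $|N(w) \cap L_k| \le \Delta - 1$. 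Combining the two observations yields $|L_k| \le (\Delta - 1)\,|L_{k-1}|$, and the induction closes.

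Finally I would sum the resulting geometric series:
\[
    |V| \;\le\; 1 + \sum_{k=1}^{D} \Delta(\Delta-1)^{k-1} \;=\; 1 + \Delta\cdot\frac{(\Delta-1)^{D}-1}{\Delta-2} \;=\; \frac{\Delta(\Delta-1)^{D}-2}{\Delta-2},
\]
where the hypothesis $\Delta \ne 2$ is exactly what makes the closed-form evaluation of the geometric sum valid (when $\Delta = 2$ the layer sizes grow only linearly, $G$ is a path or a cycle, and the stated formula simply does not apply).

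The only point that needs genuine care — and the one place the argument can go wrong if stated sloppily — is the estimate $|N(w) \cap L_k| \le \Delta - 1$ for $w \in L_{k-1}$. This requires $k-1 \ge 1$ so that $w$ really does spend an edge on a neighbor that is closer to $v$; this is precisely why the factor $\Delta - 1$ (rather than $\Delta$) is available only from layer $2$ onward and why layer $1$ must be treated separately. One should also note that a vertex of $L_k$ adjacent to several vertices of $L_{k-1}$ is counted only once on the left-hand side, so the overcounting on the right can only help us. Neither subtlety presents a real obstacle, and the whole proof reduces to careful BFS bookkeeping followed by a one-line summation.
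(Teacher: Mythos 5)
Your proof is correct: the BFS layering bound $|L_0|=1$, $|L_1|\le\Delta$, $|L_k|\le(\Delta-1)|L_{k-1}|$ for $k\ge 2$, followed by the geometric summation, is exactly the classical argument for the Moore bound, and your algebra recovering $\frac{\Delta(\Delta-1)^D-2}{\Delta-2}$ checks out, including the correct role of the hypothesis $\Delta\neq 2$. Note that the paper itself gives no proof of this lemma --- it is quoted from the cited references (Hoffman--Singleton, Miller--\v{S}ir\'a\v{n}) --- so there is nothing to diverge from; your write-up is the standard proof, and the only implicit assumption worth flagging is connectivity of $G$, which is already built into the hypothesis of finite diameter.
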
 

\begin{theorem}\label{thm:average-degree}
     Let $P$ be a $d$-dimensional Forman--Ricci-positive polytope and let $\avgdegree$ denote the average vertex degree of $P$. Then the number of vertices $n$ of $P$ satisfies
        \begin{align*}
            n\leq \frac{2^{3\avgdegree + 1}\avgdegree( 2^{3\avgdegree + 1}\avgdegree-1)^{4+6\avgdegree}-2}{ 2^{3\avgdegree }\avgdegree-1}.
        \end{align*}
\end{theorem}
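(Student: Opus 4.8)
The plan is to chain together three ingredients: (1) the average-curvature identity of \Cref{lemma:AverageCurvatureIdentity}, used to extract a bound on the maximum degree $\Delta$ of a Forman--Ricci-positive polytope in terms of $\avgdegree$; (2) the Bonnet--Myers-type diameter bound of \Cref{thm: forman diam}, used to bound the diameter $D$ of $P$; and (3) the Moore bound of \Cref{lem: degdiam}, which converts a bound on $\Delta$ and $D$ into a bound on $n$. First I would fix a Forman--Ricci-positive $d$-polytope $P$, so $\forman{e}\ge 1$ for every edge $e$ (curvature is integer-valued, so positivity gives $\ge 1$). Then \Cref{thm: forman diam} with $c=1$ yields
    \[
        \diam(P)\le 2\Bigl(1+\max_{e\in E(P)}|\mathcal{F}\uparrow(e)|\Bigr).
    \]
Since each $2$-face containing an edge $e$ contributes an edge of the polygon through $e$, one has $|\mathcal{F}\uparrow(e)|\le d_v$ for either endpoint $v$ of $e$ (indeed $|\mathcal{F}\uparrow(e)| \le \min$ of the two endpoint degrees), so $\max_e |\mathcal{F}\uparrow(e)|\le\Delta$ and hence $D:=\diam(P)\le 2(1+\Delta)\le 4+6\avgdegree$ once we have shown $\Delta\le 3\avgdegree$ (see below); this matches the exponent $4+6\avgdegree$ appearing in the statement.

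The crux is bounding $\Delta$ in terms of $\avgdegree$. Here I would use \Cref{lemma:AverageCurvatureIdentity}: positivity forces $\mathcal{K}(P)\ge 1$, hence
    \[
        6f_{02}+4f_1 \;\ge\; f_1 + \sum_{k\ge d}k^2 d_k + \sum_{k\ge 3}k^2 p_k \;\ge\; \sum_{k\ge d}k^2 d_k.
    \]
Now $f_{02}=\sum_{k\ge 3}k p_k$ and, since every $2$-face has at least $3$ edges, $\sum_k k p_k = f_{12} \le$ (a linear multiple of $f_1$); more precisely, summing edge–$2$-face incidences, each edge lies in at least two $2$-faces of a $d$-polytope but the count $f_{02}$ is at most $f_1\cdot\Delta$ crudely, or better, one can bound $f_{02}$ directly by the number of (vertex, $2$-face) incidences, which is at most $\sum_v d_v \le n\Delta$. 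Combined with $f_1 = \tfrac12\sum_v d_v = \tfrac12 n\avgdegree$, this gives a bound of the form $C\, n\Delta \ge \Delta^2 d_\Delta$, but that alone is not enough — we need to eliminate the extra factor of $n$. The right move is to localize: if $v$ is a vertex of maximum degree $\Delta$, then summing the curvature constraint $\forman{e}\ge 1$ over the $\Delta$ edges incident to $v$, and using $|\mathcal{E}(e)|\ge d_v - 1 = \Delta - 1$ (the other edges at $v$ not sharing a $2$-face with $e$), together with $|\mathcal{F}\uparrow(e)|\le d_v = \Delta$ for those same edges whose far endpoint could also have degree up to $\Delta$, one extracts that the local average degree near $v$ is at least a constant times $\Delta$; comparing with the global average $\avgdegree$ via a counting argument then forces $\Delta \le 3\avgdegree$, giving the base $2^{3\avgdegree+1}\avgdegree = (2\cdot 2^{3\avgdegree})\avgdegree$ and $\Delta(\Delta-1)^D-2$ over $\Delta-2$ structure of the Moore bound after the substitution $\Delta\mapsto 2^{3\avgdegree}\avgdegree$ (the factor $2^{3\avgdegree}$ absorbing the crude incidence estimates on $f_{02}$ in terms of degrees).

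The main obstacle I anticipate is precisely step (2): turning the \emph{average}-curvature identity into a bound on the \emph{maximum} degree, since \Cref{lemma:AverageCurvatureIdentity} controls only an average while the Moore bound needs $\Delta$. The idea will be that a vertex $v$ of very high degree $d_v$ drags the term $d_v^2$ in $\sum_{k\ge d}k^2 d_k$ down hard, and positivity of \emph{every} edge curvature (not just the average) forces enough $2$-faces and enough other high-degree vertices nearby to compensate — iterating this forces a lower bound on $f_{02}$ and hence on $f_1$, and dividing by $n$ gives the claimed control of $d_v$ by a multiple of $\avgdegree$. Getting the constants to land exactly on $2^{3\avgdegree}\avgdegree$ will require bookkeeping the crude exponential slack from bounding $f_{02}$ by degree sums, but no deep new idea beyond combining the three cited results. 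Once $\Delta\le 2^{3\avgdegree}\avgdegree$ and $D\le 4+6\avgdegree$ are in hand (the latter following from $D\le 2(1+\Delta)$ and the former, with the exponent rewritten linearly in $\avgdegree$ as claimed), plugging into \Cref{lem: degdiam} yields
    \[
        n \;\le\; \frac{2^{3\avgdegree+1}\avgdegree\bigl(2^{3\avgdegree+1}\avgdegree-1\bigr)^{4+6\avgdegree}-2}{2^{3\avgdegree}\avgdegree-1},
    \]
which is the assertion.
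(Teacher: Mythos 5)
Your proposal has a genuine gap at its central step: you need a bound on the \emph{global} maximum degree $\Delta$ of $P$ in terms of $\avgdegree$ alone, and neither of the two routes you sketch delivers one. The average-curvature identity of \Cref{lemma:AverageCurvatureIdentity} cannot do it (as you note, the factor of $n$ does not cancel), and the ``localization'' you then propose is circular: positivity of every edge incident to a maximum-degree vertex forces its neighbors to have comparably high degree (this is essentially the paper's doubling lemma, $\max\{\deg u,\deg v\}\le 2\min\{\deg u,\deg v\}$ across any edge, proved by counting type-(i) parallel neighbors), but comparing that local cluster of high-degree vertices with the global average $\avgdegree$ requires knowing they form a definite fraction of all $n$ vertices, which is exactly the quantity being bounded. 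There is also an internal inconsistency: you derive the exponent $4+6\avgdegree$ from $\diam(P)\le 2(1+\Delta)$ together with $\Delta\le 3\avgdegree$, yet the base of your Moore bound requires $\Delta\le 2^{3\avgdegree+1}\avgdegree$; these cannot both be the operative degree bound, and if only the exponential bound holds, your diameter estimate becomes $2(1+2^{3\avgdegree+1}\avgdegree)$, destroying the claimed exponent.

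The paper avoids bounding $\Delta$ globally. It fixes a \emph{minimum}-degree vertex $v$ (so $\deg(v)\le\avgdegree$) and works only inside the ball $S$ of radius $1+\deg(v)+2\avgdegree$ around $v$: part (i) of \Cref{thm: forman diam}, applied to edges at $v$ and at $w$, shows every vertex $w$ of degree at most $2\avgdegree$ lies in $S$; the doubling lemma iterated along paths of length at most $1+3\avgdegree$ bounds degrees \emph{inside} $S$ by $2^{3\avgdegree+1}\avgdegree$; and the diameter of the induced subgraph is at most $2(1+\deg(v)+2\avgdegree)$ via paths through $v$, which is where the exponent actually comes from. The Moore bound then controls $|S|$, and a separate averaging argument (partitioning vertices by degree into $\le\avgdegree$, strictly between $\avgdegree$ and $2\avgdegree$, and $\ge 2\avgdegree$) shows that the vertices of degree $\ge 2\avgdegree$ --- which may lie outside $S$ and whose degrees are never bounded --- number at most the vertices of degree $\le\avgdegree$, whence $n\le 2|V(G)|$. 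That final factor of $2$ is also why the denominator in the statement is $2^{3\avgdegree}\avgdegree-1$ rather than the Moore denominator $2^{3\avgdegree+1}\avgdegree-2$; your proposal omits this step entirely.
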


\begin{proof}
    Let $v$ be a vertex of $P$ achieving the minimum vertex degree of the polytope and let $G$ be the subgraph of the graph of $P$ induced by the set of vertices
        \begin{align*}
            S &= \{w\in V(P) \;:\; d(w,v)\leq (1 + \deg(v) + 2\avgdegree)\}.
        \end{align*}
    If $w$ is a vertex of an edge $e$, then $|\mathcal{F}\uparrow(e)|\le \deg(w)-1$. Since the Forman--Ricci curvature of $P$ satisfies $|\forman{\cdot}|\geq 1$ it follows from~\cref{thm: forman diam} that each vertex $w$ of degree at most $2\avgdegree$ satisfies $w\in S$. Furthermore, the diameter of $G$ is at most $2(1 + \deg(v)+ 2\avgdegree)$, using the paths passing through $v$. 

    Next we bound the maximum degree $\Delta$ of $G$. Fix an edge $e=\{u, v\}\in E(G)$. If $\deg(u) >2\deg(v)$ holds, we must have that $|\mathcal{F}\uparrow(e)| \le \deg(v)-1$. Among the $\deg(u) - 1$ edges incident to $u$ excluding $e$, at most $|\mathcal{F}\uparrow(e)|$ share a common $2$-face with $e$. Hence at most
        \begin{align*}
            (\deg(u)-1)-|\mathcal{F}\uparrow(e)|\ \ge\ \deg(u)-\deg(v)
        \end{align*}
    of them are parallel neighbors of $e$ satisfying the condition \emph{(i)} in~\cref{def:parallel-neighbors}. Therefore
        \begin{align*}
            \forman{e} &= |\mathcal{F}\uparrow(e)|+2-|\mathcal{E}(e)| \\
            &\leq(\deg(v)-1)+2-(\deg(u)-\deg(v)) \\
            &=\ 1+2\deg(v)-\deg(u)\ \le\ 0,
        \end{align*}
    a contradiction. Consequently, we must have 
        \begin{align*}
            \max\{\deg(u),\deg(v)\}\le 2\,\min\{\deg(u),\deg(v)\}
        \end{align*}
    for each edge $e\in E(G)$. By induction along a path, any vertex $w$ at distance $\Delta$ from $v$ satisfies $\deg(w)\le 2^\Delta\deg(v)$. Since we consider vertices within distance $1+\deg(v)+2\avgdegree$ of $v$, each vertex of $G$ has degree at most
        \begin{align*}
            2^{\,1+\deg(v)+2\avgdegree}\deg(v)\ \le\ 2^{\,3\avgdegree+1}\avgdegree,
        \end{align*}
    because $\deg(v)\le \avgdegree$ by the choice of $v$ as a minimum-degree vertex. As a consequence, by the Moore Bound (\cref{lem: degdiam}), 
        \begin{eqnarray*}
            |V(G)| &<& \frac{2^{3\avgdegree + 1}\avgdegree( 2^{3\avgdegree + 1}\avgdegree-1)^{2+2\deg(v) +4\avgdegree }-2}{ 2^{3\avgdegree + 1}\avgdegree-2} \\  &\le& \frac{2^{3\avgdegree + 1}\avgdegree( 2^{3\avgdegree + 1}\avgdegree-1)^{2+6\avgdegree }-2}{ 2^{3\avgdegree + 1}\avgdegree-2}. 
        \end{eqnarray*}
    Lastly we argue that the number of vertices $n$ of $P$ is no more than twice the number of vertices of $G$. Partition the vertex of $P$ into sets
        \begin{align*}
            A_1 &= \{v\in V(P) \;:\; \deg(v) \leq \avgdegree\},\\
            A_2 &= \{v\in V(P) \;:\; \rho < \deg(v) < 2\avgdegree\},\\
            A_3 &= \{v\in V(P) \;:\; 2\rho \leq \deg(v)\},
        \end{align*}
    and let $a_i = |A_i|$ for $i=1,2,3$. Then $n=a_1+a_2+a_3$ and since all vertices of degree at most $2\avgdegree$ belong to $G$, we have that $|V(G)| \ge a_1+a_2$. Since each vertex of $P$ has degree at least $d$, it follows that
        \begin{align*}
            \avgdegree \ge \frac{a_1d +a_2\avgdegree +2a_3\avgdegree}{a_1+a_2+a_3}.
        \end{align*}
    This implies that $a_1\ge \frac{\avgdegree}{\avgdegree - d}a_3 \ge a_3$. So $2|V(G)| \ge 2(a_1+a_2) \ge n+a_2 \ge n$. The claim follows.
\end{proof}

The upper bounds on the number of vertices in the above result are far from tight. Nevertheless, we can exploit~\cref{thm:average-degree} to obtain two corollaries.

\begin{corollary}\label{cor: finiteAverage}
    Let $d\geq 3$ be fixed and let $\Delta\ge 3$ be a real number. The set of Forman--Ricci positive $d$-polytopes such that the average degree of a vertex is at most $\Delta$ is finite. 
\end{corollary}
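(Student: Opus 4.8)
The plan is to read this off immediately from \Cref{thm:average-degree} together with the elementary fact that, in a fixed dimension, there are only finitely many combinatorial types of polytope with a bounded number of vertices.

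First I would fix $d\ge 3$ and $\Delta\ge 3$ and let $P$ be an arbitrary Forman--Ricci-positive $d$-polytope whose average vertex degree $\rho$ satisfies $\rho\le\Delta$. Since the graph of $P$ is $d$-connected by Balinski's theorem, every vertex has degree at least $d\ge 3$, so $3\le d\le\rho\le\Delta$; in particular, if $\Delta<d$ then no such $P$ exists and the statement is vacuous, so we may assume $\Delta\ge d$. By \Cref{thm:average-degree} the number of vertices $n$ of $P$ is bounded above by
\[
g(\rho):=\frac{2^{3\rho+1}\rho\,(2^{3\rho+1}\rho-1)^{4+6\rho}-2}{2^{3\rho}\rho-1}.
\]
For $\rho\ge 1$ one has $2^{3\rho}\rho\ge 8>1$, so the denominator never vanishes and $g$ is a continuous real-valued function on the compact interval $[3,\Delta]$; hence it attains a finite maximum $N=N(\Delta)$ there, and every polytope in the set under consideration satisfies $n\le N$. (One could alternatively check that $g$ is monotone increasing on $[3,\infty)$ to get the cleaner bound $n\le g(\Delta)$, but the compactness argument needs no computation.)

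Finally I would invoke the standard fact that, for fixed $d$, there are only finitely many combinatorial equivalence classes of $d$-polytopes with at most $N$ vertices: every face of such a polytope is the convex hull of a subset of its vertex set, so its face lattice embeds as a subposet of the Boolean lattice on $N$ atoms, and there are only finitely many such subposets. Combining this with the uniform bound $n\le N$ shows that the set of Forman--Ricci-positive $d$-polytopes with average vertex degree at most $\Delta$ is finite, as claimed.

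There is no genuinely hard step here once \Cref{thm:average-degree} is in hand — the argument is essentially bookkeeping. The only points that require care are (i) confirming that the hypotheses force the average degree into the range $3\le\rho\le\Delta$, where the bound of \Cref{thm:average-degree} is well-defined and bounded, and (ii) citing a clean form of the finiteness of combinatorial types of $d$-polytopes with a bounded number of vertices; both are routine.
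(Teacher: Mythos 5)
Your argument is correct and is exactly the route the paper intends: the corollary is stated without a separate proof precisely because it follows immediately from \cref{thm:average-degree}, and your write-up just spells out the routine steps (the average degree lies in a range where the vertex bound is finite, and there are only finitely many combinatorial types of $d$-polytopes on a bounded number of vertices). Nothing is missing; the compactness/monotonicity remark and the Boolean-lattice argument for finiteness of combinatorial types are fine.
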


\begin{corollary}\label{cor: finite degree}
   Let $d\ge 3$ and $\Delta\ge d$ be fixed positive integers. There are finitely many Forman--Ricci-positive $d$-polytopes whose maximum degree is at most $\Delta$. In particular, there are finitely many Forman--Ricci-positive \textit{simple} $d$-polytopes.
\end{corollary}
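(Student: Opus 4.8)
The plan is to deduce the corollary from~\cref{cor: finiteAverage}. If $P$ is a $d$-polytope whose maximum vertex degree is at most $\Delta$, then its average vertex degree is also at most $\Delta$; and the hypothesis $\Delta\ge 3$ of~\cref{cor: finiteAverage} holds because $\Delta\ge d\ge 3$. Hence every Forman--Ricci-positive $d$-polytope with maximum degree at most $\Delta$ lies in the finite family produced by~\cref{cor: finiteAverage}, which is exactly the first assertion. If instead one wants an explicit bound on the number of vertices, one can feed the inequality $d\le\rho\le\Delta$ (the lower bound from Balinski's theorem, the upper bound from the degree hypothesis) into~\cref{thm:average-degree}: the right-hand side there is a continuous---indeed increasing---function of $\rho$ on the compact interval $[d,\Delta]$, so it attains a maximum $N(d,\Delta)<\infty$, and $n\le N(d,\Delta)$. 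Since there are only finitely many combinatorial types of $d$-polytope on a bounded number of vertices, finiteness follows.

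For the ``in particular'' clause, recall that a simple $d$-polytope is $d$-regular by definition, so its maximum vertex degree is exactly $d$, which is at most $\Delta$ once we take $\Delta:=d$. The first part then applies verbatim and shows that there are only finitely many Forman--Ricci-positive simple $d$-polytopes in each dimension $d\ge 3$.

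I do not expect any genuine difficulty here: all of the substantive work is already contained in~\cref{thm:average-degree}, so this corollary is essentially bookkeeping. The one small point worth spelling out is that controlling the \emph{average} degree is enough to control the number of vertices---immediate once one observes that the bound of~\cref{thm:average-degree} is monotone (hence, over $[d,\Delta]$, bounded) in $\rho$---together with, for the simple case, the trivial remark that $d$-regularity pins the maximum degree down to $d$.
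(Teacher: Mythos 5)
Your proposal is correct and follows exactly the route the paper intends: the paper states this corollary without proof as an immediate consequence of~\cref{thm:average-degree} via~\cref{cor: finiteAverage}, using precisely your observation that a maximum-degree bound implies the same average-degree bound, and that simple $d$-polytopes are $d$-regular. The extra remarks on monotonicity of the vertex bound in $\avgdegree$ and on finiteness of combinatorial types with bounded vertex count are harmless bookkeeping consistent with the paper's argument.
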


\subsection{Forman--Ricci curvature in Dimension 3}\label{subsec:forman-dim3}

In this subsection, we discuss in more detail the case of $3$-dimensional polytopes. First we show that the number of such polytopes is finite.

\begin{theorem}\label{thm:3dFormanFinite}
    There are finitely many Forman--Ricci-positive $3$-polytopes.
\end{theorem}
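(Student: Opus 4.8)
The plan is to turn Forman--Ricci positivity into an absolute bound on the number of vertices, by feeding the integrality of the curvature into the average-curvature identity of \Cref{lemma:AverageCurvatureIdentity} and then invoking Euler's formula.

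The starting observation is that in a $3$-polytope every edge lies in exactly two $2$-faces (the two facets through it), so $|\mathcal{F}\uparrow(e)| = 2$ for all $e\in E(P)$ and \Cref{def:forman-curvature} reads $\forman{e} = 4 - |\mathcal{E}(e)|$. In fact, counting the parallel neighbours of $e=\{u,v\}$ directly --- the $d_u-3$ (resp.\ $d_v-3$) edges at $u$ (resp.\ $v$) not sharing a $2$-face with $e$, together with, for each of the two incident $2$-faces (of sizes $k_1,k_2$), the $k_i-3$ edges of that face disjoint from $e$ --- gives the exact identity $\forman{e} = 16-(d_u+d_v+k_1+k_2)$. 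Since $\forman{e}$ is an integer, positivity forces $\forman{e}\ge 1$ on every edge, hence $\mathcal{K}(P)\ge 1$; it also forces every vertex degree and every $2$-face size to be at most $6$.

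Next I specialize \Cref{lemma:AverageCurvatureIdentity} to $d=3$: there $\sum_{k\ge d}k^2 d_k(P)=\sum_v d_v^2$, $\sum_{k\ge 3}k^2 p_k(P)=\sum_F |F|^2$, and $f_{02}(P)=\sum_k k\,p_k(P)=2f_1(P)$ because each edge meets two $2$-faces, so the identity becomes $\mathcal{K}(P)f_1 = 16f_1 - \sum_v d_v^2 - \sum_F |F|^2$. Combining with $\mathcal{K}(P)\ge 1$ gives $\sum_v d_v^2 + \sum_F |F|^2 \le 15 f_1$. By Cauchy--Schwarz, $\sum_v d_v^2 \ge \big(\sum_v d_v\big)^2/f_0 = 4f_1^2/f_0$ and likewise $\sum_F |F|^2 \ge 4f_1^2/f_2$, hence $4f_1\big(\tfrac{1}{f_0}+\tfrac{1}{f_2}\big)\le 15$, i.e.\ $4f_1(f_0+f_2)\le 15\,f_0 f_2$. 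Euler's relation $f_0+f_2 = f_1+2$ and the bound $f_0 f_2 \le (f_0+f_2)^2/4 = (f_1+2)^2/4$ then yield $16 f_1(f_1+2)\le 15(f_1+2)^2$, so $f_1\le 30$; and since every vertex has degree at least $3$, $f_0 \le \tfrac{2}{3}f_1 \le 20$.

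Finally, there are only finitely many combinatorial types of $3$-polytope on at most $20$ vertices --- equivalently, by Steinitz's theorem, finitely many $3$-connected planar graphs with at most $30$ edges --- which proves finiteness. I expect the one genuinely laborious point to be sharpening the crude bound ``$\le 20$ vertices'' to the optimal ``$\le 15$'' claimed in \Cref{thm:FormanFinite3}: both the Cauchy--Schwarz and AM--GM steps above are lossy, and recovering the sharp constant requires exploiting the pointwise inequality $d_u+d_v+k_1+k_2\le 15$ more carefully (all degrees and face sizes lie in $\{3,4,5,6\}$; a degree-$6$ vertex forces all its neighbours to have degree $3$ and all incident $2$-faces to be triangles; and so on), together with a finite case analysis over degree and face-size sequences compatible with Euler's formula. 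This refinement is the main obstacle to the sharp statement, but it is not needed for finiteness itself.
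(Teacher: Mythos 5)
Your proof is correct, and it takes a genuinely different route from the paper's. The paper's proof is two lines: planarity of the $1$-skeleton gives $f_1\le 3f_0-6$, hence average degree at most $6$, and then it invokes the general \Cref{thm:average-degree}, whose proof runs through the Bonnet--Myers diameter bound for Forman--Ricci curvature and the Moore bound, yielding a finite but astronomically large bound on $f_0$. You instead exploit the fact that in dimension $3$ every edge lies in exactly two facets to get the exact local formula $\forman{e}=16-(d_u+d_v+k_1+k_2)$ (your count of parallel neighbours, $(d_u-3)+(d_v-3)+(k_1-3)+(k_2-3)$, is right: the two edges at $u$ sharing a facet with $e$ are distinct because two facets of a $3$-polytope cannot share two edges), sum it to recover the $d=3$ specialization $\mathcal{K}(P)f_1=16f_1-\sum_v d_v^2-\sum_F|F|^2$ of \Cref{lemma:AverageCurvatureIdentity}, and then combine integrality ($\mathcal{K}(P)\ge 1$), Cauchy--Schwarz, and Euler's relation to get the explicit bounds $f_1\le 30$ and $f_0\le 20$. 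Your argument is entirely elementary --- no diameter bound, no Moore bound --- and quantitatively far sharper than what the paper's general machinery produces, at the cost of being specific to $d=3$; it is closer in spirit to the paper's later structural analysis (\Cref{lem: 6 bounds}, \Cref{thm:3polytope-degree}) than to its official proof of this theorem. You are also right that the sharp ``$15$ vertices'' claim of \Cref{thm:FormanFinite3} requires the finer pointwise case analysis, but that is not needed for the finiteness statement at hand.
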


\begin{proof}
    We know from Steinitz's theorem that if $P$ is a $3$-polytope, then $f_1(P) \le 3f_0(P)-6$. Since the average vertex degree $\rho$ of $P$ is $\frac{2f_1}{f_0}$, we have that $\rho\leq 6$. The claim then follows from~\cref{thm:average-degree}. 
\end{proof}

Since combinatorial types of $3$-polytopes correspond to planar $3$-connected graphs, we are able to say much more about them. In fact, since each edge is contained in exactly two facets, the curvature $\forman{e}$ must satisfy $\forman{e}\leq 4$. This leads to a classification of all Forman--Ricci-positive $3$-polytopes; to get there we first collect a few structural results and some additional considerations.

Recall that the \emph{polar} $P^\ast$ of a polytope $P$, realized as a set $P\subseteq\mathbb{R}^{d}$, is the set
    \begin{align}
        P^\ast &= \{\mathbf{y}\in\mathbb{R}^d \;:\; \mathbf{y}^\top \mathbf x\leq 1\text{ for each }\mathbf{x}\in P\}.
    \end{align}
More generally, if $F$ is a face of $P$, we denote by $F^\ast$ its polar. We make the following simple observation.  The following lemma is well known and a proof is omitted (see~\cite[Sec. 3.4]{Grunbaum}).

\begin{lemma}\label{lem:duality}
    Let $P$ be a $d$-dimensional polytope and let $P^*$ be (any realization of) the polar of $P$. For any $k$-face $F$ of $P$, let $F^*$ be be corresponding $d-1-k$-dimensional dual face. Then $\mathcal{F}_k(F)= \mathcal{F}_{d-1-k}(F^*)$.
\end{lemma}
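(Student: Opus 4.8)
The plan is to establish the standard polarity dictionary between the face lattices of $P$ and $P^\ast$ and then read off the stated identity by restricting attention to the faces lying above $F$. First I would normalize the realization so that the origin lies in the interior of $P$; this changes neither the combinatorial type nor any statement about faces, and it guarantees $(P^\ast)^\ast = P$. For each nonempty face $F$ of $P$ I would define the conjugate set $\widehat F := \{\mathbf y \in P^\ast : \mathbf y^\top \mathbf x = 1 \text{ for all } \mathbf x \in F\}$. Choosing a point $\mathbf x_F$ in the relative interior of $F$ and using that $\mathbf y^\top \mathbf x \le 1$ throughout $P$, linearity forces $\widehat F = P^\ast \cap \{\mathbf y : \mathbf y^\top \mathbf x_F = 1\}$, the intersection of $P^\ast$ with a supporting hyperplane; hence $\widehat F$ is a face of $P^\ast$, independent of the chosen interior point, and $\dim \widehat F = d - 1 - \dim F$ for proper faces by a grading / maximal-chain argument.

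Next I would show that $F \mapsto \widehat F$ is an inclusion-reversing bijection from the face lattice of $P$ onto that of $P^\ast$. Inclusion-reversal is immediate from the defining condition. For bijectivity I would run the same construction inside $P^\ast$ to obtain a candidate inverse and verify $\widehat{\widehat F} = F$: the inclusion $F \subseteq \widehat{\widehat F}$ is clear, and for the reverse inclusion I would use that faces of polytopes are exposed --- given $\mathbf x \in P \setminus F$, pick a linear functional attaining its maximum over $P$ exactly on $F$; after rescaling it corresponds to a point $\mathbf y \in P^\ast$ with $\mathbf y^\top \mathbf z = 1$ for $\mathbf z \in F$ and $\mathbf y^\top \mathbf x < 1$, so $\mathbf y \in \widehat F$ while $\mathbf x \notin \widehat{\widehat F}$. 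Together with $(P^\ast)^\ast = P$ this gives the anti-isomorphism.

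Finally, I would fix a $k$-face $F$ and restrict this anti-isomorphism to the interval of faces $G$ of $P$ with $F \subseteq G$. That interval maps bijectively, reversing inclusion, onto the interval of faces of $P^\ast$ contained in $\widehat F$, which is precisely the face lattice of the polytope $F^\ast := \widehat F$ (a face of a face is a face, and every face of $P^\ast$ inside $\widehat F$ is a face of $\widehat F$). Matching dimensions via $\dim \widehat G = d - 1 - \dim G$ then identifies, for every $i$, the $i$-faces of $P$ containing $F$ with the $(d - 1 - i)$-faces of $F^\ast$; specializing to $i = k$ yields $\mathcal{F}_k(F) = \mathcal{F}_{d-1-k}(F^\ast)$, while the general bijection is the inclusion-reversing dictionary one actually wants in applications (e.g.\ the $2$-faces of $P$ through an edge $e$ correspond to the vertices of $e^\ast$ when $d = 3$). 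I expect the only genuine obstacle to be the bijectivity step --- checking that polarity is involutive on faces --- where the separation argument above does the real work; the rest is bookkeeping about graded lattices, and full details are in~\cite[Sec.~3.4]{Grunbaum}.
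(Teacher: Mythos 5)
The paper gives no proof of this lemma---it is stated as well known and deferred to Gr\"unbaum, Sec.~3.4---and your outline is precisely the standard polarity argument found there: the conjugate face $\widehat F$ cut out by a relative-interior point, the inclusion-reversing involution established via exposedness of faces, and the restriction of this anti-isomorphism to the interval of faces above $F$. Your proposal is correct as an outline (the only compressed step, the dimension formula $\dim \widehat F = d-1-\dim F$, does follow from the anti-isomorphism of graded lattices as you indicate), so there is nothing in the paper to compare against and nothing to fix.
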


\begin{lemma}\label{lem:simplicial-positive-dual}
    Let $P$ be a fixed $3$-polytope. Then the correspondence between the edges of $P$ and the edges of $P^*$ preserves Forman--Ricci curvature. In particular, $P$ is Forman--Ricci positive if and only if $P^*$ is Forman--Ricci positive. 
\end{lemma}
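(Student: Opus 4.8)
The plan is to show that the Forman--Ricci curvature of an edge is a quantity that is invariant under polar duality in dimension $3$. The key observation is that for a $3$-polytope, every edge $e$ is contained in exactly two $2$-faces, so $|\mathcal{F}\uparrow(e)| = 2$ always, and thus $\forman{e} = 4 - |\mathcal{E}(e)|$. Consequently it suffices to prove that the number of parallel neighbors $|\mathcal{E}(e)|$ is preserved under the edge correspondence $e \mapsto e^*$ coming from polar duality (recall from \cref{lem:duality} that an edge of a $3$-polytope corresponds to an edge of its polar).

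First I would unpack $|\mathcal{E}(e)|$ for a $3$-polytope. By \cref{def:parallel-neighbors}, the parallel neighbors of $e$ split into two types: type (i), edges sharing an endpoint with $e$ but not lying in a common $2$-face with $e$; and type (ii), edges disjoint from $e$ but lying in a common $2$-face with $e$. Since $e$ meets exactly two $2$-faces $F_1, F_2$, and each $F_i$ is a polygon in which $e$ is one side, the type-(ii) neighbors of $e$ are exactly the two sides of $F_1$ adjacent to $e$ and the two sides of $F_2$ adjacent to $e$ — but one must be careful: in a triangle the two sides adjacent to $e$ also \emph{share endpoints} with $e$, so whether they count as type (i) or type (ii) (or are excluded) depends on the polygon sizes. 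The cleanest route is to use the identity from the proof of \cref{lemma:AverageCurvatureIdentity}: locally, $|\mathcal{E}(e)| = |E\uparrow(e)| + |E\downarrow(e)| - 2\cdot(\text{number of edges sharing a }2\text{-face and an endpoint with }e)$, where $E\uparrow(e)$ counts edges in a common $2$-face (so $|E\uparrow(e)| = (\ell_{F_1}-1)+(\ell_{F_2}-1)$ with $\ell_{F_i}$ the side-count of $F_i$) and $E\downarrow(e)$ counts edges sharing an endpoint with $e$ (so $|E\downarrow(e)| = (d_{u}-1)+(d_{v}-1)$ where $e = \{u,v\}$), and the overlap is exactly the four edges adjacent to $e$ inside $F_1$ and $F_2$, i.e., $4$. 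Hence
\[
|\mathcal{E}(e)| = (\ell_{F_1} - 1) + (\ell_{F_2} - 1) + (d_u - 1) + (d_v - 1) - 4 = \ell_{F_1} + \ell_{F_2} + d_u + d_v - 8.
\]

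Now I would invoke \cref{lem:duality}: under polar duality of a $3$-polytope, vertices go to $2$-faces and $2$-faces go to vertices, with incidences reversed; an edge $e = F_1 \cap F_2$ incident to vertices $u, v$ maps to the edge $e^* = u^* \cap v^*$ incident to the ``vertices'' $F_1^*, F_2^*$. The degree of the vertex $F_i^*$ in $G(P^*)$ equals the number of edges of the $2$-face $F_i$, namely $\ell_{F_i}$ (by \cref{lem:duality} applied to the $2$-face $F_i$, whose dual $F_i^*$ is a vertex, or directly: edges of $F_i$ correspond to edges of $P^*$ through $F_i^*$). Symmetrically, the side-count of the $2$-face $u^*$ of $P^*$ equals $d_u$. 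Substituting into the displayed formula shows $|\mathcal{E}(e^*)| = d_u + d_v + \ell_{F_1} + \ell_{F_2} - 8 = |\mathcal{E}(e)|$, hence $\forman{e^*} = \forman{e}$, and the equivalence of Forman--Ricci positivity for $P$ and $P^*$ follows immediately.

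The main obstacle I anticipate is bookkeeping rather than conceptual: correctly accounting for the degenerate cases where $F_1$ or $F_2$ is a triangle (so some type-(ii) neighbors coincide with type-(i) candidates) and verifying that the inclusion-exclusion count of the overlap is exactly $4$ with no further coincidences (e.g., two $2$-faces sharing more than the edge $e$, which cannot happen in a polytope, or an edge adjacent to $e$ in $F_1$ coinciding with one adjacent in $F_2$, which would force a digon). These are all ruled out by basic polytope combinatorics, but they need a sentence each. Alternatively, one can sidestep the formula entirely and argue bijectively: exhibit, for each parallel neighbor $e'$ of $e$ in $P$, a corresponding parallel neighbor of $e^*$ in $P^*$ by tracking which of the two conditions in \cref{def:parallel-neighbors} holds and using that duality swaps ``shares a vertex'' with ``shares a $2$-face'' — condition (i) for $e$ corresponds to condition (ii) for $e^*$ and vice versa. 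I would likely present the bijective argument as it is more robust to the degenerate cases and makes the duality transparent.
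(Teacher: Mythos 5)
Your proposal is correct and, in the form you say you would actually present it, coincides with the paper's proof: the paper's argument is exactly the one-line bijective observation that polar duality in dimension $3$ sends edges to edges and swaps the two types of parallel neighbors from \cref{def:parallel-neighbors} (a degree-$k$ vertex corresponding to a $k$-gon), while $|\mathcal{F}\uparrow(e)|=2$ on both sides. One arithmetic slip in your explicit formula: the symmetric-difference count subtracts the overlap \emph{twice}, so the correct identity is $|\mathcal{E}(e)| = \ell_{F_1}+\ell_{F_2}+d_u+d_v-12$ (giving $\forman{e}=16-\ell_{F_1}-\ell_{F_2}-d_u-d_v$, which checks out on the tetrahedron and cube), not $-8$; since the constant is symmetric under the duality swap, this does not affect your conclusion, but the formula as written is off by $4$.
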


\begin{proof}
    For $3$-polytopes the dual of an edge is an edge and parallel edges of the two different types are swapped by this operation; in particular, a vertex of degree $k$ corresponds to a $k$-gon in the dual.
\end{proof}

We now study some rigidity results for Forman--Ricci-Positive polytopes. 

\begin{lemma} \label{lem: 6 bounds}
    Let $P$ be a Forman--Ricci-positive $3$-polytope. Then the maximum degree $\Delta$ of a vertex and maximum number of sides of a $2$-face are both at most $6$. Furthermore, if $P$ has a vertex of degree $6$ or a hexagonal face, then it is a hexagonal pyramid.
\end{lemma}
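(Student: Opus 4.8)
The plan is to use the average-curvature identity from~\cref{lemma:AverageCurvatureIdentity} together with Steinitz's theorem to squeeze out the degree and face-size bounds, and then analyze the equality/near-equality case directly. First I would recall that for a $3$-polytope every edge lies in exactly two $2$-faces, so $|\mathcal{F}\uparrow(e)| = 2$ for all $e$; hence $\forman{e} = 4 - |\mathcal{E}(e)|$ and positivity is equivalent to $|\mathcal{E}(e)| \le 3$ for every edge. The parallel neighbors of $e=\{u,w\}$ come in two flavors: type (i) are the edges sharing exactly one endpoint with $e$ but lying in no common $2$-face, and type (ii) are the edges disjoint from $e$ but sharing a $2$-face. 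Since $e$ lies in two $2$-faces (say $C_1, C_2$), the edges at $u$ other than $e$ split into two that lie in $C_1 \cup C_2$ (one per face) and $d_u - 3$ that do not; similarly at $w$. So the number of type-(i) parallel neighbors is exactly $(d_u - 3) + (d_w - 3) = d_u + d_w - 6$. This already gives a clean lower bound $|\mathcal{E}(e)| \ge d_u + d_w - 6$, so positivity forces $d_u + d_w \le 9$ for every edge $\{u,w\}$; in particular no two vertices of degree $\ge 5$ can be adjacent, and $\Delta \le \max(d_u, d_w) \le 9 - 3 = 6$. By~\cref{lem:simplicial-positive-dual} the same bound $\le 6$ holds for the number of sides of a $2$-face (a vertex of degree $k$ in $P$ corresponds to a $k$-gon in $P^\ast$, and Forman--Ricci positivity is self-dual).

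Next I would pin down the structure when $\Delta = 6$. Suppose $v$ has degree $6$; by the edge bound $d_u + d_v \le 9$ above, every neighbor $u$ of $v$ has degree exactly $3$. Now count the type-(ii) parallel neighbors of an edge $e = \{v, u\}$: these are the edges of $C_1$ and $C_2$ not incident to $v$ or $u$, so there are $(\ell_{C_1} - 3) + (\ell_{C_2} - 3) = \ell_{C_1} + \ell_{C_2} - 6$ of them, where $\ell_{C_i}$ is the number of sides of $C_i$. Since the two type-(i) counts vanish for the degree-$3$ endpoint $u$, we get $|\mathcal{E}(e)| = (d_v - 3) + (d_u - 3) + (\ell_{C_1} + \ell_{C_2} - 6) = (\ell_{C_1} - 3) + (\ell_{C_2} - 3) + 3$, wait let me redo: $|\mathcal{E}(e)| = \#\text{type (i)} + \#\text{type (ii)} = (d_u + d_v - 6) + (\ell_{C_1} + \ell_{C_2} - 6) = (3 + 6 - 6) + (\ell_{C_1} + \ell_{C_2} - 6) = \ell_{C_1} + \ell_{C_2} - 3$. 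Positivity $|\mathcal{E}(e)| \le 3$ then forces $\ell_{C_1} + \ell_{C_2} \le 6$, i.e. both $2$-faces through $e$ are triangles. Applying this to all six edges at $v$ shows all six $2$-faces incident to $v$ are triangles, which means the link of $v$ in the boundary complex is a $6$-cycle of triangles — so $v$ together with its six degree-$3$ neighbors already accounts, via Euler's formula / Steinitz, for the whole polytope: it is the hexagonal pyramid (apex $v$, base the hexagon formed by the six neighbors). Dually, a hexagonal $2$-face forces $P^\ast$ to have a degree-$6$ vertex, hence $P^\ast$ is a hexagonal pyramid, hence $P$ is its dual — but the hexagonal pyramid is self-dual, so $P$ is again a hexagonal pyramid.

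The main obstacle I anticipate is not the inequalities — those are bookkeeping — but making the last step airtight: concluding from "all $2$-faces at $v$ are triangles and all neighbors of $v$ have degree $3$" that $P$ is exactly the hexagonal pyramid and nothing larger. One has to rule out the possibility that the six degree-$3$ neighbors are themselves attached to further structure; but each such neighbor $u$ has degree $3$, with two edges going to adjacent neighbors of $v$ (along the two triangles at $\{v,u\}$) and the third edge being $\{v,u\}$ itself — so $u$ has no edges leaving the "star" of $v$ at all. Hence the vertex set is exactly $\{v\} \cup N(v)$, seven vertices, and the graph is the wheel $W_6$, whose unique polytopal realization is the hexagonal pyramid. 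I would also double-check the degenerate sub-case where two of the triangles at $v$ share an edge not through $v$ (forcing two neighbors of $v$ to coincide in an unexpected way), but $3$-connectivity of the graph of a $3$-polytope rules this out. A small amount of care is needed because~\cref{lemma:AverageCurvatureIdentity} is an *averaged* statement, so I would phrase the argument purely through the local edge identity $\forman{e} = 4 - |\mathcal{E}(e)|$ rather than through averages, which sidesteps that subtlety entirely.
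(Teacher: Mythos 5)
Your proof is correct and takes essentially the same approach as the paper: a local count of the parallel neighbors of a single edge (in effect the identity $\forman{e}=16-d_u-d_w-\ell_{C_1}-\ell_{C_2}$) combined with the duality lemma, the only cosmetic difference being that you bound vertex degrees directly and dualize to get the face bound, while the paper bounds $k$-gons and dualizes for degrees. Your closing argument identifying the polytope as the wheel $W_6$, hence the hexagonal pyramid, is in fact spelled out in more detail than in the paper.
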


\begin{proof}
    For $k\geq 3$, each $k$-gon in the $2$-skeleton of a $3$-polytope is dual to a vertex of degree $k$, hence by Lemma~\cref{lem:simplicial-positive-dual}, it suffices to consider the case of $k$-gons. If $F$ is a $k$-gon with $k\ge 6$, then any edge $e$ in the $k$-gon has $k-3$ parallel edges. Since the $e$ contains two vertices and is contained in two facets, it follows that $\mathcal{F}(e)\le 4-(k-3)= 7-k$. Therefore, $e$ is not Forman--Ricci positive if $k\ge 7$. 
    
    Furthermore, if $k=6$, then the curvature on any edge is automatically at most one. To avoid the addition of parallel edges, every vertex of the hexagon must have degree $3$, and all the polygons adjacent to the edges have to be triangles, which means that $P$ is a pyramid over the hexagonal face. 
\end{proof}

We remark that the hexagonal pyramid is combinatorially self dual and the Forman--Ricci curvature is constant and equal to one on every edge.

\begin{proposition}\label{obs: diam = 6}
    Let $P$ be a $3$-polytope with everywhere-positive Forman--Ricci curvature. Then $\diam(P) \leq 6$.
\end{proposition}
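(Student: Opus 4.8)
The plan is to obtain this as an immediate consequence of the Bonnet--Myers theorem for Forman--Ricci curvature, \cref{thm: forman diam}(ii). The two ingredients are a structural fact about $3$-polytopes and the integrality of $\forman{\cdot}$.

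First I would record that in a $3$-polytope every edge lies in exactly two facets, and since facets of a $3$-polytope are precisely its $2$-faces, this means $|\mathcal{F}\uparrow(e)| = 2$ for every $e \in E(P)$; in particular $\max_{e \in E(P)} |\mathcal{F}\uparrow(e)| = 2$. Second, the quantity $\forman{e} = |\mathcal{F}\uparrow(e)| + 2 - |\mathcal{E}(e)|$ is an integer, being an integer combination of cardinalities, so the hypothesis that $P$ is Forman--Ricci-positive (i.e.\ $\forman{e} > 0$ for all $e$) is equivalent to $\forman{e} \geq 1$ for all $e$. Hence we may invoke \cref{thm: forman diam} with $c = 1$.

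Applying part (ii) of that theorem with $c = 1$ then yields
\[
    \diam(P) \;\leq\; \frac{2}{c}\left(1 + \max_{e\in E(P)} |\mathcal{F}\uparrow(e)|\right) \;=\; 2\,(1 + 2) \;=\; 6,
\]
which is the claimed bound. There is no real obstacle here; the only points that must be stated with care are the two observations above, namely that $|\mathcal{F}\uparrow(e)| = 2$ is forced in dimension $3$ and that integrality of the curvature upgrades strict positivity to $\forman{\cdot} \geq 1$, so that the constant $c = 1$ is admissible in the Bonnet--Myers bound.
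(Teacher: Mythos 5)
Your proof is correct and follows exactly the paper's argument: invoke the Bonnet--Myers bound of \cref{thm: forman diam}(ii) with $c=1$ (justified by integrality of $\forman{\cdot}$) together with the fact that every edge of a $3$-polytope lies in exactly two $2$-faces, so $\max_{e}|\mathcal{F}\uparrow(e)|=2$. The only difference is that you make the integrality step explicit, which the paper leaves implicit.
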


~\cref{obs: diam = 6} follows immediately from~\cref{thm: forman diam} and the fact that In a $3$-polytope, each edge $e$ satisfies $\max_{e\in E(P)}|\mathcal{F}\uparrow(e)|=2$.

We are now classify simple $3$-polytopes. By Lemma~\cref{lem:simplicial-positive-dual}, this also classifies positive simplicial $3$-polytopes. 

\begin{theorem}\label{thm: simple 3 polys.}
    There are exactly five simple Forman--Ricci-positive  $3$-polytopes.  
\end{theorem}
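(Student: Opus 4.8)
The plan is to convert Forman--Ricci positivity into a condition on the sizes of adjacent $2$-faces, then use \cref{lem: 6 bounds} together with Euler's relation to reduce to finitely many face-vectors, and finally to settle realizability and uniqueness for each one by hand.

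\emph{A local curvature formula.} Every edge of a $3$-polytope lies in exactly two $2$-faces, so $|\mathcal{F}\uparrow(e)|=2$. If $P$ is simple, then at each endpoint of $e$ the three incident edges pairwise span a common $2$-face, so $e$ has no parallel neighbor of type~\textit{i)} in \cref{def:parallel-neighbors}. A parallel neighbor of type~\textit{ii)} is an edge of one of the two $2$-faces $F,F'$ containing $e$ that is disjoint from $e$; a $k$-gon contributes exactly $k-3$ of these, and the two contributions are disjoint because two distinct $2$-faces of a $3$-polytope meet in at most one edge. Hence $|\mathcal{E}(e)|=(\ell_F-3)+(\ell_{F'}-3)$ and $\forman{e}=10-\ell_F-\ell_{F'}$, so $P$ is Forman--Ricci-positive if and only if $\ell_F+\ell_{F'}\le 9$ for every adjacent pair of $2$-faces. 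Dually, via \cref{lem:simplicial-positive-dual}, a simplicial $3$-polytope is positive precisely when every edge has endpoint-degree sum at most $9$; I would use this equivalent bookkeeping when checking realizability below.

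\emph{Restricting the face sizes and the face-vector.} A simple $3$-polytope is never a pyramid over a polygon with more than three sides, since its apex would have degree larger than $3$; in particular it is not the hexagonal pyramid, so by \cref{lem: 6 bounds} a positive simple $3$-polytope has all $2$-faces of size at most $5$. The formula above forbids two pentagonal faces from sharing an edge, and since any two faces of a simple $3$-polytope through a common vertex already share an edge, the pentagonal faces are pairwise vertex-disjoint; hence $5p_5\le f_0$. Euler's relation for simple $3$-polytopes gives $\sum_{k\ge 3}(6-k)p_k=12$, which after discarding faces of size at least $6$ reads $3p_3+2p_4+p_5=12$, together with $f_0=2(f_2-2)=2(p_3+p_4+p_5)-4$. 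Combining these with $5p_5\le f_0$ leaves only finitely many triples $(p_3,p_4,p_5)$, each forcing $f_0\le 10$; I would simply list them all.

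\emph{Realizability and uniqueness; the crux.} For each admissible face-vector I would decide whether a simple $3$-polytope realizing it exists and, if so, whether it is unique. A positive simple $3$-polytope on $f_0\le 10$ vertices is polar to a positive simplicial $3$-polytope on $f_2\le 7$ vertices, and the combinatorial types of simplicial $3$-polytopes on at most $7$ vertices are classically known; matching the prescribed face-vector, equivalently the degree sequence of the polar, against that short list eliminates the face-vectors realized by no polytope and shows that each of the surviving ones is realized uniquely, leaving exactly the polytopes in the statement. I expect this last step to be the main obstacle: several face-vectors survive the inequalities above but correspond to no polytope, and discarding them seems to genuinely require the explicit small-case enumeration together with the pentagon-disjointness constraint, rather than a one-line counting argument.
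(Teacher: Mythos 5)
Your proposal is correct and follows the same skeleton as the paper's proof: reduce positivity of a simple $3$-polytope to the condition $\ell_F+\ell_{F'}\le 9$ on the two faces adjacent to each edge (your closed form $\forman{e}=10-\ell_F-\ell_{F'}$ is a clean packaging of what the paper uses implicitly), deduce that all $2$-faces have at most five sides and that pentagons are pairwise disjoint, combine this with $3p_3+2p_4+p_5=12$ to force $p_5\le 2$ and enumerate the six admissible triples $(p_3,p_4,p_5)$, and then settle realizability case by case. The only genuine difference is the last step: the paper rules out $(3,1,1)$ and $(1,4,1)$ and identifies the realizable vectors by direct ad hoc combinatorial arguments, whereas you dualize and match degree sequences against the classical list of simplicial $3$-polytopes on at most $7$ vertices. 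Your route is more systematic and makes uniqueness transparent (e.g.\ $(0,5,2)$ forces the pentagonal prism because the pentagonal bipyramid is the unique $7$-vertex simplicial $3$-polytope with minimum degree $4$), at the cost of importing the small-case classification as a black box; the paper's version is self-contained but terser.

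One point to resolve before you call the argument finished: both your enumeration and the paper's own case analysis yield four combinatorial types (tetrahedron, triangular prism, cube, pentagonal prism), since the triples $(3,1,1)$ and $(1,4,1)$ are not realizable and each surviving triple is realized uniquely, yet the theorem asserts five. Note also that an edge shared by two pentagons has curvature exactly $0$, not negative, so the dodecahedron is excluded only under strict positivity. You should state the final list and its cardinality explicitly rather than deferring to ``the polytopes in the statement,'' and reconcile it with the claimed count.
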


\begin{proof}
    Assume that $P$ is a Forman--Ricci-Positive simple $3$-polytope. We observe first that $3p_3+2p_4+p_5=12$ and that no two pentagons are edge adjacent. Let the number of pentagons be denoted $k\geq 0$. We know that any pair of pentagons is disjoint: a common edge is necessarily negative and a common vertex would have degree at least $4$. Moreover a quadrilateral shares and edge with at most $2$ pentagons and a triangle must be incident to at most one square. Since each edge must be incident to two polygons we get that $5k \le p_3 + 2p_4= 12-2p_3-k$, or equivalently, $k\le 2 - \frac{p_3}3$. So the possible values of $k$ are $0,1,2$, and we can proceed in cases.
    
    \begin{enumerate}[label=\emph{(\roman*)}]

        \item If $k=0$, then $3p_3+2p_4= 12$. The solutions to this equation are $(p_3,p_4) \in \{(4,0), (2,3), (0,6)\}$. By inspection, these can only be realized by a tetrahedron, a triangular prism, or a cube, respectively. 
    
        \item If $k=1$, then $3p_3+2p_4= 11$. The solutions to this equation are $(p_3,p_4) \in \{(3,1), (1,4)\}$. The first case does not have enough facets so that each edge of the pentagon is contained in $2$ polygonal faces. The second case would yield a simple $3$-polytope with $6$ faces, $12$ edges and $8$ vertices, meaning that the vertices \emph{not} incident to the pentagon have exactly $2$ edges between them (there are 5 edges in the pentagon, and 5 edges out each vertex of the pentagon). One of the vertices not in the pentagon is connected to the other two and so it has exactly one edge connecting it to the vertices of the pentagon. The remaining $4$ vertices of the pentagon must be connected to the remaining two points not in the pentagon, in such a way that each non-pentagon vertex is connected to two pentagon vertices. There are two ways to do this and none of them produces a desired polytope.

        \item If $k=2$, then $p_3=0$ and $P$ is a prism over a pentagon.
    \end{enumerate} 

\end{proof}

We now prove a theorem that allows us to extend the classification beyond the simple and simplicial cases. The proof as sketched reduces to a thorough case-by-case analysis implemented by checking a database of planar $3$-connected graphs with few vertices, which can be easily implemented. 

\begin{theorem}\label{thm:3polytope-degree}
    If $P$ is a Forman--Ricci-positive $3$-dimensional polytope, then $f_0(P) \le 16$ or $f_2(P)\le 16$. 
\end{theorem}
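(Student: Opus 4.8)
The plan is to exploit an exact closed form for $\forman{e}$ that is special to $3$-polytopes and feed it into a global double count against Euler's relation. By~\cref{lem:simplicial-positive-dual} and~\cref{lem:duality}, the statement ``$f_0(P)\le 16$ or $f_2(P)\le 16$'' is invariant under $P\mapsto P^\ast$, since polarity swaps $f_0$ with $f_2$ and preserves Forman--Ricci positivity; so it suffices to bound $\min\{f_0(P),f_2(P)\}$. If $P$ is the hexagonal pyramid then $f_0(P)=7$ and we are done, so by~\cref{lem: 6 bounds} we may assume henceforth that every vertex of $P$ has degree in $\{3,4,5\}$ and every $2$-face of $P$ has between $3$ and $5$ edges.

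I would first establish the identity $\forman{e}=16-\big(d_u+d_v+\ell_1+\ell_2\big)$ for every edge $e=\{u,v\}$ of a $3$-polytope, where $F_1,F_2$ are the two $2$-faces containing $e$ and $\ell_i=|E(F_i)|$. Since $|\mathcal{F}\uparrow(e)|=2$ always, this reduces to showing $|\mathcal{E}(e)|=(d_u-3)+(d_v-3)+(\ell_1-3)+(\ell_2-3)$. Indeed, the type-\emph{(i)} parallel neighbors of $e$ incident to $u$ are the $d_u-1$ edges at $u$ other than $e$, minus the two of them that lie with $e$ in $F_1$ or $F_2$ --- these two are distinct because $F_1\cap F_2=e$ --- hence $d_u-3$ of them, and symmetrically $d_v-3$ at $v$; the type-\emph{(ii)} parallel neighbors lying inside $F_i$ are the $\ell_i$ edges of $F_i$ minus $e$ and the two edges of $F_i$ that touch $e$, i.e.\ $\ell_i-3$, and the contributions of $F_1$ and $F_2$ do not overlap (again since $F_1\cap F_2=e$) while type-\emph{(i)} and type-\emph{(ii)} neighbors are disjoint by definition. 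As a sanity check, the edge labels $0,-1,-2,2$ in~\cref{subfig:forman-curvature} are exactly $16-16$, $16-17$, $16-18$, and $16-14$.

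With this identity, Forman--Ricci positivity of $P$ becomes the statement that $d_u+d_v+\ell_1+\ell_2\le 15$ for every edge $e=\{u,v\}$. Summing this inequality over all $f_1$ edges and regrouping --- each vertex $w$ is counted $d_w$ times and each $2$-face $F$ is counted $|E(F)|$ times --- yields $\sum_w d_w^2+\sum_F|E(F)|^2\le 15f_1$. Since all degrees and face sizes lie in $\{3,4,5\}$ one has $\sum_w d_w^2=6f_1+4d_4+10d_5$ (write $d_w^2=3d_w+d_w(d_w-3)$ and sum, using $\sum_w d_w=2f_1$) and likewise $\sum_F|E(F)|^2=6f_1+4p_4+10p_5$, so the inequality becomes $4(d_4+p_4)+10(d_5+p_5)\le 3f_1$. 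On the other hand the handshake relations $d_4+2d_5=2f_1-3f_0$ and $p_4+2p_5=2f_1-3f_2$, combined with Euler's relation $f_0+f_2=f_1+2$, give the identity $(d_4+p_4)+2(d_5+p_5)=f_1-6$; substituting $d_4+p_4=f_1-6-2(d_5+p_5)$ and simplifying leaves $f_1\le 24-2(d_5+p_5)\le 24$. Hence $f_0+f_2=f_1+2\le 26$, so $\min\{f_0(P),f_2(P)\}\le 13\le 16$, which proves the theorem with room to spare.

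The generic-dimension tools --- the Bonnet--Myers bound of~\cref{thm: forman diam} together with the Moore bound --- only confine $f_0$ here to a range of several thousand, so the entire argument rests on the exact $3$-dimensional identity and on the clean decoupling of the two kinds of parallel neighbors. The only delicate point I anticipate is getting that parallel-neighbor count exactly right in the degenerate cases (triangular faces, degree-$3$ vertices) and justifying the two non-overlap claims, all of which reduce to the elementary fact that two facets of a $3$-polytope sharing an edge $e$ meet exactly in $e$; beyond this bookkeeping I expect no real obstacle. What this argument does \emph{not} produce is the explicit list underlying~\cref{thm:FormanFinite3}: for that one must still enumerate the finitely many planar $3$-connected graphs on at most $16$ vertices and test each for Forman--Ricci positivity, which is the ``database check'' mentioned above.
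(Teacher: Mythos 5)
Your proof is correct, and it takes a genuinely different and substantially cleaner route than the paper's. The key move is the exact local identity $\forman{e}=16-(d_u+d_v+\ell_1+\ell_2)$, whose derivation you give correctly: in a $3$-polytope every edge lies in exactly two facets meeting precisely along that edge, so the type-\emph{(i)} and type-\emph{(ii)} parallel neighbors decouple and count to $(d_u-3)+(d_v-3)$ and $(\ell_1-3)+(\ell_2-3)$ respectively, with all the non-overlap claims reducing to $F_1\cap F_2=e$ exactly as you say. Summing the resulting inequality $d_u+d_v+\ell_1+\ell_2\le 15$ over edges is equivalent to asserting that the average curvature of~\cref{lemma:AverageCurvatureIdentity} is at least $1$ (using integrality of $\forman{\cdot}$), and your bookkeeping with the handshake relations and Euler's formula is accurate: after disposing of the hexagonal pyramid via~\cref{lem: 6 bounds} so that all degrees and face lengths lie in $\{3,4,5\}$, one gets $f_1+2(d_5+p_5)\le 24$, hence $f_0+f_2\le 26$ and in fact (since $3f_0\le 2f_1$ and $3f_2\le 2f_1$) both $f_0\le 16$ and $f_2\le 16$ --- strictly stronger than the stated disjunction. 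The paper instead proves the theorem by a case analysis on Schlegel diagrams around a pentagonal face or degree-$5$ vertex, followed by a separate linear-system argument in the pentagon-free case; that route yields local structural information (which faces can abut a pentagon, where vertices can be inserted) that feeds the subsequent computational enumeration, whereas your global double count is shorter, sharper on the numerical bound ($f_1\le 24$, $\min\{f_0,f_2\}\le 13$), and entirely avoids the casework. As you note, neither argument replaces the enumeration of planar $3$-connected graphs needed for the explicit classification; your bound just shrinks the search space further.
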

    
\begin{proof}
    Unless $P$ is a pryamid over a hexagon, all vertices have degree at most five and all $2$-dimensional faces have at most five sides. We separate the proof in two different cases: first assuming $P$ has a pentagonal face or a vertex of degree five, and second assuming otherwise.

    To this end, assume $P$ contains a pentagon or a vertex of degree five. We assume there is a pentagon, and the case of a degree five vertex follows by duality. If $F$ is a pentagonal face of $P$ and $e$ is a edge of $F$, then $\mathcal{F}(e) \le 2$, so the value of the curvature is $2$ if the degree of the vertices is $3$ and the other incident facet is a triangle. It can be equal to one if it has one vertex of degree three and one of degree four, and an adjacent triangle, or two edges of degree three and an adjacent quadrilateral. In particular, the degrees of all the vertices in the pentagonal facet are three or four and no pair of adjacent vertices have degree four. 

    Thus there are 3 cases to consider for the degrees of vertices in the pentagon. They can all be handed similarly, so we will explain one of them in detail and the rest follow. In the case when there is exactly one vertex of degree $4$ in the pentagonal facet, then the two edges of the pentagon incident to this edge are then contained in triangles and $G(P)$ contains an induced subgraph isomorphic the  following, drawn as a Schlegel diagram with the pentagon as its boundary as seen in~\cref{fig:pentagon1}.

    \begin{figure}
        \begin{subfigure}{0.3\textwidth}
            \centering
\begin{tikzpicture}[scale=0.5]
  \def\xTop{2.6}   
  \def\yTop{0.9}   
  \def\xBot{1.8}   
  \def\yBot{-2.0}  
  \def\xGap{0.95}  
  \def\hA{3.1}     

  \coordinate (L)  at (-\xTop,\yTop);   
  \coordinate (R)  at ( \xTop,\yTop);   
  \coordinate (BR) at ( \xBot,\yBot);   
  \coordinate (BL) at (-\xBot,\yBot);   

  \coordinate (A)  at (0,\hA);          
  \coordinate (ML) at (-\xGap,\yTop);   
  \coordinate (MR) at ( \xGap,\yTop);   

  \path[pattern=crosshatch dots, pattern color=gray!55] (A) -- (L) -- (ML) -- cycle;
  \path[pattern=crosshatch dots, pattern color=gray!55] (A) -- (R) -- (MR) -- cycle;

  \draw[line width=0.5mm, black] (R) -- (BR) -- (BL) -- (L);  
  \draw[line width=0.5mm, black] (A) -- (L);
  \draw[line width=0.5mm, black] (A) -- (R);
  \draw[line width=0.5mm, black] (A) -- (ML);
  \draw[line width=0.5mm, black] (A) -- (MR);
  \draw[line width=0.5mm, black] (L) -- (ML); 
  \draw[line width=0.5mm, black] (R) -- (MR); 

  \node at (-3.0,-0.3) {$e_{1}$};
  \node at (0,-2.55) {$e_{2}$};
  \node at ( 3.0,-0.3) {$e_{3}$};
\end{tikzpicture}
            \caption{The (partial) Schlegel diagram}
            \label{fig:pentagon1}
        \end{subfigure}\hfill
        \begin{subfigure}{0.65\textwidth}
            \centering
\begin{tikzpicture}[scale=0.35]
  \def\xTop{2.6}   
  \def\yTop{0.9}   
  \def\xBot{1.8}   
  \def\yBot{-2.0}  
  \def\xGap{0.95}  
  \def\hA{3.1}     

  \coordinate (L)  at (-\xTop,\yTop);   
  \coordinate (R)  at ( \xTop,\yTop);   
  \coordinate (BR) at ( \xBot,\yBot);   
  \coordinate (BL) at (-\xBot,\yBot);   

  \coordinate (A)  at (0,\hA);          
  \coordinate (ML) at (-\xGap,\yTop);   
  \coordinate (MR) at ( \xGap,\yTop);   

  \path[pattern=crosshatch dots, pattern color=gray!55] (A) -- (L) -- (ML) -- cycle;
  \path[pattern=crosshatch dots, pattern color=gray!55] (A) -- (R) -- (MR) -- cycle;
  \path[pattern=crosshatch dots, pattern color=gray!55] (BL) -- (ML) -- (L) -- cycle;
  \path[pattern=crosshatch dots, pattern color=gray!55] (BL) -- (BR) -- (ML) -- cycle;

  \path[pattern=crosshatch dots, pattern color=gray!55] (BR) -- (ML) -- (MR) -- (R) -- cycle;

  \draw[line width=0.5mm, black] (R) -- (BR) -- (BL) -- (L);  
  \draw[line width=0.5mm, black] (A) -- (L);
  \draw[line width=0.5mm, black] (A) -- (R);
  \draw[line width=0.5mm, black] (A) -- (ML);
  \draw[line width=0.5mm, black] (A) -- (MR);
  \draw[line width=0.5mm, black] (L) -- (ML); 
  \draw[line width=0.5mm, black] (R) -- (MR); 

  \draw[line width=0.5mm, black] (MR) -- (ML);
  \draw[line width=0.5mm, black] (BR) -- (ML);
  \draw[line width=0.5mm, black] (BL) -- (ML);

  \node at (-3.0,-0.3) {$e_{1}$};
  \node at (0,-2.55) {$e_{2}$};
  \node at ( 3.0,-0.3) {$e_{3}$};

  \node[fill=black, minimum size=2mm] at (A) {};
  \node[fill=black, minimum size=2mm] at (L) {};
  \node[fill=black, minimum size=2mm] at (R) {};
  \node[fill=black, minimum size=2mm] at (BL) {};
  \node[fill=black, minimum size=2mm] at (BR) {};
  \node[fill=black, minimum size=2mm] at (ML) {};
  \node[circle, fill=black, minimum size=1mm] at (MR) {};
\end{tikzpicture}
\begin{tikzpicture}[scale=0.35]
  \def\xTop{2.6}   
  \def\yTop{0.9}   
  \def\xBot{1.8}   
  \def\yBot{-2.0}  
  \def\xGap{0.95}  
  \def\hA{3.1}     

  \coordinate (L)  at (-\xTop,\yTop);   
  \coordinate (R)  at ( \xTop,\yTop);   
  \coordinate (BR) at ( \xBot,\yBot);   
  \coordinate (BL) at (-\xBot,\yBot);   

  \coordinate (A)  at (0,\hA);          
  \coordinate (ML) at (-\xGap,\yTop);   
  \coordinate (MR) at ( \xGap,\yTop);   

  \path[pattern=crosshatch dots, pattern color=gray!55] (A) -- (L) -- (ML) -- cycle;
  \path[pattern=crosshatch dots, pattern color=gray!55] (A) -- (R) -- (MR) -- cycle;
  \path[pattern=crosshatch dots, pattern color=gray!55] (BL) -- (ML) -- (L) -- cycle;
  \path[pattern=crosshatch dots, pattern color=gray!55] (BL) -- (BR) -- (ML) -- cycle;

  \path[pattern=crosshatch dots, pattern color=gray!55] (BR) -- (ML) -- (MR) -- (R) -- cycle;

  \draw[line width=0.5mm, black] (R) -- (BR) -- (BL) -- (L);  
  \draw[line width=0.5mm, black] (A) -- (L);
  \draw[line width=0.5mm, black] (A) -- (R);
  \draw[line width=0.5mm, black] (A) -- (ML);
  \draw[line width=0.5mm, black] (A) -- (MR);
  \draw[line width=0.5mm, black] (L) -- (ML); 
  \draw[line width=0.5mm, black] (R) -- (MR); 

  \draw[line width=0.5mm, black] (MR) -- (ML);
  \draw[line width=0.5mm, black] (BR) -- (MR);
  \draw[line width=0.5mm, black] (BL) -- (ML);

  \node at (-3.0,-0.3) {$e_{1}$};
  \node at (0,-2.55) {$e_{2}$};
  \node at ( 3.0,-0.3) {$e_{3}$};

  \node[fill=black, minimum size=2mm] at (A) {};
  \node[fill=black, minimum size=2mm] at (L) {};
  \node[fill=black, minimum size=2mm] at (R) {};
  \node[fill=black, minimum size=2mm] at (BL) {};
  \node[fill=black, minimum size=2mm] at (BR) {};
  \node[fill=black, minimum size=2mm] at (ML) {};
  \node[fill=black, minimum size=1mm] at (MR) {};
\end{tikzpicture}
\begin{tikzpicture}[scale=0.35]
  \def\xTop{2.6}   
  \def\yTop{0.9}   
  \def\xBot{1.8}   
  \def\yBot{-2.0}  
  \def\xGap{0.95}  
  \def\hA{3.1}     

  \coordinate (L)  at (-\xTop,\yTop);   
  \coordinate (R)  at ( \xTop,\yTop);   
  \coordinate (BR) at ( \xBot,\yBot);   
  \coordinate (BL) at (-\xBot,\yBot);   

  \coordinate (A)  at (0,\hA);          
  \coordinate (ML) at (-\xGap,\yTop);   
  \coordinate (MR) at ( \xGap,\yTop);   
  \coordinate (LM) at ( 0,0);   

  \path[pattern=crosshatch dots, pattern color=gray!55] (A) -- (L) -- (ML) -- cycle;
  \path[pattern=crosshatch dots, pattern color=gray!55] (A) -- (R) -- (MR) -- cycle;

  \path[pattern=crosshatch dots, pattern color=gray!55] (BL) -- (LM) -- (ML) -- (L) -- cycle;
  \path[pattern=crosshatch dots, pattern color=gray!55] (BL) -- (BR) -- (LM) -- cycle;
  \path[pattern=crosshatch dots, pattern color=gray!55] (BR) -- (LM) -- (MR) -- (R) -- cycle;
  
  \draw[line width=0.5mm, black] (R) -- (BR) -- (BL) -- (L);  
  \draw[line width=0.5mm, black] (A) -- (L);
  \draw[line width=0.5mm, black] (A) -- (R);
  \draw[line width=0.5mm, black] (A) -- (ML);
  \draw[line width=0.5mm, black] (A) -- (MR);
  \draw[line width=0.5mm, black] (L) -- (ML); 
  \draw[line width=0.5mm, black] (R) -- (MR); 

  \draw[line width=0.5mm, black] (MR) -- (LM);
  \draw[line width=0.5mm, black] (ML) -- (LM);
  \draw[line width=0.5mm, black] (BR) -- (LM);
  \draw[line width=0.5mm, black] (BL) -- (LM);

  \node at (-3.0,-0.3) {$e_{1}$};
  \node at (0,-2.55) {$e_{2}$};
  \node at ( 3.0,-0.3) {$e_{3}$};


  \node[star, star points=5, star point ratio=0.5, draw=red, fill=red, minimum size=1mm] at (ML) {};
  \node[star, star points=5, star point ratio=0.5, draw=red, fill=red, minimum size=1mm] at (MR) {};
  \node[star, star points=5, star point ratio=0.5, draw=red, fill=red, minimum size=1mm] at (LM) {};
  
  \node[fill=black, minimum size=2mm] at (A) {};
  \node[fill=black, minimum size=2mm] at (L) {};
  \node[fill=black, minimum size=2mm] at (R) {};
  \node[fill=black, minimum size=2mm] at (BL) {};
  \node[fill=black, minimum size=2mm] at (BR) {};
\end{tikzpicture}
\begin{tikzpicture}[scale=0.35]
  \def\xTop{2.6}   
  \def\yTop{0.9}   
  \def\xBot{1.8}   
  \def\yBot{-2.0}  
  \def\xGap{0.95}  
  \def\hA{3.1}     

  \coordinate (L)  at (-\xTop,\yTop);   
  \coordinate (R)  at ( \xTop,\yTop);   
  \coordinate (BR) at ( \xBot,\yBot);   
  \coordinate (BL) at (-\xBot,\yBot);   

  \coordinate (A)  at (0,\hA);          
  \coordinate (ML) at (-\xGap,\yTop);   
  \coordinate (MR) at ( \xGap,\yTop);   
  \coordinate (LM) at ( 0,0); 
  
  \coordinate (LR) at ( 0.5,0); 
  \coordinate (LL) at ( -0.5,0); 

  \path[pattern=crosshatch dots, pattern color=gray!55] (A) -- (L) -- (ML) -- cycle;
  \path[pattern=crosshatch dots, pattern color=gray!55] (A) -- (R) -- (MR) -- cycle;

  \path[pattern=crosshatch dots, pattern color=gray!55] (BL) -- (ML) -- (L) -- cycle;
  \path[pattern=crosshatch dots, pattern color=gray!55] (BR) -- (LR) -- (MR) -- (R) -- cycle;
  \path[pattern=crosshatch dots, pattern color=gray!55] (BR) -- (BL) -- (ML) -- (LR) -- cycle;
  
  \draw[line width=0.5mm, black] (R) -- (BR) -- (BL) -- (L);  
  \draw[line width=0.5mm, black] (A) -- (L);
  \draw[line width=0.5mm, black] (A) -- (R);
  \draw[line width=0.5mm, black] (A) -- (ML);
  \draw[line width=0.5mm, black] (A) -- (MR);
  \draw[line width=0.5mm, black] (L) -- (ML); 
  \draw[line width=0.5mm, black] (R) -- (MR); 

  \draw[line width=0.5mm, black] (ML) -- (BL);
  \draw[line width=0.5mm, black] (ML) -- (LR);
  \draw[line width=0.5mm, black] (LR) -- (BR);
  \draw[line width=0.5mm, black] (MR) -- (LR);

  \node at (-3.0,-0.3) {$e_{1}$};
  \node at (0,-2.55) {$e_{2}$};
  \node at ( 3.0,-0.3) {$e_{3}$};


  \node[star, star points=5, star point ratio=0.5, draw=red, fill=red, minimum size=1mm] at (ML) {};
  \node[star, star points=5, star point ratio=0.5, draw=red, fill=red, minimum size=1mm] at (LR) {};
  
  \node[fill=black, minimum size=2mm] at (A) {};
  \node[fill=black, minimum size=2mm] at (L) {};
  \node[fill=black, minimum size=2mm] at (R) {};
  \node[fill=black, minimum size=2mm] at (BL) {};
  \node[fill=black, minimum size=2mm] at (BR) {};
\end{tikzpicture}
\begin{tikzpicture}[scale=0.35]
  \def\xTop{2.6}   
  \def\yTop{0.9}   
  \def\xBot{1.8}   
  \def\yBot{-2.0}  
  \def\xGap{0.95}  
  \def\hA{3.1}     

  \coordinate (L)  at (-\xTop,\yTop);   
  \coordinate (R)  at ( \xTop,\yTop);   
  \coordinate (BR) at ( \xBot,\yBot);   
  \coordinate (BL) at (-\xBot,\yBot);   

  \coordinate (A)  at (0,\hA);          
  \coordinate (ML) at (-\xGap,\yTop);   
  \coordinate (MR) at ( \xGap,\yTop);   
  \coordinate (LM) at ( 0,0); 
  
  \coordinate (LR) at ( 0.5,0); 
  \coordinate (LL) at ( -0.5,0); 

  \path[pattern=crosshatch dots, pattern color=gray!55] (A) -- (L) -- (ML) -- cycle;
  \path[pattern=crosshatch dots, pattern color=gray!55] (A) -- (R) -- (MR) -- cycle;

  \path[pattern=crosshatch dots, pattern color=gray!55] (BL) -- (LL) -- (ML) -- (L) -- cycle;
  \path[pattern=crosshatch dots, pattern color=gray!55] (BR) -- (LR) -- (MR) -- (R) -- cycle;
  \path[pattern=crosshatch dots, pattern color=gray!55] (BR) -- (BL) -- (LL) -- (LR) -- cycle;
  
  \draw[line width=0.5mm, black] (R) -- (BR) -- (BL) -- (L);  
  \draw[line width=0.5mm, black] (A) -- (L);
  \draw[line width=0.5mm, black] (A) -- (R);
  \draw[line width=0.5mm, black] (A) -- (ML);
  \draw[line width=0.5mm, black] (A) -- (MR);
  \draw[line width=0.5mm, black] (L) -- (ML); 
  \draw[line width=0.5mm, black] (R) -- (MR); 

  \draw[line width=0.5mm, black] (ML) -- (LL);
  \draw[line width=0.5mm, black] (LL) -- (LR);
  \draw[line width=0.5mm, black] (LL) -- (BL);
  \draw[line width=0.5mm, black] (LR) -- (BR);
  \draw[line width=0.5mm, black] (LR) -- (MR);

  \node at (-3.0,-0.3) {$e_{1}$};
  \node at (0,-2.55) {$e_{2}$};
  \node at ( 3.0,-0.3) {$e_{3}$};


  \node[star, star points=5, star point ratio=0.5, draw=red, fill=red, minimum size=1mm] at (LL) {};
  \node[star, star points=5, star point ratio=0.5, draw=red, fill=red, minimum size=1mm] at (LR) {};
  
  \node[fill=black, minimum size=2mm] at (A) {};
  \node[fill=black, minimum size=2mm] at (L) {};
  \node[fill=black, minimum size=2mm] at (R) {};
  \node[fill=black, minimum size=2mm] at (BL) {};
  \node[fill=black, minimum size=2mm] at (BR) {};
  \node[circle, fill=black, minimum size=2mm] at (ML) {};
  \node[circle, fill=black, minimum size=1mm] at (MR) {};
\end{tikzpicture}
            \caption{The five cases (up to symmetry) for the non-pentagonal facets adjacent to $e_1, e_2$ and $e_3$}
            \label{fig:manypentagons}
        \end{subfigure}
        \caption{Illustrations of the Schlegel diagrams used in the proof of~\cref{thm:3polytope-degree}.}
    \end{figure}
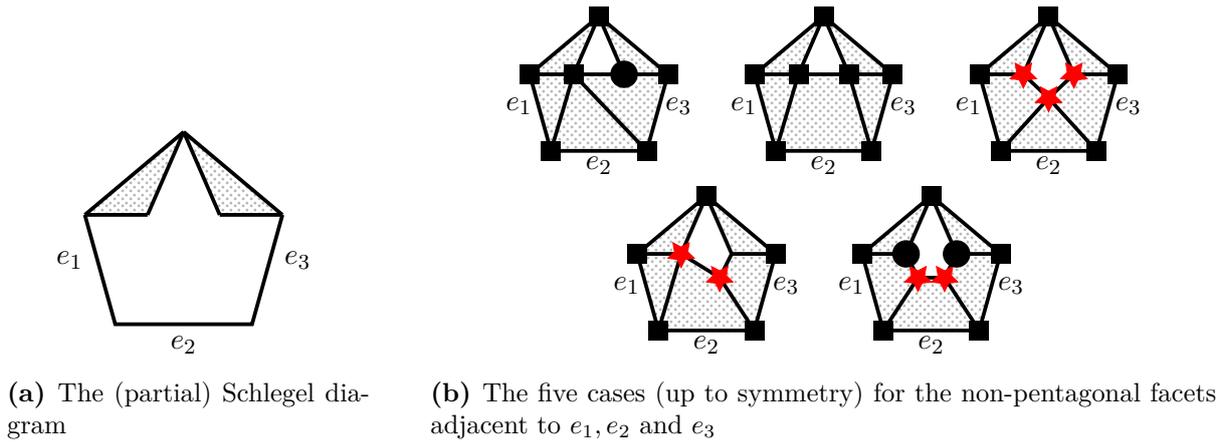
    
    In the notation of~\cref{fig:pentagon1}, $e_1, e_2, e_3$ are each contained in one additional facet, each of which can be a triangle or a square, leading to the five cases shown in~\cref{fig:manypentagons}. 

    The vertices with squares drawn on them cannot increase their degree. The polygons are pieces that are not yet fixed by our considerations and could perhaps be further subdivided. The vertices with a red star are adjacent and only one of their degrees can increase or one of the edges incident to them will become negative. Since the graph of the polytope is three connected, the addition of a vertex in the white region will create edges incident to three of the vertices of the region (or the ones connecting could be removed to disconnect the graph). It follows that in the first four cases no additional vertex can be added. In the last case if there are additional vertices, then there is exactly one new facet containing the additional facet, it can be a triangle, a square or a pentagon. Analyzing those cases we see that no more than 3 vertices can be added.
    
    Assuming $P$ contains no pentagons or vertices of degree $5$, then we have the following linear equations: $d_3+d_4 =f_0$, $p_3+p_4=f_2$, $3d_3+4d_4 = 2f_1 = 3p_3 +4p_4$, which, taken together with Euler's formula, results in a system of linear equations, with $5$ equations and $7$ unknowns; looking for positive integral solutions, it must hold that $d_3 = 8-p_3$, and since $d_3$ is even and nonnegative, we must have $p_3 \in\{0,2,4,6,8\}$. 
    
    Furthermore, notice that if $F_1$ and $F_2$ are quadrilateral faces sharing an edge, then at least one of the two vertices adjacent to the edge has degree 3. From this one obtains that for every face that is a quadrilateral, the number of edges incident to triangles plus the number of vertices of degree three is at least $3$. Each vertex of degree three and each triangle is adjacent to at most 3 quadrilaterals, meaning that $3p_4\le 3(p_3+d_3)= 24$, so $p_4\le 8$ and by duality $d_4\le 8$. Then $f_0 = d_3+d_4\le 8+8 = 16$. We reiterate that the remaining two cases for the degrees of the vertices occurring in the pentagon follow similarly, and the claim follows.
\end{proof}

\Cref{thm:3polytope-degree} allows us to identify many Forman--Ricci-positive $3$-polytopes by scanning the family of planar, $3$-connected graphs for Forman--Ricci positivity. We carried out such an experiment on all such graphs up to and including twelve vertices and found $109$ Forman--Ricci-positive polyhedra. We did this by generating all polyhedral graphs up to this threshold using the software \texttt{plantri} (see~\cite{McKayGraphs,OEIS_A000944,plantri}), and then running each graph through a Python method to compute its curvature. We illustrate the graphs of each of a random sample of 49 such $3$-polytopes in~\cref{fig:forman_positives}.\footnote{Our code is publicly available at \href{https://github.com/jeddyhub/discrete-curvatures-and-convex-polytopes}{https://github.com/jeddyhub/discrete-curvatures-and-convex-polytopes}.} 

\begin{figure}[t!]
    \centering
    \includegraphics[width=0.75\textwidth]{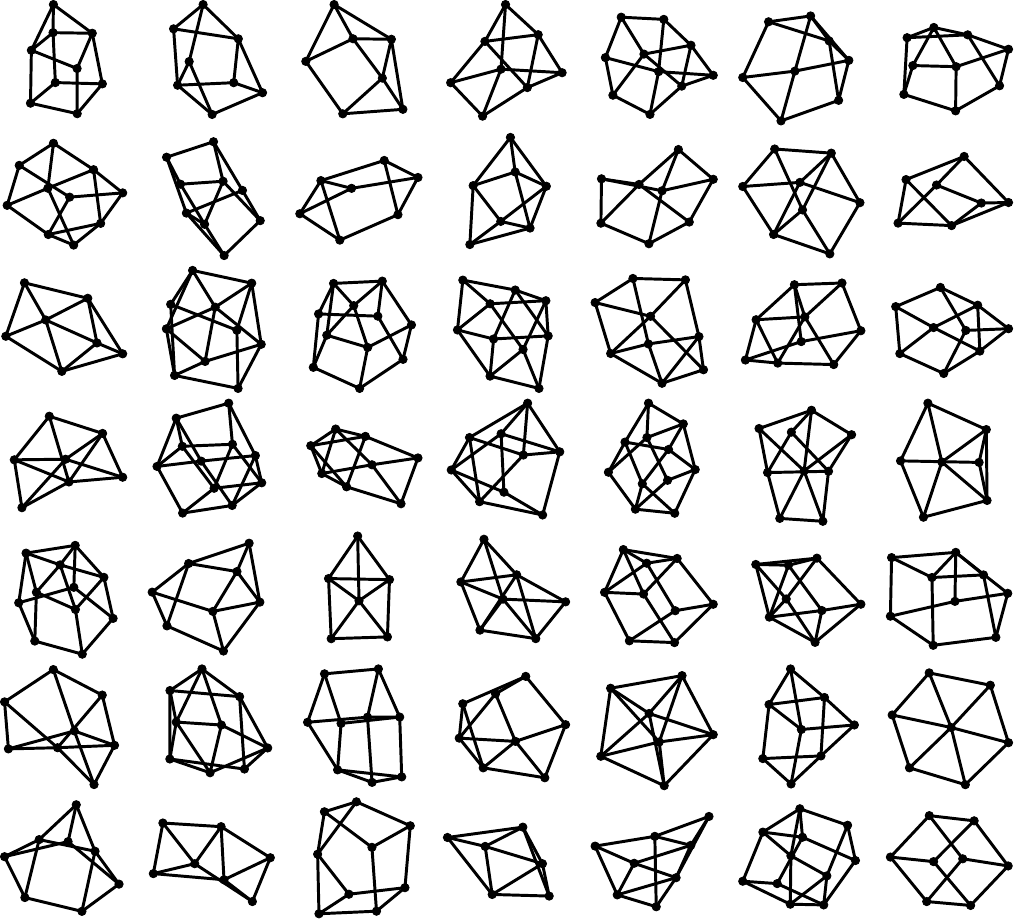}
    \caption{An illustration of the graphs of 49 Forman--Ricci-positive $3$-polytopes, obtained as a random sample of the Forman--Ricci-positive $3$-polytopes on at most twelve vertices.}
    \label{fig:forman_positives}
\end{figure}

\subsection{Forman--Ricci curvature in Dimension $4$}\label{subsec:forman-dim4}

The goal of this section is to show that there can only be finitely many Forman--Ricci-positive $4$-polytopes. The proof analyzes the neighborhood of a vertex of very high degree, to show that an edge connected to it must be negative. 

\begin{theorem}\label{thm:4-dimFiniteness}
  There are finitely many Forman--Ricci-Positive $4$-polytopes.   
\end{theorem}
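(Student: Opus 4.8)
The plan is to bound the maximum vertex degree of a Forman--Ricci-positive $4$-polytope by an absolute constant, and then to invoke~\cref{cor: finite degree}. The argument proceeds in three parts.

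\emph{A pointwise curvature identity.} First I would record the edge-local counterpart of~\cref{lemma:AverageCurvatureIdentity}: for any polytope $P$ and any edge $e=\{v,w\}$, writing $t_e:=|\mathcal{F}\uparrow(e)|$ and $|E(F)|$ for the number of edges of a $2$-face $F$,
\[
  \forman{e}\;=\;6\,t_e+4-d_v-d_w-\sum_{F\in\mathcal{F}\uparrow(e)}|E(F)|.
\]
To prove this I sort the parallel neighbours of $e$ into the two cases of~\cref{def:parallel-neighbors}. Each $2$-face $F\ni e$ is a polygon having exactly one edge other than $e$ at $v$ and one at $w$; conversely any edge $e'\neq e$ at $v$ sharing a $2$-face with $e$ arises this way, and distinct $2$-faces through $e$ yield distinct such edges --- otherwise their intersection would be a face of dimension $\ge 2$, forcing the two $2$-faces to coincide. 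Hence the type-(i) parallel neighbours number $(d_v-1-t_e)+(d_w-1-t_e)$, and the same distinctness argument applied to vertex-disjoint edges shows the type-(ii) parallel neighbours number $\sum_{F\ni e}(|E(F)|-3)$. Adding these, substituting into~\cref{def:forman-curvature}, and simplifying gives the displayed identity.

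\emph{The degree bound.} Fix a vertex $v$ of degree $d_v$. Since $\forman{\cdot}$ is integer-valued, Forman--Ricci positivity forces $\forman{e}\ge 1$ for each edge $e=\{v,w\}$ at $v$; combined with $|E(F)|\ge 3$ this yields $d_v+d_w\le 3t_e+3$. On the other hand, the vertex figure $P_w$ is a $3$-polytope on $d_w$ vertices in which the edge $e$ corresponds to a vertex whose degree equals $t_e$; since the graph of a $3$-polytope is simple and $3$-connected, that vertex has degree at most $d_w-1$, so $d_w\ge t_e+1$. Eliminating $d_w$ gives $t_e\ge(d_v-2)/2$ for \emph{every} edge $e$ at $v$. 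Summing over the $d_v$ edges at $v$, using $\sum_{e\ni v}t_e=2f_1(P_v)$ for the ($3$-polytopal) vertex figure $P_v$ of $v$ together with the Steinitz--Euler inequality $f_1(P_v)\le 3d_v-6$, one obtains
\[
  \frac{d_v(d_v-2)}{2}\;\le\;6d_v-12,\qquad\text{equivalently}\qquad (d_v-2)(d_v-12)\le 0.
\]
Since $d_v\ge 4$ by Balinski's theorem, this forces $d_v\le 12$.

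\emph{Conclusion, and the hard step.} Every Forman--Ricci-positive $4$-polytope therefore has maximum vertex degree at most $12$, so~\cref{cor: finite degree} with $d=4$ and $\Delta=12$ finishes the proof. The main obstacle is the pointwise identity: its usefulness hinges on the counts $(d_v-1-t_e)$ and $(|E(F)|-3)$ being \emph{exact} rather than mere upper bounds, which is precisely where the distinctness claim --- and through it the fact that two distinct $2$-faces of a convex polytope meet in a face of dimension at most $1$ --- is required. After that, the remaining work is an elementary optimization fed by the standard vertex-figure dictionary and the bound $f_1\le 3f_0-6$ for $3$-polytopes.
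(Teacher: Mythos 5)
Your proof is correct, and it reaches the paper's own quantitative threshold (maximum degree at most $12$) by a genuinely different route. The paper argues more surgically: given a vertex $v$ of degree $\delta\ge 13$, it looks at the vertex figure of $v$, which as a $3$-polytope must contain a vertex of degree at most $5$; the corresponding edge $e=\{v,w\}$ then has $|\mathcal{F}\uparrow(e)|\le 5$ while having at least $\delta-6$ type-(i) parallel neighbours at $v$, so $\forman{e}\le 13-\delta\le 0$, a contradiction, after which finiteness follows from \cref{thm:average-degree}. You instead prove an exact edge-local identity $\forman{e}=6t_e+4-d_v-d_w-\sum_{F\ni e}|E(F)|$ (a pointwise refinement of the counting behind \cref{lemma:AverageCurvatureIdentity}, and your justification of exactness — two distinct $2$-faces meet in a face of dimension at most $1$ — is exactly the needed point), extract from positivity and the vertex figure of the \emph{other} endpoint the constraint $t_e\ge (d_v-2)/2$ for \emph{every} edge at $v$, and then sum over the star of $v$ using $\sum_{e\ni v}t_e=2f_1(P_v)\le 6d_v-12$ for the vertex figure $P_v$, landing on $(d_v-2)(d_v-12)\le 0$; the conclusion then follows from \cref{cor: finite degree}. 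Both arguments rest on the vertex-figure dictionary and Euler-type facts for $3$-polytopes, but the paper's uses the minimum-degree-$\le 5$ consequence at a single well-chosen edge, while yours uses the edge-count bound $f_1\le 3f_0-6$ globally over the vertex star; your identity is reusable elsewhere (it recovers the simplex and cube computations instantly), whereas the paper's one-edge argument is shorter and also yields the remark about nonnegatively curved $4$-polytopes having degree at most $13$.
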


\begin{proof}
    Assume that $P$ is Forman--Ricci-positive and contains a vertex $v$ whose degree $\delta$ satisfies $\delta\ge 13$. Then the vertex figure $Q$ of $v$ is a $3$-polytope, hence it must have a vertex of degree no larger than $5$. That vertex corresponds to an edge of $P$ that connects $v$ to another vertex $w$. We claim that the edge $e=\{v,w\}$ is not Forman--Ricci-Positive. 

    To bound the curvature at $e$, notice that the positive contribution is $2+k$, where $k$ is the number of $2$-faces that contain $e$. Thus $k$ is exactly the degree of $w$ in $Q$, which is at most $5$. Furthermore, any $2$-face containing $e$ contains exactly one additional edge incident to $v$, so there are $\delta-6$ parallel edges to $e$ that are incident to $v$. It follows that $\mathcal{F}(e) = 2+k - \#\{\text{parallel edges}\} \le 2+5 - (\delta-6) = 13- \delta \le 0$, a contradiction.
    
    It follows that the maximum degree (and hence the average degree) of a Forman--Ricci-Positive $4$-dimensional polytope is at most 12, and thus by~\cref{thm:average-degree}, there are only finitely many such polytopes. 
\end{proof}

\begin{remark}
    Notice that the upper bound does not work for polytopes with non-negative Forman--Ricci curvature, but the proof above shows that any nonnegative $4$-polytope has all of its vertices of degree no more than 13. 
\end{remark}

\subsection{Curvature of simplicial polytopes and 5-dimensional polytopes}\label{subsec:simplicial-five}

In this setting we investigate the class of Forman--Ricci-positive $5$-polytopes and show that  there are finitely many Forman--Ricci-positive simplicial $5$-polytopes. We conclude with a conjecture concerning the extension of ~\cref{thm:FormanFinite3} and~\cref{thm:4-dimFiniteness} to the setting of $d=5$.

We begin with a lemma for computing the average curvature in a simplicial polytope, which specializes~\cref{lemma:AverageCurvatureIdentity}.

\begin{lemma}\label{lemma:simplicial-AverageCurvatureIdentity}
    Let $P$ be a simplicial $d$-polytope. The following equation holds: 
    \begin{eqnarray*}
        \mathcal{K}(P) &=& \frac{1}{f_1}\left(18f_2 + 4f_1 - 2\sum_{k\ge d} k^2d_k - 9f_2\right) \\ &=& \frac{1}{f_1}\left(9f_2+4f_1-\sum_{k\ge d}k^2d_k\right)
    \end{eqnarray*}
\end{lemma}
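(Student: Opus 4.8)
The plan is to obtain the statement by directly specializing the general identity of \cref{lemma:AverageCurvatureIdentity}, exploiting that in a simplicial polytope the $2$-face structure is completely rigid. First I would record the two consequences of $P$ being simplicial that are needed. Since every $2$-face of $P$ is a triangle, we have $p_3(P) = f_2(P)$ and $p_k(P) = 0$ for all $k \ge 4$, so the last sum in \cref{lemma:AverageCurvatureIdentity} collapses to
\[
    \sum_{k\ge 3} k^2 p_k(P) \;=\; 9\,p_3(P) \;=\; 9 f_2(P).
\]
Likewise each triangular $2$-face contains exactly three vertices, and a given vertex lies in a given $2$-face in only one way, so the number of incident $(\text{vertex},2\text{-face})$ pairs is $f_{02}(P) = 3 f_2(P)$, whence $6 f_{02}(P) = 18 f_2(P)$.

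Substituting these two identities into the formula of \cref{lemma:AverageCurvatureIdentity} yields
\[
    \mathcal{K}(P) \;=\; \frac{1}{f_1}\Bigl(18 f_2 + 4 f_1 - \sum_{k\ge d} k^2 d_k - 9 f_2\Bigr),
\]
and collecting the two multiples of $f_2$ (namely $18 f_2 - 9 f_2 = 9 f_2$) gives the closed form $\mathcal{K}(P) = \frac{1}{f_1}\bigl(9 f_2 + 4 f_1 - \sum_{k\ge d} k^2 d_k\bigr)$. The vertex-degree sum $\sum_{k\ge d} k^2 d_k$ is inherited unchanged from \cref{lemma:AverageCurvatureIdentity}, as it depends only on the degree sequence of $G(P)$; the lower index $k=d$ remains valid because every vertex of a simplicial $d$-polytope has a $(d-1)$-dimensional simplicial vertex figure and hence degree at least $d$.

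There is no genuine obstacle here: the entire content is carried by \cref{lemma:AverageCurvatureIdentity}, and the passage to the simplicial case is the one-line substitution above once the identities $f_{02} = 3 f_2$ and $\sum_{k\ge 3} k^2 p_k = 9 f_2$ are in hand. Should a self-contained argument be preferred, one can instead rerun the counting in the proof of \cref{lemma:AverageCurvatureIdentity}: in the simplicial case $|E\uparrow(P)| = \sum_{k\ge 3} k(k-1) p_k = 6 f_2$, $|E\downarrow(P)| = \sum_{k\ge d} k(k-1) d_k$, and $f_{12} = f_{02} = 3 f_2$, so from $\mathcal{K}(P) f_1 = f_{12} + 2 f_1 - \bigl(|E\uparrow(P)| + |E\downarrow(P)| - 4 f_{02}\bigr)$ one expands $k(k-1) = k^2 - k$ and uses $\sum_{k\ge d} k\,d_k = 2 f_1$ to absorb the linear term, recovering the stated formula. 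Either route is routine.
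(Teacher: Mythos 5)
Your proof is correct and takes essentially the same route as the paper's, which likewise just substitutes $f_{02}=3f_2$ and $\sum_{k\ge 3}k^2p_k=9f_2$ into \cref{lemma:AverageCurvatureIdentity} (the paper's proof writes $9f_1$ for this last quantity, an evident typo that your derivation silently corrects). Note also that your intermediate expression carries a single $\sum_{k\ge d}k^2d_k$ rather than the $2\sum_{k\ge d}k^2d_k$ appearing in the first displayed line of the lemma statement; your version is the consistent one, since only it reduces to the second line actually used later in the paper.
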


\begin{proof}
    Let $P$ be a fixed simplicial $d$-polytope. Then it follows that $f_{02}= 3f_2$. Moreover, all $2$-dimensional faces are triangles and hence $\sum_{k\ge3}k^2p_k= 9f_1$. The claim follows from the proof of~\cref{lemma:AverageCurvatureIdentity}. 
\end{proof}

We will use this as a tool to show non-positivity in several instances. The main difficulty in dealing with the expression above concerns the term $\sum_{k\ge d}k^2d_k$. The degree sequences in simplicial polytopes can vary extensively. Nonetheless, the Cauchy-Schwartz inequality implies that 
    \begin{equation*}
        \sum_{k\ge d} k^2d_k \ge  \frac{\left(\sum_{k\ge d}kd_k\right)^2}{\sum_{k\ge d} d_k} = \frac{4f_1^2}{f_0}.
    \end{equation*}
Thus we have:
    \begin{align}\label{eqn:simplicialBound}
        \mathcal{K}(P)f_1 &\le  9f_2+4f_1 - \frac{4f_1^2}{f_0} .
    \end{align}
 The $g$-theorem implies that the right hand expression can be positive in many cases as long as the dimension is at least $6$. In dimensions $4$ and $5$, however, we have that $f_2$ is determined by a linear equation in $f_0$ and $f_1$, and the inequality becomes harder to satisfy, since it is a quadratic in $f_1$ (the larger term) with negative principal coefficient.  

From this setup and a short proof we may conclude the following theorem.

\begin{theorem}
   There are finitely many Forman--Ricci-positive simplicial\break $5$-polytopes. 
\end{theorem}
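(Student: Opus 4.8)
The plan is to combine Forman--Ricci positivity with the Cauchy--Schwarz estimate~\eqref{eqn:simplicialBound} and the Dehn--Sommerville relations to force the average vertex degree of $P$ to be bounded by an absolute constant; the theorem then follows at once from~\cref{cor: finiteAverage}.

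I would start from two preliminary observations. First, $\forman{e}$ is always an integer, so Forman--Ricci positivity gives $\forman{e}\ge 1$ for every edge, hence $\mathcal{K}(P)f_1\ge f_1>0$. Second, for a simplicial $5$-polytope the flag numbers obey a single nontrivial Dehn--Sommerville relation: passing through the $h$-vector and using $h_2=h_3$ yields $f_2 = 4f_1-10f_0+20$ (which one can sanity-check on the $5$-simplex). I would then substitute this identity into~\eqref{eqn:simplicialBound}, multiply through by $f_0>0$, and rearrange. Because $f_2$ is \emph{linear} in $f_1$, the inequality $\mathcal{K}(P)f_1>0$ turns into a quadratic inequality in $f_1$ whose leading coefficient is \emph{negative}; completing the square produces something of the shape $(f_1-5f_0)^2 < \tfrac{5}{2}f_0^2 + 45 f_0$. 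Since every $5$-polytope has $f_0\ge 6$, the right-hand side is at most $10f_0^2$, so $f_1 < 9f_0$ and the average degree $\rho=2f_1/f_0$ is strictly less than $18$.

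With this bound in hand the argument concludes: the set of Forman--Ricci-positive simplicial $5$-polytopes is contained in the set of Forman--Ricci-positive $5$-polytopes of average degree at most $18$, which is finite by~\cref{cor: finiteAverage} (applied with $d=5$ and $\Delta=18$), and \emph{a fortiori} by the explicit bound of~\cref{thm:average-degree}.

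The step carrying the most weight is recognizing that positivity is a genuine constraint here at all: it hinges on the fact that, after inserting the Dehn--Sommerville relation, the governing expression is a downward-opening quadratic in the dominant parameter $f_1$ --- exactly the feature that fails for $d\ge 6$ (where the $g$-theorem lets $f_2$ grow faster than linearly in $f_1$) and which forces the linear bound $f_1=O(f_0)$. The rest is routine bookkeeping: verifying the Dehn--Sommerville identity in dimension $5$ and carrying out the (lossy, hence far from optimal) quadratic estimate --- any explicit constant in place of $18$ would serve equally well to invoke~\cref{cor: finiteAverage}.
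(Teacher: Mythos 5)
Your proposal is correct and follows essentially the same route as the paper: combine positivity with the Cauchy--Schwarz bound~\eqref{eqn:simplicialBound}, substitute the Dehn--Sommerville relation $f_2=4f_1-10f_0+20$, deduce a linear bound $f_1=O(f_0)$ from the downward-opening quadratic in $f_1$, and invoke~\cref{cor: finiteAverage}. Your completing-the-square step (giving average degree $<18$ for $f_0\ge 6$) is a slightly cruder constant than the paper's $13.2$ but is, if anything, cleaner than the paper's ``for large $f_0$'' estimate, and the conclusion is identical.
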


\begin{proof}
    Let $P$ a $5$-dimensional simplicial polytope, then $f_2= 4f_1-10f_0+20$ by the Dehn-Sommerville equations. Plugging this into~\cref{eqn:simplicialBound} yields
        \begin{align}
            \mathcal{K}(P)f_1 &\le 9(4f_1-10f_0+20) + 4f_1 - 4\frac{f_1^2}{f_0} \\
            &= - \frac{4}{f_0}f_1^2 + 40f_1 +180 -90f_0 .
        \end{align}
    Viewed as a quadratic in $f_1$ it only assumes positive values when $f_1$ assumes a value between the two roots: 
        \begin{eqnarray*}
             \frac{-40 \pm\sqrt{40^2 + 16f_0^{-1}(180-90f_0)}}{-8f_0^{-1}} &=& 5f_0 \pm \frac{\sqrt{40^2f_0^2+16f_0(180-90f_0)}}{8}
             \\ &=& 5f_0 \pm \frac{\sqrt{160f_0^2 +2880f_0}}{8}
        \end{eqnarray*}
    For a large value of $f_0$ we have that $\frac{\sqrt{160f_0^2 +2880f_0}}{8} \le 1.6 f_0$ which would mean that the bound is positive in the range $3.4f_0 \le f_1 \le 6.6f_0$. The average vertex degree of a polytope is $\frac{2f_1}{f_0}$, which is therefore bounded above by $13.2$. According to~\cref{cor: finiteAverage}, there are finitely many such polytopes. 
\end{proof}

With the result above in mind, we pose the following conjecture: 

\begin{conjecture}
    There are finitely many Forman--Ricci-positive $5$-polytopes. 
\end{conjecture}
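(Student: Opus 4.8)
We outline one natural route toward the conjecture above, following the template of the dimension $3$ and $4$ results: reduce it to a bound on the average vertex degree $\rho=2f_1/f_0$ and then quote \cref{cor: finiteAverage}.

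\emph{Step 1: squeezing the average-curvature identity.} If $P$ is Forman--Ricci-positive then $\mathcal{K}(P)\ge 1$, so $\mathcal{K}(P)f_1\ge f_1$. Substituting $d=5$ into \cref{lemma:AverageCurvatureIdentity} and using the identity $f_{02}=\sum_{k\ge 3}kp_k$ from its proof, this rearranges to
\[
\sum_{k\ge 5}k^2 d_k+\sum_{k\ge 3}\bigl(k^2-6k\bigr)p_k\ \le\ 3f_1 .
\]
Since $k^2-6k=(k-3)^2-9\ge -9$, the second sum is at least $-9\sum_{k\ge 3}p_k=-9f_2$; and since $\sum_{k\ge 5}k^2 d_k=\sum_v d_v^2\ge\bigl(\sum_v d_v\bigr)^2/f_0=4f_1^2/f_0$ by Cauchy--Schwarz, we obtain the scalar inequality
\[
\frac{4f_1^{\,2}}{f_0}\ \le\ 3f_1+9f_2 .
\]
Consequently, \emph{if} one can establish a linear bound $f_2\le Cf_1+C'f_0$ with absolute constants $C,C'$ for Forman--Ricci-positive $5$-polytopes, then, writing $x=f_1/f_0$, the displayed inequality becomes $4x^2\le(3+9C)x+9C'$, which forces $x$ — hence $\rho=2x$ — below a constant, and \cref{cor: finiteAverage} finishes the proof. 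This is exactly how the simplicial case above works: there the Dehn--Sommerville relation $f_2=4f_1-10f_0+20$ supplies $f_2\le 4f_1$.

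\emph{Step 2: a local analysis at a high-degree vertex.} To produce the missing bound on $f_2$ in the non-simplicial case, consider a vertex $v$ of large degree $\delta$. Its vertex figure $Q=P/v$ is a $4$-polytope with $f_0(Q)=\delta$, and the vertex $\bar e\in Q$ corresponding to an edge $e=\{v,w\}$ of $P$ satisfies $\deg_Q(\bar e)=|\mathcal{F}\uparrow(e)|$. Counting the parallel neighbors of $e$ as in the proof of \cref{lemma:AverageCurvatureIdentity} — there are $d_v-1-|\mathcal{F}\uparrow(e)|$ of type \textit{(i)} at $v$, another $d_w-1-|\mathcal{F}\uparrow(e)|$ at $w$, and $\sum_{F\ni e}\bigl(|E(F)|-3\bigr)\ge 0$ of type \textit{(ii)} — gives
\[
\forman{e}\ \le\ 3\,|\mathcal{F}\uparrow(e)|+4-d_v-d_w-\sum_{F\ni e}\bigl(|E(F)|-3\bigr).
\]
Combining $\forman{e}\ge 1$ with the local-regularity estimate from the proof of \cref{thm:average-degree} (which shows that neighbouring vertices of a Forman--Ricci-positive polytope have degrees within a factor of two, so $d_w\ge\delta/2$) forces $\deg_Q(\bar e)=|\mathcal{F}\uparrow(e)|\ge\tfrac12\delta-1$ for every edge $e$ at $v$, together with a bound, linear in $\delta$, on $\sum_{F\ni e}|E(F)|$. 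In words: a high-degree vertex of a Forman--Ricci-positive $5$-polytope must have a \emph{dense} $4$-dimensional vertex figure, with small $2$-faces around it. One would then hope to contradict this — or directly to derive $f_2=O(f_1+f_0)$ — by feeding the density of $Q$, the control on the surrounding $2$-faces, and global flag-vector information (the Bayer--Billera generalized Dehn--Sommerville relations together with known nonlinear inequalities on flag numbers of $5$-polytopes) into a counting argument, e.g. a Dehn--Sommerville-type count applied to the simplicial-like link of $v$.

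\emph{The main obstacle} is precisely Step 2. In dimensions $3$ and $4$ the analogous local argument closes at once because every $3$-polytope has a vertex of degree at most $5$; in dimension $5$, however, the vertex figure is a $4$-polytope, and $4$-polytopes can have arbitrarily large minimum degree — the cyclic polytopes $C(\delta,4)$ are $2$-neighborly, so their graph is $K_\delta$, which comfortably satisfies $\deg_Q(\bar e)\ge\tfrac12\delta-1$. Equivalently, there is no Dehn--Sommerville relation pinning $f_2$ of a general $5$-polytope to $f_0$ and $f_1$, so the clean reduction in Step 1 is left without its final ingredient. Resolving the conjecture thus amounts to ruling out Forman--Ricci-positive $5$-polytopes whose high-degree vertices have nearly neighborly vertex figures — morally, showing that cones and pyramids over large neighborly $4$-polytopes fail to be Forman--Ricci-positive because of the edges lying in their bases — and this is the step a complete proof would need to supply.
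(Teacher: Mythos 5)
This statement is a \emph{conjecture} in the paper: the authors do not prove it, and they only establish the special case of simplicial $5$-polytopes (via Dehn--Sommerville, $f_2=4f_1-10f_0+20$, fed into the average-curvature bound and then \cref{cor: finiteAverage}). Your proposal, by your own admission, also does not prove it, so what you have is a strategy with a genuine gap rather than a proof. The gap is exactly where you locate it: Step~1 is correct (the rearrangement of \cref{lemma:AverageCurvatureIdentity}, the bound $k^2-6k\ge-9$, and Cauchy--Schwarz do give $4f_1^2/f_0\le 3f_1+9f_2$, which is the paper's own inequality), and your local estimate at a high-degree vertex is a sound generalization of the counting in \cref{thm:average-degree} and the vertex-figure argument of \cref{thm:4-dimFiniteness}; but the reduction is conditional on a linear bound $f_2\le Cf_1+C'f_0$ for Forman--Ricci-positive $5$-polytopes, and no such bound is supplied. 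In dimension $4$ the paper's argument closes because every $3$-dimensional vertex figure has a vertex of degree at most $5$, whereas in dimension $5$ the vertex figure is a $4$-polytope that may be $2$-neighborly, so the forced conclusion $|\mathcal{F}\uparrow(e)|\ge\tfrac12\delta-1$ is not a contradiction at all --- it is consistent with exactly the dense, small-$2$-face configurations the authors themselves flag as the plausible home of any infinite family. So nothing in your Step~2 rules these out, and the conjecture remains open after your argument just as it does in the paper.

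It is worth saying that your outline does match, and slightly sharpens, the evidence the paper assembles: Step~1 recovers their inequality~(2.1) for general (not just simplicial) $5$-polytopes, and your explicit edge inequality $\forman{e}\le 3|\mathcal{F}\uparrow(e)|+4-d_v-d_w-\sum_{F\ni e}(|E(F)|-3)$ is a correct and potentially useful local tool. But to turn this into a proof you would have to carry out the step you defer --- either establish $f_2=O(f_1+f_0)$ under Forman--Ricci positivity, or show directly that a vertex whose vertex figure is nearly neighborly forces a nonpositive edge somewhere (e.g.\ among edges of the $2$-faces far from $v$, where the type-(ii) parallel count is not under control). As written, the appeal to ``global flag-vector information'' and Bayer--Billera relations is a hope, not an argument, so you should present this as a reduction of the conjecture, not as a proof of it.
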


We suspect that if an infinite family of Forman--Ricci-positive polytopes exists, they will be limited to polytopes with dense graphs and small two dimensional faces.

\section{Resistance Curvature}
\label{sec: resistance results}

In this section we investigate the rarity of polytopal graphs which have everywhere positive resistance curvature. We proceed as follows. In~\Cref{subsec:resistance-positivity}, we obtain an upper bound on the resistance of a pair of vertices based on the lengths of paths appearing between its endpoints. We then apply this to obtain lower bounds on the resistance curvature. In~\Cref{subsec:resistance-positive-polyhedra} we apply this setup to obtain a family of constructions of resistance-positive $3$-polytopes. Finally in~\Cref{subsec:resistance-negative} we obtain a degree-based lower bound on the effective resistance of an edge and use this to show that resistance-positive polytopes are close to degree-regular in a weak sense.

\subsection{Resistance bounds via path lengths}\label{subsec:resistance-positivity}
We begin with the following bound on the effective resistance distance between vertices on the graph of a $d$-polytope in terms of the lengths of edge-disjoint paths between them.

\begin{lemma}\label{lem:resistance-bound}
    Let $G=(V,E)$ be any graph, and let $u, v\in V$ be fixed. Assume that for some $k\geq 2$ the vertices $u, v$ admit $k$ edge-disjoint paths $P_1, P_2,\dotsc, P_k$ which begin and end at $u, v$, respectively. Then we have
        \begin{align*}
            r_{uv} \leq \frac{1}{\sum_{s=1}^k |P_s|^{-1}}.
        \end{align*}
\end{lemma}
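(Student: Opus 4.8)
The plan is to use the variational (flow) characterization of effective resistance from \cref{prop:flow-resistance}, namely $r_{uv} = \inf\{\|\mathbf{J}\|_2^2 : \mathbf{B}\mathbf{J} = \mathbf{1}_u - \mathbf{1}_v\}$, and exhibit an explicit unit $u$--$v$ flow supported on the $k$ edge-disjoint paths whose energy equals the claimed bound. First I would set $T = \sum_{s=1}^k |P_s|^{-1}$ and send flow of magnitude $\alpha_s := |P_s|^{-1}/T$ along path $P_s$ (oriented from $u$ to $v$), so that $\sum_s \alpha_s = 1$. Concretely, define $\mathbf{J} \in \mathbb{R}^{E'}$ by assigning to each edge $e$ of $P_s$ the value $\pm\alpha_s$, with sign chosen so that the flow runs from $u$ toward $v$ along $P_s$; edges lying on no $P_s$ get value $0$. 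Because the paths are edge-disjoint, no edge receives contributions from two different paths, so $\mathbf{J}$ is well-defined.

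The next step is to verify the two required properties of this $\mathbf{J}$. For feasibility, I would check that $\mathbf{B}\mathbf{J} = \mathbf{1}_u - \mathbf{1}_v$: at each internal vertex of a path $P_s$, the incoming and outgoing edges of $P_s$ each carry $\alpha_s$, so they cancel in the (signed) incidence sum; at $u$ the net outflow is $\sum_s \alpha_s = 1$ and at $v$ the net inflow is $1$; a vertex on no path contributes $0$. (One should note that a vertex may be shared by several paths even though the edges are not — this is fine, since conservation holds path-by-path, hence in aggregate.) For the energy, edge-disjointness gives
\[
\|\mathbf{J}\|_2^2 = \sum_{s=1}^k |P_s|\,\alpha_s^2 = \sum_{s=1}^k |P_s|\cdot \frac{|P_s|^{-2}}{T^2} = \frac{1}{T^2}\sum_{s=1}^k |P_s|^{-1} = \frac{1}{T}.
\]
By \cref{prop:flow-resistance}, $r_{uv} \le \|\mathbf{J}\|_2^2 = 1/T = \bigl(\sum_{s=1}^k |P_s|^{-1}\bigr)^{-1}$, which is the claim.

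There is essentially no hard obstacle here: the only point requiring care is confirming that the flow-conservation computation $\mathbf{B}\mathbf{J} = \mathbf{1}_u - \mathbf{1}_v$ is unaffected by paths sharing \emph{vertices} (only edge-disjointness is assumed), and that the sign conventions in the incidence matrix $\mathbf{B}$ are handled consistently so that each path genuinely transports a unit of current from $u$ to $v$. An alternative, equivalent framing is the electrical-network heuristic: replacing each $P_s$ by a single resistor of resistance $|P_s|$ (edges in series) and then placing these $k$ resistors in parallel between $u$ and $v$ yields an effective resistance of $\bigl(\sum_s |P_s|^{-1}\bigr)^{-1}$; by Rayleigh monotonicity, deleting all other edges and identifying the shared vertices only increases resistance, so the original graph has resistance at most this value. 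Either route is short; I would present the flow-based argument since it plugs directly into \cref{prop:flow-resistance} and avoids invoking Rayleigh monotonicity as an external fact.
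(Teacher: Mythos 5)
Your proposal is correct and follows essentially the same route as the paper: both use the flow characterization in \cref{prop:flow-resistance} and route current along the $k$ edge-disjoint paths with weights proportional to $|P_s|^{-1}$. The only cosmetic difference is that you write down the optimal weights directly (and verify feasibility and energy), whereas the paper derives the same weights by optimizing a convex combination of unit path flows via Lagrange multipliers.
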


\begin{proof}
    We will exhibit an $u, v$-flow and calculate its norm as follows. For $1\leq \ell\leq k$, let $\mathbf{J}_\ell:E'\rightarrow\mathbb{R}$ denote the flow which is supported on the edges of $P_\ell$, has constant value $1$ up to changes in sign (depending on the choice of orientation $E'$), and which satisfies $\mathbf{BJ}_\ell = \mathbf{1}_u - \mathbf{1}_v$. For a choice of coefficients $\gamma = (\gamma_1,\gamma_2,\dotsc, \gamma_k)\in\mathbb{R}^k$ with $\gamma_i\geq 0$ and $\sum_i \gamma_i = 1$, define
        \begin{align*}
            \mathbf{J}_\gamma = \sum_{\ell=1}^k \gamma_\ell \mathbf{J}_\ell,
        \end{align*}
    then $\mathbf{J}_\gamma$ is a feasible $u, v$-flow and we have that by~\cref{prop:flow-resistance}, it holds
        \begin{align*}
            r_{uv} \leq \|\mathbf{J}_\gamma\|_2^2 = \sum_{\ell=1}^k \gamma_\ell^2 |P_\ell|,
        \end{align*}
    so to improve this bound we consider the problem
        \begin{align*}
            \begin{cases}
            \text{minimize }\hspace{.25cm}& \sum_{\ell=1}^k \gamma_\ell^2 |P_\ell|\\
            \text{subject to }\hspace{.25cm}& 0\leq \gamma_\ell\leq 1\\
            & \sum_i \gamma_i = 1 .
            \end{cases}
        \end{align*}
    Let $\mathbf{B} = \mathrm{diag}(|P_1|, |P_2|,\dotsc, |P_k|)\in\mathbb{R}^{k\times k}$ denote the diagonal matrix of path lengths. The Lagrange multiplier becomes $2\mathbf{B}\gamma = \lambda\mathbf{1}_k$, i.e., $\gamma_\ell = \frac{\lambda}{2|P_\ell|}$, so that we have 
        \begin{align*}
            \lambda = \frac{2}{\sum_{\ell=1}^k |P_\ell|^{-1}},\text{ and }\gamma_\ell = \frac{1}{|P_\ell| \sum_{s=1}^k |P_s|^{-1}}.
        \end{align*}
    Thus we have
        \begin{align*}
            r_{uv} \leq \frac{1}{\left(\sum_{s=1}^k |P_s|^{-1}\right)^2}\sum_{\ell=1}^k \frac{|P_\ell|}{|P_\ell|^2} \leq \frac{1}{\sum_{s=1}^k |P_s|^{-1}}.
        \end{align*}
\end{proof}

We may then apply~\cref{lem:resistance-bound} to obtain the following corollary on the resistance curvature of a $3$-polytope.

\begin{theorem}\label{thm:curvature-bound}
    Let $G=(V,E)$ be the $1$-skeleton of a simple $3$-polytope. Fix $v\in V$, and let $\mathcal{C
    }(v)$ denote the set of $2$-faces incident to $v$. For each $C\in\mathcal{C}(v)$, let $\ell_C := |E(C)|$ be the length (edge count) of $C$. Then the resistance curvature of $G$ at $v$ satisfies
        \begin{align*}
            \resistance{v} \geq
            1 \;-\; \frac{1}{2}\sum_{C\in\mathcal{C}(v)}
            \frac{\ell_C-1}{
                (\ell_C-1)\!\left(\sum_{C'\in\mathcal{C}(v)}\frac{1}{\ell_{C'}-1}+1\right)-1}.
        \end{align*}
\end{theorem}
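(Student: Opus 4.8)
The plan is to estimate each term $r_{uv}$ appearing in \cref{defn:curvature-resistance} by exhibiting, for every neighbor $u$ of $v$, three edge-disjoint paths joining $u$ and $v$ that can be read off from the local face structure, applying \cref{lem:resistance-bound} to each, and then rearranging the resulting sum of bounds into the stated closed form.

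First I would record the local combinatorics at $v$. Since $P$ is a simple $3$-polytope, $v$ has degree exactly $3$; write $u_1,u_2,u_3$ for its neighbors, $e_i=\{v,u_i\}$ for the incident edges, and $\mathcal{C}(v)=\{C_1,C_2,C_3\}$ for the $2$-faces through $v$. The vertex figure at $v$ is a triangle, whose three vertices correspond to the $e_i$ and whose three edges correspond to the $C_j$; consequently each edge $e_i$ is contained in exactly two of the faces of $\mathcal{C}(v)$, and there is a bijection $e_i\leftrightarrow C_i$ under which $C_i$ is the unique face of $\mathcal{C}(v)$ that does \emph{not} contain $e_i$, the two faces containing $e_i$ being $C_j$ and $C_k$ with $\{i,j,k\}=\{1,2,3\}$. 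Moreover, two distinct $2$-faces of a $3$-polytope intersect in a single face of the face lattice, so $C_j\cap C_k$ equals exactly the edge $e_i$.

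Next, fixing $i$, I would take three paths joining $v$ and $u_i$: the edge $e_i$ itself (length $1$); and, for each of the two faces $C\in\{C_j,C_k\}$ containing $e_i$, the arc of the boundary cycle of $C$ running from $v$ to $u_i$ and avoiding $e_i$, which is a simple path of length $\ell_C-1\ge 2$ (recall $\ell_C\ge 3$). These three paths are pairwise edge-disjoint: the two arcs do not use $e_i$, and they lie inside $C_j$ and $C_k$ respectively, which meet only along $e_i$. Then \cref{lem:resistance-bound} with $k=3$ gives
\[
    r_{u_i v}\ \le\ \left(1+\frac{1}{\ell_{C_j}-1}+\frac{1}{\ell_{C_k}-1}\right)^{-1}.
\]

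Finally I would sum over $i=1,2,3$ and reindex by faces using the bijection $e_i\leftrightarrow C_i$: the bound attached to $u_i$ involves precisely the two faces $C_j,C_k\ne C_i$, so writing $S=\sum_{C'\in\mathcal{C}(v)}\tfrac{1}{\ell_{C'}-1}$ and clearing denominators with $(\ell_C-1)\cdot\tfrac{1}{\ell_C-1}=1$ gives
\[
    \left(1+S-\tfrac{1}{\ell_C-1}\right)^{-1}=\frac{\ell_C-1}{(\ell_C-1)(S+1)-1},
\]
whence $\sum_{u\sim v}r_{uv}\le\sum_{C\in\mathcal{C}(v)}\frac{\ell_C-1}{(\ell_C-1)(S+1)-1}$; substituting into $\resistance{v}=1-\tfrac12\sum_{u\sim v}r_{uv}$ yields the claim. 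The one genuinely delicate ingredient is the combinatorial fact that the two $2$-faces through $e_i$ intersect \emph{only} in $e_i$, since this is exactly what makes the three chosen paths edge-disjoint and hence legitimate inputs to \cref{lem:resistance-bound}; after that the argument is just the variational resistance estimate together with routine bookkeeping. (One could obtain sharper bounds by using additional edge-disjoint paths, but the three above already suffice for the stated estimate.)
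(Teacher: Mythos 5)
Your proposal is correct and follows essentially the same route as the paper: for each neighbor $u$ of $v$ you take the edge $\{u,v\}$ together with the two arcs around the two incident faces as three edge-disjoint paths, apply \cref{lem:resistance-bound}, and sum over the neighbors, reindexing by the opposite faces. Your explicit justification of edge-disjointness (two $2$-faces of a $3$-polytope meet in a single face of the face lattice, hence exactly in the shared edge) is a welcome elaboration of a point the paper passes over quickly, but it does not change the argument.
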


\begin{proof}
    Let $u\in V$ be fixed with $v\sim u$ and note that the edge $e = \{u, v\}$ is incident to exactly $2$ of the $2$-faces belonging to $\mathcal{C}(v)$, which without loss of generality we may take to be $C_1, C_{2}$. Note that with the exception of $e$, said $2$-faces are otherwise edge-disjoint. Therefore, we may construct $3$ edge-disjoint paths from $u$ to $v$ as follows: let the first path $P_1$ consist of exactly $e$, and which has length one; and then let $P_2,\dotsc, P_d$ be obtained by traversing the edges of $C_1,C_2$ from $u$ to $v$ and avoiding $e$, and which have lengths $\ell_{C_1}-1, \ell_{C_2}-1$, respectively. By~\cref{lem:resistance-bound}, we have
        \begin{align*}
            r_{uv} \leq \frac{1}{\sum_{s=1}^{3} |P_s|^{-1}} = \frac{1}{1 + \sum_{s=1}^{2}\frac{1}{\ell_{C_s} - 1}}.
        \end{align*}
    By applying this argument to each edge incident to $v$, we have by straightforward manipulation
        \begin{align*}
            \sum_{u\sim v} r_{uv} \leq \sum_{t=1}^{3} \frac{1}{ 1 + \sum_{s\neq t}\frac{1}{\ell_{C_s} - 1}} &= \sum_{t=1}^{3} \frac{\ell_{C_t} - 1}{ (\ell_{C_t} - 1)(\sum_{s=1}^{3} \frac{1}{\ell_{C_s} - 1}+1) - 1}.
        \end{align*}
    The claim follows.
\end{proof}

\begin{remark}\label{rmk:curvature-test}
   ~\cref{thm:curvature-bound} may be used to verify the resistance positivity of a simple polytope based on information about the $2$-face cycle lengths at each vertex, as follows. Letting $P$ be a simple $3$-polytope and $v\in V(P)$ fixed, let $\boldsymbol\ell(v)\in\mathbb{R}^3$ denote the number of edges in each of the three $2$-faces incident to $v$, ordered in descending fashion. Then~\Cref{thm:curvature-bound} shows that $\resistance{v} > 0$ provided it holds:
        \begin{align}\label{eq:test-for-positive}
            \sum_{i=1}^3
                \frac{\boldsymbol\ell_i-1}{
                    (\boldsymbol\ell_i-1)\!\left(\sum_{j=1}^3\frac{1}{\boldsymbol\ell_{j}-1}+1\right)-1} < 2.
        \end{align}
    It is useful to remark that among all vectors $\mathbf{x}\in \{3, 4, 5, 6\}^3$, the only ``forbidden'' vectors for which~\Cref{eq:test-for-positive} fails for the following four vectors:
        \begin{align}\label{eq:forbidden}
            \mathbf{x} = (5, 5, 5), (6, 6, 4), (6, 6, 5), (6, 6, 6).
        \end{align}
    We also remark that~\Cref{eq:test-for-positive} holds for any $3$-tuple of the form $(a, 3, 3)$ where $a\ge 3$.
\end{remark}

\subsection{Resistance-positive polytopes and $\Delta$-expansions}\label{subsec:resistance-positive-polyhedra}

In this subsection we explore examples of graphs of $3$-polytopes which have everywhere-positive resistance curvature. First, we note the following useful fact that establishes the existence of infinitely many $d$-polytopes with positive resistance curvature. 

\begin{theorem}\label{thm:resis-curv-transitive}
    Let $G=(V, E)$ be a vertex transitive graph with $|V| =n$. Then each node $v\in V$ has constant positive resistance curvature which satisfies
        \begin{align*}
            \resistance{v} =\frac{1}{n}.
        \end{align*}
\end{theorem}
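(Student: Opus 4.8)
The plan is to exploit vertex-transitivity twice: once to make the resistance curvature constant across vertices, and once to evaluate the constant via a global summation identity. First I would observe that an automorphism $\phi$ of $G$ satisfies $r_{\phi(u)\phi(v)} = r_{uv}$ for all $u,v$, since $\phi$ permutes the Laplacian (conjugating $\mathbf{L}$ by the permutation matrix of $\phi$ preserves $\mathbf{L}^\dagger$ and the indicator vectors up to the same permutation). Consequently, for any two vertices $v, w$ with $\phi(v) = w$, the sum $\sum_{u\sim v} r_{uv}$ equals $\sum_{u'\sim w} r_{u'w}$ after reindexing $u' = \phi(u)$ through the neighborhood bijection. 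Hence $\resistance{v}$ is independent of $v$; call the common value $\kappa$.

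Next I would identify the constant by summing over all vertices. We have
    \begin{align*}
        n\kappa = \sum_{v\in V}\resistance{v} = n - \frac{1}{2}\sum_{v\in V}\sum_{u\sim v} r_{uv} = n - \frac{1}{2}\cdot 2\sum_{\{u,v\}\in E} r_{uv} = n - \sum_{\{u,v\}\in E} r_{uv},
    \end{align*}
where the factor $2$ accounts for each edge being counted from both endpoints. The key input is the classical identity $\sum_{\{u,v\}\in E} r_{uv} = n - 1$, the celebrated ``Foster's theorem,'' equivalently the statement that $\operatorname{tr}(\mathbf{L}^\dagger \mathbf{L}) = \operatorname{rank}(\mathbf{L}) = n-1$ together with the expansion $r_{uv} = (\mathbf{1}_u - \mathbf{1}_v)^\top \mathbf{L}^\dagger (\mathbf{1}_u - \mathbf{1}_v)$ and $\sum_{\{u,v\}\in E}(\mathbf{1}_u - \mathbf{1}_v)(\mathbf{1}_u - \mathbf{1}_v)^\top = \mathbf{L}$. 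Substituting gives $n\kappa = n - (n-1) = 1$, hence $\kappa = 1/n$, which is positive. This simultaneously shows the curvature is positive and computes it exactly.

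The only real obstacle is whether Foster's theorem may be invoked freely; if the paper prefers a self-contained argument I would derive it in one line from the trace computation $\sum_{\{u,v\}\in E} r_{uv} = \sum_{\{u,v\}\in E} \operatorname{tr}\big(\mathbf{L}^\dagger (\mathbf{1}_u-\mathbf{1}_v)(\mathbf{1}_u-\mathbf{1}_v)^\top\big) = \operatorname{tr}(\mathbf{L}^\dagger \mathbf{L}) = n-1$, using connectedness (implicit for a polytope skeleton, and harmless to assume for a vertex-transitive graph since each connected component would contribute analogously). No case analysis or delicate estimate is needed; the argument is purely linear-algebraic once transitivity is used to reduce to a single constant.
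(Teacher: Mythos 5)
Your proposal is correct: the paper omits the proof entirely (describing it only as ``straightforward linear algebra''), and your argument---automorphism-invariance of effective resistance to get a constant curvature, then Foster's theorem $\sum_{\{u,v\}\in E} r_{uv}=n-1$ via $\operatorname{tr}(\mathbf{L}^\dagger\mathbf{L})=n-1$ to pin the constant at $1/n$---is exactly the standard computation the authors have in mind. Your remark about connectedness is the right caveat to flag, since Foster's identity (and the stated value $1/n$) requires it, and it holds in all of the paper's applications to polytopal skeleta.
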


The proof of~\Cref{thm:resis-curv-transitive} consists of straightforward linear algebra and the result covers such instances as the platonic solids as well as any polytope with $1$-skeleton isomorphic to the complete graph $K_n$. In dimension three, however, we note that there exist resistance positive families of $3$-polytopes which are not vertex transitive, although their classification seems at present out of reach.

To explore this angle, we can first apply~\Cref{rmk:curvature-test} to uncover a class of resistance positive simple $3$-polytopes which are obtained as $\Delta$-expansions of various $3$-polytopes. Recall that if $P$ is a simple $3$-polytope and $v\in V(P)$, the $\Delta$-expansion of $P$ at $v$ is the simple $3$-polytope obtained by replacing $v$ with a triangle (and which can be visualized as slicing a corner off of the polytope).

\begin{theorem}\label{thm:delta-expansions}
    Let $P$ be a simple $3$-polytope. For each $v\in V(P)$, let $\boldsymbol\ell(v)$ denote the vector of face lengths as defined in~\cref{rmk:curvature-test}. Assume:
        \begin{enumerate}[label=(\roman*)]
            \item $\boldsymbol{\ell}(v)\in \{3, 4, 5\}^3$ for each $v\in V(P)$,
            \item and no $\boldsymbol{\ell}(v)$ equals $(5, 5, 5)$.
        \end{enumerate}
    Let $v_0\in V(P)$ be fixed. Assume further that:
        \begin{enumerate}
            \item[(iii)] At most one of the entries of the vector $\boldsymbol{\ell}(v_0)$ equals five,
            \item[(iv)] for each $u\in V(P)$ with $u\sim v_0$, the $2$-face incident to $u$ which is not incident to $v_0$ contains at most four edges,
            \item[(v)] and that no $w\in V(P)$ belonging to the three faces incident to $v_0$ has a face sequence $(5, 5, 4)$.
        \end{enumerate}
    Then the $\Delta$-expansion of $P$ at $v_0$ is resistance positive.  
\end{theorem}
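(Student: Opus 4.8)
The plan is to track how the $\Delta$-expansion $P'$ of $P$ at $v_0$ alters the vector of incident $2$-face lengths at each vertex, and then to certify that $P'$ is resistance positive one vertex at a time via the test~\eqref{eq:test-for-positive} of~\cref{rmk:curvature-test}, whose only ``forbidden'' face-length vectors among $\{3,4,5,6\}^3$ are the four vectors listed in~\eqref{eq:forbidden}. Write $C_1,C_2,C_3$ for the three $2$-faces of $P$ incident to $v_0$, so that $\boldsymbol\ell(v_0)=(a_1,a_2,a_3)$ with $a_1\ge a_2\ge a_3$ and, by hypothesis (iii), $a_2,a_3\le 4$. First I would recall the combinatorics of vertex truncation: $P'$ is again a simple $3$-polytope; $v_0$ disappears; a new triangular face $T$ is created; each of the three edges $\{v_0,u\}$ of $P$ acquires a new degree-$3$ endpoint $t_u$ lying on $T$; and each $C_i$ gains exactly one new vertex and one new edge, so $|E(C_i)|$ becomes $a_i+1\le 6$, while all other $2$-faces are unchanged and hence (by (i)) still have length in $\{3,4,5\}$. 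Consequently every vertex of $P'$ has its incident-face-length vector in $\{3,4,5,6\}^3$, so the test~\eqref{eq:test-for-positive}--\eqref{eq:forbidden} is applicable at each of them and the whole problem reduces to checking that no vertex of $P'$ realizes a vector in~\eqref{eq:forbidden}.

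Next I would partition $V(P')$ into four classes. \textbf{(a)} A triangle vertex $t_u$ is incident to $T$ and to the two faces among $C_1,C_2,C_3$ that met the old edge $\{v_0,u\}$, so its vector is $(a_i+1,\,a_j+1,\,3)$ for some $i\ne j$; since no vector in~\eqref{eq:forbidden} contains a $3$, these are automatically safe. \textbf{(b)} For a neighbor $u$ of $v_0$, the two faces at $u$ containing $v_0$ are two of the $C_i$ (a $2$-face containing the edge-adjacent pair $u,v_0$ must contain the edge joining them, hence is one of the three faces at $v_0$), while the third face $F$ at $u$ avoids $v_0$; thus the new vector at $u$ is $(a_i+1,\,a_j+1,\,|E(F)|)$ with $|E(F)|\le 4$ by (iv), and with at most one of $a_i,a_j$ equal to $5$ by (iii) (only two of the three faces $C_1,C_2,C_3$ meet a given edge through $v_0$). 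Such a vector contains at most one $6$ and has smallest entry $\le 4$, so it is not among~\eqref{eq:forbidden}. \textbf{(c)} A vertex $w$ that lies on some $C_i$ but is not a neighbor of $v_0$ in fact lies on exactly one of $C_1,C_2,C_3$ (any two of these faces meet only along the edge through $v_0$, whose other endpoint is a neighbor of $v_0$); hence its vector changes from $(p,q,a_i)$ to the sorted form of $\{p,q,a_i+1\}$ with $p,q\le 5$. When $a_i\in\{3,5\}$ the constraint $p,q\le 5$ already rules out a match with~\eqref{eq:forbidden} (attaining a $6$ would force two $6$'s); when $a_i=4$ the only obstruction is $(p,q)=(5,5)$, i.e., the original vector $(5,5,4)$, which is excluded by (v). \textbf{(d)} Every remaining vertex of $P'$ is untouched, with vector in $\{3,4,5\}^3\setminus\{(5,5,5)\}$ by (i)--(ii), hence not forbidden.

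With all four classes cleared, the test~\eqref{eq:test-for-positive} combined with~\cref{thm:curvature-bound} gives $\resistance{v}>0$ for every $v\in V(P')$, so $P'$ is resistance positive. I expect the main obstacle to be organizational rather than technical: the delicate part is making the four-class partition visibly exhaustive and identifying, for each expansion-created or expansion-touched vertex, precisely which $2$-faces it is incident to. The elementary fact that a $2$-face of a polytope containing two edge-adjacent vertices must contain the edge between them is the workhorse behind classes (b) and (c); once the combinatorial accounting is in place, nothing remains but the finite verification already recorded in~\eqref{eq:forbidden}, so no new inequality needs to be established.
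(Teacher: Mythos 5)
Your proposal is correct and follows essentially the same route as the paper's proof: partition the vertices of the expansion into the new triangle vertices, the old neighbors of $v_0$, the remaining vertices on the three affected faces, and the untouched vertices, track how each face-length vector changes, and check against the forbidden list~\eqref{eq:forbidden} using hypotheses (i)--(v) together with~\cref{rmk:curvature-test}. Your case analysis in class (c) is in fact slightly more explicit than the paper's, which compresses that step into a single appeal to assumptions (i), (ii), and (v).
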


\begin{proof}
    Assume without loss of generality that the vertices neighboring $v_0$ are labelled $v_1, v_2, v_3$. Let $F_{012}$ denote the face of $P$ containing the vertices $v_0, v_1, v_2$ and similarly for $F_{023}, F_{031}$. Write $\boldsymbol{\ell}(v_0) = (\ell_1, \ell_2, \ell_3)$ where
        \begin{align*}
            \ell_1 = |F_{012}|,\quad \ell_2 = |F_{023}|,\quad \ell_3 = |F_{031}|,
        \end{align*}
    for which we assume without loss of generality that $\ell_3\leq \ell_2\leq \ell_1$. Applying the $\Delta$-expansion of $P$ at $v_0$, vertex $v_0$ is replaced by three new vertices $u_1, u_2, u_3$, which can be taken so that $u_i$ is incident to $v_i$ and the remaining two $u_j$ with $j\neq i$. In this case we have face length sequences
        \begin{align*}
            \boldsymbol{\ell}'(u_1) &= (\ell_1+1, \ell_3+1, 3),\\
            \boldsymbol{\ell}'(u_2) &= (\ell_1+1, \ell_2+1, 3),\\
            \boldsymbol{\ell}'(u_3) &= (\ell_2+1, \ell_3+1, 3).
        \end{align*}
    Here, $\boldsymbol{\ell}'(\cdot)$ denote the face count vector in the $\Delta$-expansion of $P$ to avoid confusion. Since each $\ell_i\leq 5$, it holds that no entry of $\boldsymbol{\ell}(u_i)$ exceeds six, and each such sequence contains an entry of three. Thus by~\Cref{rmk:curvature-test}, $\resistance{u_i} > 0$ for $i=1, 2, 3$. Next consider $v_1$ as it appears in the $\Delta$-expansion of $P$. It follows that
        \begin{align*}
            \boldsymbol{\ell}'(v_1) = (\ell_1+1, \ell_3+1, s)
        \end{align*}
    for some $s\in \{3, 4\}$. Moreover, because at most one entry of $\boldsymbol{\ell}(v_0)$ equals $5$, we cannot have $\ell_1=\ell_3=5$. Thus, again by~\Cref{rmk:curvature-test}, we must have $\resistance{v_1} > 0$. The same argument applies to the case of $v_2, v_3$ and it follows that $\resistance{v_i} > 0$ for each $i$.

    Finally, let $w\in V(P)\setminus\{v_0,v_1,v_2,v_3\}$ be fixed. If the vertex $w$ and its incident facets are untouched by the $\Delta$-expansion, $\resistance{w} > 0$ automatically by assumptions~(i) and~(ii) via~\Cref{rmk:curvature-test}.
    
    In the $\Delta$-expansion only the faces $F_{012},F_{023},F_{031}$ change, each increasing its length by exactly $1$. Any such $w$ lies on at most one of these three faces (otherwise $w$ would be one of $v_1,v_2,v_3$), so $\boldsymbol{\ell}'(w)$ is obtained from $\boldsymbol{\ell}(w)\in\{3,4,5\}^3$ by increasing at most one coordinate by $1$. By assumptions (i), (ii), and (v), $\boldsymbol{\ell}'(w)$ also avoids the four vectors in~\eqref{eq:forbidden} and thus $\resistance{w}>0$. This proves the theorem.
\end{proof}

Note that conditions \emph{(i)-(v)} in~\cref{thm:delta-expansions} are sufficient but not necessary; one can exhibit constructions failing one or more of the aforementioned criteria but which still determine resistance positive $3$-polytopes. In~\Cref{fig:delta-expansions} we illustrate resistance-positive $3$-polytopes and their $\Delta$-expansions.

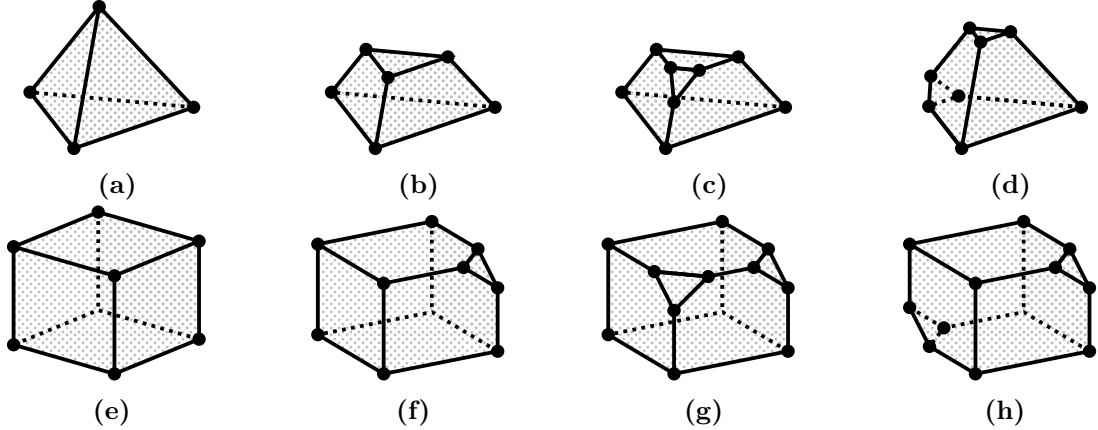
\begin{figure}[t!]
    \centering
    \sc{$3$-Polytopes and $\Delta$-expansions}
    
    \begin{subfigure}[t]{0.24\textwidth}
        \centering
\begin{tikzpicture}[x={(0.866cm,0.30cm)}, y={(-0.866cm,0.30cm)}, z={(0cm,1cm)}, scale=1.0]
  \def\r{1.05}
  \def\h{1.45}
  \def\rot{-150}

  \foreach \i in {1,...,3}{
    \pgfmathsetmacro{\ang}{\rot + 120*(\i-1)}
    \coordinate (B\i) at ({\r*cos(\ang)},{\r*sin(\ang)},{0});
  }
  \coordinate (A) at (0,0,\h);

  \path[pattern=crosshatch dots, pattern color=gray!50] (B1)--(B2)--(B3)--cycle;         
  \path[pattern=crosshatch dots, pattern color=gray!50] (A)--(B1)--(B2)--cycle;          
  \path[pattern=crosshatch dots, pattern color=gray!50] (A)--(B2)--(B3)--cycle;          

  \draw[line width=0.5mm] (B1)--(B2);
  \draw[line width=0.5mm, dotted] (B2)--(B3);
  \draw[line width=0.5mm] (B3)--(B1);
  \draw[line width=0.5mm] (A)--(B1);
  \draw[line width=0.5mm] (A)--(B2);
  \draw[line width=0.5mm] (A)--(B3);

  \foreach \p in {A,B1,B2,B3}{
    \node[circle, fill=black, inner sep=0pt, minimum size=5pt] at (\p) {};
  }
\end{tikzpicture}
        \caption{}\label{subfig:delta-expansions-1}
    \end{subfigure}
    \begin{subfigure}[t]{0.24\textwidth}
        \centering
\begin{tikzpicture}[x={(0.866cm,0.30cm)}, y={(-0.866cm,0.30cm)}, z={(0cm,1cm)}, scale=1.0]
  \def\r{1.05}
  \def\h{1.45}
  \def\rot{-150}
  \def\tcut{0.5} 

  \foreach \i in {1,...,3}{
    \pgfmathsetmacro{\ang}{\rot + 120*(\i-1)}
    \coordinate (B\i) at ({\r*cos(\ang)},{\r*sin(\ang)},{0});
  }
  \coordinate (A) at (0,0,\h);

  \coordinate (U1) at ($(A)!\tcut!(B1)$);
  \coordinate (U2) at ($(A)!\tcut!(B2)$);
  \coordinate (U3) at ($(A)!\tcut!(B3)$);

  \path[pattern=crosshatch dots, pattern color=gray!50] (B1)--(B2)--(B3)--cycle;                 
  \path[pattern=crosshatch dots, pattern color=gray!50] (B1)--(B2)--(U2)--(U1)--cycle;           
  \path[pattern=crosshatch dots, pattern color=gray!50] (B2)--(B3)--(U3)--(U2)--cycle;           
  \path[pattern=crosshatch dots, pattern color=gray!50] (U1)--(U2)--(U3)--cycle;

  \draw[line width=0.5mm] (B1)--(B2);
  \draw[line width=0.5mm, dotted] (B2)--(B3);
  \draw[line width=0.5mm] (B3)--(B1);
  \draw[line width=0.5mm] (U1)--(B1);
  \draw[line width=0.5mm] (U2)--(B2);
  \draw[line width=0.5mm] (U3)--(B3);
  \draw[line width=0.5mm] (U1)--(U2)--(U3)--(U1);

  \foreach \p in {B1,B2,B3,U1,U2,U3}{
    \node[circle, fill=black, inner sep=0pt, minimum size=5pt] at (\p) {};
  }
\end{tikzpicture}
        \caption{}\label{subfig:delta-expansions-2}
    \end{subfigure}%
    \begin{subfigure}[t]{0.24\textwidth}
        \centering
\begin{tikzpicture}[x={(0.866cm,0.30cm)}, y={(-0.866cm,0.30cm)}, z={(0cm,1cm)}, scale=1.0]
  \def\r{1.05}
  \def\h{1.45}
  \def\rot{-150}
  \def\tcut{0.5}   
  \def\tcuttwo{0.35}

  \foreach \i in {1,...,3}{
    \pgfmathsetmacro{\ang}{\rot + 120*(\i-1)}
    \coordinate (B\i) at ({\r*cos(\ang)},{\r*sin(\ang)},{0});
  }
  \coordinate (A) at (0,0,\h);
  \coordinate (U1) at ($(A)!\tcut!(B1)$);
  \coordinate (U2) at ($(A)!\tcut!(B2)$);
  \coordinate (U3) at ($(A)!\tcut!(B3)$);

  \coordinate (W1B) at ($(U1)!\tcuttwo!(B1)$);
  \coordinate (W1U2) at ($(U1)!\tcuttwo!(U2)$);
  \coordinate (W1U3) at ($(U1)!\tcuttwo!(U3)$);

  \path[pattern=crosshatch dots, pattern color=gray!50] (B1)--(B2)--(B3)--cycle;                        
  \path[pattern=crosshatch dots, pattern color=gray!50] (B1)--(B2)--(U2)--(W1U2)--(W1B)--cycle;         
  \path[pattern=crosshatch dots, pattern color=gray!50] (B3)--(B1)--(W1B)--(W1U3)--(U3)--cycle;         
  \path[pattern=crosshatch dots, pattern color=gray!50] (W1B)--(W1U2)--(W1U3)--cycle;
  \path[pattern=crosshatch dots, pattern color=gray!50] (U2)--(U3)--(W1U2)--(W1U3)--cycle;

  \draw[line width=0.5mm] (B1)--(B2);
  \draw[line width=0.5mm, dotted] (B2)--(B3);
  \draw[line width=0.5mm] (B3)--(B1);

  \draw[line width=0.5mm] (W1B)--(B1);
  \draw[line width=0.5mm] (U2)--(B2);
  \draw[line width=0.5mm] (U3)--(B3);

  \draw[line width=0.5mm] (U2)--(U3);
  \draw[line width=0.5mm] (W1U2)--(W1U3);

  \draw[line width=0.5mm] (W1B)--(W1U2)--(W1U3)--(W1B);

  \draw[line width=0.5mm] (U2)--(W1U2);
  \draw[line width=0.5mm] (U3)--(W1U3);

  \foreach \p in {B1,B2,B3,U2,U3,W1B,W1U2,W1U3}{
    \node[circle, fill=black, inner sep=0pt, minimum size=5pt] at (\p) {};
  }
\end{tikzpicture}
        \caption{}\label{subfig:delta-expansions-3}
    \end{subfigure}
    \begin{subfigure}[t]{0.24\textwidth}
        \centering
\begin{tikzpicture}[x={(0.866cm,0.30cm)}, y={(-0.866cm,0.30cm)}, z={(0cm,1cm)}, scale=1.0]
  \def\r{1.05}
  \def\h{1.45}
  \def\rot{-150}
  \def\tcut{0.25}      
  \def\tcutB{0.25}     

  \foreach \i in {1,...,3}{
    \pgfmathsetmacro{\ang}{\rot + 120*(\i-1)}
    \coordinate (B\i) at ({\r*cos(\ang)},{\r*sin(\ang)},{0});
  }
  \coordinate (A) at (0,0,\h);

  \coordinate (U1) at ($(A)!\tcut!(B1)$);
  \coordinate (U2) at ($(A)!\tcut!(B2)$);
  \coordinate (U3) at ($(A)!\tcut!(B3)$);

  \coordinate (V31) at ($(B3)!\tcutB!(B1)$);
  \coordinate (V32) at ($(B3)!\tcutB!(B2)$);
  \coordinate (V3U) at ($(B3)!\tcutB!(U3)$);

  \path[pattern=crosshatch dots, pattern color=gray!50]
    (B1)--(B2)--(V32)--(V31)--cycle;

  \path[pattern=crosshatch dots, pattern color=gray!50]
    (B1)--(B2)--(U2)--(U1)--cycle;

  \path[pattern=crosshatch dots, pattern color=gray!50]
    (B2)--(V32)--(V3U)--(U3)--(U2)--cycle;

  \path[pattern=crosshatch dots, pattern color=gray!50] (U1)--(U2)--(U3)--cycle;        
  \path[pattern=crosshatch dots, pattern color=gray!50] (V31)--(V32)--(V3U)--cycle;      

  \draw[line width=0.5mm] (B1)--(B2);
  \draw[line width=0.5mm, dotted] (B2)--(V32)--(V31);
  \draw[line width=0.5mm] (V31)--(B1);

  \draw[line width=0.5mm] (U1)--(B1);
  \draw[line width=0.5mm] (U2)--(B2);
  \draw[line width=0.5mm] (U3)--(V3U); 

    \draw[line width=0.5mm] (U1)--(U2);
    \draw[line width=0.5mm] (U2)--(U3);
    \draw[line width=0.5mm] (U3)--(U1);

        \draw[line width=0.5mm, dotted] (V3U)--(V32);
      \draw[line width=0.5mm] (V3U)--(V31);


  \draw[line width=0.5mm] (V31)--(B1);
  \draw[line width=0.5mm, dotted] (V32)--(B2);
  \draw[line width=0.5mm, dotted] (V3U)--(U3);

  \foreach \p in {B1,B2,U1,U2,U3,V31,V32,V3U}{
    \node[circle, fill=black, inner sep=0pt, minimum size=5pt] at (\p) {};
  }
\end{tikzpicture}
        \caption{}\label{subfig:delta-expansions-4}
    \end{subfigure}%
    
    \begin{subfigure}[t]{0.24\textwidth}
        \centering
\begin{tikzpicture}[x={(0.866cm,0.30cm)}, y={(-0.866cm,0.30cm)}, z={(0cm,1cm)}, scale=1.0]
  \def\r{1.0}
  \def\h{1.3}
  \def\rot{-130}

  \foreach \i in {1,...,4}{
    \pgfmathsetmacro{\ang}{\rot + 90*(\i-1)}
    \coordinate (T\i) at ({\r*cos(\ang)},{\r*sin(\ang)},{\h});
    \coordinate (B\i) at ({\r*cos(\ang)},{\r*sin(\ang)},{0});
  }

  \path[pattern=crosshatch dots, pattern color=gray!50]
    (T1)--(T2)--(T3)--(T4)--cycle;                         
  \path[pattern=crosshatch dots, pattern color=gray!50]
    (B4)--(T4)--(T1)--(B1)--cycle;                         
  \path[pattern=crosshatch dots, pattern color=gray!50]
    (B1)--(T1)--(T2)--(B2)--cycle;                         
  \path[pattern=crosshatch dots, pattern color=gray!50]
    (B2)--(T2)--(T3)--(B3)--cycle;                         

  \draw[line width=0.5mm] (T1)--(T2)--(T3)--(T4)--(T1);

  \draw[line width=0.5mm] (B4)--(B1)--(B2);
  \draw[line width=0.5mm, dotted] (B2)--(B3)--(B4);

  \draw[line width=0.5mm] (T1)--(B1);
  \draw[line width=0.5mm] (T2)--(B2);
  \draw[line width=0.5mm, dotted] (T3)--(B3);
  \draw[line width=0.5mm] (T4)--(B4);

  \foreach \i in {1,...,4}{
    \node[circle, fill=black, inner sep=0pt, minimum size=5pt] at (T\i) {};
  }
  \foreach \p in {B4,B1,B2}{
    \node[circle, fill=black, inner sep=0pt, minimum size=5pt] at (\p) {};
  }
\end{tikzpicture}
        \caption{}\label{subfig:delta-expansions-5}
    \end{subfigure}
    \begin{subfigure}[t]{0.24\textwidth}
        \centering
\begin{tikzpicture}[x={(0.866cm,0.30cm)}, y={(-0.866cm,0.30cm)}, z={(0cm,1cm)}, scale=1.0]
  \def\r{1.0}
  \def\h{1.2}
  \def\rot{-150}
  \def\tcut{0.3} 

  \foreach \i in {1,...,4}{
    \pgfmathsetmacro{\ang}{\rot + 90*(\i-1)}
    \coordinate (T\i) at ({\r*cos(\ang)},{\r*sin(\ang)},{\h});
    \coordinate (B\i) at ({\r*cos(\ang)},{\r*sin(\ang)},{0});
  }

  \coordinate (U21) at ($(T2)!\tcut!(T1)$);
  \coordinate (U23) at ($(T2)!\tcut!(T3)$);
  \coordinate (U2B) at ($(T2)!\tcut!(B2)$);

  \path[pattern=crosshatch dots, pattern color=gray!50]
    (T1)--(U21)--(U23)--(T3)--(T4)--cycle;

  \path[pattern=crosshatch dots, pattern color=gray!50]
    (B4)--(T4)--(T1)--(B1)--cycle;                           
  \path[pattern=crosshatch dots, pattern color=gray!50]
    (B1)--(T1)--(U21)--(U2B)--(B2)--cycle;                   
  \path[pattern=crosshatch dots, pattern color=gray!50]
    (B2)--(U2B)--(U23)--(T3)--(B3)--cycle;                   

  \path[pattern=crosshatch dots, pattern color=gray!50]
    (U21)--(U23)--(U2B)--cycle;

  \draw[line width=0.5mm] (T1)--(U21)--(U23)--(T3)--(T4)--(T1);

  \draw[line width=0.5mm] (B4)--(B1)--(B2);
  \draw[line width=0.5mm, dotted] (B2)--(B3)--(B4);

  \draw[line width=0.5mm] (T1)--(B1);        
  \draw[line width=0.5mm] (U2B)--(B2);       
  \draw[line width=0.5mm, dotted] (T3)--(B3);
  \draw[line width=0.5mm] (T4)--(B4);

  \draw[line width=0.5mm] (U21)--(U23)--(U2B)--(U21);

  \foreach \p in {T1,T3,T4,U21,U23,U2B,B4,B1,B2}{
    \node[circle, fill=black, inner sep=0pt, minimum size=5pt] at (\p) {};
  }
\end{tikzpicture}
        \caption{}\label{subfig:delta-expansions-5}
    \end{subfigure}%
    \begin{subfigure}[t]{0.24\textwidth}
        \centering
\begin{tikzpicture}[x={(0.866cm,0.30cm)}, y={(-0.866cm,0.30cm)}, z={(0cm,1cm)}, scale=1.0]
  \def\r{1.0}
  \def\h{1.2}
  \def\rot{-150}
  \def\tcut{0.3} 

  \foreach \i in {1,...,4}{
    \pgfmathsetmacro{\ang}{\rot + 90*(\i-1)}
    \coordinate (T\i) at ({\r*cos(\ang)},{\r*sin(\ang)},{\h});
    \coordinate (B\i) at ({\r*cos(\ang)},{\r*sin(\ang)},{0});
  }

  \coordinate (U21) at ($(T2)!\tcut!(T1)$);
  \coordinate (U23) at ($(T2)!\tcut!(T3)$);
  \coordinate (U2B) at ($(T2)!\tcut!(B2)$);

  \coordinate (A12) at ($(T1)!\tcut!(T2)$);
  \coordinate (A14) at ($(T1)!\tcut!(T4)$);
  \coordinate (A1B) at ($(T1)!\tcut!(B1)$);

  \path[pattern=crosshatch dots, pattern color=gray!50]
    (A14)--(T4)--(T3)--(U23)--(U21)--(A12)--cycle;

  \path[pattern=crosshatch dots, pattern color=gray!50]
    (B4)--(T4)--(A14)--(A1B)--(B1)--cycle;                 
  \path[pattern=crosshatch dots, pattern color=gray!50]
    (B1)--(A1B)--(A12)--(U2B)--(B2)--cycle;                 
  \path[pattern=crosshatch dots, pattern color=gray!50]
    (B2)--(U2B)--(U23)--(T3)--(B3)--cycle;                  

  \path[pattern=crosshatch dots, pattern color=gray!50]
    (U21)--(U23)--(U2B)--cycle;                             
  \path[pattern=crosshatch dots, pattern color=gray!50]
    (A14)--(A12)--(A1B)--cycle;                             

  \draw[line width=0.5mm] (T4)--(A14)--(A12)--(U21)--(U23)--(T3)--(T4);

  \draw[line width=0.5mm] (B4)--(B1)--(B2);
  \draw[line width=0.5mm, dotted] (B2)--(B3)--(B4);

  \draw[line width=0.5mm] (A1B)--(B1);      
  \draw[line width=0.5mm] (U2B)--(B2);      
  \draw[line width=0.5mm, dotted] (T3)--(B3);
  \draw[line width=0.5mm] (T4)--(B4);

  \draw[line width=0.5mm] (U21)--(U23)--(U2B)--(U21);
  \draw[line width=0.5mm] (A14)--(A12)--(A1B)--(A14);

  \foreach \p in {A14,A12,U21,U23,T3,T4,A1B,U2B,B1,B2,B4}{
    \node[circle, fill=black, inner sep=0pt, minimum size=5pt] at (\p) {};
  }
\end{tikzpicture}
        \caption{}\label{subfig:delta-expansions-6}
    \end{subfigure}
    \begin{subfigure}[t]{0.24\textwidth}
        \centering
\begin{tikzpicture}[x={(0.866cm,0.30cm)}, y={(-0.866cm,0.30cm)}, z={(0cm,1cm)}, scale=1.0]
  \def\r{1.0}
  \def\h{1.2}
  \def\rot{-150}
  \def\tcut{0.3} 

  \foreach \i in {1,...,4}{
    \pgfmathsetmacro{\ang}{\rot + 90*(\i-1)}
    \coordinate (T\i) at ({\r*cos(\ang)},{\r*sin(\ang)},{\h});
    \coordinate (B\i) at ({\r*cos(\ang)},{\r*sin(\ang)},{0});
  }

  \coordinate (U21) at ($(T2)!\tcut!(T1)$);
  \coordinate (U23) at ($(T2)!\tcut!(T3)$);
  \coordinate (U2B) at ($(T2)!\tcut!(B2)$);

  \coordinate (W41) at ($(B4)!\tcut!(B1)$);
  \coordinate (W43) at ($(B4)!\tcut!(B3)$);
  \coordinate (W4T) at ($(B4)!\tcut!(T4)$);

  \path[pattern=crosshatch dots, pattern color=gray!50]
    (T1)--(U21)--(U23)--(T3)--(T4)--cycle;

  \path[pattern=crosshatch dots, pattern color=gray!50]
    (W4T)--(T4)--(T1)--(B1)--(W41)--cycle;                  
  \path[pattern=crosshatch dots, pattern color=gray!50]
    (B1)--(T1)--(U21)--(U2B)--(B2)--cycle;                  
  \path[pattern=crosshatch dots, pattern color=gray!50]
    (B2)--(U2B)--(U23)--(T3)--(B3)--cycle;                  

  \path[pattern=crosshatch dots, pattern color=gray!50]
    (U21)--(U23)--(U2B)--cycle;

  \draw[line width=0.5mm] (T1)--(U21)--(U23)--(T3)--(T4)--(T1);

  \draw[line width=0.5mm] (W41)--(B1)--(B2);                
  \draw[line width=0.5mm, dotted] (B2)--(B3)--(W43)--(W41); 

  \draw[line width=0.5mm] (T1)--(B1);
  \draw[line width=0.5mm] (U2B)--(B2);
  \draw[line width=0.5mm, dotted] (T3)--(B3);
  \draw[line width=0.5mm] (T4)--(W4T);

  \draw[line width=0.5mm] (U21)--(U23)--(U2B)--(U21);
    \draw[line width=0.5mm, dotted] (W43)--(W4T);
    \draw[line width=0.5mm]  (W4T)--(W41);

  \foreach \p in {T1,T3,T4,U21,U23,U2B,W41,W43,W4T,B1,B2}{
    \node[circle, fill=black, inner sep=0pt, minimum size=5pt] at (\p) {};
  }
\end{tikzpicture}
        \caption{}\label{subfig:delta-expansions-8}
    \end{subfigure}%
    ~
    \caption{\textit{(a)} The $3$-simplex, vertex transitive and resistance positive.\textit{(b)--(d)} $\Delta$-expansions of \textit{(a)}, resistance-positive via~\Cref{thm:delta-expansions}. \textit{(e)} The $3$-cube, likewise vertex-transitive and resistance positive. \textit{(f)} A $\Delta$-expansion of the $3$-cube that is resistance positive via~\Cref{thm:delta-expansions}. \textit{(g)} A $\Delta$-expansion of the $3$-cube not covered by the sufficient criteria of~\Cref{thm:delta-expansions}, yet containing no forbidden face sequences (cf.~\Cref{eq:forbidden}). \textit{(h)} A $\Delta$-expansion of the $3$-cube failing the same criteria and containing forbidden face sequences (cf.~\Cref{eq:forbidden}) which is resistance positive. 
    }\label{fig:delta-expansions}
\end{figure}

Lastly, we offer a direct construction of a family of non-vertex transitive resistance positive $3$-polytopes which is unbounded in size, as follows. First we recall the Cartesian product of graphs operation: If $G=(V_G, E_G)$ and $H=(V_H, E_H)$ are graphs, $G\times H$ is the graph with vertex set $V_G\times V_H$ and edges
    \begin{align*}
        E(G\times H) &= \{\{ (u_1, v_1), (u_2, v_2)\} \;:\;\{u_1, u_2\}\in E_G\text{ and }v_1 = v_2,\\
        &\qquad\qquad\text{ or }\{v_1, v_2\}\in E_H\text{ and }u_1 = u_2\}.
    \end{align*} 

\begin{definition}[$k$-pointed tube]
    Let $k\geq 1$ be fixed. We construct a $3$-polytope $\mathcal{Q}_k$, called the \emph{$k$-pointed tube}, by constructing its graph as follows: Let $P_k$ be the path on $k$ vertices with $V(P_k) = \{0, 1, \dotsc, k-1\}$. Let $C$ be the cycle on three vertices with $V(C) = \{0, 1, 2\}$. Start by writing $G = C\times P_k$. Now, write $V(\mathcal{Q}_k) = V(G) \cup \{x, y\}$ where $x, y$ are separately labeled vertices. Then, write
    \begin{align*}
        E(\mathcal{Q}_k) &= E(G) \cup \bigcup_{i=0}^2 (\{x, (i, 0)\} \cup \{y, (i, k-1)\} ).
    \end{align*}
\end{definition}

Two copies of the $k$-pointed tube for $k=3, 5$ are illustrated in~\Cref{subfig:resis-pencil}. 

\begin{example}\label{thm:pencil-positive}
    The $3$-polytope $\mathcal{Q}_k$ has everywhere positive resistance curvature for each $k\geq 1$.
\end{example}

The proof of~\Cref{thm:pencil-positive} is long and requires a systematic derivation of the effective resistances between edges in $\mathcal{Q}_k$, which in turn requires a full spectral decomposition of its Laplacian. We include a proof of the everywhere positivity of this example family in~\cref{sec:appendix}.

\subsection{Conditions for negative resistance curvature}\label{subsec:resistance-negative}

In this subsection we obtain a degree-based lower bound for the effective resistance between vertices in a graph and use it to establish criteria for the existence of a negative curvature vertex. 

\begin{theorem}\label{thm:resistance-subset-lower-bound}
    Let $G=(V,E)$ be any graph, let $A\subseteq V$ be nonempty, and $u, v\in A$ fixed. Let $\mathbf{L}_A$ denote the principal submatrix of the Laplacian matrix $\mathbf{L}$ with rows and columns indexed by vertices in $A$. Then we have
        \begin{align}\label{eq:resistance-lower-bound}
            r_{uv} \geq (\mathbf{1}_u-\mathbf{1}_v)^\top \mathbf{L}_A^\dagger (\mathbf{1}_u - \mathbf{1}_v).
        \end{align}
\end{theorem}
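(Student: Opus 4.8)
The plan is to read the right-hand side of~\eqref{eq:resistance-lower-bound} as the optimal value of a \emph{relaxed} minimum-energy flow problem — the one from~\cref{prop:flow-resistance}, but with the conservation constraint imposed only at the vertices of $A$ — and then to invoke the trivial principle that dropping constraints cannot raise the optimal energy. A degenerate case is disposed of first: if $u$ and $v$ lie in different connected components of $G$, then $\mathbf{1}_u-\mathbf{1}_v\notin\operatorname{col}(\mathbf{B})$, the flow problem of~\cref{prop:flow-resistance} is infeasible, $r_{uv}=+\infty$ under that convention, and~\eqref{eq:resistance-lower-bound} is vacuous. So I assume from now on that $u$ and $v$ are joined by a path in $G$ (in particular this holds whenever $G$ is connected, which is the only case the paper uses).

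The key algebraic step is an identification of $\mathbf{L}_A$ as a Gram matrix. Let $\iota\colon\mathbb{R}^{A}\hookrightarrow\mathbb{R}^{V}$ be the inclusion extending a vector by zeros outside $A$, and set $\mathbf{C}:=\iota^{\top}\mathbf{B}\in\mathbb{R}^{A\times E'}$, i.e. the matrix obtained from the oriented incidence matrix $\mathbf{B}$ by deleting the rows indexed by $V\setminus A$. Note that $\mathbf{C}$ still has one column per edge of $G$, so it is \emph{not} the incidence matrix of the induced subgraph $G[A]$: a column of an edge with exactly one endpoint in $A$ survives with a single $\pm1$ entry. Since the principal submatrix of $\mathbf{L}$ indexed by $A$ is exactly $\iota^{\top}\mathbf{L}\,\iota$, and $\mathbf{L}=\mathbf{B}\mathbf{B}^{\top}$, we obtain
\begin{align*}
    \mathbf{L}_{A}\;=\;\iota^{\top}\mathbf{L}\,\iota\;=\;\iota^{\top}\mathbf{B}\mathbf{B}^{\top}\iota\;=\;\mathbf{C}\mathbf{C}^{\top}.
\end{align*}
The proof of~\cref{prop:flow-resistance} (omitted there) uses only the factorization $\mathbf{L}=\mathbf{B}\mathbf{B}^{\top}$ together with the standard fact that a consistent system $\mathbf{M}\mathbf{x}=b$ has minimum-norm solution of squared norm $b^{\top}(\mathbf{M}\mathbf{M}^{\top})^{\dagger}b$; running that argument verbatim with $\mathbf{C}$ in place of $\mathbf{B}$ gives
\begin{align*}
    (\mathbf{1}_{u}-\mathbf{1}_{v})^{\top}\mathbf{L}_{A}^{\dagger}(\mathbf{1}_{u}-\mathbf{1}_{v})\;=\;\inf\bigl\{\,\|\mathbf{J}\|_{2}^{2}\;:\;\mathbf{J}\in\mathbb{R}^{E'},\ \mathbf{C}\mathbf{J}=\mathbf{1}_{u}-\mathbf{1}_{v}\,\bigr\},
\end{align*}
where the indicator vectors are now read in $\mathbb{R}^{A}$ (legitimate since $u,v\in A$); the consistency hypothesis $\mathbf{1}_{u}-\mathbf{1}_{v}\in\operatorname{col}(\mathbf{C})$ will be confirmed by the feasibility exhibited in the next step.

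Finally I compare feasible sets. By the first step there exists $\mathbf{J}\in\mathbb{R}^{E'}$ with $\mathbf{B}\mathbf{J}=\mathbf{1}_{u}-\mathbf{1}_{v}$ in $\mathbb{R}^{V}$; applying $\iota^{\top}$ — that is, simply discarding the conservation equations at the vertices of $V\setminus A$ — yields $\mathbf{C}\mathbf{J}=\iota^{\top}\mathbf{B}\mathbf{J}=\iota^{\top}(\mathbf{1}_{u}-\mathbf{1}_{v})=\mathbf{1}_{u}-\mathbf{1}_{v}$ in $\mathbb{R}^{A}$. Hence every flow feasible for the problem computing $r_{uv}$ is feasible for the relaxed problem, so the infimum over the larger feasible set is no larger:
\begin{align*}
    (\mathbf{1}_{u}-\mathbf{1}_{v})^{\top}\mathbf{L}_{A}^{\dagger}(\mathbf{1}_{u}-\mathbf{1}_{v})\;\le\;\inf\bigl\{\,\|\mathbf{J}\|_{2}^{2}\;:\;\mathbf{B}\mathbf{J}=\mathbf{1}_{u}-\mathbf{1}_{v}\,\bigr\}\;=\;r_{uv},
\end{align*}
the last equality being~\cref{prop:flow-resistance}. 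This is~\eqref{eq:resistance-lower-bound}.

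I expect the only real friction to be bookkeeping rather than mathematics: one must check that~\cref{prop:flow-resistance} is genuinely a statement about an arbitrary matrix of the form $\mathbf{C}\mathbf{C}^{\top}$ (it is — its proof never uses that $\mathbf{B}$ arises from a graph), and that the consistency hypothesis for $\mathbf{C}$ holds (it does, by the feasibility argument). For readers who prefer potentials to flows, the same conclusion follows by dualizing~\cref{prop:flow-resistance} to the Dirichlet form $r_{uv}=\sup_{x\in\mathbb{R}^{V}}\bigl(2(\mathbf{1}_{u}-\mathbf{1}_{v})^{\top}x-x^{\top}\mathbf{L}x\bigr)$ and restricting the supremum to vectors $x=\iota y$ supported on $A$, for which $x^{\top}\mathbf{L}x=y^{\top}\mathbf{L}_{A}y$ and $(\mathbf{1}_{u}-\mathbf{1}_{v})^{\top}x=(\mathbf{1}_{u}-\mathbf{1}_{v})^{\top}y$; the inequality drops out identically.
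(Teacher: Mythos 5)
Your proposal is correct and follows essentially the same route as the paper: relax the flow-conservation constraints of~\cref{prop:flow-resistance} to the vertices of $A$, identify the relaxed minimum-energy value via the factorization $\mathbf{B}_A\mathbf{B}_A^{\top}=\mathbf{L}_A$ (your $\mathbf{C}\mathbf{C}^{\top}$) and the minimum-norm/pseudoinverse identity. Your extra care about the disconnected case, the consistency of the relaxed system, and the dual Dirichlet-form remark are fine additions but do not change the argument.
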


We note that if $A\subseteq V$ does not contain an entire connected component of $G$, then $\mathbf{L}_A$ will in fact be invertible (since it is strictly diagonally dominant in at least one row or column) and in turn $\mathbf{L}_A^\dagger = \mathbf{L}_A^{-1}$.

\begin{proof}[Proof of~\Cref{thm:resistance-subset-lower-bound}]
    Using~\cref{prop:flow-resistance}, we have
        \begin{align*}
            r_{uv} &= \inf\left\{\sum_{e\in E'}|\mathbf{J}_e|^2 \;:\; \mathbf{J}\in\mathbb{R}^{E'}, \mathbf{B}\mathbf{J} = \mathbf{1}_u-\mathbf{1}_v \right\}.
        \end{align*}
    We can first apply a relaxation of the constraint $\mathbf{B}\mathbf{J} = \mathbf{1}_u-\mathbf{1}_v$ by considering $\mathbf{J}$ such that $(\mathbf{B}\mathbf{J})(x) = ( \mathbf{1}_u-\mathbf{1}_v )(x)$ for $x\in A$. This leads to the inequality
        \begin{align}\label{eq:relaxed-infimum}
            r_{uv} &\geq \inf\left\{\sum_{e\in E'}|\mathbf{J}_e|^2  \;:\;\mathbf{J}\in\mathbb{R}^{E'},\;(\mathbf{B}\mathbf{J})(x) = ( \mathbf{1}_u-\mathbf{1}_v )(x)\;\;\forall\,x\in A \right\}.
        \end{align}
    Now we claim the infimum in~\Cref{eq:relaxed-infimum} is realized by the right-hand side in~\cref{eq:resistance-lower-bound}. To see this, let $\mathbf{B}_A\in\mathbb{R}^{|A|\times |E|}$ be the submatrix of $\mathbf{B}$ with rows indexed by the vertices in $A$ and columns unchanged. Then the relaxed constraint $(\mathbf{B}\mathbf{J})(x) = ( \mathbf{1}_u-\mathbf{1}_v )(x)$ for $x\in A$ can be recast as $\mathbf{B}_A\mathbf{J}= \mathbf{1}_u-\mathbf{1}_v$, where with a slight abuse of notation, we identify $\mathbf{1}_u-\mathbf{1}_v\in\mathbb{R}^{|A|}$ with its restriction to the vertices in $A$. We then have, upon inspection and the basic properties of the matrix pseudoinverse, that
        \begin{align*}
            \inf\left\{\sum_{e\in E'}|\mathbf{J}_e|^2  \;:\;\mathbf{J}\in\mathbb{R}^{E'},\;\mathbf{B}_A\mathbf{J}= \mathbf{1}_u-\mathbf{1}_v\right\} &= \| \mathbf{B}_A^\dagger(\mathbf{1}_u-\mathbf{1}_v)\|_2^2\\
            &= (\mathbf{1}_u-\mathbf{1}_v)^\top (\mathbf{B}_A^\dagger)^\top \mathbf{B}_A^\dagger(\mathbf{1}_u-\mathbf{1}_v)\\
            &= (\mathbf{1}_u-\mathbf{1}_v)^\top \mathbf{L}_A^\dagger (\mathbf{1}_u-\mathbf{1}_v)
        \end{align*}
    since $\mathbf{B}_A \mathbf{B}_A^\top = \mathbf{L}_A$. The claim follows.
\end{proof}

\Cref{thm:resistance-subset-lower-bound} can be used to obtain a degree-based lower bound on the effective resistance between adjacent vertices in a graph.

\begin{theorem}\label{thm:resistance-degree-bound}
    Let $G=(V, E)$ be a graph and fix adjacent vertices $u, v\in V$. Assume for simplicity that either $d_u\geq 2$ or $d_v\geq 2$ (i.e., that $\{u, v\}$ is not a connected component of $G$). Then it holds
        \begin{align*}
            r_{uv} \geq \max\left\{\frac{d_v + d_u-2}{d_ud_v - 1}, \frac{4}{d_u + d_v + 2}\right\}.
        \end{align*}
\end{theorem}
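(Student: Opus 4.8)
The plan is to prove the two lower bounds separately and then combine them: a number bounded below by each of two quantities is bounded below by their maximum, so it suffices to show $r_{uv}\ge \frac{d_u+d_v-2}{d_ud_v-1}$ and $r_{uv}\ge \frac{4}{d_u+d_v+2}$ individually.

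\textbf{First bound.} I would apply \Cref{thm:resistance-subset-lower-bound} with the two–element set $A=\{u,v\}$. Since $u\sim v$, the principal submatrix $\mathbf{L}_A$ is $\begin{pmatrix} d_u & -1 \\ -1 & d_v\end{pmatrix}$ in the ordering $(u,v)$, and the hypothesis that $\{u,v\}$ is not an isolated edge (i.e.\ $d_u\ge 2$ or $d_v\ge 2$, so $d_u,d_v\ge 1$ with at least one $\ge 2$) gives $\det\mathbf{L}_A=d_ud_v-1\ge 1>0$; hence $\mathbf{L}_A$ is invertible and $\mathbf{L}_A^\dagger=\mathbf{L}_A^{-1}$. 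A direct $2\times 2$ computation yields $(\mathbf{1}_u-\mathbf{1}_v)^\top\mathbf{L}_A^{-1}(\mathbf{1}_u-\mathbf{1}_v)=\frac{(d_v-1)+(d_u-1)}{d_ud_v-1}=\frac{d_u+d_v-2}{d_ud_v-1}$, and \Cref{thm:resistance-subset-lower-bound} then gives $r_{uv}\ge \frac{d_u+d_v-2}{d_ud_v-1}$. (Equivalently, this is the effective resistance of the multigraph obtained by contracting $V\setminus\{u,v\}$ to one node — a single edge in parallel with a two–edge path of conductances $d_u-1$ and $d_v-1$ — and contraction only decreases resistance; either route works.)

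\textbf{Second bound.} For this I would use a dual, Dirichlet–type estimate derived from \Cref{prop:flow-resistance}: for any $\phi\in\mathbb{R}^V$ and any feasible flow $\mathbf{J}$ with $\mathbf{B}\mathbf{J}=\mathbf{1}_u-\mathbf{1}_v$, Cauchy--Schwarz gives $\phi(u)-\phi(v)=\phi^\top\mathbf{B}\mathbf{J}=\langle \mathbf{B}^\top\phi,\mathbf{J}\rangle\le \|\mathbf{B}^\top\phi\|_2\,\|\mathbf{J}\|_2$; squaring, minimizing over $\mathbf{J}$, and using $\|\mathbf{B}^\top\phi\|_2^2=\phi^\top\mathbf{L}\phi$ yields $r_{uv}\ge \dfrac{(\phi(u)-\phi(v))^2}{\phi^\top\mathbf{L}\phi}$ whenever $\phi^\top\mathbf{L}\phi\neq 0$. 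Taking $\phi=\mathbf{1}_u-\mathbf{1}_v$, the numerator is $(1-(-1))^2=4$, and $\phi^\top\mathbf{L}\phi=\sum_{\{a,b\}\in E}(\phi(a)-\phi(b))^2$ gets $4$ from the edge $\{u,v\}$, a $1$ from each of the $d_u-1$ remaining edges at $u$, a $1$ from each of the $d_v-1$ remaining edges at $v$, and $0$ from every other edge, for a total of $d_u+d_v+2>0$. Hence $r_{uv}\ge \frac{4}{d_u+d_v+2}$; note this choice of $\phi$ and hence this bound remains valid even in the degenerate case $d_u=d_v=1$ (where it equals $1=r_{uv}$), which is the case excluded from the first fraction.

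\textbf{Main obstacle.} There is no genuine difficulty here; the only points requiring care are the edge bookkeeping in $\phi^\top\mathbf{L}\phi$ — in particular that $\{u,v\}$ is counted exactly once with weight $4$ and is not also counted among the ``other edges at $u$'' or ``at $v$'' — and the choice of the correct auxiliary object in each case ($A=\{u,v\}$ versus $\phi=\mathbf{1}_u-\mathbf{1}_v$). It is worth remarking that the first bound in fact always dominates the second (the inequality $\frac{d_u+d_v-2}{d_ud_v-1}\ge \frac{4}{d_u+d_v+2}$ reduces to $(d_u-d_v)^2\ge 0$, with equality iff $d_u=d_v$), so the stated $\max$ is for convenience: it is the second form $\frac{4}{d_u+d_v+2}\ge \frac{2}{\max\{d_u,d_v\}}$ that feeds cleanly into \Cref{cor:intro-resistance-curv-lowerbound}, where summing $\frac{4}{d_u+d_v+2}\ge \frac{2}{d_v}$ over the $d_v$ neighbors of $v$ gives $\sum_{u\sim v}r_{uv}\ge 2$ and hence nonpositive resistance curvature at $v$.
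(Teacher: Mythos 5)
Your proposal is correct, and the first bound is proved exactly as in the paper: apply \Cref{thm:resistance-subset-lower-bound} with $A=\{u,v\}$ and invert the $2\times 2$ matrix $\mathbf{L}_A$. For the second bound the paper stays within the same $2\times 2$ computation, applying Cauchy--Schwarz to get $\mathbf{b}^\top\mathbf{L}_A^{-1}\mathbf{b}\ge \|\mathbf{b}\|^4/(\mathbf{b}^\top\mathbf{L}_A\mathbf{b})=4/(d_u+d_v+2)$, whereas you obtain the same number by a Dirichlet-type estimate on the full Laplacian with test function $\phi=\mathbf{1}_u-\mathbf{1}_v$; since the quadratic form of $\mathbf{L}$ on vectors supported on $\{u,v\}$ is exactly $\mathbf{L}_A$, the two computations coincide and the difference is only cosmetic (your edge bookkeeping $4+(d_u-1)+(d_v-1)$ is right, common neighbors causing no double counting). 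Your closing remarks — that the first fraction always dominates the second, reducing to $(d_u-d_v)^2\ge 0$, and that the second form is what feeds \Cref{cor:intro-resistance-curv-lowerbound} — are correct observations consistent with how the paper uses the bound.
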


\begin{proof}
    Let $A=\{u, v\}\subseteq V$ and let $\mathbf{b}=\mathbf{1}_u - \mathbf{1}_v$. Then we have
        \begin{align*}
            \mathbf{L}_A = \begin{bmatrix}
                d_u & -1\\
                -1 & d_v
            \end{bmatrix},\,\, \mathbf{L}_A^{-1} = \frac{1}{d_ud_v - 1}\begin{bmatrix}
                d_v & 1\\
                1 & d_u
            \end{bmatrix}.
        \end{align*}    
    In turn,
        \begin{align*}
            \mathbf{b}^\top  \mathbf{L}_A^{-1} \mathbf{b} &= \frac{1}{d_ud_v - 1}\mathbf{b}^\top\begin{bmatrix}
                d_v - 1\\
                1 - d_u
            \end{bmatrix} = \frac{d_v + d_u-2}{d_ud_v - 1}.
        \end{align*}
    The first claim then follows by Theorem~\cref{thm:resistance-subset-lower-bound}. On the other hand, since $\mathbf{L}_A$ is positive definite (having assumed at least one of the vertices $u, v$ is not degree one, $A$ does not contain an entire connected component), we must have, by the Cauchy-Schwarz inequality with and $\mathbf{x}\in\mathbb{R}^{|A|}$ fixed with $\mathbf{x}\neq\mathbf{0}$,
        \begin{align*}
            |\mathbf{x}^\top \mathbf{b}|^2 &= |(\mathbf{L}_A^{1/2} \mathbf{x})^\top (\mathbf{L}_A^{-1/2} \mathbf{b})|^2 \leq \|\mathbf{L}_A^{1/2}\mathbf{x}\|^2\|\mathbf{L}_A^{-1/2}\mathbf{b}\|^2,
        \end{align*}
    or, $\mathbf{b}^\top \mathbf{L}_A^{-1} \mathbf{b} \geq \frac{|\mathbf{x}^\top \mathbf{b}|^2}{\mathbf{x}^\top \mathbf{L}_A\mathbf{x}}$. Since $\mathbf{x}$ was arbitary, we can take for example $\mathbf{x}=\mathbf{b}$, and obtain
        \begin{align*}
            \mathbf{b}^\top \mathbf{L}_A ^{-1} \mathbf{b} \geq \frac{\|\mathbf{b}\|^4}{\mathbf{b}^\top \mathbf{L}_A \mathbf{b}}\geq \frac{4}{(d_u+1)-(-1-d_v)}.
        \end{align*}
    The theorem follows.
\end{proof}

~\Cref{thm:resistance-degree-bound}, in turn, leads to a degree-based criterion for the existence of a vertex in a graph with negative resistance curvature, as follows.

\begin{corollary}\label{cor:resistance-curv-lowerbound}
    Let $G=(V,E)$ be any graph, and suppose $v\in V$ satisfies the following two conditions:
        \begin{enumerate}[label={\normalfont (\roman*)}]
            \item $d_v\geq 2$, and
            \item For each $u\sim v$, $d_u\leq d_v -2$.
        \end{enumerate}
    Then the resistance curvature $\resistance{v}$ at vertex $v$ satisfies $\resistance{v}\leq 0$.
\end{corollary}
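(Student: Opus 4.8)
The plan is to show $\sum_{u\sim v} r_{uv} \ge 2$, which immediately gives $\resistance{v} = 1 - \tfrac12\sum_{u\sim v} r_{uv} \le 0$ by~\cref{defn:curvature-resistance}. The point is that the per-edge lower bound of~\cref{thm:resistance-degree-bound} is already strong enough once hypothesis (ii) is used. First I would note that hypothesis (i), $d_v\ge 2$, guarantees that for every neighbor $u$ of $v$ the pair $\{u,v\}$ is not a connected component of $G$ (otherwise $d_v$ would be $1$), so~\cref{thm:resistance-degree-bound} applies to each edge incident to $v$ and is in particular finite there.

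Next I would invoke the harmonic-mean form of that bound, $r_{uv} \ge \tfrac{4}{d_u + d_v + 2}$, and use hypothesis (ii), $d_u \le d_v - 2$, in the form $d_u + d_v + 2 \le 2d_v$. This yields $r_{uv} \ge \tfrac{4}{2d_v} = \tfrac{2}{d_v}$ for each of the $d_v$ neighbors $u$ of $v$. Summing over all $u\sim v$ gives
\[
\sum_{u\sim v} r_{uv} \;\ge\; d_v\cdot\frac{2}{d_v} \;=\; 2,
\]
and hence $\resistance{v} \le 1 - \tfrac12\cdot 2 = 0$, as claimed. (One could equally use the other estimate $r_{uv}\ge \tfrac{d_u+d_v-2}{d_ud_v-1}$ from~\cref{thm:resistance-degree-bound}; a short rearrangement shows $\tfrac{d_u+d_v-2}{d_ud_v-1}\ge\tfrac{2}{d_v}$ exactly when $d_u\le d_v-2+\tfrac{2}{d_v}$, which holds under (ii). Either route closes the argument.)

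There is essentially no obstacle here beyond bookkeeping: the entire content is the elementary inequality $d_u+d_v+2\le 2d_v$ together with a correct application of~\cref{thm:resistance-degree-bound}. The only subtlety worth a sentence is verifying that the standing hypothesis of~\cref{thm:resistance-degree-bound} (that $\{u,v\}$ is not a whole connected component) is automatic from (i), so that all $d_v$ resistances appearing in the sum are finite and the bound may be applied uniformly.
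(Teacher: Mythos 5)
Your proposal is correct and is essentially identical to the paper's proof: it applies the bound $r_{uv}\ge \frac{4}{d_u+d_v+2}$ from~\cref{thm:resistance-degree-bound}, uses hypothesis (ii) to get $r_{uv}\ge \frac{2}{d_v}$, and sums over the $d_v$ neighbors to conclude $\resistance{v}\le 0$. The extra remarks (finiteness via hypothesis (i) and the alternative bound) are fine but not needed.
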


\begin{proof}
    From~\Cref{thm:resistance-degree-bound}, we have that for each $u\sim v$, the resistance $r_{uv}$ satisfies
        \begin{align*}
            r_{uv} \geq \frac{4}{d_u + d_v + 2} \geq \frac{2}{d_v},
        \end{align*}
    therefore,
        \begin{align*}
            \resistance{v} &= 1-\frac{1}{2}\sum_{u\sim v}r_{uv} \leq 1-\frac{1}{2}\sum_{u\sim v}\frac{2}{d_v} \leq 0.
        \end{align*}
\end{proof}

From~\Cref{cor:resistance-curv-lowerbound} we may deduce that, for example, pyramids have negative curvature at the apex whenever the base polygon contains five or more vertices. Moreover~\Cref{cor:resistance-curv-lowerbound} suggests that the resistance positive \emph{graphs} are limited to those which are, in a suitably weak sense, close to being degree regular. We remark finally that the techniques utilized in the proof of~\Cref{thm:resistance-degree-bound} could conceivably be extended in more sophisticated ways utilizing higher-order information from the $1$-skeleton of a polytope, and this direction is promising for future work. 

We finish with a conjecture about resistance curvature and simple 3-dimensional polytopes, that would imply the scarcity of the most relevant resistance positive polytopes in dimension $3$. If the vast majority of faces have at least six sides, their incidences may guarantee induced subgraphs that may be combined with~\cref{thm:resistance-subset-lower-bound} to guarantee the negativity. Eberhard's theorem implies that several large incidences make it plausible that the following is true:

\begin{conjecture}
    There are finitely many simple $3$-dimensional polytopes that are resistance positive and are not isomorphic to a prism over a polygon. 
\end{conjecture}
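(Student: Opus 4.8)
The plan is to show that a simple, resistance-positive $3$-polytope that is not a prism over a polygon has at most an absolute constant number of vertices. The engine is~\Cref{thm:resistance-subset-lower-bound}: it converts a local combinatorial configuration around a vertex $v$ into a \emph{lower} bound on the resistances $r_{uv}$ over the neighbours $u\sim v$, hence into an \emph{upper} bound on $\resistance{v}$. Concretely, I would try to exhibit, inside the graph $G$ of any sufficiently large such polytope, a vertex $v$ and an induced subgraph $A\ni v$ — a ``patch'' made of $v$, its neighbours, and a bounded number of concentric layers of $2$-faces around $v$ — for which
\[
\sum_{u\sim v}(\mathbf 1_u-\mathbf 1_v)^\top\mathbf L_A^{-1}(\mathbf 1_u-\mathbf 1_v)>2,
\]
which by~\Cref{thm:resistance-subset-lower-bound} forces $\resistance{v}<0$ and contradicts resistance positivity.

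\textbf{Step 1: discharging reduction via Euler.} For a simple $3$-polytope, $\sum_{k\ge 3}(6-k)p_k=12$, while $f_2(P)=\tfrac12 f_0(P)+2\to\infty$; thus a polytope with many vertices has many $2$-faces of size $\ge 6$, the total $(6-k)$-weight carried by faces of size $\le 5$ being exactly $12$ plus the surplus $\sum_{k\ge 7}(k-6)p_k$ from faces of size $\ge 7$. (Eberhard's theorem, which says essentially any admissible profile of faces of size $\ne 6$ is realized for some value of $p_6$, is what makes this large-face regime genuinely unavoidable for big polytopes rather than an artifact.) Counting incidences between vertices and their three incident $2$-faces, one should be able to argue that when $f_0(P)$ is large either $P$ already contains a small configuration forcing negativity (for instance a vertex whose patch meets a face of size $\ge 7$), or else $P$ contains a vertex $v$ all three of whose incident $2$-faces have size $\ge 6$ and which lies in a large patch of faces of size $\ge 6$ whose combinatorial type is \emph{not} the thin belt of quadrilaterals separating two large caps — the configuration characteristic of a prism and the only way to host faces of size $\ge 6$ while keeping every vertex out of an all-large-face patch. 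This is exactly where the hypothesis that $P$ is not a prism enters.

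\textbf{Step 2: the Laplacian estimate on the patch.} On such a patch the remaining task is to prove $\sum_{u\sim v}(\mathbf 1_u-\mathbf 1_v)^\top\mathbf L_A^{-1}(\mathbf 1_u-\mathbf 1_v)>2$. When the patch contains a face of size $\ge 7$ I expect this to follow from a star-triangle ($Y$-$\Delta$) reduction comparing $A$ to a weighted triangular-type grid, or from Rayleigh monotonicity against the honeycomb lattice, since the nearest-neighbour effective resistance of a $3$-regular tiling all of whose faces have size $\ge 6$, with at least one of size $\ge 7$, strictly exceeds the honeycomb value $\tfrac23$.

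\textbf{Main obstacle.} The hard part — and the reason this remains a conjecture — is the borderline all-hexagon regime. The nearest-neighbour resistance of the honeycomb lattice is \emph{exactly} $\tfrac23$ (one checks this using the half-turn and edge-reflection symmetries together with a single $Y$-$\Delta$ move), so there $\sum_{u\sim v}r_{uv}=2$ exactly; and~\Cref{thm:resistance-subset-lower-bound} only ever under-estimates $r_{uv}$, so no finite honeycomb patch can certify $\resistance{v}<0$. Fullerene-type polytopes ($p_5=12$, $p_6$ arbitrary, no other faces) realize precisely this obstruction: their $(6,6,6)$-vertices sit inside arbitrarily deep honeycomb patches, so excluding an infinite family of resistance-positive ones demands a genuinely global estimate of how the unavoidable positive-curvature defects of a finite $3$-polytope push $r_{uv}$ above $\tfrac23$ — or could instead reveal that some of them are resistance positive, in which case the statement would need amending. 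Making the discharging step always output a patch containing a face of size $\ge 7$, and separately handling the pentagon-and-hexagon-only case, is where I expect the real difficulty, and perhaps a new idea, to be needed.
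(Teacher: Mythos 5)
You have not produced a proof, and the paper does not contain one either: the statement is posed as a conjecture, accompanied only by the heuristic remark that Eberhard's theorem forces many large-face incidences and that \cref{thm:resistance-subset-lower-bound} might then certify negativity on suitable induced patches. Your Steps 1--2 are a faithful and somewhat more detailed elaboration of exactly that heuristic (discharging via $\sum_{k\ge 3}(6-k)p_k=12$, then a local Laplacian lower bound on a patch around a vertex whose incident faces are all large), so in spirit you are following the authors' intended route. But your own closing paragraph concedes the decisive gap, and it is a real one, not a technicality: when every face in the patch is a hexagon, the nearest-neighbour resistance of the honeycomb lattice is exactly $\tfrac23$, so $\sum_{u\sim v}r_{uv}=2$ and $\resistance{v}=0$ on the nose. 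Since \cref{thm:resistance-subset-lower-bound} only ever \emph{under}-estimates $r_{uv}$ (it is a relaxation of the flow problem), no finite all-hexagon patch can push the bound strictly past $2$, so the local method cannot rule out fullerene-type polytopes ($p_5=12$, $p_6$ arbitrary), whose generic vertices sit in arbitrarily deep hexagonal neighbourhoods. Closing this would require a genuinely global argument quantifying how the twelve pentagonal defects of such a polytope perturb $r_{uv}$ above $\tfrac23$ at every vertex simultaneously --- or a computation showing some fullerenes are in fact resistance positive, which would refute the conjecture as stated. Either way, what you have is a plausible programme with an honestly identified missing ingredient, not a proof, and it should not be presented as more than that.
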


\section*{Closing Remarks and Acknowledgements}

In this paper we investigated Forman--Ricci and Resistance curvatures of convex polytopes. We remark that there are many other notions of curvature we did not discuss here, including those of Ollivier~\cite{Ollivier2009}, Lin-Lu-Yau~\cite{lin2011ricci}, or Steinerberger~\cite{steinerberger2023curvature}, which have been studied at length in the literature. Similarly, we are aware of only a few papers trying to compare curvatures~\cite{Samal2018,WatanabeYamada2018,robertson2024discrete}. We believe that exploring the variants of our results for other curvatures could be of interest. Our experiments suggest that the same kind of results will hold. 

\vskip 12pt

The authors thank Stefan Steinerberger for several useful comments and suggestions. The first two authors are grateful to NSF for support through grants DMS-2348578 and DMS-2434665. The fourth author is partially supported by ANID FONDECYT Iniciaci\'on grant \#11221076.

\appendix

\section{Computational Details for Example 3.7}\label{sec:appendix}

This appendix contains a detailed derivation of the claim made in~\cref{thm:pencil-positive} of the main text. We begin by restating the definition of the $3$-polytope termed the $k$-pointed tube below.

\begin{definition}[$k$-pointed tube]
    We construct a $3$-polytope $\mathcal{Q}_k$, called the \emph{$k$-pointed tube}, by constructing its graph as follows: Let $P_k$ be the path on $k$ vertices with $V(P_k) = \{0, 1, \dotsc, k-1\}$. Let $C_3$ be the cycle on three vertices with $V(C_3) = \{0, 1, 2\}$. Start by writing $G = C_3\times P_k$. Now, write $V(\mathcal{Q}_k) = V(G) \cup \{x, y\}$ where $x, y$ are separately labeled vertices. Then, write
    \begin{align*}
        E(\mathcal{Q}_k) &= E(G) \cup \bigcup_{i=0}^2 (\{x, (i, 0)\} \cup \{y, (i, k-1)\} ).
    \end{align*}
\end{definition}

Here, the step $G = C_3\times P_k$ refers to the Cartesian product of graphs operation introduced in the main text. The claim presented in~\cref{thm:pencil-positive} is proved at the end of this appendix in~\cref{thm:positivity-tube}. To establish this we establish several computational lemmas. We denote by $G_k$ the graph of $\mathcal{Q}_k$. The main task that needs to be completed is to compute closed-form expressions for the effective resistances of edges in $G_k$. The edges of $G_k$ can be partitioned into three sets, as follows:
    \medskip
    \begin{center}
        \begin{tabular}{c|c}
            ``cycle edges'' & $\big\{\{(i, j), (i+1, j)\} : 0\leq j\leq k-1, 0\leq i\leq 2\big\}$ \\\hline
            ``path edges'' & $\big\{\{(i, j), (i, j + 1)\} : 0\leq j\leq k-2, 0\leq i\leq 2\big\}$ \\\hline
            ``cap edges'' & $\big\{\{(i, 0), x\}, \{(i, k-1), y\} : 0\leq i\leq 2\big\}$
        \end{tabular}
    \end{center}
    \medskip
Note that addition on the first vertex coordinate is carried out modulo $3$. By exploiting vertex symmetries in the graph, it is straightforward to deduce that the edge effective resistance in $G_k$ can be categorized as the following family of numbers:
    \begin{align*}
        r_{\text{cycle}}( j ; k) &= r_{(i, j), ({i+1}, j)},\text{ and which does not depend on }0\leq i\leq 2,\\
        r_{\text{path}}( j ; k) &= r_{(i, j), ({i}, j + 1)},\text{ and which does not depend on }0\leq i\leq 2,\\
        r_{\text{cap}}(k) &= r_{(i, 0), x} = r_{(i, k-1), y},\text{ and which does not depend on }0\leq i\leq 2.
    \end{align*}
Here, $k\geq 1$ and $0\leq j\leq k-2$. The first lemma establishes a block diagonalization of the corresponding Laplacian matrix. As a matter of notation, if $S$ is any set, we write $\ell(S)$ to denote the linear space of functions $f:S\rightarrow\mathbb{R}$.

\begin{lemma}\label{lem:block-diag}
    The Laplacian matrix $\mathbf{L}(G_k)$ admits a block diagonalization of the form
        \begin{align}\label{eq:lap-block-repn}
            \mathbf{L}(G_k) = \widetilde{\mathbf{U}} \begin{pmatrix}
            (\widetilde{\Delta}_k^{(0)}) & & \\
            & (\Delta_k^{(3)}) & \\
            & & (\Delta_k^{(3)})
        \end{pmatrix} \widetilde{\mathbf{U}}^\top,
        \end{align}
    where
        \begin{align*}
            \widetilde{\Delta}_k^{(0)}=
            \begin{pmatrix}
            3 & -\sqrt3 &        &        &        &   \\
            -\sqrt3 & 2 & -1     &        &        &    \\
                    & -1 & 2 & -1&        &        \\
                    &    & \ddots & \ddots & \ddots &   \\
                    &    &        & -1 & 2 & -\sqrt3 \\
                   &    &        &    & -\sqrt3 & 3
            \end{pmatrix}\in\mathbb{R}^{(k+2)\times (k+2)}, 
        \end{align*}
        \begin{align*}
            \Delta_k^{(3)} &= \begin{pmatrix}
            5 & -1 &        &        & \\
            -1& 5  & -1     &        &  \\
               &-1 & 5      & \ddots &  \\
               &   & \ddots & \ddots & -1\\
              &   &        & -1 & 5
            \end{pmatrix}\in\mathbb{R}^{k\times k},
        \end{align*}
    and 
        \begin{align*}
            \widetilde{\mathbf{U}} &= \begin{pmatrix}
                1 & 0\\
                0 & 1
            \end{pmatrix} \oplus \bigoplus_{j=1}^{k} \begin{pmatrix}
                \tfrac{1}{\sqrt{3}} & \tfrac{1}{\sqrt{2}}&\tfrac{1}{\sqrt{6}}\\
                \tfrac{1}{\sqrt{3}} & -\tfrac{1}{\sqrt{2}}& \tfrac{1}{\sqrt{6}}\\
                \tfrac{1}{\sqrt{3}} & 0 & -\tfrac{2}{\sqrt{6}}\\
            \end{pmatrix}.
        \end{align*}
\end{lemma}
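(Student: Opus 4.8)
The plan is to exploit the cyclic symmetry of $G_k$: the group $\mathbb{Z}/3$ acts on $G_k$ by rotating the first coordinate of every vertex $(i,j)$ while fixing the two cap vertices $x$ and $y$, so this action commutes with $\mathbf{L}(G_k)$. The matrix $\widetilde{\mathbf{U}}$ in the statement is (a reordering of) the orthogonal change of basis into $\mathbb{Z}/3$-isotypic components — the trivial component contributes one coordinate per $C_3$-fibre, and the two-dimensional ``zero-sum'' component contributes two — and I would show that conjugating $\mathbf{L}(G_k)$ by it yields the claimed block-diagonal matrix, with the symmetric block $\widetilde{\Delta}_k^{(0)}$ and two identical copies of the zero-sum block $\Delta_k^{(3)}$.

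The first step is to pin down the degree sequence of $G_k$. Every vertex $(i,j)$ has exactly two neighbours in its own $C_3$-fibre; along the $P_k$-direction it has two neighbours when $1 \le j \le k-2$, one neighbour when $j \in \{0, k-1\}$ and $k \ge 2$, and none when $k = 1$; in the two latter (boundary) cases the missing path-edges are exactly made up for by the one or two incident cap-edges. Hence $d_{(i,j)} = 4$ for all $i,j$, while $d_x = d_y = 3$. Ordering the vertices as $x$, then the $k$ fibres, then $y$, I would record that $\mathbf{L}(G_k)$ has: fibre-diagonal blocks all equal to $4\mathbf{I}_3 - \mathbf{A}(C_3) = 5\mathbf{I}_3 - \mathbf{J}_3$; coupling $-\mathbf{I}_3$ between consecutive fibres; coupling given by the all-$(-1)$ vector between $x$ and the first fibre and between $y$ and the last fibre; $\mathbf{L}_{xx} = \mathbf{L}_{yy} = 3$; and no $x$--$y$ edge.

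The second step is the fibrewise diagonalization. I would take $s = \tfrac{1}{\sqrt3}(1,1,1)^\top$, $a = \tfrac{1}{\sqrt2}(1,-1,0)^\top$, $b = \tfrac{1}{\sqrt6}(1,1,-2)^\top$ — exactly the columns of the $3\times 3$ block in $\widetilde{\mathbf{U}}$ — and use that $\mathbf{J}_3 s = 3s$ while $\mathbf{J}_3 a = \mathbf{J}_3 b = 0$. Thus $5\mathbf{I}_3 - \mathbf{J}_3$ acts as the scalar $2$ on $\mathrm{span}\{s\}$ and as the scalar $5$ on $\mathrm{span}\{a,b\}$; the coupling $-\mathbf{I}_3$ between consecutive fibres is already scalar on each piece; and the all-$(-1)$ vector is a multiple of $s$, hence orthogonal to $a$ and $b$, with $s^\top(-1,-1,-1)^\top = -\sqrt3$. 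Therefore, after conjugating by $\widetilde{\mathbf{U}}$ and regrouping the new coordinates as $\{x, s_1, \dots, s_k, y\}$, $\{a_1,\dots,a_k\}$, $\{b_1,\dots,b_k\}$, the three groups decouple: the $s$-group carries fibre-diagonal $2$, consecutive coupling $-1$, and couplings $-\sqrt3$ to $x$ and $y$, which is precisely $\widetilde{\Delta}_k^{(0)}\in\mathbb{R}^{(k+2)\times(k+2)}$; and each of the $a$- and $b$-groups carries fibre-diagonal $5$, consecutive coupling $-1$, and no interaction with $x$ or $y$, which is $\Delta_k^{(3)}\in\mathbb{R}^{k\times k}$. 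Since $\widetilde{\mathbf{U}}$ is orthogonal, this is the asserted factorization $\mathbf{L}(G_k) = \widetilde{\mathbf{U}}\,(\widetilde{\Delta}_k^{(0)}\oplus\Delta_k^{(3)}\oplus\Delta_k^{(3)})\,\widetilde{\mathbf{U}}^\top$.

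I do not expect a genuinely hard step here: the content is the observation that $\mathbf{A}(C_3)$ restricts to a scalar on the zero-sum subspace — which is what forces the two non-symmetric blocks to be equal and to decouple from everything else — combined with the bookkeeping fact that every fibre vertex has degree $4$, so the boundary fibres are not exceptional. The one place care is needed is in the orderings: conjugating by $\widetilde{\mathbf{U}}$ as literally written interleaves the $s$-, $a$-, and $b$-coordinates of successive fibres, and one must compose with the permutation that regroups them before the block-diagonal form $\widetilde{\Delta}_k^{(0)}\oplus\Delta_k^{(3)}\oplus\Delta_k^{(3)}$ appears; I would also verify the small cases $k=1,2$ directly to confirm the boundary fibres behave uniformly.
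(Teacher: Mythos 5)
Your proposal is correct and follows essentially the same route as the paper: both diagonalize the $C_3$-fibre using the orthonormal eigenbasis $\{\tfrac{1}{\sqrt3}(1,1,1),\tfrac{1}{\sqrt2}(1,-1,0),\tfrac{1}{\sqrt6}(1,1,-2)\}$ and observe that the cap couplings, being multiples of the all-ones vector, vanish against the zero-sum components, which forces the two $\Delta_k^{(3)}$ blocks to decouple. Your block-matrix bookkeeping (uniform degree $4$ on fibre vertices, fibre block $5\mathbf{I}_3-\mathbf{J}_3$, explicit $-\sqrt3$ inner product) is a slightly more streamlined packaging of the paper's computation of the coupling matrices $\mathbf{A}_1,\mathbf{A}_2,\mathbf{A}_3$, and your caveat about the reordering permutation matches the paper's use of $P$.
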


\begin{proof}
    We begin by setting, for $0\leq i\leq 2$, the vectors $\mathbf{u}_i\in\mathbb{R}^3$ given by
        \begin{align*}
            \mathbf{u}_0&=\tfrac1{\sqrt3}(1,1,1),&
            \mathbf{u}_1&=\tfrac1{\sqrt2}(1,-1,0),&
            \mathbf{u}_2&=\tfrac1{\sqrt6}(1,1,-2).
        \end{align*}
    These vectors form an orthonormal basis for $\mathbb{R}^3$ and it is straightforward to verify that the cycle Laplacian matrix $\mathbf{L}(C_3)$ satisfies
        \begin{align*}
            \mathbf{L}({C_3})\mathbf{u}_0&=0,& 
            \mathbf{L}({C_3})\mathbf{u}_1&=3\mathbf{u}_1,&
            \mathbf{L}({C_3})\mathbf{u}_2&=3\mathbf{u}_2.
        \end{align*}
    Any function $f:V(C_3)\times V(P_k)\to\mathbb R$ therefore decomposes uniquely as
        \begin{align}\label{eq:expansion-fourier}
        f(i,j)=
            f^{(0)}(j)\mathbf{u}_0(i)+f^{(1)}(j)\mathbf{u}_1(i)+f^{(2)}(j)\mathbf{u}_2(i),
        \end{align}
    where the scalar coefficients $f^{(s)}(j)$, $s=0,1,2$, depend only on
    the index $j$. If we identify the linear space $\ell(V(C_3\times P_k))$ with $\mathbb{R}^{3\times k}$,~\cref{eq:expansion-fourier} can be expressed as
        \begin{align*}
            \begin{pmatrix}
            f(0,0) & f(0,1) & \cdots & f(0,k-1)\\
            f(1,0) & f(1,1) & \cdots & f(1,k-1)\\
            f(2,0) & f(2,1) & \cdots & f(2,k-1)
            \end{pmatrix}
            &=
            U
            \begin{pmatrix}
            f^{(0)}(0) & f^{(0)}(1) & \cdots & f^{(0)}(k-1)\\
            f^{(1)}(0) & f^{(1)}(1) & \cdots & f^{(1)}(k-1)\\
            f^{(2)}(0) & f^{(2)}(1) & \cdots & f^{(2)}(k-1)
            \end{pmatrix}
        \end{align*}
    with $U = \begin{pmatrix}
        \mathbf{u}_0 & \mathbf{u}_1 & \mathbf{u}_2
    \end{pmatrix}\in\mathbb{R}^{3\times 3}$. Let $\mathcal L$ denote the Laplacian of $C_3\times P_k$ {before} the caps $x,y$ are attached. For fixed $0\leq i \leq 2$ and $0\leq j\leq k-1$ the neighbors of $(i,j)$ are
        \begin{align*}
            (i\pm 1,j),\quad (i,j-1),\quad (i,j+1)
        \end{align*}
    with the obvious adjustments when $j = 0, k-1$. Therefore if $f\in \ell(V(C_3\times P_k))$, one has
        {\footnotesize\begin{align*}
            (\mathcal L f)(i,j)= \begin{cases}
                3f(i,j)-\bigl[f(i+1,j)+f(i-1,j)\bigr] - f(i,j+1) &\text{ if } j = 0,\\
                4f(i,j)-\bigl[f(i+1,j)+f(i-1,j)\bigr] -\bigl[f(i,j-1)+f(i,j+1)\bigr] &\text{ if } 0 < j < k-1,\\
                3f(i,j)-\bigl[f(i+1,j)+f(i-1,j)\bigr] - f(i,j-1) &\text{ if } j = k-1.\\
            \end{cases}
        \end{align*}}
    Inserting the expansion set up in~\cref{eq:expansion-fourier}, we have that for $0\leq i\leq 2$ and $0 < j < k-1$ fixed:
        \begin{align*}
            (\mathcal{L}f)(i, j) &= \sum_{s=0}^2 4f^{(s)}(j)\mathbf{u}_s(i)-\bigl[  f^{(s)}(j)\mathbf{u}_s(i+1) + f^{(s)}(j)\mathbf{u}_s(i-1) \bigr]\\
            &\qquad\qquad-\bigl[ f^{(s)}(j-1)\mathbf{u}_s(i) +f^{(s)}(j+1)\mathbf{u}_s(i)\bigr]\\
            &= \sum_{s=0}^2 f^{(s)}(j)(\mathbf{L}(C_3)\mathbf{u}_s)(i) + \mathbf{u}_s(i)(2f^{(s)}(j) -f^{(s)}(j-1) -f^{(s)}(j+1)) \\
            &=\sum_{s=0}^2 \mathbf{u}_s(i)\left[(2+\lambda_s)f^{(s)}(j) -f^{(s)}(j-1) -f^{(s)}(j+1)\right]
        \end{align*}
    where $\lambda_0 = 0$ and $\lambda_1 = \lambda_2= 3$. Similarly, if $j=0, k-1$, we have
        \begin{align*}
            (\mathcal{L}f)(i, j) &= \sum_{s=0}^2 \mathbf{u}_s(i)\left[(2+\lambda_s)f^{(s)}(j) -f^{(s)}(j+1)\right],\quad j=0\\
            (\mathcal{L}f)(i, j) &= \sum_{s=0}^2 \mathbf{u}_s(i)\left[(2+\lambda_s)f^{(s)}(j) -f^{(s)}(j-1)\right],\quad j=k-1.
        \end{align*}
    Inspecting the expressions above, we define the ``longitudinal'' operators $\Delta_k^{(0)}, \Delta_k^{(3)}:\ell(V(P_k))\rightarrow \ell(V(P_k))$ given by their action on functions $g\in \ell(V(P_k))$ as follows:
        \begin{align*}
            (\Delta_k^{(0)}g)(j) &= \begin{cases}
                2g(j)-g(j+1) & \text{ if }j=0,\\
                -g(j-1)+2g(j)-g(j+1) & \text{ if }0<j<k-1,\\
                -g(j-1) + 2g(j) & \text{ if }j=k-1,\\
            \end{cases}\quad g\in\ell(V(P_k)),
        \end{align*}
    and
        \begin{align}\label{eq:anti-block}
            (\Delta_k^{(3)}g)(j) &= \begin{cases}
                5g(j)-g(j+1) & \text{ if }j=0,\\
                -g(j-1)+5g(j)-g(j+1) & \text{ if }0<j<k-1,\\
                -g(j-1) + 5g(j) & \text{ if }j=k-1,\\
            \end{cases}\quad g\in\ell(V(P_k)).
        \end{align}
    We call $\Delta_k^{(0)}$ the {symmetric block} ($\lambda_0=0$), and
    $\Delta_k^{(3)}$ the {antisymmetric block}, and identify the operators with their matrix representations under the standard basis. By~\cref{eq:expansion-fourier} and the preceding, we have that the Laplacian $\mathbf{L}(C_3\times P_k)$ admits the decomposition
        \begin{align*}
            \mathbf{L}(C_3\times P_k) = \mathbf{U} \begin{pmatrix}
            \Delta_k^{(0)} & & \\
            & \Delta_k^{(3)} & \\
            & & \Delta_k^{(3)}
        \end{pmatrix} \mathbf{U}^\top,\qquad \mathbf{U} = \mathrm{diag}(\underbrace{U, U, \dotsc,  U}_{k\text{ times}}).
        \end{align*}
    Now suppose we add to $C_3\times P_k$ the two additional ``cap'' vertices $x, y$ and thereby obtain the graph $G_k$. The space $\ell(V(G_k))$ can be identified as $\mathbb{R}\oplus \mathbb{R} \oplus \ell(V(C_3\times P_k))$. If we set 
        \begin{align*}
            \widetilde{\mathbf{U}} = \mathrm{diag}(I_2, \underbrace{U, U, \dotsc,  U}_{k\text{ times}}),
        \end{align*}
    then we have
        \begin{align}\label{eq:block-decomp-L}
            \mathbf{L}(G_k) = \widetilde{\mathbf{U}} \begin{pmatrix} 
            \mathbf{A}_1 & \mathbf{A}_2 & \mathbf{A}_3 & \mathbf{A}_3\\
            \mathbf{A}_2^\top &\Delta_k^{(0)} & & \\
            \mathbf{A}_3^\top && \Delta_k^{(3)} & \\
            \mathbf{A}_3^\top && & \Delta_k^{(3)}
        \end{pmatrix} \widetilde{\mathbf{U}}^\top
        \end{align}
    where $\mathbf{A}_1\in\mathbb{R}^{2\times 2}$, $\mathbf{A}_2\in\mathbb{R}^{2\times k}$, $\mathbf{A}_3\in\mathbb{R}^{2\times k}$ are to be determined. We claim $\mathbf{A}_3=0$. Let $0\leq j \leq k-1$ be fixed. Let $(\mathbf{A}_3)_{x, j}$ denote the entry of $\mathbf{A}_3$ in the first row (indexed by $x$) and the $j$-th column. We write
        \begin{align*}
            (\mathbf{A}_3)_{x, j} = \mathbf{1}_x^\top \widetilde{\mathbf{U}}^\top \mathbf{L}(G_k)\widetilde{\mathbf{U}} \mathbf{1}_{k+2+j}
        \end{align*}
    Here, $\mathbf{1}_x$ is the indicator vector of the first coordinate (indexed by $x$) and $\mathbf{1}_{k+2+j}$ is the indicator vector of the $(k+2+j)$-th coordinate, which is chosen so as to capture the entry of $\mathbf{A}_3$ as it appears in the first row of the block decomposition of $\mathbf{L}(G_k)$ in~\cref{eq:block-decomp-L}. Note that
        \begin{align*}
            (\widetilde{\mathbf{U}}\mathbf{1}_x)^\top \mathbf{L}(G_k) &= \mathbf{1}_x^\top \mathbf{L}(G_k)\\
            &= 3\mathbf{1}_x^\top - \mathbf{1}_{(0, 0)}^\top - \mathbf{1}_{(1, 0)}^\top - \mathbf{1}_{(2, 0)}^\top
        \end{align*}
    And therefore
        \begin{align*}
            (\mathbf{A}_3)_{x,j}
            &=(3\mathbf{1}_x^\top - \mathbf{1}_{(0,0)}^\top - \mathbf{1}_{(1,0)}^\top - \mathbf{1}_{(2,0)}^\top)\,\widetilde{\mathbf{U}}\,\mathbf{1}_{k+2+j}\\
            &=-\sum_{i=0}^2 \mathbf{1}_{(i,0)}^\top\,\widetilde{\mathbf{U}}\,\mathbf{1}_{k+2+j}.
        \end{align*}
    If the nonzero entries of $\widetilde{\mathbf{U}}\,\mathbf{1}_{k+2+j}$ corresponds to $\mathbf{u}_s$ for $s=1,2$ at level $j=0$, then
        \begin{align*}
            \mathbf{1}_{(i,0)}^\top\,\widetilde{\mathbf{U}}\,\mathbf{1}_{k+2+j}
            = \mathbf{u}_s(i),
            \quad
            \sum_{i=0}^2 \mathbf{u}_s(i)=0,
        \end{align*}
    otherwise the inner product vanishes. In either case the sum is zero, so $(\mathbf{A}_3)_{x,j}=0$.  A similar calculation for the row indexed by $y$ shows $(\mathbf{A}_3)_{y,j}=0$ for all $j$.  Therefore $\mathbf{A}_3=0$ and we have a block diagonalization of $\mathbf{L}(G_k)$ in the form
        \begin{align*}
            \mathbf{L}(G_k) = \widetilde{\mathbf{U}} \begin{pmatrix}
            \widetilde{\Delta}_k^{(0)} & & \\
            & \Delta_k^{(3)} & \\
            & & \Delta_k^{(3)}
        \end{pmatrix} \widetilde{\mathbf{U}}^\top, \qquad \widetilde{\Delta}_k^{(0)} = P\begin{pmatrix}
            \mathbf{A}_1 & \mathbf{A}_2 \\
            \mathbf{A}_2 ^\top & \Delta_k^{(0)}
        \end{pmatrix}P^\top,
        \end{align*}
    where $P$ is the permutation matrix which shifts the coordinate indexing the vertex $y$ to the last slot of the $(k+2)\times (k+2)$ matrix. From~\cref{eq:anti-block}, it follows that
        \begin{align*}
            \Delta_k^{(3)} &= \begin{pmatrix}
            5 & -1 &        &        & \\
            -1& 5  & -1     &        &  \\
            &-1 & 5      & \ddots &  \\
            &   & \ddots & \ddots & -1\\
            &   &        & -1 & 5
            \end{pmatrix}
        \end{align*}
    On the other hand, recall that $\widetilde{\Delta}_k^{(0)} \in\mathbb R^{(k+2)\times(k+2)}$ is obtained from the block
        \begin{align*}
            \begin{pmatrix}
                \mathbf{A}_1 & \mathbf{A}_2\\[2pt]
                \mathbf{A}_2^{\top} & \Delta_k^{(0)}
                \end{pmatrix}
        \end{align*}
    after the permutation $P$ that sends the coordinate indexed by the vertex $y$ to the last position. It therefore suffices to identify the matrices $\mathbf{A}_1$ and $\mathbf{A}_2$ explicitly. For the caps $x$ and $y$ one has
        \begin{align*}
            \mathbf{L}(G_k)\mathbf{1}_x=3\mathbf{1}_x-\sum_{i=0}^2\mathbf{1}_{(i,0)},
            \qquad
            \mathbf{L}(G_k)\mathbf{1}_y=3\mathbf{1}_y-\sum_{i=0}^2\mathbf{1}_{(i,k-1)},
        \end{align*}
    so that, in the $\{x,y\}$-coordinates,
        \begin{align*}
            \mathbf{A}_1=\begin{pmatrix}3&0\\[2pt]0&3\end{pmatrix}.
        \end{align*}
    Now let $0\le j\le k-1$ be fixed and let $\mathbf{v}^{(0)}_j:=\mathbf{U}\mathbf{1}_{3j}$ contain a copy of $\mathbf{u}_0$ at level $j$. Then it holds
        \begin{align*} 
            \mathbf{1}_{(i,0)}^{\top}\mathbf{v}^{(0)}_j=
            \begin{cases}
            \frac1{\sqrt3},&j=0,\\
            0,&j\ne 0,
            \end{cases}
            \qquad
            \mathbf{1}_{(i,k-1)}^{\top}\mathbf{v}^{(0)}_j=
            \begin{cases}
            \frac1{\sqrt3},&j=k-1,\\
            0,&j\ne k-1.
            \end{cases}
        \end{align*}
    Using the expression for $\mathbf{L}(G_k)\mathbf{1}_x$ above, we obtain, 
        \begin{align*}
            (\mathbf{A}_2)_{xj}=\mathbf{1}_x^{\top}\mathbf{L}(G_k)\mathbf{v}^{(0)}_j
            =-\,\sum_{i=0}^2\mathbf{1}_{(i,0)}^{\top}\mathbf{v}^{(0)}_j
            =-\sqrt3\,\delta_{j,0}.
        \end{align*}
    An identical calculation with $\mathbf{1}_y$ gives
        \begin{align*}
            (\mathbf{A}_2)_{yj}=-\sqrt3\,\delta_{j,k-1}.
        \end{align*}
    Hence
        \begin{align*}
            \mathbf{A}_2=\begin{pmatrix}
                -\sqrt3 & 0 &\cdots &0\\
                0&\cdots &0 &-\sqrt3
                \end{pmatrix},
        \end{align*}
    where the first (resp.\ last) column corresponds to $j=0$
    (resp.\ $j=k-1$). Inserting $\mathbf{A}_1$, $\mathbf{A}_2$, and
        \begin{align*}
            \Delta_k^{(0)}=\begin{pmatrix}
                2&-1\\[2pt]-1&\ddots&\ddots\\
                &\ddots&2&-1\\
                &&-1&2
                \end{pmatrix}
        \end{align*}
    into $\begin{pmatrix}\mathbf{A}_1&\mathbf{A}_2\\[2pt]\mathbf{A}_2^{\top}&\Delta_k^{(0)}\end{pmatrix}$, and then apply the permutation $P$ which moves the row/column indexed by $y$ to the bottom-right corner.  The result is the tridiagonal matrix
        \begin{align*}
            \widetilde{\Delta}_k^{(0)}=
            \begin{pmatrix}
            3 & -\sqrt3 &        &        &        & 0  \\
            -\sqrt3 & 2 & -1     &        &        &    \\
                    & -1 & 2 & -1&        &        \\
                    &    & \ddots & \ddots & \ddots &   \\
                    &    &        & -1 & 2 & -\sqrt3 \\
            0       &    &        &    & -\sqrt3 & 3
            \end{pmatrix},
        \end{align*}
    The claim follows.
\end{proof}

The next lemma provides closed-form expressions for the Moore-Penrose inverses of the components of the block matrices described above.

\begin{lemma}\label{lem:block-pinv}
    Assume the notation and conventions of~\cref{lem:block-diag}. Let $\varphi = \mathrm{arccosh}\bigl(\tfrac{5}{2}\bigr)$. Then
        \begin{align*}
            (\Delta_k^{(3)})^{-1}_{ij}
            &=
            \begin{cases}
            \displaystyle
            \frac{\mathrm{sinh}(i\varphi)\,\mathrm{sinh}\bigl((k-j+1)\varphi\bigr)}
                {\mathrm{sinh}(\varphi)\,\mathrm{sinh}\bigl((k+1)\varphi\bigr)},
            & i \le j,\\
            \displaystyle
            \frac{\mathrm{sinh}(j\varphi)\,\mathrm{sinh}\bigl((k-i+1)\varphi\bigr)}
                {\mathrm{sinh}(\varphi)\,\mathrm{sinh}\bigl((k+1)\varphi\bigr)},
            & i > j,
            \end{cases}
        \end{align*}
    and 
        \begin{align*}
            (\widetilde{\Delta}_k^{(0)})^\dagger_{ij} = (\widetilde{\Delta}_k^{(0)})^\dagger_{ji} &= \frac{\omega_i \omega_j}{3}\left(h_k(i-1) + h_k(k+2-j)- c_k(i, j)\right),\quad 1\leq i\leq j\leq k+2,
        \end{align*}    
    where
        \begin{align*}
            (\omega_1, \omega_2, \dotsc, \omega_{k+1}, \omega_{k+2}) &:= \frac{1}{\sqrt{3k+2}} (1, \sqrt{3}, \sqrt{3}, \dotsc, \sqrt{3}, 1),\\
            h_k(x) &= \frac{x(6x^2 - 3x - 1)}{2(3k+2)}, \quad x\in\mathbb{R}\\
            c_k(i, j) &:= \frac{(j-i)}{2(3k+2)}\bigl(2(3k+4)-3(i+j-1)\bigr), \quad i, j\in\mathbb{Z}.
        \end{align*}
\end{lemma}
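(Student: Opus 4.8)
The plan is to handle the two diagonal blocks separately. In both cases the key is that the interior rows of the matrix encode a constant‑coefficient three–term recurrence, so the (pseudo)inverse can be assembled from two fundamental solutions of that recurrence; the boundary rows then fix the remaining constants and the normalization. For $\Delta_k^{(3)}$ the recurrence is $u_{m+1}=5u_m-u_{m-1}$, whose fundamental solutions are hyperbolic sines; for $\widetilde\Delta_k^{(0)}$ the recurrence is the discrete Laplace equation $-g(j-1)+2g(j)-g(j+1)=(\text{source})$, whose homogeneous solutions are affine, so summing against the rank‑one correction $-\omega_i\omega_j$ produces a cubic polynomial — exactly the shape of the stated formula.

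\textbf{The block $\Delta_k^{(3)}$.} This matrix is real symmetric, tridiagonal, and strictly diagonally dominant, hence invertible. I would invoke the classical description of inverses of tridiagonal matrices: writing $\theta_m$ for the leading $m\times m$ principal minor of $\Delta_k^{(3)}$ (with $\theta_0=1$) and $\phi_m$ for the corresponding trailing minor, one has $(\Delta_k^{(3)})^{-1}_{ij}=\theta_{i-1}\phi_{j+1}/\theta_k$ for $i\le j$, and the transposed expression for $i>j$. Both $\theta_m$ and $\phi_m$ satisfy $u_{m+1}=5u_m-u_{m-1}$, and since $2\cosh\varphi=5$ for $\varphi=\operatorname{arccosh}(5/2)$ this recurrence is solved by $\{\cosh(m\varphi),\sinh(m\varphi)\}$; matching the initial data gives $\theta_m=\sinh((m+1)\varphi)/\sinh\varphi$ and $\phi_m=\sinh((k+2-m)\varphi)/\sinh\varphi$, and substitution yields the claimed formula. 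A more self‑contained route is to verify $\Delta_k^{(3)}N^{(3)}=I$ directly: for a fixed column $j$, the entry $N^{(3)}_{ij}$ is, up to a factor independent of $i$, a product of two homogeneous solutions on the ranges $i\le j$ and $i\ge j$, so the tridiagonal action kills it for $i\ne j$; at $i=j$ the jump is governed by the hyperbolic ``Wronskian'' identity $\sinh(x\varphi)\sinh(y\varphi)-\sinh((x-1)\varphi)\sinh((y+1)\varphi)=\sinh(\varphi)\sinh((y-x+1)\varphi)$, which produces precisely the denominator $\sinh(\varphi)\sinh((k+1)\varphi)$ and hence the value $1$.

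\textbf{The block $\widetilde\Delta_k^{(0)}$.} First I would record that this matrix is singular: checking the three row types — the two boundary rows with diagonal $3$ and off‑diagonal entry $-\sqrt3$, and the interior rows with diagonal $2$ and off‑diagonal $-1$ — shows $\widetilde\Delta_k^{(0)}\omega=0$, while $\|\omega\|_2^2=(3k+2)/(3k+2)=1$. Since $\widetilde\Delta_k^{(0)}$ is symmetric with one‑dimensional kernel spanned by $\omega$, its Moore--Penrose inverse $M$ is the unique matrix with $M=M^\top$, $M\omega=0$, and $\widetilde\Delta_k^{(0)}M=I-\omega\omega^\top$. I would construct such an $M$ as a discrete Green's function: on the interior the homogeneous solutions are the constant $1$ and the linear function $j$, so $M$ is affine in each variable away from the ``diagonal'', the two boundary rows forcing corrections near the ends; integrating the affine kink against the constant and linear solutions together with the rank‑one source $-\omega_i\omega_j$ yields a cubic polynomial, which is exactly what the stated formula records — $h_k$ is cubic with linear second difference (reproducing both $\delta_{ij}$ and the $-\omega_i\omega_j$ term), and $c_k(i,j)$, carrying the antisymmetric factor $j-i$, encodes the kink at $i=j$. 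The verification then has three parts: (i) symmetry of the displayed expression under $i\leftrightarrow j$ on $\{1,\dots,k+2\}$; (ii) the orthogonality $\sum_j M_{ij}\omega_j=0$ for every $i$, via closed‑form sums of $h_k$ and $c_k$; (iii) $\widetilde\Delta_k^{(0)}M=I-\omega\omega^\top$ checked row by row, the interior rows by a telescoping second–difference computation and the two boundary rows separately because of the $3$ and $-\sqrt3$ entries.

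\textbf{Main obstacle.} Part (a) is essentially bookkeeping once the recurrence $u_{m+1}=5u_m-u_{m-1}$ is identified. The real difficulty is in part (b): correctly accounting for the non‑Laplacian boundary entries $3$ and $-\sqrt3$ in the Green's function, fixing the global normalization $1/(3k+2)$, and verifying $M\omega=0$. The $\sqrt3$‑weighting of $\omega$ at the two ends makes the relevant polynomial identities delicate, and it is precisely this step that forces the exact cubic $x(6x^2-3x-1)$ and the constant $2(3k+4)-3(i+j-1)$ appearing in $h_k$ and $c_k$; these are most cleanly found by positing an affine/cubic ansatz with undetermined coefficients and pinning them down from the three verification conditions above.
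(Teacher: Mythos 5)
Your treatment of $\Delta_k^{(3)}$ is correct and essentially the paper's argument: the paper also reduces to the three-term recurrence $5x_i-x_{i-1}-x_{i+1}=\delta_{i\ell}$ with roots $e^{\pm\varphi}$ and boundary conditions $x_0=x_{k+1}=0$, which is equivalent to your $\theta/\phi$ minor formula or your direct Wronskian-style verification. For $\widetilde{\Delta}_k^{(0)}$, however, you take a genuinely different route. The paper does not verify the Penrose conditions by hand; it invokes a closed-form formula of Bendito--Carmona--Encinas for the Moore--Penrose inverse of a symmetric Jacobi matrix admitting a positive null weight vector $\omega$ satisfying $d_j\omega_j=c_j\omega_{j+1}+c_{j-1}\omega_{j-1}$, checks that $\omega=\tfrac{1}{\sqrt{3k+2}}(1,\sqrt3,\dots,\sqrt3,1)$ satisfies this condition, and then evaluates the resulting partial sums $\sum A_s^2$, $\sum B_t^2$, $\sum B_t$ in closed form, which is exactly where the cubic $h_k$ and the term $c_k(i,j)$ come from. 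Your alternative --- characterizing $M=(\widetilde{\Delta}_k^{(0)})^\dagger$ as the unique symmetric matrix with $M\omega=0$ and $\widetilde{\Delta}_k^{(0)}M=I-\omega\omega^\top$, and checking the given cubic expression against these conditions --- is a sound strategy (your uniqueness characterization is valid, and since the formula is true the row-by-row and orthogonality checks would close), and it buys self-containedness at the price of heavier polynomial bookkeeping, whereas the paper's citation buys brevity and makes the origin of $h_k$ and $c_k$ transparent as explicit sums. Two caveats: your item (i) is not quite right --- the displayed expression is not symmetric under $i\leftrightarrow j$; it is stated only for $i\le j$ and the matrix is extended by symmetry, so the antisymmetry $c_k(i,j)=-c_k(j,i)$ imposes no constraint to verify --- and your part (b) as written is a verification plan (an ansatz to be pinned down) rather than the executed computation, so to be complete you would still need to carry out the telescoping interior-row identities, the two boundary rows with entries $3$ and $-\sqrt3$, and the sum $\sum_j M_{ij}\omega_j=0$ explicitly.
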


Note that by~\cref{lem:block-diag}, it follows that
    \begin{align}\label{eq:lap-pinv}
        \mathbf{L}(G_k)^\dagger = \widetilde{\mathbf{U}} \begin{pmatrix}
            (\widetilde{\Delta}_k^{(0)})^\dagger & & \\
            & (\Delta_k^{(3)})^{-1} & \\
            & & (\Delta_k^{(3)})^{-1}
        \end{pmatrix} \widetilde{\mathbf{U}}^\top.
    \end{align}

Before proving~\cref{lem:block-pinv}, we recall a useful theorem for inverting symmetric tridiagonal matrices below.

\begin{lemma}[Bendito, Carmona, Encinas~\cite{bendito2010generalized}]\label{lem:inverting-tridiagonal}
    Let $M\in\mathbb{R}^{n\times n}$ be the tridiagonal symmetric matrix
        \begin{align*}
            M = \begin{pmatrix}
                d_1 & -c_1 & & & & \\
                -c_1 & d_2 & -c_2 & & & \\
                & -c_2 & d_3 & -c_3 & & \\
                & &\ddots &\ddots &\ddots &\\
                & & &-c_{n-2}&d_{n-1}&-c_{n-1}\\
                & & & &-c_{n-1}&d_n\\
            \end{pmatrix},
        \end{align*}
    where $c_i \geq 0$ for $1\leq i \leq n-1$ and $d_i \geq 0$ for $1\leq i \leq n$. Assume there exist $\omega_1, \omega_2, \dotsc,\omega_n >0$ with $\sum_{i=1}^n\omega_i^2 = 1$ such that
        \begin{align}\label{eq:omega-js}
            d_j = \frac{1}{\omega_j}\left(c_j\omega_{j+1} + c_{j-1}\omega_{j-1}\right),\quad 1\leq j\leq n, \quad c_0 = c_n = \omega_0 = \omega_{n+1} := 0.
        \end{align}
    Then the Moore-Penrose inverse $M^\dagger$ has entries given by
        {\footnotesize\begin{align*}
            (M^\dagger)_{ij} =  (M^\dagger)_{ji} &= \omega_i \omega_j \left[
                \sum_{k=1}^{i-1} 
                \frac{\left( \sum_{\ell=1}^{k} \omega_\ell^2 \right)^2}{c_k \omega_k \omega_{k+1}}
                +
                \sum_{k=i}^{n-1}
                \frac{\left( \sum_{\ell=k+1}^{n} \omega_\ell^2 \right)^2}{c_k \omega_k \omega_{k+1}}
                -
                \sum_{k=i}^{j-1}
                \frac{\left( \sum_{\ell=k+1}^{n} \omega_\ell^2 \right)}{c_k \omega_k \omega_{k+1}}
                \right],
        \end{align*}}
    where $1\leq i\leq j\leq n$.
\end{lemma}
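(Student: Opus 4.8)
The starting point is to read hypothesis~\eqref{eq:omega-js} correctly: setting $\boldsymbol{\omega}=(\omega_1,\dots,\omega_n)^\top$, the $j$-th entry of $M\boldsymbol{\omega}$ is $-c_{j-1}\omega_{j-1}+d_j\omega_j-c_j\omega_{j+1}$, which vanishes for every $j$ precisely because of~\eqref{eq:omega-js} (with the boundary conventions $c_0=c_n=\omega_0=\omega_{n+1}=0$). Thus $M\boldsymbol{\omega}=\mathbf 0$. Assuming $c_k>0$ for all $k$ --- the irreducible case relevant to~\cref{lem:block-pinv} --- the $(n-1)\times(n-1)$ submatrix of $M$ obtained by deleting the first row and last column is lower triangular with diagonal entries $-c_1,\dots,-c_{n-1}$, hence nonsingular, so $\operatorname{rank}M\ge n-1$; combined with $\boldsymbol{\omega}\in\ker M\setminus\{\mathbf 0\}$ this gives $\operatorname{rank}M=n-1$ and $\ker M=\operatorname{span}(\boldsymbol{\omega})$, with $\boldsymbol{\omega}$ a unit vector since $\sum_i\omega_i^2=1$. (Conceptually, $D:=\operatorname{diag}(\omega_1,\dots,\omega_n)$ conjugates $M$ to $N:=DMD$, a symmetric matrix with $N\mathbf 1=DM\boldsymbol{\omega}=\mathbf 0$ and off-diagonal entries $-c_k\omega_k\omega_{k+1}\le 0$; that is, $N$ is the Laplacian of a weighted path with edge weights $w_k=c_k\omega_k\omega_{k+1}$, which explains the origin of the quantities $g_k:=(c_k\omega_k\omega_{k+1})^{-1}$ appearing in the formula.)

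Next I would invoke the characterization of the pseudoinverse in this rank-deficient setting: for symmetric $M$ of rank $n-1$ with unit kernel vector $\boldsymbol{\omega}$, the matrix $M^\dagger$ is the unique $X\in\R^{n\times n}$ with
\[
\text{(i)}\ X=X^\top,\qquad \text{(ii)}\ X\boldsymbol{\omega}=\mathbf 0,\qquad \text{(iii)}\ MX=I-\boldsymbol{\omega}\boldsymbol{\omega}^\top .
\]
Uniqueness holds because the difference $Y$ of two such matrices is symmetric with $MY=0$ and $Y\boldsymbol{\omega}=0$; then every column of $Y$ lies in $\operatorname{span}(\boldsymbol{\omega})$, so $Y=\boldsymbol{\omega}\mathbf a^\top$, and symmetry together with $Y\boldsymbol{\omega}=0$ forces $\mathbf a=\mathbf 0$. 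The plan is therefore to \emph{define} $X=(X_{ij})$ by the formula in the statement (extended by $X_{ji}=X_{ij}$) and verify (i)--(iii); property (i) is immediate from the construction.

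The computational core is (iii). Write $S_k=\sum_{\ell\le k}\omega_\ell^2$, $\overline S_k=1-S_k$, and factor
\[
X_{ij}=\omega_i\omega_j\,\beta_i,\qquad
\beta_i=\sum_{k=1}^{i-1}S_k^2 g_k+\sum_{k=i}^{n-1}\overline S_k^2 g_k-\sum_{k=i}^{j-1}\overline S_k g_k
\]
(for $i\le j$, suppressing the fixed $j$). Fixing the column $j$, I would compute $(MX)_{pj}=-c_{p-1}X_{p-1,j}+d_pX_{p,j}-c_pX_{p+1,j}$ and substitute $d_p\omega_p=c_p\omega_{p+1}+c_{p-1}\omega_{p-1}$ from~\eqref{eq:omega-js}, collapsing the row into the two discrete-derivative terms $c_{p-1}\big(\tfrac{\omega_{p-1}}{\omega_p}X_{p,j}-X_{p-1,j}\big)$ and $c_p\big(\tfrac{\omega_{p+1}}{\omega_p}X_{p,j}-X_{p+1,j}\big)$. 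The factored form makes the consecutive $\beta$'s telescope: using $\overline S_k^2-S_k^2=\overline S_k-S_k$ one gets $\beta_p-\beta_{p+1}=-S_p g_p$ for $p<j$, and the two terms then combine to $\tfrac{\omega_j}{\omega_p}(S_{p-1}-S_p)=-\omega_p\omega_j=(I-\boldsymbol{\omega}\boldsymbol{\omega}^\top)_{pj}$. At $p=j$ the third sum in $\beta$ switches on across the diagonal, and the same telescoping produces the extra $+1$, namely $(MX)_{jj}=S_{j-1}+\overline S_j=1-\omega_j^2$. The range $p>j$ is treated by the mirror-image telescoping via the symmetric extension $X_{p,j}=X_{j,p}$, and the boundary rows $p=1,n$ are automatic because $c_0=c_n=0$ deletes the out-of-range terms (consistently with $S_0=0$, $\overline S_n=0$). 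Property (ii) then follows either from a shorter analogous telescoping of $\sum_j\omega_j X_{ij}$, or more cheaply by observing that (iii) gives $M(X\boldsymbol{\omega})=(I-\boldsymbol{\omega}\boldsymbol{\omega}^\top)\boldsymbol{\omega}=\mathbf 0$, so $X\boldsymbol{\omega}=\alpha\boldsymbol{\omega}$, and checking $\boldsymbol{\omega}^\top X\boldsymbol{\omega}=0$ to conclude $\alpha=0$. With (i)--(iii) verified, uniqueness gives $X=M^\dagger$.

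The main obstacle is organizational rather than conceptual: the factor $\beta_i$ is defined only for $i\le j$, so the telescoping in (iii) must be carried out separately in the three regimes $p<j$, $p=j$, and $p>j$, invoking the symmetric extension in the last and treating with care the single transition at $p=j$ that is responsible for the Kronecker delta. Within each regime the algebra reduces to the two elementary identities $\overline S_k^2-S_k^2=\overline S_k-S_k$ and $S_{p-1}-S_p=-\omega_p^2$, but one must track the boundary conventions and regime endpoints consistently. A secondary point worth recording is the standing assumption $c_k>0$, which guarantees $\dim\ker M=1$ and hence the validity of the pseudoinverse characterization; this is satisfied in the application to~\cref{lem:block-pinv}.
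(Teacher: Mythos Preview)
The paper does not actually prove this lemma: it is quoted from \cite{bendito2010generalized} and used as a black box in the derivation of \cref{lem:block-pinv}. So there is no ``paper's proof'' to compare against, and your proposal stands as an independent argument.

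Your approach is correct and well organized. The recognition that \eqref{eq:omega-js} says precisely $M\boldsymbol{\omega}=\mathbf 0$, together with the rank argument via the $(n-1)\times(n-1)$ triangular minor, cleanly identifies $\ker M=\operatorname{span}(\boldsymbol{\omega})$ under the irreducibility hypothesis $c_k>0$. Your three-condition characterization of $M^\dagger$ is standard and your uniqueness argument is fine. The telescoping verification of (iii) is the right computation: I checked the key identity $\beta_p-\beta_{p+1}=-S_pg_p$ for $p<j$ and the diagonal case $(MX)_{jj}=S_{j-1}+\overline S_j=1-\omega_j^2$, and both go through exactly as you describe. The conceptual remark that $DMD$ is the Laplacian of a weighted path with edge weights $c_k\omega_k\omega_{k+1}$ is a nice piece of context that explains why the formula has the shape it does.

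Two small points. First, for property (ii) your ``cheap'' route still leaves the scalar check $\boldsymbol{\omega}^\top X\boldsymbol{\omega}=0$ to be done; this is not automatic from (iii) alone (indeed $X-\alpha\boldsymbol{\omega}\boldsymbol{\omega}^\top$ also satisfies (i) and (iii) for any $\alpha$), so you should either carry out that one-line telescoping or note that it is equivalent to the Moore--Penrose condition $XMX=X$. Second, you correctly flag that the argument assumes $c_k>0$ for all $k$; the lemma as stated allows $c_k\ge 0$, in which case $M$ decouples into blocks and the formula must be interpreted blockwise. Since the only use in the paper is for $\widetilde\Delta_k^{(0)}$, where all off-diagonals are strictly negative, this restriction is harmless here and worth stating explicitly, as you do.
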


\begin{proof}
    Since $\Delta_k^{(3)}$ is real symmetric and strictly diagonally dominant it is positive-definite and thus invertible. Hence its Moore-Penrose inverse coincides with its ordinary inverse. A standard method for tridiagonal matrices is to solve a difference equation in lieu of Gaussian elimination. We fix $1\le\ell\le k$ and solve $\Delta_k^{(3)}\mathbf{x}=\mathbf{1}_\ell$. Writing $x_0=x_{k+1}=0$, the components satisfy
        \begin{align*}
            5x_i-x_{i-1}-x_{i+1}=\delta_{i\ell},
            \qquad 1\le i\le k .
        \end{align*}
    For $i\neq\ell$ the homogeneous recurrence $5x_i-x_{i-1}-x_{i+1}=0$ has characteristic polynomial $r^{2}-5r+1=0$ with distinct roots 
        \begin{align*}
            r_1 &= \frac{5 + \sqrt{21}}{2} = e^\varphi, \quad r_2 = \frac{5 - \sqrt{21}}{2} = e^{-\varphi},\quad  \varphi = \mathrm{arccosh}\bigl(\tfrac{5}{2}\bigr).
        \end{align*}
    Enforcing the boundary conditions yields, after some simplification,
        \begin{align}\label{eq:closed-form-asym}
            (\Delta_k^{(3)})^{-1}_{ij}
            &=
            \begin{cases}
            \displaystyle
            \frac{\mathrm{sinh}(i\varphi)\,\mathrm{sinh}\bigl((k-j+1)\varphi\bigr)}
                {\mathrm{sinh}(\varphi)\,\mathrm{sinh}\bigl((k+1)\varphi\bigr)},
            & i \le j,\\
            \displaystyle
            \frac{\mathrm{sinh}(j\varphi)\,\mathrm{sinh}\bigl((k-i+1)\varphi\bigr)}
                {\mathrm{sinh}(\varphi)\,\mathrm{sinh}\bigl((k+1)\varphi\bigr)},
            & i > j.
            \end{cases}
        \end{align}

    Next we compute $(\widetilde{\Delta}_k^{(0)})^\dagger$. This can be done by invoking the machinery presented in~\cite{bendito2010generalized}, restated in detail in~\cref{lem:inverting-tridiagonal}, as follows. Let
        \begin{align*}
            (\omega_1, \omega_2, \dotsc, \omega_{k+1}, \omega_{k+2}) := \frac{1}{\sqrt{3k+2}} (1, \sqrt{3}, \sqrt{3}, \dotsc, \sqrt{3}, 1).
        \end{align*}
    Then for $j=1$, we have
        \begin{align*}
            \frac{1}{\omega_j}\left(c_j\omega_{j+1} + c_{j-1}\omega_{j-1}\right) &= \sqrt{3}\omega_2 = 3,
        \end{align*}
    and similarly for $j=k+2$. For $1 < j < k+2$, we have
        \begin{align*}
            \frac{1}{\omega_j}\left(c_j\omega_{j+1} + c_{j-1}\omega_{j-1}\right) &= \frac{1}{\sqrt{3}} (\sqrt{3} + \sqrt{3}) = 2,
        \end{align*}
    so that $(\omega_j)_{j=1}^{k+2}$ satisfy~\cref{eq:omega-js}. Now define
        \begin{align*}
            A_s &:= \sum_{j=1}^s \omega_j^2 = \frac{1}{3k+2}\begin{cases}
                1 &\text{ if }s=1,\\
                3s-2 &\text{ if }1 < s < k+1\\
                3k+2 &\text{ if }s = k+2 
            \end{cases},\quad 1\leq s\leq k+2,
        \end{align*}
    and symmetrically 
        \begin{align*}
            B_t &= A_{k+2} - A_{t} = \frac{1 + 3(k+1-t)}{3k+2},\quad 1\leq t\leq k+1,\\
            B_{k+2} &:= 0.
        \end{align*}
    Note next that, letting $(c_1, c_2,\dotsc, c_{k+1}) := (\sqrt{3}, 1, \dotsc, 1, \sqrt{3})$, it holds that for each $1\leq s \leq k+1$, $c_s\omega_s\omega_{s+1} = \frac{3}{3k+2}$. Therefore by~\cref{lem:inverting-tridiagonal}, it holds that for $1\leq i \leq j\leq k+2$,
        \begin{align*}
            (\widetilde{\Delta}_k^{(0)})^\dagger_{ij} = (\widetilde{\Delta}_k^{(0)})^\dagger_{ji} &= \frac{(3k+2)\omega_i \omega_j}{3} \left[
                \sum_{s=1}^{i-1} A_s^2
                +
                \sum_{t=i}^{k+1}
                B_t^2
                -
                \sum_{t=i}^{j-1}
                B_t
                \right]
        \end{align*}
    Next we compute, for $0\leq t \leq k+1$,
        \begin{align*}
            \sum_{s=1}^{t} A_s^2 &= \frac{1}{(3k+2)^2} \left(\underbrace{1^2 + 4^2 + 7^2 + \dotsc +(3(k+1) - 2)^2}_{t\text{ terms }}\right)\\
            &=\frac{1}{(3k+2)^2} \left( 9\frac{t(t+1)(2t+1)}{6}-12\frac{(t)(t+1)}{2} +4t\right) \\ 
            &= \frac{t(6t^2 - 3t - 1)}{2(3k+2)^2},
        \end{align*}
    and similarly, for $1\leq t\leq k+2$, we have
        \begin{align*}
            \sum_{t=i}^{k+1}
            B_t^2 - \sum_{t=i}^{j-1}
            B_t
            &= \frac{1}{(3k+2)^2}\sum_{t=i}^{k+1}\bigl(3k+4-3t\bigr)^2
               \;-\; \frac{1}{3k+2}\sum_{t=i}^{j-1}\bigl(3k+4-3t\bigr)\\
            &= \frac{(k+2-i)\bigl(6(k+2-i)^2-3(k+2-i)-1\bigr)}{2(3k+2)^2}\\
            &\quad \;-\; \frac{(j-i)\bigl(2(3k+4)-3(i+j-1)\bigr)}{2(3k+2)}\\
            &= h_k(k+2-i)\;-\;c_k(i, j),
        \end{align*}
    where
        \begin{align*}
            h_k(x) &:= \frac{x(6x^2 - 3x - 1)}{2(3k+2)},\, c_k(i, j) := \frac{(j-i)}{2(3k+2)}\bigl(2(3k+4)-3(i+j-1)\bigr),\; x\in\mathbb{R},\; i, j\in\mathbb{Z}.
        \end{align*}
    Thus we have
        \begin{align*}
            (\widetilde{\Delta}_k^{(0)})^\dagger_{ij} = (\widetilde{\Delta}_k^{(0)})^\dagger_{ji} &= \frac{\omega_i \omega_j}{3}\left(h_k(i-1) + h_k(k+2-j) - c_k(i, j)\right),\quad 1\leq i\leq j\leq k+2.
        \end{align*}    
\end{proof}

\begin{lemma}\label{lem:resistance-formulas}
    Let $k\ge3$ and write
        \begin{align*}
            \varphi := \mathrm{arccosh}(5/2),\qquad s_j := \sinh(j\varphi),\quad j\in\mathbb{Z}.
        \end{align*}
    Then for each $0\le j\le k-1$, it holds that
        \begin{align*}
            r_{\mathrm{cycle}}(j;k)&=\frac{2s_{j+1}s_{(k-j)}}{s_1s_{k+1}},\\
            r_{\mathrm{path}}(j;k)&=\frac{1}{3} + \frac{2}{3}\frac{s_{j+1}s_{k-j}+ s_{j+2}s_{k-j-1}-2s_{j+1}s_{k-j-1}}{s_1s_{k+1}},\\
            r_{\mathrm{cap}}(k)&=\frac{1}{3} + \frac{2}{3}\frac{s_k}{s_{k+1}}.
        \end{align*}
\end{lemma}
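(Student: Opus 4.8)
The plan is to read the three edge resistances off the block decomposition \cref{eq:lap-pinv}. Since $r_{uv}=(\mathbf 1_u-\mathbf 1_v)^\top\mathbf L(G_k)^\dagger(\mathbf 1_u-\mathbf 1_v)$, inserting \cref{eq:lap-pinv} reduces each resistance, for $\mathbf b:=\widetilde{\mathbf U}^\top(\mathbf 1_u-\mathbf 1_v)$, to a sum of quadratic forms of $\mathbf b$ against the three diagonal blocks $(\widetilde\Delta_k^{(0)})^\dagger,(\Delta_k^{(3)})^{-1},(\Delta_k^{(3)})^{-1}$. The first step is to record how indicator vectors transform: for a product vertex $(i,j)$ the vector $\widetilde{\mathbf U}^\top\mathbf 1_{(i,j)}$ places the scalar $\mathbf u_s(i)$ at the coordinate indexing level $j$ in the $s$-th channel, while $\widetilde{\mathbf U}^\top\mathbf 1_x=\mathbf 1_x$ and $\widetilde{\mathbf U}^\top\mathbf 1_y=\mathbf 1_y$. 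Here one must respect the coordinate conventions of \cref{lem:block-diag}: in $\widetilde\Delta_k^{(0)}\in\mathbb R^{(k+2)\times(k+2)}$ the order is $x,\text{level }0,\dots,\text{level }k-1,y$, so level $j$ is index $j+2$, whereas in each copy of $\Delta_k^{(3)}\in\mathbb R^{k\times k}$ level $j$ is index $j+1$. Since $\mathbf u_0=\tfrac1{\sqrt3}(1,1,1)$ is constant and $\{\mathbf u_0,\mathbf u_1,\mathbf u_2\}$ is orthonormal, I will use $\mathbf u_0(i)^2=\tfrac13$, $\sum_{s=1}^2\mathbf u_s(i)^2=\tfrac23$, and $\sum_{s=1}^2(\mathbf u_s(i)-\mathbf u_s(i+1))^2=\|\mathbf e_i-\mathbf e_{i+1}\|^2=2$.

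For a cycle edge $\{(i,j),(i+1,j)\}$ the transformed difference has vanishing $s=0$ component and is supported at index $j+1$ in each antisymmetric channel, so $r_{\mathrm{cycle}}(j;k)=2\,(\Delta_k^{(3)})^{-1}_{j+1,j+1}$, and evaluating the hyperbolic formula of \cref{lem:block-pinv} at index $j+1$ gives $\tfrac{2s_{j+1}s_{k-j}}{s_1s_{k+1}}$. For a path edge $\{(i,j),(i,j+1)\}$ the $s=0$ component of $\mathbf b$ is $\tfrac1{\sqrt3}(\mathbf e_{j+2}-\mathbf e_{j+3})$ and the $s=1,2$ components are $\mathbf u_s(i)(\mathbf e_{j+1}-\mathbf e_{j+2})$, whence
\[
r_{\mathrm{path}}(j;k)=\tfrac13\,(\mathbf e_{j+2}-\mathbf e_{j+3})^\top(\widetilde\Delta_k^{(0)})^\dagger(\mathbf e_{j+2}-\mathbf e_{j+3})+\tfrac23\,(\mathbf e_{j+1}-\mathbf e_{j+2})^\top(\Delta_k^{(3)})^{-1}(\mathbf e_{j+1}-\mathbf e_{j+2}).
\]
For a cap edge $\{(i,0),x\}$ the $s=0$ component is $\tfrac1{\sqrt3}\mathbf e_2-\mathbf e_1$ and the $s=1,2$ components are $\mathbf u_s(i)\mathbf e_1$, whence $r_{\mathrm{cap}}(k)=(\tfrac1{\sqrt3}\mathbf e_2-\mathbf e_1)^\top(\widetilde\Delta_k^{(0)})^\dagger(\tfrac1{\sqrt3}\mathbf e_2-\mathbf e_1)+\tfrac23\,(\Delta_k^{(3)})^{-1}_{1,1}$. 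The antisymmetric contributions come straight out of \cref{lem:block-pinv}: one has $(\Delta_k^{(3)})^{-1}_{j+1,j+1}+(\Delta_k^{(3)})^{-1}_{j+2,j+2}-2(\Delta_k^{(3)})^{-1}_{j+1,j+2}=\tfrac{s_{j+1}s_{k-j}+s_{j+2}s_{k-j-1}-2s_{j+1}s_{k-j-1}}{s_1s_{k+1}}$ and $(\Delta_k^{(3)})^{-1}_{1,1}=\tfrac{s_k}{s_{k+1}}$.

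It remains to evaluate the two symmetric-block quadratic forms, and I will do this without using the explicit formula for $(\widetilde\Delta_k^{(0)})^\dagger$. In both cases the relevant vector $\mathbf b_0$ (namely $\mathbf e_{j+2}-\mathbf e_{j+3}$, resp.\ $\tfrac1{\sqrt3}\mathbf e_2-\mathbf e_1$) is orthogonal to the kernel $\omega$ of $\widetilde\Delta_k^{(0)}$, because $\omega$ is constant, equal to $\tfrac{\sqrt3}{\sqrt{3k+2}}$, on the interior coordinates $2,\dots,k+1$, and $-\omega_1+\tfrac1{\sqrt3}\omega_2=0$; hence $\mathbf b_0^\top(\widetilde\Delta_k^{(0)})^\dagger\mathbf b_0=\mathbf b_0^\top\mathbf z$ for \emph{any} $\mathbf z$ with $\widetilde\Delta_k^{(0)}\mathbf z=\mathbf b_0$. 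I then exhibit explicit solutions: the ``step vector'' $\mathbf z$ equal to $1$ on coordinates $2,\dots,j+2$, equal to $\tfrac1{\sqrt3}$ on coordinate $1$, and $0$ elsewhere solves $\widetilde\Delta_k^{(0)}\mathbf z=\mathbf e_{j+2}-\mathbf e_{j+3}$, while $\mathbf z=-\tfrac13\mathbf e_1$ solves $\widetilde\Delta_k^{(0)}\mathbf z=\tfrac1{\sqrt3}\mathbf e_2-\mathbf e_1$; in each case the interior rows telescope and the two $\sqrt3$-weighted boundary rows are precisely what pin down the $\tfrac1{\sqrt3}$, resp.\ $-\tfrac13$, entry. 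Reading off $\mathbf b_0^\top\mathbf z$ yields $1$ for the path edge and $\tfrac13$ for the cap edge, so the symmetric contributions are $\tfrac13$ in both cases, and assembling with the antisymmetric parts gives the three claimed identities. The only genuine difficulty is the index bookkeeping across the three coordinate systems (including the permutation sending $y$ to the last slot of $\widetilde\Delta_k^{(0)}$) together with the verification that the explicit trial vectors satisfy the $\sqrt3$-weighted boundary rows; the hyperbolic simplifications are then immediate.
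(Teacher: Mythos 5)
Your proposal is correct, and its skeleton is the same as the paper's: you use the block diagonalization of \cref{lem:block-diag} (i.e.~\cref{eq:lap-pinv}), push the indicator differences through $\widetilde{\mathbf U}^\top$ with the same index conventions (level $j$ at position $j+2$ in the symmetric block, $j+1$ in each antisymmetric block), and read the antisymmetric contributions off the explicit hyperbolic formula for $(\Delta_k^{(3)})^{-1}$, exactly as in the paper. Where you genuinely deviate is the symmetric block: the paper evaluates the quadratic forms $(\mathbf e_{j+2}-\mathbf e_{j+3})^\top(\widetilde\Delta_k^{(0)})^\dagger(\mathbf e_{j+2}-\mathbf e_{j+3})$ and the cap term by substituting the closed-form entries of $(\widetilde\Delta_k^{(0)})^\dagger$ from \cref{lem:block-pinv} (the $h_k$, $c_k$ bookkeeping), whereas you bypass that formula entirely, noting that the relevant right-hand sides lie in the range of $\widetilde\Delta_k^{(0)}$ (orthogonal to its kernel vector $\omega$) and exhibiting explicit telescoping preimages, so that $\mathbf b_0^\top(\widetilde\Delta_k^{(0)})^\dagger\mathbf b_0=\mathbf b_0^\top\mathbf z$ evaluates to $1$ and $\tfrac13$ directly; your trial vectors do satisfy the $\sqrt3$-weighted boundary rows, and the resulting constants $\tfrac13$ agree with the paper. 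This buys a cleaner and less error-prone derivation of the constant symmetric contributions (for this lemma you never need the closed form of $(\widetilde\Delta_k^{(0)})^\dagger$ at all), at the cost of a one-line remark you should add: that $\ker\widetilde\Delta_k^{(0)}$ is exactly the span of $\omega$ (immediate, e.g., because $\widetilde\Delta_k^{(0)}$ is tridiagonal with nonzero off-diagonal entries, or because $\Delta_k^{(3)}$ is invertible and $G_k$ is connected), which is what licenses replacing the pseudoinverse by an arbitrary preimage. The paper's route, by contrast, treats all blocks uniformly through \cref{lem:block-pinv}, which it needs anyway elsewhere.
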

  
\begin{proof}
    Recall that for any two vertices $u,v$ in $G_k$, it holds
        \begin{align*}
            r_{uv}
            &=(\mathbf{1}_u-\mathbf{1}_v)^\top\,\mathbf{L}(G_k)^\dagger\,(\mathbf{1}_u-\mathbf{1}_v),\\
            \mathbf{L}(G_k)^\dagger
            &=\widetilde U\,
            \mathrm{diag}\bigl((\widetilde\Delta_k^{(0)})^\dagger,\,
                           (\Delta_k^{(3)})^{-1},\,
                           (\Delta_k^{(3)})^{-1}\bigr)\,
            \widetilde U^\top.
        \end{align*}
    Hence if we set
        \begin{align*}
            w=\widetilde U^\top\bigl(\mathbf{1}_u-\mathbf{1}_v\bigr),
            \quad
            M^\dagger=\mathrm{diag}\bigl((\widetilde\Delta_k^{(0)})^\dagger,\,
                                      (\Delta_k^{(3)})^{-1},\,
                                      (\Delta_k^{(3)})^{-1}\bigr),
        \end{align*}
    then $r_{uv}=w^\top M^\dagger w$. Since
        \begin{align*}
            \widetilde{U}=\mathrm{diag}\bigl(I_2,\underbrace{U,\dots,U}_{k\text{ times}}\bigr),
        \end{align*}
    we have for any two vertices $u,v$ in $G_k$ the general formula
        \begin{align*}
            \widetilde U^\top&(\mathbf{1}_u-\mathbf{1}_v)=\\
            &\begin{cases}
              \displaystyle
              \mathbf{1}_1 - \bigl(0,0,\dots,0,\mathbf{u}_0(i),\mathbf{u}_1(i),\mathbf{u}_2(i),0,\dots,0\bigr)^\top,
              & (u,v)=(x,(i,0)),\\[1em]
              \displaystyle
              \mathbf{1}_2 - \bigl(0,0,\dots,0,\mathbf{u}_0(i),\mathbf{u}_1(i),\mathbf{u}_2(i)\bigr)^\top,
              & (u,v)=(y,(i,k-1)),\\[1em]
              \displaystyle
              \bigl(0,0,\,\dots,\,0,\underbrace{\mathbf{u}_0(i),\mathbf{u}_1(i),\mathbf{u}_2(i)}_{\text{block }j},0,\dots\bigr)^\top\\
              \;-\;
              \bigl(0,0,\,\dots,\,0,\underbrace{\mathbf{u}_0(i'),\mathbf{u}_1(i'),\mathbf{u}_2(i')}_{\text{block }j'},0,\dots\bigr)^\top,
              & u=(i,j),\;v=(i',j')
            \end{cases}
        \end{align*}
    along with the obvious modifications when the order of $u, v$ is reversed. We establish the three claims using this setup. First, in the case of cycle edges, let $u=(i,j)$ and $v=(i+1,j)$ for some $0\leq i \leq 2$ and $0\leq j \leq k-1$ fixed. Then we have that
        \begin{align*}
            \widetilde{U}^\top (\mathbf{1}_u-\mathbf{1}_v) &= \bigl(0,0,\,\dots,\,0,\underbrace{\mathbf{u}_0(i) - \mathbf{u}_0(i+1) ,\mathbf{u}_1(i)- \mathbf{u}_1(i+1),\mathbf{u}_2(i) - \mathbf{u}_2(i+1)}_{\text{block }j},0,\dots\bigr)^\top\\
            &= \bigl(0,0,\,\dots,\,0,\underbrace{0 ,\mathbf{u}_1(i)- \mathbf{u}_1(i+1),\mathbf{u}_2(i) - \mathbf{u}_2(i+1)}_{\text{block }j},0,\dots\bigr)^\top.
        \end{align*}
    with $(\mathbf{u}_1(i)-\mathbf{u}_1(i+1))^2+(\mathbf{u}_2(i)-\mathbf{u}_2(i+1))^2=2$. Therefore it holds that
        \begin{align*}
            r_{\mathrm{cycle}}(j;k)
            =2\,\bigl(\Delta_k^{(3)}\bigr)^{-1}_{\,j+1,j+1} =  \frac{2s_{j+1}s_{(k-j)}}{s_1s_{k+1}}.
        \end{align*}
    In the case of a path edge $\{u, v\}$ of the form $u=(i, j), v=(i, j+1)$ for $0\leq i \leq 2$ and $1\leq j \leq k-1$ fixed, we have
        \begin{align*}
            \mathbf{1}_u-\mathbf{1}_v
            =\sum_{s=0}^2 \mathbf{u}_s(i)\,\bigl(\mathbf{v}^{(s)}_j-\mathbf{v}^{(s)}_{j+1}\bigr),
        \end{align*}
    and thus that
        \begin{align*}
            r_{\mathrm{path}}(j;k)
            &=\frac{1}{3}\bigl[
                (\widetilde\Delta_k^{(0)})^\dagger_{\,j+2,j+2}
                +(\widetilde\Delta_k^{(0)})^\dagger_{\,j+3,j+3}
                -2(\widetilde\Delta_k^{(0)})^\dagger_{\,j+2,j+3}
              \bigr]
              \\
            &\qquad+\frac{2}{3}\bigl[
                (\Delta_k^{(3)})^{-1}_{\,j+1,j+1}
                +(\Delta_k^{(3)})^{-1}_{\,j+2,j+2}
                -2(\Delta_k^{(3)})^{-1}_{\,j+1,j+2}
              \bigr],
        \end{align*}
    from which the claim follows upon applying~\cref{lem:block-pinv}. Finally take $u=x$ and $v=(i,0)$. Then
        \begin{align*}
            \mathbf{1}_x-\mathbf{1}_{(i,0)}
            =\underbrace{\bigl(\mathbf{1}_x-\mathbf{u}_0(i)\mathbf{v}^{(0)}_0\bigr)}_{=:f^{(0)}}
             +\underbrace{\bigl(-\,\mathbf{u}_1(i)\mathbf{v}^{(1)}_0\bigr)}_{=:f^{(1)}}
             +\underbrace{\bigl(-\,\mathbf{u}_2(i)\mathbf{v}^{(2)}_0\bigr)}_{=:f^{(2)}},
        \end{align*}
    so
        \begin{align*}
            r_{\mathrm{cap}}(k)
            =((\widetilde\Delta_k^{(0)})^\dagger f^{(0)})^\top f^{(0)}
             +\sum_{s=1}^2\mathbf{u}_s(i)^2\,
              (\Delta_k^{(3)})^{-1}_{\,1,1}.
        \end{align*}
    We can compute
        \begin{align*}
            ((\widetilde\Delta_k^{(0)})^\dagger f^{(0)})^\top f^{(0)}
            =\frac{1}{3},
            \qquad
            (\Delta_k^{(3)})^{-1}_{\,1,1}
            =\frac{\sinh(k\varphi)}{\sinh\bigl((k+1)\varphi\bigr)},
        \end{align*}
    from which the claim follows.
\end{proof}

\begin{theorem}\label{thm:positivity-tube}
    For each $k\geq 1$, and $u\in V(G_k)$, the effective resistance curvature $\resistance{u}$ at vertex $u$ satisfies $\resistance{u} > 0$.
\end{theorem}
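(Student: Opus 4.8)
The plan is to read off the exact effective resistances of all edges of $G_k$ from~\cref{lem:resistance-formulas}, classify the vertices into three types, and for each type collapse the sum $\sum_{u\sim v} r_{uv}$ into an expression of the form $2$ minus a manifestly positive quantity, so that $\resistance{v}>0$ follows with room to spare. Writing $\varphi=\mathrm{arccosh}(5/2)$, $s_m=\sinh(m\varphi)$, $c_m=\cosh(m\varphi)$, the two facts I will lean on repeatedly are the three‑term recurrences $s_{m+1}=5s_m-s_{m-1}$, $c_{m+1}=5c_m-c_{m-1}$ (from $\cosh\varphi=5/2$) and the product‑to‑sum identity $\sinh(a\varphi)\sinh(b\varphi)=\tfrac12(c_{a+b}-c_{a-b})$. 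First I would dispose of the small cases $k\in\{1,2\}$ by a direct hand computation (the graph is tiny, and for $k=1$ the ``boundary'' vertices are adjacent to both caps, which the general formulas do not literally cover). For $k\ge 3$ I would partition $V(G_k)$ into: the two cap vertices $x,y$ (degree $3$); the boundary vertices $(i,0)$ and $(i,k-1)$ (degree $4$); and the interior vertices $(i,j)$ with $0<j<k-1$ (degree $4$).

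For the cap vertex $x$ all three incident edges are cap edges, so $\resistance{x}=1-\tfrac32\,r_{\mathrm{cap}}(k)=\tfrac12-\tfrac{s_k}{s_{k+1}}$ by~\cref{lem:resistance-formulas}; since $s_{k+1}=5s_k-s_{k-1}>2s_k$ (using $0\le s_{k-1}<s_k$), this is positive, and $y$ is symmetric. For an interior vertex $(i,j)$ the four incident edges contribute $2\,r_{\mathrm{cycle}}(j;k)+r_{\mathrm{path}}(j-1;k)+r_{\mathrm{path}}(j;k)$; substituting the formulas of~\cref{lem:resistance-formulas}, replacing every product $s_as_b$ via the identity above, and using $c_{m-1}+c_{m+1}=5c_m$ together with the key identity $\tfrac{10}{3}c_{k+1}-\tfrac43 c_k=\tfrac43\,s_1 s_{k+1}$, the numerators collapse and one is left with
\[
2\,r_{\mathrm{cycle}}(j;k)+r_{\mathrm{path}}(j-1;k)+r_{\mathrm{path}}(j;k)=2-\frac{7\,c_{k-2j-1}}{s_1\,s_{k+1}},
\]
whence $\resistance{(i,j)}=\dfrac{7\cosh\!\big((k-2j-1)\varphi\big)}{\sqrt{21}\,\sinh\!\big((k+1)\varphi\big)}>0$. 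For a boundary vertex $(i,0)$ the incident edges contribute $2\,r_{\mathrm{cycle}}(0;k)+r_{\mathrm{path}}(0;k)+r_{\mathrm{cap}}(k)$, and after rewriting $r_{\mathrm{cap}}(k)=\tfrac13+\tfrac13(c_{k+1}-c_{k-1})/(s_1 s_{k+1})$ (using $2s_1 s_k=c_{k+1}-c_{k-1}$) and running the same simplification, one gets $\resistance{(i,0)}=\dfrac{c_k+2c_{k-1}+c_{k-2}}{2\,s_1\,s_{k+1}}>0$; the vertex $(i,k-1)$ is symmetric. This exhausts all vertex classes, so $\resistance{u}>0$ for every $u\in V(G_k)$.

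The main obstacle is that no soft estimate will work: one checks that as $k\to\infty$ with $j$ near $k/2$, the interior‑vertex curvature $\resistance{(i,j)}\to 0$, i.e.\ the sum of the three relevant resistances cancels the constant $1$ in~\cref{defn:curvature-resistance} \emph{exactly} in the limit. In particular, the flow/path‑length upper bounds of~\cref{lem:resistance-bound}, which would give $r_{\mathrm{cycle}}\le 6/13$ and $r_{\mathrm{path}}\le 3/5$, overshoot and produce a negative bound for interior vertices. Thus the exact closed forms of~\cref{lem:resistance-formulas} are genuinely needed, and the bulk of the work is the (routine but unforgiving) hyperbolic‑identity bookkeeping producing the two displayed formulas above; the identity $\tfrac{10}{3}c_{k+1}-\tfrac43 c_k=\tfrac43 s_1 s_{k+1}$ is precisely what makes the interior sum collapse to $2$ minus a single positive term. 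Once these identities are established, positivity at every vertex is immediate and~\cref{thm:positivity-tube} follows.
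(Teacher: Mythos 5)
Your proposal is correct and follows essentially the same route as the paper's proof: dispose of $k=1,2$ by hand, then use the closed forms of~\cref{lem:resistance-formulas} together with the product-to-sum identity and the recurrence $c_{m+1}=5c_m-c_{m-1}$ to collapse the neighbor-resistance sums, yielding $\resistance{(i,j)}=\tfrac{7}{2s_1s_{k+1}}\cosh\bigl((2j+1-k)\varphi\bigr)$ for interior vertices and $\tfrac12-\tfrac{s_k}{s_{k+1}}$ at the caps, exactly as in the paper. Your explicit boundary-level formula $\resistance{(i,0)}=\tfrac{c_k+2c_{k-1}+c_{k-2}}{2s_1s_{k+1}}$ checks out and actually supplies a detail the paper leaves as ``similar.''
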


\begin{proof}[Proof of \cref{thm:positivity-tube}]
    The cases $k=1, 2$ can be handled via direct computation and are omitted. Assume $k\ge 3$. As a reminder we recall the definition of resistance curvature:
        \begin{align*}
            \resistance{u}\;=\;1-\frac12\sum_{v\sim u} r_{uv}.
        \end{align*}
    where $r_{uv}$ is the effective resistance between adjacent vertices $u$ and $v$ in $G_k$. Throughout, we write $C_j:=\cosh(j\varphi)$ for $j\in\mathbb{Z}$. Note the following identity for integers $a,b\in\mathbb{Z}$:
        \begin{align}\label{eq:prod-to-sum}
            2\,s_a s_b \;=\; C_{a+b}-C_{a-b},
        \end{align}
    and note additionally that the sequences $(C_n)$ and $(s_n)$ satisfy the linear recurrences
        \begin{align}\label{eq:recurrences}
            C_{n+1}=5\,C_n-C_{n-1},\qquad s_{n+1}=5\,s_n-s_{n-1}\qquad(n\in\mathbb Z),
        \end{align}
    as in the proof of~\cref{lem:block-pinv}. Set $D:=s_1 s_{k+1}>0$. First we consider the case of the cap vertices $x, y\in V(G_k)$. Each of $x,y$ has three neighbors and all three incident edges have resistance $r_{\mathrm{cap}}(k)$. Thus
        \begin{align*}
            \resistance{x}=\resistance{y}=1-\frac12\cdot 3\,r_{\mathrm{cap}}(k)
    =1-\frac32\left(\frac{1}{3}+\frac{2}{3}\cdot\frac{s_k}{s_{k+1}}\right)
    =\frac12-\frac{s_k}{s_{k+1}}.
        \end{align*}
    From~\cref{eq:recurrences} and the monotonicity $s_{n-1}<s_n$ for $n\ge1$, we have
        \begin{align*}
            s_{k+1}=5s_k-s_{k-1}\ge 5s_k-s_k=4s_k,
        \end{align*}
    hence $s_k/s_{k+1}\le 1/4$ and therefore $\resistance{x} = \resistance{y}= \ge \frac{1}{4} > 0$. Second, we consider the case of the interior vertices $(i,j)$, for $0\le i\le 2$ and $0\le j\le k-1$. Each such vertex has degree $4$. Note that by symmetry, the curvature $\resistance{(i, j)}$ does not depend on $i$. We treat separately the interior levels $1\le j\le k-2$ and then the boundary levels $j=0$ and $j=k-1$. For the interior levels, the neighbors of vertex $(i,j)$ are $(i\pm1,j)$ (two cycle edges) and $(i,j\pm1)$ (two path edges). Thus we have
        \begin{align*}
            \sum_{v\sim (i,j)} r_{(i,j),v}
    =2\,r_{\mathrm{cycle}}(j;k)+r_{\mathrm{path}}(j-1;k)+r_{\mathrm{path}}(j;k).
        \end{align*}
    Substituting the formulas provided in~\cref{lem:resistance-formulas} gives
        {\footnotesize \begin{align*}
            S_j &:=2\,r_{\mathrm{cycle}}(j;k)+r_{\mathrm{path}}(j-1;k)+r_{\mathrm{path}}(j;k)\\
                &=\frac{2}{3}+
                \frac{1}{D}\left[
                4\,s_{j+1}s_{k-j}
                +\frac{2}{3}\Big(2 s_{j+1}s_{k-j}+s_j s_{k-j+1}+s_{j+2}s_{k-j-1}
                -2 s_j s_{k-j}-2 s_{j+1} s_{k-j-1}\Big)
                \right].
        \end{align*}}
    Write
        \begin{align*}
            E_j :=8\,s_{j+1}s_{k-j}+ s_j s_{k-j+1}+ s_{j+2}s_{k-j-1} -2 s_j s_{k-j}-2 s_{j+1} s_{k-j-1}.
        \end{align*}
    We now simplify $E_j$ using~\cref{eq:prod-to-sum}. Let $t:=2j+1-k$. Then
        \begin{align*}
            E_j&=4\big(C_{k+1}-C_{t}\big)
            +\tfrac12\big(C_{k+1}-C_{t-1}\big)
            +\tfrac12\big(C_{k+1}-C_{t+2}\big)
            -\big(C_k-C_{t+1}\big)
            -\big(C_k-C_{t+2}\big)\\
            &=\big(4+\tfrac12+\tfrac12\big)C_{k+1}-2C_k
            +\big(C_{t+1}+C_{t+2}-4C_t-\tfrac12 C_{t-1}-\tfrac12 C_{t+2}\big).
        \end{align*}
    From~\cref{eq:prod-to-sum}, one has $C_{t-1}+C_{t+1}=2C_1C_t$ and $C_{t-2}+C_{t+2}=2C_2C_t$. Further, since $C_1=\cosh\varphi=5/2$ and $C_2=\cosh(2\varphi)=2C_1^2-1=23/2$, we get $2C_1-4-C_2=-21/2$. Therefore we have
        \begin{align*}
            E_j = 5C_{k+1}-2C_k - \frac{21}{2}\,C_t.
        \end{align*}
    On the other hand, by~\cref{eq:prod-to-sum}, $2D=2\,s_1 s_{k+1}=C_{k+2}-C_k$, and by~\cref{eq:recurrences}, $C_{k+2}=5C_{k+1}-C_k$, so $2D=5C_{k+1}-2C_k$. Hence
        \begin{align*}
            2D-E_j=\frac{21}{2}\,C_t.
        \end{align*}
    It follows that
        \begin{align*}
            \resistance{(i,j)}
    =\frac{1}{3D}\big(2D-E_j\big)
    ={\;\frac{7}{2\,s_1 s_{k+1}}\;\cosh\big((2j+1-k)\varphi\big)\;}.
        \end{align*}
    Since $\cosh(\cdot)\ge 1$, this yields $\resistance{(i,j)}>0$ for all interior $j$. The final case of the boundary levels $j=0$ and $j=k-1$ is similar and the claim follows.
\end{proof}


\begin{thebibliography}{10}

\bibitem{Barnette1969Conjecture5}
{\sc D.~W. Barnette}, {\em Conjecture 5}, in Recent Progress in Combinatorics: Proceedings of the Third Waterloo Conference on Combinatorics, May 1968, W.~T. Tutte, ed., Academic Press, New York, 1969.

\bibitem{bendito2010generalized}
{\sc E.~Bendito, A.~Carmona, A.~Encinas, and M.~Mitjana}, {\em Generalized inverses of symmetric m-matrices}, Linear algebra and its applications, 432 (2010), pp.~2438--2454.

\bibitem{Bloch2014}
{\sc E.~D. Bloch}, {\em Combinatorial ricci curvature for polyhedral surfaces and posets}, arXiv,  (2014), \url{https://arxiv.org/abs/1406.4598}.

\bibitem{plantri}
{\sc G.~Brinkmann and B.~D. McKay}, {\em plantri}.
\newblock \url{https://users.cecs.anu.edu.au/~bdm/plantri/}.
\newblock Accessed: 2025-10-07.

\bibitem{chung1997spectral}
{\sc F.~R. Chung}, {\em Spectral graph theory}, vol.~92, American Mathematical Soc., 1997.

\bibitem{DevosMohar2007}
{\sc M.~DeVos and B.~Mohar}, {\em An analogue of the {D}escartes-{E}uler formula for infinite graphs and {H}iguchi's conjecture}, Trans. Amer. Math. Soc., 359 (2007), pp.~3287--3300.

\bibitem{devriendt2025graphs}
{\sc K.~Devriendt}, {\em Graphs with nonnegative resistance curvature}, Annals of Combinatorics,  (2025), pp.~1--24.

\bibitem{DL}
{\sc K.~Devriendt and R.~Lambiotte}, {\em Discrete curvature on graphs from the effective resistance}, Journal of Physics: Complexity,  (2022).

\bibitem{devriendt2024graph}
{\sc K.~Devriendt, A.~Ottolini, and S.~Steinerberger}, {\em Graph curvature via resistance distance}, Discrete Applied Mathematics, 348 (2024), pp.~68--78.

\bibitem{DOS}
{\sc K.~Devriendt, A.~Ottolini, and S.~Steinerberger}, {\em Graph curvature via resistance distance}, Discrete Applied Mathematics,  (2024).

\bibitem{DoyleSnell1984RWEN}
{\sc P.~G. Doyle and J.~L. Snell}, {\em Random Walks and Electric Networks}, vol.~22 of Carus Mathematical Monographs, Mathematical Association of America, 1984.

\bibitem{Fesseretal2024}
{\sc L.~Fesser, S.~Serrano~de Haro~Iváñez, K.~Devriendt, M.~Weber, and R.~Lambiotte}, {\em Augmentations of forman's ricci curvature and their applications in community detection}, Journal of Physics: Complexity, 5 (2024), p.~035010.

\bibitem{forman}
{\sc R.~Forman}, {\em Bochner's method for cell complexes and combinatorial ricci curvature}, Discrete Computational Geometry,  (2003).

\bibitem{Grunbaum}
{\sc B.~Gr\"unbaum}, {\em Convex polytopes}, vol.~Vol. 16 of Pure and Applied Mathematics, Interscience Publishers John Wiley \& Sons, Inc., New York, 1967.
\newblock With the cooperation of Victor Klee, M. A. Perles and G. C. Shephard.

\bibitem{hoffman1960moore}
{\sc A.~J. Hoffman and R.~R. Singleton}, {\em On moore graphs with diameters 2 and 3}, IBM Journal of Research and Development, 4 (1960), pp.~497--504.

\bibitem{JostMuench2021}
{\sc J.~Jost and F.~M{\"u}nch}, {\em Characterizations of forman curvature}, arXiv,  (2021), \url{https://arxiv.org/abs/2110.04554}.

\bibitem{KleinRandic1993ResistanceDistance}
{\sc D.~J. Klein and M.~Randi{\'c}}, {\em Resistance distance}, Journal of Mathematical Chemistry, 12 (1993), pp.~81--95.

\bibitem{Leal2021Hypergraph}
{\sc W.~Leal, G.~Restrepo, P.~F. Stadler, and J.~Jost}, {\em Forman--ricci curvature for hypergraphs}, Advances in Complex Systems, 24 (2021), p.~2150003.

\bibitem{lin2011ricci}
{\sc Y.~Lin, L.~Lu, and S.-T. Yau}, {\em Ricci curvature of graphs}, Tohoku Mathematical Journal, Second Series, 63 (2011), pp.~605--627.

\bibitem{McKayGraphs}
{\sc B.~D. McKay}, {\em Combinatorial data: Graphs}.
\newblock \url{https://users.cecs.anu.edu.au/~bdm/data/graphs.html}.
\newblock Accessed: 2025-10-07.

\bibitem{miller2012moore}
{\sc M.~Miller and J.~Sir{\'a}n}, {\em Moore graphs and beyond: A survey of the degree/diameter problem}, The electronic journal of combinatorics,  (2012), pp.~DS14--May.

\bibitem{moore1920reciprocal}
{\sc E.~H. Moore}, {\em On the reciprocal of the general algebraic matrix}, Bulletin of the american mathematical society, 26 (1920), pp.~294--295.

\bibitem{munch2022mixingtimeexpansionnonnegatively}
{\sc F.~Münch and J.~Salez}, {\em Mixing time and expansion of non-negatively curved markov chains}, 2022, \url{https://arxiv.org/abs/2206.08294}, \url{https://arxiv.org/abs/2206.08294}.

\bibitem{Ollivier2009}
{\sc Y.~Ollivier}, {\em Ricci curvature of markov chains on metric spaces}, Journal of Functional Analysis, 256 (2009), pp.~810--864.

\bibitem{robertson2024discrete}
{\sc S.~J. Robertson}, {\em On discrete curvatures of trees}, arXiv preprint arXiv:2412.20661,  (2024).

\bibitem{Samal2018}
{\sc A.~Samal, R.~P. Sreejith, J.~Gu, S.~Liu, E.~Saucan, and J.~Jost}, {\em Comparative analysis of two discretizations of ricci curvature for complex networks}, Scientific Reports, 8 (2018), p.~8650.

\bibitem{santos2012counterexample}
{\sc F.~Santos}, {\em A counterexample to the hirsch conjecture}, Annals of mathematics,  (2012), pp.~383--412.

\bibitem{OEIS_A000944}
{\sc N.~J.~A. Sloane}, {\em A000944: Number of polyhedra (or 3-connected simple planar graphs) with $n$ nodes}.
\newblock \url{https://oeis.org/A000944}.
\newblock Accessed: 2025-10-07.

\bibitem{SpielmanSrivastava2011Sparsification}
{\sc D.~A. Spielman and N.~Srivastava}, {\em Graph sparsification by effective resistances}, SIAM Journal on Computing, 40 (2011), pp.~1913--1926.

\bibitem{Sreejith2016}
{\sc R.~P. Sreejith, K.~Mohanraj, J.~Jost, E.~Saucan, and A.~Samal}, {\em Forman curvature for complex networks}, Journal of Statistical Mechanics: Theory and Experiment, 2016 (2016), p.~063206.

\bibitem{steinerberger2023curvature}
{\sc S.~Steinerberger}, {\em Curvature on graphs via equilibrium measures}, Journal of Graph Theory, 103 (2023), pp.~415--436.

\bibitem{Tetali1991RandomWalksResistance}
{\sc P.~Tetali}, {\em Random walks and the effective resistance of networks}, Journal of Theoretical Probability, 4 (1991), pp.~101--109.

\bibitem{Watanabe2019}
{\sc K.~Watanabe}, {\em Combinatorial ricci curvature on cell-complex and gauss--bonnet theorem}, Tohoku Mathematical Journal, 71 (2019), pp.~533--547.

\bibitem{WatanabeYamada2018}
{\sc K.~Watanabe and T.~Yamada}, {\em Relation between combinatorial ricci curvature and lin--lu--yau's ricci curvature on cell complexes}, arXiv,  (2018), \url{https://arxiv.org/abs/1801.05593}.

\bibitem{Weber2017}
{\sc M.~Weber, E.~Saucan, and J.~Jost}, {\em Characterizing complex networks with forman--ricci curvature and associated geometric flows}, Journal of Complex Networks, 5 (2017), pp.~527--550.

\bibitem{Ziegler}
{\sc G.~M. Ziegler}, {\em Lectures on polytopes}, vol.~152 of Graduate Texts in Mathematics, Springer-Verlag, New York, 1995.

\end{thebibliography}
\end{document}